\documentclass{amsart}

\begin{document}
\newtheorem{thm}{Theorem}[section]
\newtheorem{pro}[thm]{Proposition}
\newtheorem{cor}[thm]{Corollary}
\newtheorem{lem}[thm]{Lemma}
\newtheorem*{problemHL}{Problem (HL)}
\theoremstyle{definition}
\newtheorem{dfn}[thm]{Definition}
\theoremstyle{remark}
\newtheorem{rmk}[thm]{Remark}
\newtheorem{exe}[thm]{Example}

\numberwithin{equation}{section}

\title[Extremal representations of functions of matrices]{Extremal representations of functions of matrices and applications to multivariate prediction}
\author{Michael Frank}
\address[M. Frank and A. Lasarow]{HTWK Leipzig, Fakult\"at f\"ur Informatik und Medien, Gustav-Freytag-Str.\,42a, D-04277 Leipzig, Germany}
\email{michael.frank@htwk-leipzig.de, andreas.lasarow@htwk-leipzig.de}
\author{Lutz Klotz}
\address[L. Klotz]{E.-v.-Brockdorff-Str. 16, D-04159 Leipzig, Germany}
\email{lutzklotz@t-online.de}
\author{Andreas Lasarow}
\subjclass{Primary 60G25; Secondary 15A15, 15A18, 15A42}
\keywords{Multivariate prediction, functions of matrices, extremal representations, theory of majorization}

\begin{abstract}
Motivated by two seminal results of multivariate prediction theory by Helson and Lowdenslager and by Wiener and Masani we prove extremal representations of functions of matrices and derive their prediction-theoretic consequences. We also sketch a way to obtain matricial inequalities from our results. The main goal of the paper is the computation of the infimum of a set of values of the form ${\rm tr}(A \Delta A^*)$, where $\Delta$ is a given non-negative Hermitian $n \times n$ matrix and the choices for $A$ exhauste a certain set of $n \times n$ matrices. In particular, we focus on norm-bounded unit spheres with certain types of properties of unitary invariance, what allows an application of the theory of majorization.
\end{abstract}

\maketitle
\section{Introduction}
\label{s1}

Two equivalent major results in multivariate prediction theory by H.~Helson and D.~Lowdenslager  \cite{HL} and by N.~Wiener and P.~Masani \cite{WM} motivate new research efforts in matrix theory and multivariate prediction theory in the present paper. The goal is an extension of these results to a wider class of matrix sets to get a deeper insight into these aspects of multivariate prediction theory and to strengthen its ties with matrix theory. 

Generalizing the Szeg{\H{o}} infimum theorem, H.~Helson and D.~Lowdenslager proved the following remarkable result, cf.~\cite[Thm.~8]{HL}. The result was announced in an earlier paper \cite{Z} by V.~N.~Zasukhin, a younger colleague of A.~N.~Kolmogorov. The paper \cite{K} deals with an extension to certain $L^p$-spaces, and the paper \cite{Cheng} contains its further extension to the infinite-dimensional case. We denote the set of all complex-valued type $n \times n$ matrices of a certain fixed dimension $n$ by $\mathcal M$. 

\begin{thm} {\rm \cite[Thm.~8, Lemma 4 and 5]{HL}} \label{t1.1} \newline
    Let $M$ be a Borel measure on $[0,2 \pi)$ taking values in the convex cone  ${\mathcal M}^\geq$  of all non-negative $n \times n$ matrices of a fixed dimension $n$. Let ${\mathcal A}_1 = \{ A \in {\mathcal M}: {\rm det} (A)=1 \}$ and ${\mathcal P}_0$ the set of all finite sums of the form $\sum_{s \in {\mathbb N}} A_s e^{{\bf i}s ( \cdot )}$, $A_s \in {\mathcal M}$. Let $M$ have a Lebesgue decomposition ${\mathrm d} M  = W {\mathrm d}L + {\mathrm d} M_s$, where $W$ is a summable matrix function and $M_s$ is singular with respect to ${\mathrm d} L$, $L$ denoting the normalized Lebesgue measure on $[0,2 \pi)$. The infimum
    \begin{equation}  \label{f1.1}
         \inf \left\{ {\rm tr} \left( \int\limits_{0}^{2 \pi} (A-P) \, {\mathrm d}M \, (A-P)^* \right): A \in {\mathcal A}_1 , \, P \in {\mathcal P}_0 \right\}
    \end{equation}
can be expressed by $\, n \cdot {\rm exp}  \left( \frac{1}{n} \int_0^{2\pi} {\rm tr} ({\rm log} (W)) \, {\mathrm d} L \right) =  n \cdot {\rm exp} \left( \frac{1}{n} \int_{0}^{2 \pi}  {\rm log} ({\rm det} (W)) \, {\mathrm d} L \right)$. In particular, the infimum \eqref{f1.1} equals zero if and only if ${\rm log}({\rm det}(W))$ is not Lebesgue integrable.
\end{thm}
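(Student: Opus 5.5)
The plan is to reduce the infimum \eqref{f1.1} to two ingredients: the matrix Szeg\H{o}/Wiener--Masani theorem on the one-step prediction error matrix, and a purely matricial extremal problem over ${\mathcal A}_1$.

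\textbf{Step 1: separate the minimization over $P$ and over $A$.} Fix $A\in{\mathcal A}_1$. Since every $P\in{\mathcal P}_0$ is a finite sum $\sum_{s\ge1}A_se^{{\bf i}s(\cdot)}$, we have $A-P=A(I-A^{-1}P)$ with $A^{-1}P\in{\mathcal P}_0$. Hence the infimum over $P$ equals ${\rm tr}(A\Delta A^*)$, where $\Delta\in{\mathcal M}^\ge$ is the minimal Gram matrix
\[
\Delta=\inf_{Q\in{\mathcal P}_0}\int_0^{2\pi}(I-Q)\,{\mathrm d}M\,(I-Q)^*
\]
in the L\"owner order. I would obtain $\Delta$ as the Gram matrix of the orthogonal projection of the constant function $I$ onto the orthogonal complement of the closure of ${\mathcal P}_0$ in the Hilbert module $L^2(M)$. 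For this projection $I-Q_0$, the inequality $\int(I-Q)\,{\mathrm d}M\,(I-Q)^*\ge\Delta$ holds for every $Q$ by the Pythagorean identity. Consequently $\inf_P {\rm tr}(\cdots)={\rm tr}(A\Delta A^*)$.

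\textbf{Step 2: identify $\Delta$.} Here I would invoke the multivariate Szeg\H{o}--Kolmogorov theorem (Wiener--Masani, Helson--Lowdenslager):
\[
{\rm det}\,\Delta=\exp\int_0^{2\pi}\log{\rm det}(W)\,{\mathrm d}L ,
\]
with the convention that the right-hand side is $0$ when $\log{\rm det} W$ is not integrable. Proving this is the main obstacle. First, the singular part $M_s$ must be removed; in the scalar case this uses the F.~and M.~Riesz type argument, and here it is applied entrywise or via traces. Second, one needs the inequality in both directions. The upper bound comes from factorizing $W=\Phi\Phi^*$ with an outer $\Phi$ when $\log{\rm det} W\in L^1$. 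The lower bound comes from applying the scalar Szeg\H{o} theorem to $x^*Wx$ together with Jensen's inequality for $\log{\rm det}$ of analytic matrix functions. Since the statement is a cited theorem, I would largely take this step from \cite{HL,WM} and only verify the reduction.

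\textbf{Step 3: the matrix extremal problem.} We need
\[
\inf\{{\rm tr}(A\Delta A^*):{\rm det}A=1\}=n\,({\rm det}\,\Delta)^{1/n}.
\]
If $\Delta>0$, write ${\rm tr}(A\Delta A^*)$ as the sum of the eigenvalues of $A\Delta A^*$. By AM--GM this sum is at least $n\,{\rm det}(A\Delta A^*)^{1/n}=n({\rm det}\Delta)^{1/n}$. Equality holds for $A=({\rm det}\Delta)^{1/(2n)}\Delta^{-1/2}$, which has determinant $1$.

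If $\Delta$ is singular, diagonalize $\Delta=U{\rm diag}(\lambda_j)U^*$ with $\lambda_1=0$. Take $A=D U^*$ with $D={\rm diag}(\varepsilon^{-(n-1)},\varepsilon,\dots,\varepsilon)$. Then ${\rm det}A$ is unimodular, and rescaling by a phase gives ${\rm det}A=1$. The trace equals $\varepsilon^2\sum_{j\ge2}\lambda_j\to0$, so the infimum is $0=n({\rm det}\Delta)^{1/n}$.

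Finally, combine the three steps. The equality of the two exponential expressions in the statement is ${\rm tr}\log W=\log{\rm det}W$ pointwise. The vanishing criterion then follows directly: the infimum is zero exactly when ${\rm det}\,\Delta=0$, that is, when $\log{\rm det}(W)$ is not integrable.
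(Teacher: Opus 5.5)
Your proposal is correct and follows the same route as the paper. The substitution $P\mapsto A^{-1}P$ reduces \eqref{f1.1} to $\inf\{{\rm tr}(A\Delta A^*):A\in{\mathcal A}_1\}$, the Wiener--Masani formula (Theorem~\ref{t1.2}) is taken as given for ${\rm det}(\Delta)$, and the identity $\inf\{{\rm tr}(A\Delta A^*):{\rm det}(A)=1\}=n\,{\rm det}(\Delta)^{1/n}$ finishes the argument; the only difference is that the paper cites \cite[Thm.~7.8.1]{HJ} for this identity, whereas you prove it directly by AM--GM and handle singular $\Delta$ explicitly with ${\rm diag}(\varepsilon^{-(n-1)},\varepsilon,\ldots,\varepsilon)U^*$.
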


By the Lebesgue-Radon-Nikodym theorem $W$ can be described by a Radon-Nikodym derivative of the absolutely continuous part of $M$.
Considering $M$ as a (non-stochastic) spectral measure of an $n$-variate weakly stationary sequence ${\mathbf X}=(X(s) = (x_1(s), \ldots , x_n(s))^\top )_{s \in \mathbb Z}$ and taking into account a multivariate version of Kolmogorov's isomorphism, Theorem \ref{t1.1} can be given a probabilistic interpretation. Indeed, Theorem \ref{t1.1} is equivalent to the assertion that the distance of the set $\{ AX(0) : A \in {\mathcal A}_1 \}$ to the $\mathcal M$-linear space spanned by $\{ X(s): s \in -{\mathbb N} \}$ is equal to $n \cdot {\rm exp} \left( \frac{1}{n} \int_{0}^{2 \pi} {\rm log} ({\rm det} (W))\, {\mathrm d}  L \right)$, cf.~\cite[Thm.~11]{HL} for a precise formulation. The assertion of Theorem \ref{t1.1} is equivalent to Main Theorem I of another celebrated paper of that time, cf.~\cite [Subsection 7.10]{WM}. To state it denote the one-step-ahead prediction error matrix of $\mathbf X$ by $\Delta$, $\Delta = \inf \{ \int_{0}^{2 \pi} (I-P) \, {\mathrm d} M \, (I-P)^* : P \in {\mathcal P}_0 \}$, where $I$ denotes the identity matrix of $\mathcal M$, and the infimum is taken with respect to Loewner's semiordering. 

\begin{thm}   {\rm \cite [Subsection 7.10]{WM}}  \label{t1.2} \newline
   The determinant ${\rm det} (\Delta)$ is equal to ${\rm exp} \left(\int_{0}^{2 \pi} {\rm log} ( {\rm det} (W)  \, {\mathrm d} L\right)$ with the convention that ${\rm det}(\Delta)=0$ if ${\rm log}({\rm det}(W))$ is not Lebesgue integrable.
\end{thm}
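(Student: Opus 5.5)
Assuming Theorem \ref{t1.1}, I will derive Theorem \ref{t1.2} by reducing the Helson--Lowdenslager infimum to a finite-dimensional extremal problem in the error matrix $\Delta$. The key matrix fact is
\[
\inf\{ {\rm tr}(A\Delta A^*) : A \in {\mathcal A}_1\} = n\,({\rm det}\,\Delta)^{1/n},
\]
valid for every $\Delta \in {\mathcal M}^\geq$, with the value $0$ when $\Delta$ is singular. For invertible $\Delta$ it follows from the AM--GM inequality applied to the eigenvalues of $A\Delta A^*$, whose determinant is $|{\rm det} A|^2 {\rm det}\,\Delta = {\rm det}\,\Delta$. Equality is attained at $A=({\rm det}\,\Delta)^{1/(2n)}\Delta^{-1/2}$. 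If $\Delta$ is singular, I take $A$ diagonal in an eigenbasis of $\Delta$, with a huge entry on a kernel direction and small entries elsewhere, so the trace tends to $0$.

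Next I identify the infimum \eqref{f1.1} with $\inf_A {\rm tr}(A\Delta A^*)$. Write $Q(A,P)=\int (A-P)\,{\mathrm d}M\,(A-P)^*$. Since ${\mathcal P}_0$ is a left ${\mathcal M}$-module, every $P\in{\mathcal P}_0$ can be written as $AP'$ with $P'=A^{-1}P\in{\mathcal P}_0$, because $A$ is invertible. Hence
\[
Q(A,P)=A\,Q(I,P')\,A^*,
\]
and the infimum over $P$ for fixed $A$ equals $\inf_{P'} {\rm tr}(A\,Q(I,P')\,A^*)$.

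The main obstacle is to show this equals ${\rm tr}(A\Delta A^*)$. That requires the Loewner infimum defining $\Delta$ to be approached in a way compatible with the trace functional. I handle it through the Hilbert-space picture. Take the space $L^2(M)$ of row-vector functions (equivalently, Kolmogorov's isomorphism). There, $Q(I,P)$ is the Gram matrix of the rows of $I-P$. Each row of $I-P$ is $e_j$ minus an element of the closed subspace $H_-$ spanned by $\{e_k e^{{\rm i}s(\cdot)} : s\ge1\}$. The rows of $P$ range independently over a dense subset of $H_-$, so the orthogonal projection gives a single minimizer in the closure: row $j$ becomes $e_j - \pi(e_j)$. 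This minimizer is Loewner-minimal, so $\Delta$ exists as a genuine minimum in the closure, namely the Gram matrix of the vectors $e_j-\pi(e_j)$. Moreover $Q(I,P)\to\Delta$ entrywise along an approximating sequence $P_k$. Then ${\rm tr}(AQ(I,P)A^*)\ge{\rm tr}(A\Delta A^*)$ by Loewner monotonicity, and continuity gives equality of the infimum.

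Combining the three steps, \eqref{f1.1} equals $n({\rm det}\,\Delta)^{1/n}$. Theorem \ref{t1.1} says it also equals $n\exp\bigl(\frac1n\int\log{\rm det}(W)\,{\mathrm d}L\bigr)$. Raising both to the $n$-th power gives ${\rm det}\,\Delta=\exp\int\log{\rm det}(W)\,{\mathrm d}L$. In the non-integrable case both sides vanish: the infimum is zero by Theorem \ref{t1.1}, so by the matrix fact $\Delta$ is singular, which matches the stated convention.
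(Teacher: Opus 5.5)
Your argument is correct and follows essentially the paper's own route. The paper derives Theorem \ref{t1.2} from Theorem \ref{t1.1} through the same chain of equalities, rewriting \eqref{f1.1} as $\inf\{{\rm tr}(A\Delta A^*) : A\in{\mathcal A}_1\} = n\,{\rm det}(\Delta)^{1/n}$ and citing \cite[Thm.~7.8.1]{HJ} for the last step, where you instead use AM--GM together with your construction for singular $\Delta$; your Gram-matrix and orthogonal-projection argument additionally justifies two points the paper leaves implicit, namely that the Loewner infimum $\Delta$ exists and that the infimum over $P$ commutes with $A(\cdot)A^*$ and the trace.
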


Since the infimum \eqref{f1.1} is equal to 
\begin{eqnarray*}
\begin{split}
&\inf \left\{ {\rm tr} \left(A \int\limits_{0}^{2 \pi}  (I-A^{-1}P) \, {\mathrm d}M \, (I-A^{-1}P)^*A^*\right) : A \in {\mathcal A}_1, P \in {\mathcal P}_0 \right\} \\
& \quad \quad \quad \quad \quad =  \inf \left\{ {\rm tr} \left(A \int\limits_{0}^{2 \pi}  (I-P) \, {\mathrm d}M \, (I-P)^*A^*\right) : A \in {\mathcal A}_1, P \in {\mathcal P}_0 \right\} \\
& \quad \quad \quad \quad \quad = \inf \{ {\rm tr} (A \Delta A^*) : A \in {\mathcal A}_1 \} \\
& \quad \quad \quad \quad \quad = n \, {\rm det}(\Delta^{1/n}) \, ,
\end{split}
\end{eqnarray*}
cf.~\cite[Thm.~7.8.1]{HJ} for the last equality, the equivalence of Theorems \ref{t1.1} and \ref{t1.2} is obvious. 

Generalizing the problem of H.~Helson and D.~Lowdenslager one could ask for the infimum
\begin{equation}  \label{f1.2}
    \inf \left\{ {\rm tr} \left(\int\limits_{0}^{2 \pi}  (A-P) \, {\mathrm d}M \, (A-P)^* \right) : A \in {\mathcal A}, P \in {\mathcal P}({\mathcal O}) \right\} \, ,
\end{equation}
where $\mathcal A$ is a non-empty subset of $\mathcal M$, the symbol $\mathcal O$ denotes a non-empty subset of ${\mathbb Z} \setminus \{ 0 \}$, which is interpreted as observation set of the sequence ${\mathbf X}$, and ${\mathcal P}({\mathcal O})$ is the set of all $\mathcal M$-valued trigonometric polynomials of the form $\sum_{s \in  {\mathcal O}} A_s e^{{\bf i}s(\cdot)}$. In Section 6 we formulate this problem, which we will denote by Problem (HL), as a problem of prediction theory and call it ``multivariate prediction problem of Helson-Lowdenslager type''. If for given $\mathcal O$ some information on the corresponding prediction error matrix $\Delta ({\mathcal O}) = \inf \{ \int_{0}^{2 \pi} (I-P) \, {\mathrm d} M \, (I-P)^* : P \in {\mathcal P}({\mathcal O}) \}$ is known, one can try to determine the infimum \eqref{f1.2} following the proof of Theorem \ref{t1.1} by Theorem \ref{t1.2} as sketched above. In this case, a main goal will be the computation of the infimum 
\begin{equation}    \label{f1.3}
     \iota (\Delta ; {\mathcal A}) = \inf \{ {\rm tr} (A \Delta A^*): A \in {\mathcal A} \}
\end{equation}
for any $\Delta \in {\mathcal M}^\geq$. A large part of our paper deals with this problem. Although probably the computation of $\iota (\Delta ; {\mathcal A} )$ has little prediction-theoretical interest for many particular choices of the set $\mathcal A$, we think this kind of problem (and, of course, also the corresponding problem of determining $\sup \{ {\rm tr} (A \Delta A^*): A \in {\mathcal A} \}$) is of interest from the point of view of linear algebra and matrix theory. In particular, we are interested in cases where the infimum is attained. Moreover, as will be pointed out in Section 5, these problems deserve investigation since one could derive new inequalities from their solutions. In places we indicate open problems. 

In Section 2 some general and auxilary results are summarized. We also recall the simple variant of computing the infimum \eqref{f1.3} in case the set $\mathcal A$ is a hyperplane of $\mathcal M$. In Section 6 we demonstrate that its solution is of significant importance in multivariate prediction theory despite the simplicity of that particular case. 

Section 3 deals with the infimum of \eqref{f1.3} if $\mathcal A$ is a unitarily invariant subset of $\mathcal M$. In this situation the value $\iota (\Delta ; {\mathcal A})$ can be expressed by a permutation invariant function of the eigenvalues of $\Delta$, so the theory of majorization is helpful, cf.~\cite{MOA} for a comprehensive treatise of it.  For some observation sets $\mathcal O$ that are important in practice the matrix $\Delta({\mathcal O})$ cannot be calculated precisely. However, this matrix is at least known to be non-invertible for certain sets of spectral measures $M$. Consequently, we are searching for sets $\mathcal A$ such that $\iota(\Delta; {\mathcal A})$ can be described. In particular, sets $\mathcal A$ forcing $\iota(\Delta; {\mathcal A})=0$ are of special interest. 
What is more, Corollary \ref{c3.12} describes a large class of sets $\mathcal A$ such that for any $\Delta \in {\mathcal M}^\geq$ the infimum $\iota (\Delta ; {\mathcal A})$  is equal to the smallest eigenvalue of $\Delta$. 

In Sections 4 and 5 we deal with more concrete examples. Section 4 is mainly devoted to the case that $\mathcal A$ is a unit sphere of a unitarily invariant norm, but we also obtain a result if $\mathcal A$ is a unit sphere of a $C$-numerical radius. Section 5 contains further examples, particularly, a generalization of the case ${\mathcal A}={\mathcal A}_1$ is given, shown at the beginning of this introduction. Most results of Sections 4 and 5 provide examples of permutation invariant and concave, hence, Schur concave functions on $[0,+\infty)^n$. As indicated in Section 5, this observation opens the way to prove new inequalities. 

Section 6 shows applications to prediction theory of multivariate weakly stationary sequences. Beside the interpretation given by H.~Helson and D.~Lowdenslager (\cite[Thm.~11]{HL}) we give some further interpretations and indicate additional consequences. In particular, we obtain results concerning the multivariate prediction problem of Yu.~A.~Rozanov (\cite{R}), where the value $x_1(0)$ should be predicted not only on the basis of the realizations of $\mathbf X$ at the points of the observation set $\mathcal O$, but additionally, on the basis of the values $\{ x_j(0) : j \in \{ 2,\ldots,n\} \}$. 

The approach described in the present paper is applicable only if the prediction error matrix $\Delta({\mathcal O})$ itself or at least some information on it are known. So, we recall several investigations of practical interest from the literature where such information is available: 

\begin{itemize}
\item `classical' prediction: ${\mathcal O}= - {\mathbb N}$, cf.~Theorems \ref{t1.1} and \ref{t1.2} and the corresponding papers \cite{HL,WM} as well as the monograph \cite{R};
\item interpolation problem: ${\mathcal O} = (- {\mathbb N}) \cup {\mathbb N}$, cf.~\cite{MW};
\item Nakazi's problem: ${\mathcal O} = (- {\mathbb N}) \cup \{ 1,\ldots,k\}$ for some $k \in {\mathbb N}$, cf.~\cite{KL};
\item Pourahmadi's problem: ${\mathcal O} = (- {\mathbb N}) \setminus \{ -k \}$ for some $k \in \mathbb N$, cf.~\cite{KL};
\item periodic observations: ${\mathcal O} = \{ ks+m : s \in {\mathbb Z} \}$ for a certain $k \in {\mathbb N} \setminus \{ 1 \}$ and selected $m \in \{ 1,\ldots,(k-1) \}$, cf.~\cite{KM}.
\end{itemize}

\section{Preliminaries and the hyperplane case}
\label{s2}

In this section we fix denotations and recall some basic results used in the sequel. We consider $m \times n$ matrices as linear operators between complex finite-dimensio\-nal vector spaces, i.e.~from ${\mathbb C}^n$ to ${\mathbb C}^m$, where some orthonormal bases of these spaces are fixed to get a unique matrix representation of each linear operator. Vectors are imagined as column vectors, however, for the sake of a reasonable text reading, we write them as row vectors with a transposed sign at the end. 
For any $m \times n$ matrix $A$ with complex entries denote $A^{\top}$, $A^*$, $A^\dagger$, ${\mathcal R}(A)$, ${\mathcal N}(A)$, and ${\rm rk}(A)$ its transpose, adjoint, Moore--Penrose inverse, range, kernel, and rank, respectively. If $A$ is a quadratic matrix, then ${\rm tr}(A)$ and ${\rm det}(A)$ denote its trace and determinant values, respectively. If $A$ is a positive matrix $A^{1/2}$ denotes the unique positive matrix such that $(A^{1/2})^2=A$.
Throughout the present paper, let $n$ be a positive integer, where the symbol $\mathcal M$ stands for the algebra of complex $n \times n$ matrices, $\mathcal I$ for its open subset of all invertible matrices, $\mathcal U$ for its compact subset of all unitary matrices, and ${\mathcal M}^{\geq}$ for its convex cone of all non-negative Hermitian matrices.

For ${\mathcal A} \subseteq {\mathcal M}$ being a hyperplane, we want to resume the solution of the problem of computing the infimum of the value  ${\rm tr}(A \Delta A^*)$
\begin{equation} \label{f2.0}
    \iota(\Delta;{\mathcal A}) := \inf \left \{ {\rm tr}( A \Delta A^*) :  A\in {\mathcal A} \right \}
\end{equation} 
for any $\Delta\in {\mathcal M}^{\geq}$. 

In the following, the singular values of a matrix $A \in {\mathcal M}$ are denoted by $\sigma_j(A)$, where the dependence on $A$ will not be indicated in unambiguous cases, i.e. we write $\sigma_j$ in short instead of $\sigma_j(A)$. However, we emphasize that the singular values of $A$ are always assumed to be ordered decreasingly throughout this paper, i.e.
\begin{equation} \label{f2.1}
  \sigma_j \geq \sigma_{j+1} \, .
\end{equation}

Let $\|A\|_2$ be the Schatten $2$-norm (or Euclidean or Frobenius norm) of $A$, what
implies
\[ 
    \|A\|_2 ^2= {\rm tr}(A^* A) = \sum_{k,l} |a_{k l}|^2 = \sum_{j} \sigma_j^2 \, ,  
\]
where $A=(a_{k l})$ is a quadratic matrix with complex entries $a_{k l}$, $A \in {\mathcal M}$ of arbitrary size $n$. The symbol $\Delta$ always stands for a matrix of ${\mathcal M}^{\geq}$.  Let $\lambda_1,\ldots,\lambda_n$ be the eigenvalues of $\Delta$, where
\[ 
    \lambda_j \geq \lambda_{j+1}, \quad j\in\{1,\ldots,n-1\},  
\]
in accordance with \eqref{f2.1}. Moreover, let ${\mathcal U}_{\Delta}$ denote the set of all $U\in {\mathcal U}$ such that
\begin{equation} \label{f2.3}
  \Delta = U \, {\rm diag}(\lambda_n,\ldots,\lambda_1) \, U^*,
\end{equation}
where ${\rm diag}(\lambda_n,\ldots,\lambda_1)$ stands abbreviated for the diagonal matrix in $\mathcal M$ with entries on the principal diagonal $\lambda_n,\ldots,\lambda_1$ incipient top left. 
Note that, therefore, the eigenvalues on the principal diagonal at the right-hand side of \eqref{f2.3} are ordered increasingly, which simplifies our presentation slightly. We denote the identity matrix by $I$.

Recall that a real-valued function $f$ on a convex cone $\mathcal K$ of an $\mathbb R$-linear space is called positively homogeneous, concave, and superadditive if
\[ 
   f(ax) = af(x),\,\,\,
   f\bigl(bx+(1-b)y\bigr) \geq bf(x)+(1-b)f(y),\,\,\, \mbox{and}\,\,\,
   f(x+y\bigr) \geq f(x)+f(y), 
\]
respectively, for all $a\in (0,+\infty)$, $b\in[0,1]$, $x,y\in {\mathcal K}$.
Moreover, if $0\in {\mathcal K}$, we suppose that $f(0)=0$. Note that, here and henceforth, the notation $0$ is used for the null element in the corresponding space.

The set of all non-negative and positively homogeneous functions on $\mathcal M$, which are different from the zero function, is denoted by $\mathcal F$. The proofs of the following facts are omitted.

\begin{lem} \label{l2.1}
    Let $f$ be a function in the set $\mathcal F$ of all non-negative and positively homogeneous functions on $\mathcal M$ different from the zero function.
\begin{itemize}
        \item[(i)]  The function $f^p$ is not concave if $p\in (1,+\infty)$ and not superadditive if $p\in (0,1)$.
        \item[(ii)]  The function $f$ is concave if and only if it is superadditive. In this case,  $f^p$ is concave
                                for $p\in (0,1]$ and superadditive for $p\in [1,+\infty)$.
\end{itemize}
\end{lem}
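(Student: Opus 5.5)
The plan is to use only positive homogeneity, the convention $f(0)=0$, and elementary facts about the scalar function $t\mapsto t^p$ on $[0,+\infty)$. Throughout, $\mathcal M$ is regarded as the cone $\mathcal K$. Each $f^p$ is again non-negative and positively homogeneous, since $(f(ax))^p=a^p f(x)^p$. Strictly, $f^p$ is homogeneous of degree $p$, so in (i) I will refute concavity and superadditivity directly, exactly as they are defined, without appealing to (ii).

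For (i), since $f$ is not the zero function, I fix $x\in\mathcal M$ with $f(x)>0$.
\begin{itemize}
\item Let $p>1$. Test concavity on the segment between $x$ and $0$ with $b\in(0,1)$. Using $f(0)=0$ and homogeneity,
\[
f^p(bx+(1-b)0)=b^p f(x)^p<b\,f(x)^p=b f^p(x)+(1-b)f^p(0),
\]
because $b^p<b$. So $f^p$ is not concave.
\item Let $p\in(0,1)$. Test superadditivity with $y=x$:
\[
f^p(2x)=2^p f(x)^p<2f(x)^p=f^p(x)+f^p(x),
\]
because $2^p<2$. So $f^p$ is not superadditive.
\end{itemize}

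For the equivalence in (ii), there are two directions.
\begin{itemize}
\item Suppose $f$ is concave. Take $b=1/2$ and use homogeneity:
\[
f(x+y)=2f\bigl(\tfrac12x+\tfrac12y\bigr)\ge f(x)+f(y).
\]
\item Suppose $f$ is superadditive and $b\in(0,1)$. Then
\[
f(bx+(1-b)y)\ge f(bx)+f((1-b)y)=bf(x)+(1-b)f(y).
\]
The endpoint cases $b\in\{0,1\}$ are trivial.
\end{itemize}

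For the remaining claims, assume $f$ is concave, hence superadditive.
\begin{itemize}
\item Let $p\in(0,1]$. The function $\varphi(t)=t^p$ is concave and nondecreasing on $[0,+\infty)$, and $f$ takes values there. Hence
\[
f^p(bx+(1-b)y)\ge \bigl(bf(x)+(1-b)f(y)\bigr)^p\ge b f(x)^p+(1-b)f(y)^p .
\]
The first inequality uses concavity of $f$ together with monotonicity of $\varphi$; the second uses concavity of $\varphi$.
\item Let $p\ge1$. By superadditivity of $f$ and monotonicity of $t^p$, $f^p(x+y)\ge (f(x)+f(y))^p$. It then suffices to show $(a+b)^p\ge a^p+b^p$ for $a,b\ge0$. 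If $a+b=0$ this is trivial. Otherwise put $s=a/(a+b)\in[0,1]$; then $s^p\le s$ and $(1-s)^p\le 1-s$, and adding these gives the claim.
\end{itemize}

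No step is a real obstacle. The only care needed is to use the convention $f(0)=0$ in the counterexample of (i), and to use monotonicity of $t^p$ in both parts of (ii) when composing with $f$.
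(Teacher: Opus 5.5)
Your proof is correct and complete. The paper omits the proof of this lemma (``The proofs of the following facts are omitted''), and your argument is the routine verification it leaves to the reader.

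Two cosmetic remarks. The value $f(0)=0$ you use in (i) is forced by positive homogeneity anyway, since $f(0)=f(2\cdot 0)=2f(0)$. Your opening phrase that $f^p$ is ``again positively homogeneous'' is wrong in the paper's degree-one sense, as you immediately note yourself, but nothing in the proof relies on it.
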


In the following, $\mathcal A$ always denotes a non-empty subset of $\mathcal M$, and we use the notation given in \eqref{f2.1} based on a matrix $\Delta$. If the set $\mathcal A$ is characterized by a certain family of conditions $\mathcal C$ we shall write $\iota(\Delta;{\mathcal C})$ for short. For example, if ${\mathcal A}=\{A\in {\mathcal M} : {\rm tr}(A) = 1\}$, we write $\iota(\Delta; {\rm tr}(A) = 1)$ instead of $\iota(\Delta;\{A\in {\mathcal M} : {\rm tr}(A) = 1\})$. If the infimum stated in \eqref{f2.0} is attained, we call it a minimum and write the lowercase letter $m$ in place of $\iota$. Moreover, the set of all $A\in {\mathcal A}$, where the minimum is attained, will be denoted by ${\mathcal A}(\Delta)$. In particular, ${\mathcal A}(\Delta)=\emptyset$ if and only if the infimum is not attained.

Note that the function $\iota(\,\cdot\,; {\mathcal A})$ on the convex cone  ${\mathcal M}^{\geq}$ is non-negative, positively homogeneous, concave, and superadditive, hence, increasing with respect to Loewner's semiordering.
From concavity follows that $\iota(\,\cdot\,; {\mathcal A})$ is continuous at the set ${\mathcal M}^{\geq} \cap {\mathcal I}$ of inner points of ${\mathcal M}^{\geq}$. As an infimum of continuous functions, it is upper semicontinuous on ${\mathcal M}^{\geq}$.
If ${\mathcal A}$ is a bounded set, there exists a constant $c\in(0,+\infty)$ such that ${\rm tr}( A \Delta A^*)\leq c \cdot {\rm tr}(\Delta)$ for all $A\in {\mathcal A}$, hence, $\iota(\,\cdot\,;{\mathcal A})$ is continuous.  In general, $\iota(\,\cdot\,;{\mathcal A})$ is not necessarily continuous at boundary points of ${\mathcal M}^{\geq}$.

\begin{exe}
 If $n=2$ and
\[  
   {\mathcal A} = \left\{
        \left(\begin{array}{cc}
               1 & -k \\
              -k & k^2
              \end{array}
       \right) : k\in {\mathbb N} \right\} \, , \quad
    \Delta = 
       \left(\begin{array}{cc}
               1 & 0 \\   
               0 & 0
             \end{array}
       \right) \, , 
\]
then $m(\Delta;{\mathcal A}) = 2$. But, using the notation
\[  
    \Delta_k = 
      \left(\begin{array}{cc}
             1 & k^{-1} \\
             k^{-1} & k^{-2}
            \end{array}
      \right) \, ,   
 \]
the sequence $(\Delta_k)_{k\in {\mathbb N}}$ tends to $\Delta$ if $k\to+\infty$ and $m(\Delta_k;{\mathcal A}) = 0$
for all $k\in {\mathbb N}$.
\end{exe}

For future reference the following lemma summarizes further properties of $\iota(\Delta;{\mathcal A})$ for non-empty subsets $\mathcal A$ of $\mathcal M$, whose easy verifications are omitted.

\begin{lem} \label{l2.2}
Let $\mathcal A$ and ${\mathcal A}'$ be non-empty subsets of $\mathcal M$ and $\Delta \in {\mathcal M}^\geq$.
   \begin{itemize}
       \item[(i)]  Suppose that $\iota(\Delta;{\mathcal A}') \leq \iota(\Delta;{\mathcal A})$ and  ${\rm tr}( A_k \Delta A_k^*) \leq \iota(\Delta;{\mathcal A}') + k^{-1}$ for a sequence $(A_k)_{k\in {\mathbb N}}$ of matrices of $\mathcal A$. Then $\iota(\Delta;{\mathcal A}) = \iota(\Delta;{\mathcal A}')$.
       \item[(ii)] For any norm-dense subset ${\mathcal A}'$ of ${\mathcal A}$, the functions $\iota(\,\cdot\,;{\mathcal A})$ and $\iota(\,\cdot\,;{\mathcal A}')$ coincide.
       \item[(iii)] If the minimum $m(\Delta;{\mathcal A})$ is realized at $A\in {\mathcal A}$, then $m(\Delta;{\mathcal A}) = m(\Delta;{\mathcal A}')$ for all ${\mathcal A}'$ with $\{A\} \subseteq {\mathcal A}' \subseteq {\mathcal A}$.
       \item[(iv)] If ${\mathcal A}$ is such that $A\in {\mathcal A}$ yields $(A^* A)^{1/2} \in {\mathcal A}$, then the functions $\iota(\,\cdot\,;{\mathcal A})$ and $\iota(\,\cdot\,;{\mathcal A}')$ coincide for all ${\mathcal A}'$ with  ${\mathcal A} \cap  {\mathcal M}^{\geq} \subseteq {\mathcal A}' \subseteq {\mathcal A}$.
       \item[(v)] If $\{ A_\tau : \tau\in {\mathcal T}\}$ is a family of non-empty subsets of $\mathcal M$, then the function
 $\iota(\,\cdot\,;\bigcup_{\tau \in {\mathcal T}}\!\!A_\tau)$ is given by $\inf \{  \iota(\,\cdot\,;{\mathcal A}_\tau) : \tau\in {\mathcal T}\}$.
\item[(vi)] If ${\mathcal U}_1$ is a non-empty subset of $\mathcal U$, then $\iota(\,\cdot\,;{\mathcal A}) = \iota(\,\cdot\,;\bigcup_{U \in {\mathcal U}_1}\!U{\mathcal A})$.
\end{itemize}
\end{lem}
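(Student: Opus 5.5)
Each of the six items follows directly from the definition $\iota(\Delta;{\mathcal A})=\inf\{{\rm tr}(A\Delta A^*):A\in{\mathcal A}\}$, from continuity of $A\mapsto {\rm tr}(A\Delta A^*)$, and from unitary invariance of the trace. I would treat them one after another.

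For (i): since $A_k\in{\mathcal A}$, we have
\[
\iota(\Delta;{\mathcal A})\le {\rm tr}(A_k\Delta A_k^*)\le \iota(\Delta;{\mathcal A}')+k^{-1}
\]
for every $k$. Letting $k\to\infty$ and combining with the assumed reverse inequality gives equality.

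For (ii): ${\mathcal A}'\subseteq{\mathcal A}$ gives $\iota(\Delta;{\mathcal A})\le\iota(\Delta;{\mathcal A}')$. For the other direction, fix $\Delta$ and choose $A\in{\mathcal A}$ with ${\rm tr}(A\Delta A^*)$ within $\varepsilon$ of the infimum. The map $A\mapsto{\rm tr}(A\Delta A^*)=\|A\Delta^{1/2}\|_2^2$ is norm continuous, so density yields $A'\in{\mathcal A}'$ within $2\varepsilon$ of the infimum. Alternatively this is just (i) applied to a suitable sequence.

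For (iii): monotonicity under inclusion gives
\[
m(\Delta;{\mathcal A})\le\iota(\Delta;{\mathcal A}')\le{\rm tr}(A\Delta A^*)=m(\Delta;{\mathcal A}),
\]
and the value is attained at $A\in{\mathcal A}'$.

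For (iv): the key identity is ${\rm tr}(A\Delta A^*)={\rm tr}(\Delta A^*A)={\rm tr}(\Delta\, |A|^2)$ with $|A|=(A^*A)^{1/2}$. Since $|A|$ is Hermitian, this equals ${\rm tr}(|A|\Delta|A|^*)$. So every value attained on ${\mathcal A}$ is also attained at $|A|\in{\mathcal A}\cap{\mathcal M}^{\geq}\subseteq{\mathcal A}'$, giving $\iota(\Delta;{\mathcal A}')\le\iota(\Delta;{\mathcal A})$. The reverse inequality comes from ${\mathcal A}'\subseteq{\mathcal A}$.

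For (v): the infimum over a union is the infimum of the infima.

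For (vi): ${\rm tr}(UA\Delta A^*U^*)={\rm tr}(A\Delta A^*)$ by cyclicity, so $\iota(\cdot\,;U{\mathcal A})=\iota(\cdot\,;{\mathcal A})$ for each $U$. Applying (v) then gives the claim.

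No step is a genuine obstacle. The only point needing care is the trace manipulation in (iv), where one must use that $|A|$ is self-adjoint so that $|A|^*=|A|$, and the continuity estimate in (ii).
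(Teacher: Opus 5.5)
Your proposal is correct. The paper omits these verifications as easy, and your item-by-item arguments are exactly the intended ones: monotonicity of $\iota$ under inclusion, norm continuity of $A\mapsto {\rm tr}(A\Delta A^*)$ for (ii), the identity ${\rm tr}(A\Delta A^*)={\rm tr}(\Delta A^*A)={\rm tr}\bigl((A^*A)^{1/2}\Delta (A^*A)^{1/2}\bigr)$ for (iv), and unitary invariance of the trace together with (v) for (vi).
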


The next lemma reveals that for many sets $\mathcal A$ it is sufficient to deal with invertible matrices $\Delta$, i.e.~with matrices $\Delta$ such that ${\mathcal R}(\Delta)$ is the entire space.

\begin{lem} \label{l2.3}
    For any ${\mathcal A} \subset {\mathcal M}$ and $\Delta \in {\mathcal M}^\geq$, let $P$ be the orthoprojection onto ${\mathcal R}(\Delta)$. The equality $\iota(\Delta;{\mathcal A}) = 0$ is true if and only if there exists a sequence $(A_k)_{k\in{\mathbb N}}$ of matrices of $\mathcal A$ with $\lim_{k\to+\infty} A_kP = 0$. In this case, ${\mathcal A}(\Delta) = \{ A\in{\mathcal A} : {\mathcal R}(\Delta) \subseteq {\mathcal N}(A) \}$.
\end{lem}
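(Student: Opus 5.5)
The whole argument rests on one identity. Write $\Delta = \Delta^{1/2}\Delta^{1/2}$ and note that $\Delta^{1/2}$ and $\Delta$ have the same range, namely ${\mathcal R}(\Delta)$. Since $P$ is the orthoprojection onto this range, we have $\Delta^{1/2} = P\Delta^{1/2} = \Delta^{1/2}P$. Hence for every $A\in{\mathcal M}$
\[
{\rm tr}(A\Delta A^*) = \|A\Delta^{1/2}\|_2^2 = \|AP\Delta^{1/2}\|_2^2 .
\]
Both directions of the equivalence, and the description of ${\mathcal A}(\Delta)$, should follow from this formula together with two-sided norm estimates.

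\emph{Sufficiency.} Suppose $A_kP\to 0$ for a sequence $(A_k)$ in ${\mathcal A}$. Then
\[
{\rm tr}(A_k\Delta A_k^*) \le \|A_kP\|_2^2\,\|\Delta^{1/2}\|^2 \to 0 ,
\]
using submultiplicativity with the operator norm. Since the trace values are non-negative, $\iota(\Delta;{\mathcal A})=0$.

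\emph{Necessity.} This is the only step needing a little care. Restricted to ${\mathcal R}(\Delta)$, $\Delta^{1/2}$ is invertible, and its smallest eigenvalue there is $\mu:=\lambda_r^{1/2}>0$, where $r={\rm rk}(\Delta)$. If $\Delta=0$, then $P=0$ and the claim is trivial. Otherwise $\Delta^{1/2}\ge \mu P$ in Loewner order, so $\Delta\ge \mu^2 P$. Therefore
\[
{\rm tr}(A\Delta A^*) \ge \mu^2\,{\rm tr}(APA^*) = \mu^2\|AP\|_2^2 .
\]
If $\iota(\Delta;{\mathcal A})=0$, choose $A_k\in{\mathcal A}$ with ${\rm tr}(A_k\Delta A_k^*)<k^{-1}$. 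The estimate then gives $\|A_kP\|_2^2<\mu^{-2}k^{-1}\to0$.

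\emph{Description of ${\mathcal A}(\Delta)$.} Assume $\iota(\Delta;{\mathcal A})=0$. Then $A\in{\mathcal A}(\Delta)$ iff ${\rm tr}(A\Delta A^*)=0$. By the same two-sided bounds, this holds iff $AP=0$, i.e.\ iff $A$ vanishes on ${\mathcal R}(P)={\mathcal R}(\Delta)$, which means ${\mathcal R}(\Delta)\subseteq{\mathcal N}(A)$.
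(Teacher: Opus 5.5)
Your proof is correct and is the paper's argument made quantitative. The paper likewise observes that ${\rm tr}(A_k\Delta A_k^*)\to0$ iff $A_kP\Delta^{1/2}\to0$ iff $A_kP\to0$. Your bounds $\lambda_r\|AP\|_2^2\le{\rm tr}(A\Delta A^*)\le\|\Delta^{1/2}\|^2\|AP\|_2^2$ supply the justification for the second equivalence, and for the description of ${\mathcal A}(\Delta)$, which the paper leaves implicit.

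One inference needs tidying: ``$\Delta^{1/2}\ge\mu P$, so $\Delta\ge\mu^2P$''. Loewner inequalities cannot be squared in general. The step is valid here only because the two matrices commute, and it is cleaner to read $\Delta\ge\lambda_rP$ directly off the spectral decomposition.
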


\begin{proof}
Since the equality $\lim_{k\to+\infty} {\rm tr}(A_k\Delta A_k^*) = 0$ is equivalent to the equality $\lim_{k\to+\infty} A_kP\Delta^{1/2} = 0$, which in turn is equivalent to $\lim_{k\to+\infty} A_kP = 0$, the first assertion follows. The second one is obvious.
\end{proof}

The second assertion of the preceding lemma has a partial extension to invertible matrices $\Delta$.

\begin{pro} \label{p2.4}
  Let ${\mathcal A} \subset {\mathcal M}$. For all $\Delta \in{\mathcal M}^\geq$, if for all $A\in{\mathcal A}$
     \begin{equation} \label{f2.5}
         \|A\|_2 \geq 1 
     \end{equation}
  then    
     \begin{equation} \label{f2.4}
         \iota(\Delta;{\mathcal A}) \geq \lambda_n \, .
     \end{equation}
  If the condition \eqref{f2.5} is not valid then there exists a $\Delta$ (e.g. $\Delta = I$) such that \eqref{f2.4}  is not valid for that $\Delta$.
  Furthermore, if \eqref{f2.5} is true and $\Delta \in {\mathcal I}$, then the equality $m(\Delta;{\mathcal A}) = \lambda_n$ is fulfilled if and only if the set ${\mathcal A}' = \{ A\in {\mathcal A} : {\mathcal R}(\Delta-\lambda_nI) \subseteq {\mathcal N}(A) \mbox{ and } \|A\|_2=1 \}$ is not empty. In this case ${\mathcal A}(\Delta) = {\mathcal A}'$.
\end{pro}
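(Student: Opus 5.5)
The plan is to rest everything on one elementary trace inequality. Since $\Delta - \lambda_n I \in {\mathcal M}^\geq$, for every $A \in {\mathcal M}$ we have
\[
 {\rm tr}(A\Delta A^*) = \lambda_n \, {\rm tr}(AA^*) + {\rm tr}\bigl(A(\Delta-\lambda_n I)A^*\bigr) \geq \lambda_n \|A\|_2^2 ,
\]
because the second summand is the trace of a non-negative matrix. Under \eqref{f2.5} each $A\in{\mathcal A}$ thus satisfies ${\rm tr}(A\Delta A^*)\geq \lambda_n\|A\|_2^2 \geq \lambda_n$, and taking the infimum gives \eqref{f2.4}.

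For the converse, suppose some $A_0 \in {\mathcal A}$ has $\|A_0\|_2<1$. With $\Delta=I$ we get $\lambda_n=1$ and $\iota(I;{\mathcal A}) \leq {\rm tr}(A_0A_0^*) = \|A_0\|_2^2 < 1$, so \eqref{f2.4} fails for $\Delta = I$.

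For the characterization of the minimum, let $\Delta\in{\mathcal I}$, so that $\lambda_n>0$. First assume ${\mathcal A}'\neq\emptyset$ and take $A\in{\mathcal A}'$. Then ${\mathcal R}(\Delta-\lambda_nI)\subseteq{\mathcal N}(A)$ gives $A(\Delta-\lambda_nI)=0$, so the decomposition above yields ${\rm tr}(A\Delta A^*)=\lambda_n\|A\|_2^2=\lambda_n$. Combined with \eqref{f2.4}, the infimum is attained at $A$ and equals $\lambda_n$; in particular ${\mathcal A}'\subseteq{\mathcal A}(\Delta)$.

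Conversely, let the minimum equal $\lambda_n$ and be attained at some $A\in{\mathcal A}$. Then both inequalities in the chain
\[
\lambda_n={\rm tr}(A\Delta A^*)\geq\lambda_n\|A\|_2^2\geq\lambda_n
\]
must be equalities. Since $\lambda_n>0$, the second equality forces $\|A\|_2=1$; this is the only place where invertibility of $\Delta$ is needed. The first equality forces ${\rm tr}\bigl(A(\Delta-\lambda_nI)A^*\bigr)=0$, that is, $\|A(\Delta-\lambda_nI)^{1/2}\|_2=0$, so $A(\Delta-\lambda_nI)^{1/2}=0$. Hence $A$ vanishes on ${\mathcal R}((\Delta-\lambda_nI)^{1/2})={\mathcal R}(\Delta-\lambda_nI)$, which shows $A\in{\mathcal A}'$. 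This gives ${\mathcal A}(\Delta)\subseteq{\mathcal A}'$, hence ${\mathcal A}(\Delta)={\mathcal A}'$ together with the previous paragraph, and in particular ${\mathcal A}'\neq\emptyset$.

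The only point requiring care is the equality-case step: deducing the kernel condition from a vanishing trace, via the fact that a non-negative matrix and its square root have the same range. The rest is routine.
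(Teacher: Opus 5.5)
Your proof is correct and follows the paper's argument: the same inequality ${\rm tr}(A\Delta A^*) \geq \lambda_n \|A\|_2^2$ obtained from ${\rm tr}(A(\Delta-\lambda_n I)A^*) \geq 0$, the same test matrix $\Delta = I$ for the converse, and the same equality analysis of that chain to characterize ${\mathcal A}(\Delta)$. You also spell out two points the paper leaves implicit: that ${\rm tr}(A(\Delta-\lambda_nI)A^*)=0$ forces $A$ to vanish on ${\mathcal R}(\Delta-\lambda_nI)$, and that invertibility of $\Delta$ is needed only to get $\|A\|_2=1$ from $\lambda_n>0$.
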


\begin{proof}
From ${\rm tr}\bigl( A(\Delta-\lambda_nI)A^* \bigr) \geq 0$ follows
\begin{equation} \label{f2.6}
     {\rm tr}(A\Delta A^*) \geq   \lambda_n \|A\|_2^2 \, .
\end{equation}
Thus, \eqref{f2.5} yields \eqref{f2.4}. Conversely, if $\|A\|_2<1$ for some $A\in {\mathcal A}$ and $\Delta = I$ we have
\[ 
    \iota(\Delta;{\mathcal A})  = \iota(I;{\mathcal A}) \leq {\rm tr}(A A^*) =  \|A\|_2^2 < 1 = \lambda_n (I) \, . 
\]
Now, assume the inequality  \eqref{f2.5} to be satisfied and $\Delta \in {\mathcal I}$ be such that $m(\Delta;{\mathcal A}) = \lambda_n$. Relation \eqref{f2.6} implies that
\begin{equation} \label{f2.7}
    \|A\|_2=1 \quad\mbox{and} \quad  {\rm tr}\bigl(A(\Delta-\lambda_nI)A^*\bigr) = 0 \, ,
\end{equation}
for any $A \in {\mathcal A}(\Delta)$, and hence $A\in {\mathcal A}'$. Conversely, if $A\in {\mathcal A}'$, then \eqref{f2.7} is true, which gives
${\rm tr}(A\Delta A^*) = \lambda_n$. Thus, $m(\Delta;{\mathcal A}) = \lambda_n$ follows, and $A\in{\mathcal A}(\Delta)$.
\end{proof}

Now, we resort to a first concrete setting. If the set $\mathcal A$ is a hyperplane, the behaviour of $\iota(\,\cdot\,;{\mathcal A})$ is known and easily to describe, cf.~\cite[Problem 7.4.12]{HJ_alt} and \cite[2.6.P4]{HJ} for a related result. Despite the simplicity, the hyperplane case is of considerable importance in the prediction theory. For convenience of the reader we give a full proof and add some details and examples, which will be applied to multivariate prediction problems in Section \ref{s6}.

\begin{pro} \label{p2.5}
    Let $X \in {\mathcal M} \setminus \{ 0 \}$. Consider the sets ${\mathcal A}_X:= \{ A\in {\mathcal M} : {\rm tr}(A X^*) = 1 \}$ and ${{\mathcal A}_X':= \{ A\in {\mathcal M} : |{\rm tr}(A X^*)| = 1 \}}$. 
\begin{itemize}
   \item[(i)]
 If ${\mathcal N}(\Delta) \subseteq {\mathcal N}(X)$, then  $m(\Delta;{\mathcal A}_X) = m(\Delta;{\mathcal A}_X') = \bigl({\rm tr}(X\Delta^{\dagger}X^*) \bigr)^{-1}$ and ${\mathcal A}_X(\Delta) = \{ A\in {\mathcal M}: AP = \bigl({\rm tr}(X\Delta^{\dagger}X^*) \bigr)^{-1}X\Delta^{\dagger}\}$,
${\mathcal A}_X'(\Delta) = \{ A\in {\mathcal M} : AP = \exp({\bf i} a) \cdot \bigl({\rm tr}(X\Delta^{\dagger}X^*) \bigr)^{-1}X\Delta^{\dagger} \mbox{ for some } a\in [0,2\pi) \}$, where $P$ is the orthoprojection onto~${\mathcal R}(\Delta)$.
   \item[(ii)]
If ${\mathcal N}(\Delta)$ is not a subset of ${\mathcal N}(X)$, then  $m(\Delta;{\mathcal A}_X) = m(\Delta;{\mathcal A}_X') = 0$.
\end{itemize}
\end{pro}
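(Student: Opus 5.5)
The plan is to treat ${\rm tr}(A\Delta A^*)$ as a squared norm in the Hilbert space $\mathcal M$ with the Frobenius inner product $\langle A,B\rangle={\rm tr}(AB^*)$, and to apply Cauchy--Schwarz after factoring through $\Delta^{1/2}$.

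For (ii), suppose there is a vector $v\in{\mathcal N}(\Delta)$ with $Xv\neq 0$. I would take the rank-one matrix $A=c\,(Xv)v^*$ with $c=(\|Xv\|^2)^{-1}$. Then ${\rm tr}(AX^*)=c\,v^*X^*Xv=1$, so $A\in{\mathcal A}_X\subseteq{\mathcal A}_X'$. Since $\Delta v=0$, we have $A\Delta=0$, hence ${\rm tr}(A\Delta A^*)=0$. Both minima are therefore $0$ and are attained.

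For (i), assume ${\mathcal N}(\Delta)\subseteq{\mathcal N}(X)$, so $X=XP$, where $P=\Delta^{\dagger}\Delta=(\Delta^{1/2})^\dagger\Delta^{1/2}$. For any $A$ we get
\[
{\rm tr}(AX^*)={\rm tr}(APX^*)={\rm tr}\bigl(A\Delta^{1/2}\,(X(\Delta^{1/2})^{\dagger})^*\bigr).
\]
Cauchy--Schwarz then gives
\[
|{\rm tr}(AX^*)|^2\le {\rm tr}(A\Delta A^*)\cdot{\rm tr}(X\Delta^{\dagger}X^*).
\]
Hence every $A\in{\mathcal A}_X'$ satisfies ${\rm tr}(A\Delta A^*)\ge({\rm tr}(X\Delta^\dagger X^*))^{-1}$. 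The denominator is positive: $X\neq0$ and $X=XP$ give $X(\Delta^{1/2})^\dagger\neq0$. The bound is attained by $A_0=({\rm tr}(X\Delta^\dagger X^*))^{-1}X\Delta^\dagger$. Indeed ${\rm tr}(A_0X^*)=1$, and ${\rm tr}(A_0\Delta A_0^*)=({\rm tr}(X\Delta^\dagger X^*))^{-2}\,{\rm tr}(X\Delta^\dagger\Delta\Delta^\dagger X^*)$, which equals the claimed value. Since ${\mathcal A}_X\subseteq{\mathcal A}_X'$, both minima coincide, which can also be seen via Lemma~\ref{l2.2}(iii).

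The main care is needed in describing the minimizer sets. Equality in Cauchy--Schwarz holds iff $A\Delta^{1/2}=\mu\,X(\Delta^{1/2})^\dagger$ for some scalar $\mu$. Multiplying on the right by $(\Delta^{1/2})^\dagger$ and using $\Delta^{1/2}(\Delta^{1/2})^\dagger=P$, this becomes $AP=\mu X\Delta^\dagger$. The step back from $AP=\mu X\Delta^{\dagger}$ to $A\Delta^{1/2}=\mu X(\Delta^{1/2})^\dagger$ follows by right-multiplying with $\Delta^{1/2}$, so the two conditions are equivalent. The constraint then fixes $\mu$: since ${\rm tr}(AX^*)={\rm tr}(APX^*)=\mu\,{\rm tr}(X\Delta^\dagger X^*)$, the condition ${\rm tr}(AX^*)=1$ forces $\mu=({\rm tr}(X\Delta^\dagger X^*))^{-1}$. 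Likewise $|{\rm tr}(AX^*)|=1$ forces $\mu=e^{{\bf i}a}({\rm tr}(X\Delta^\dagger X^*))^{-1}$. Conversely, every such $A$ lies in the respective set and attains the bound, because ${\rm tr}(A\Delta A^*)$ depends only on $AP$. This yields exactly the stated descriptions of ${\mathcal A}_X(\Delta)$ and ${\mathcal A}_X'(\Delta)$.
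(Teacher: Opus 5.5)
Your proposal is correct and follows essentially the same route as the paper: Cauchy--Schwarz for the Frobenius inner product after writing ${\rm tr}(AX^*)={\rm tr}\bigl(A\Delta^{1/2}\,(X(\Delta^{1/2})^{\dagger})^*\bigr)$, with the equality case translated into $AP=\mu X\Delta^{\dagger}$; for (ii) you use an element of ${\mathcal A}_X$ that annihilates ${\mathcal R}(\Delta)$, taking the explicit rank-one matrix $c\,(Xv)v^*$ where the paper takes $C(I-P)$. You also supply details the paper leaves implicit: positivity of ${\rm tr}(X\Delta^{\dagger}X^*)$, the check that $A_0$ attains the bound, and the two-way equivalence between $A\Delta^{1/2}=\mu X(\Delta^{1/2})^{\dagger}$ and $AP=\mu X\Delta^{\dagger}$.
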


\begin{proof}
(i): Condition ${\mathcal N}(\Delta) \subseteq {\mathcal N}(X)$ is equivalent to $PX^*=X^*$. Thus, if ${\mathcal N}(\Delta) \subseteq {\mathcal N}(X)$, then
\[
    1 = |{\rm tr}(AX^*) |^2   =    |{\rm tr}(A\Delta^{1/2}(\Delta^{\dagger})^{1/2}X^*) |^2
    \leq {\rm tr}(A\Delta A^*) \cdot  {\rm tr}(X\Delta^{\dagger} X^*) 
\]
with equality if and only if $A\Delta^{1/2} = c \cdot X(\Delta^{\dagger})^{1/2}$ or, equivalently, $AP = c \cdot X\Delta^{\dagger}$ for some $c \in \mathbb C$. Clearly, one can derive $c = \bigl({\rm tr}(X\Delta^{\dagger}X^*) \bigr)^{-1}$ if $A\in {\mathcal A}_X$ and $|c| = \bigl({\rm tr}(X\Delta^{\dagger}X^*) \bigr)^{-1}$  if $A\in {\mathcal A}_X'$, which completes the proof of part (i).

(ii): Now, let ${\mathcal N}(\Delta)$ be not a subset of ${\mathcal N}(X)$. Then the matrix $(I-P)X^*$ is not the zero matrix. Hence, there exists a $C\in \mathcal M$ with ${\rm tr}\bigl(C(I-P)X^* \bigr) = 1$. Setting $A:=C(I-P)$, we have $A\in {\mathcal A}_X\subseteq {\mathcal A}_X'$ and $AP$ is the zero matrix, which yields $m(\Delta;{\mathcal A}_X) = 0$ and $m(\Delta;{\mathcal A}_X') = 0$ by Lemma \ref{l2.3}.
\end{proof}

As an immediate conclusion we get the following.

\begin{cor} \label{c2.6}
    Let $X \in {\mathcal M} \setminus \{ 0 \}$. Consider the sets ${\mathcal A}_X:= \{ A\in {\mathcal M} : {\rm tr}(A X^*) = 1 \}$ and ${{\mathcal A}_X':= \{ A\in {\mathcal M} : |{\rm tr}(A X^*)| = 1 \}}$. 
    The equality $m(\Delta;{\mathcal A}_X) = m(\Delta;{\mathcal A}_X')$ is fulfilled for all $\Delta$.
\end{cor}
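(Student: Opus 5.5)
The corollary follows directly from Proposition \ref{p2.5} by splitting on the relation between ${\mathcal N}(\Delta)$ and ${\mathcal N}(X)$. For an arbitrary $\Delta \in {\mathcal M}^\geq$ exactly one of two cases occurs: either ${\mathcal N}(\Delta) \subseteq {\mathcal N}(X)$, or ${\mathcal N}(\Delta)$ is not a subset of ${\mathcal N}(X)$.

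In the first case, part (i) of Proposition \ref{p2.5} applies. It gives that both minima exist and
\[
m(\Delta;{\mathcal A}_X) = m(\Delta;{\mathcal A}_X') = \bigl({\rm tr}(X\Delta^{\dagger}X^*)\bigr)^{-1}.
\]
One point should be checked here: the quantity ${\rm tr}(X\Delta^{\dagger}X^*)$ is strictly positive, so the expression is finite and well defined. Since $X \neq 0$ and $PX^* = X^*$, the matrix $(\Delta^{\dagger})^{1/2}X^*$ is nonzero, because $(\Delta^{\dagger})^{1/2}$ is injective on ${\mathcal R}(\Delta)$. Hence ${\rm tr}(X\Delta^{\dagger}X^*) = \|(\Delta^{\dagger})^{1/2}X^*\|_2^2 > 0$.

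In the second case, part (ii) of Proposition \ref{p2.5} gives directly that both minima are attained and equal $0$. Combining the two cases yields the equality for every $\Delta$.

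There is no real obstacle. The only thing to stress is that the statement concerns minima, not merely infima. Attainment is exactly what Proposition \ref{p2.5} supplies in both cases: explicitly via the nonempty sets ${\mathcal A}_X(\Delta)$ and ${\mathcal A}_X'(\Delta)$ in case (i), and via the constructed matrix $A = C(I-P)$ in case (ii).
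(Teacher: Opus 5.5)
Your proof is correct and follows the paper's route: the paper states this corollary as an immediate consequence of Proposition \ref{p2.5}, reading off the common value $\bigl({\rm tr}(X\Delta^{\dagger}X^*)\bigr)^{-1}$ in case (i) and $0$ in case (ii). Your additional check that ${\rm tr}(X\Delta^{\dagger}X^*)>0$ is sound; it is a detail already implicit in Proposition \ref{p2.5}(i).
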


Let $r:={\rm rk}(X)$. According to the orthogonal decomposition ${\mathbb C}^n = {\mathcal R}(X^*) \oplus {\mathcal N}(X)$ the matrix has a block representation $X = (X_1 \,\, 0)$ with an $n\times r$ matrix $X_1$ of rank $r$. If $r < n$, let
\[ 
   \Delta = \left(\begin{array}{cc}
               \Delta_{11} & \Delta_{12} \\
               \Delta_{12}^{*} & \Delta_{22}
   \end{array}\right)  
\]
be the corresponding partition of $\Delta$ with $r\times r$ matrix $\Delta_{11}$ and let
$\Delta/\Delta_{22}:=\Delta_{11} - \Delta_{12}\Delta_{22}^{\dagger}\Delta_{12}^*$ be its generalized Schur complement. If $r = n$, for harmonization, let $\Delta/\Delta_{22}:=\Delta$. Some consequences of Proposition \ref{p2.5} can be stated in terms of $\Delta/\Delta_{22}$.

\begin{cor} \label{c2.7}
    Let $X \in {\mathcal M} \setminus \{ 0 \}$ and ${\mathcal A}_X:= \{ A\in {\mathcal M} : {\rm tr}(A X^*) = 1 \}$.
\begin{itemize}
   \item[(i)] The minimum $m(\Delta;{\mathcal A}_X)$ is positive if and only if $\Delta/\Delta_{22}$ is invertible.
   \item[(ii)] If $\Delta/\Delta_{22}$ is invertible, then $m(\Delta;{\mathcal A}_X) = \bigl({\rm tr}(X_1 (\Delta/\Delta_{22})^{-1} X_1^*) \bigr)^{-1}$.
   \item[(iii)] Let $\Delta_{22}$ be invertible. Then the minimum $m(\Delta;{\mathcal A}_X)$ is positive if and only if $\Delta$ is invertible.
   \item[(iv)] If $r=1$ and $\Delta_{22}$ invertible, then
    \begin{equation} \label{f2.8}
         m(\Delta;{\mathcal A}_X) =  \|X_1\|_2^{-2} \, \det (\Delta_{22}^{-1}) \, {\rm det}(\Delta) .
\end{equation}
\end{itemize}
\end{cor}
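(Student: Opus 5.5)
The plan is to reduce everything to Proposition \ref{p2.5} by rewriting the condition ${\mathcal N}(\Delta)\subseteq{\mathcal N}(X)$ and the quantity ${\rm tr}(X\Delta^\dagger X^*)$ in the block coordinates of the decomposition ${\mathbb C}^n={\mathcal R}(X^*)\oplus{\mathcal N}(X)$, where $X=(X_1\;0)$ with $X_1$ of full column rank $r$. I will work in this basis throughout; this is legitimate since a unitary change of basis $V$ (with $X\mapsto XV$, $\Delta\mapsto V^*\Delta V$, $A\mapsto AV$) preserves ${\rm tr}(AX^*)$, ${\rm tr}(A\Delta A^*)$ and the hyperplane.

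The key step is an auxiliary identity: for $\Delta\in{\mathcal M}^\geq$ and $x=(x_1^\top\,0)^\top$ with $x_1\in{\mathbb C}^r$, I claim
\[
\inf\{ y^*\Delta y : y=(x_1^\top\,z^\top)^\top,\ z\in{\mathbb C}^{n-r}\}=x_1^*(\Delta/\Delta_{22})x_1 ,
\]
attained at $z=-\Delta_{22}^\dagger\Delta_{12}^*x_1$. This is the standard generalized Schur complement fact; it uses ${\mathcal R}(\Delta_{12}^*)\subseteq{\mathcal R}(\Delta_{22})$, which holds for non-negative $\Delta$. Applying it row by row to $A=(A_1\;A_2)$ gives
\[
\inf_{A_2}{\rm tr}(A\Delta A^*)={\rm tr}\bigl(A_1(\Delta/\Delta_{22})A_1^*\bigr).
\]
The constraint ${\rm tr}(AX^*)={\rm tr}(A_1X_1^*)=1$ involves only $A_1$. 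Hence
\[
m(\Delta;{\mathcal A}_X)=m\bigl(\Delta/\Delta_{22};\{A_1\in{\mathbb C}^{n\times r}:{\rm tr}(A_1X_1^*)=1\}\bigr).
\]
This is a hyperplane problem in the $r\times r$ setting, with rectangular $A_1$. The proof of Proposition \ref{p2.5} is just Cauchy--Schwarz, so it carries over verbatim to $n\times r$ matrices.

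Now, since $X_1$ has rank $r$, we have ${\mathcal N}(X_1)=\{0\}$. So Proposition \ref{p2.5}(ii) applied to the reduced problem gives minimum $0$ exactly when ${\mathcal N}(\Delta/\Delta_{22})\neq\{0\}$. When $\Delta/\Delta_{22}$ is invertible, part (i) gives the value $\bigl({\rm tr}(X_1(\Delta/\Delta_{22})^{-1}X_1^*)\bigr)^{-1}>0$. This proves (i) and (ii) together.

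For (iii), if $\Delta_{22}$ is invertible, the Schur determinant formula $\det\Delta=\det\Delta_{22}\det(\Delta/\Delta_{22})$ shows that $\Delta/\Delta_{22}$ is invertible iff $\Delta$ is, and (i) then applies. For (iv), take $r=1$. Then $\Delta/\Delta_{22}$ is the scalar $\det\Delta/\det\Delta_{22}$, and $X_1$ is a column vector with ${\rm tr}(X_1X_1^*)=\|X_1\|_2^2$. If $\det\Delta\neq0$, (ii) yields $\|X_1\|_2^{-2}\det(\Delta_{22}^{-1})\det\Delta$. If $\det\Delta=0$, then (iii) gives minimum $0$, which agrees with \eqref{f2.8}.

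The main obstacle is the auxiliary Schur complement minimization when $\Delta_{22}$ is singular. I would handle it by completing the square, writing
\[
y^*\Delta y=x_1^*(\Delta/\Delta_{22})x_1+\bigl(z+\Delta_{22}^\dagger\Delta_{12}^*x_1\bigr)^*\Delta_{22}\bigl(z+\Delta_{22}^\dagger\Delta_{12}^*x_1\bigr).
\]
This identity relies on $\Delta_{22}\Delta_{22}^\dagger\Delta_{12}^*=\Delta_{12}^*$, which follows from positivity via ${\mathcal N}(\Delta_{22})\subseteq{\mathcal N}(\Delta_{12})$.
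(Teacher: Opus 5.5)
Your proof is correct, but it takes a different route from the paper's. The paper applies Proposition~\ref{p2.5} directly to the full $n\times n$ problem. It then translates the hypothesis and the formula into block terms using two Schur-complement facts that it quotes without proof:
\begin{itemize}
\item $\Delta/\Delta_{22}$ is invertible if and only if ${\mathcal N}(\Delta)\subseteq{\mathcal N}(X)$;
\item $(\Delta^\dagger)_{11}=(\Delta/\Delta_{22})^{-1}$ whenever $\Delta/\Delta_{22}$ is invertible.
\end{itemize}
Part (iv) then follows from $\delta_{11}^{(\dagger)}=\det(\Delta_{22})/\det(\Delta)$.

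You instead first eliminate the unconstrained block $A_2$ by completing the square. You then apply a rectangular version of Proposition~\ref{p2.5} to the reduced data $(\Delta/\Delta_{22},X_1)$. There the kernel condition collapses to invertibility of $\Delta/\Delta_{22}$, because $X_1$ has trivial kernel.

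What each route buys:
\begin{itemize}
\item The paper's argument is shorter, given those two facts. However, the second is a Moore--Penrose block identity for a possibly singular $\Delta$, which the paper takes for granted.
\item Your completing-the-square identity establishes what is needed from scratch. It also gives the first fact: a vector of ${\mathcal N}(\Delta)$ with nonzero first block $x_1$ exists iff $x_1^*(\Delta/\Delta_{22})x_1=0$.
\item Your route never uses $\Delta^\dagger$. It yields an explicit minimizer $A=(A_1,\,-A_1\Delta_{12}\Delta_{22}^\dagger)$ with $A_1$ a scalar multiple of $X_1(\Delta/\Delta_{22})^{-1}$.
\item It makes transparent that the value depends on $\Delta$ only through $\Delta/\Delta_{22}$. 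This matches the prediction-theoretic reading in Section~\ref{s6}: minimizing over $A_2$ amounts to adjoining $x_{r+1}(0),\ldots,x_n(0)$ to the observation space.
\end{itemize}

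The only small points to state explicitly are these. First, Proposition~\ref{p2.5} is being used for $n\times r$ matrices; its Cauchy--Schwarz proof carries over unchanged, as you note. Second, in the degenerate case $r=n$ there is no $A_2$, and the claim is just Proposition~\ref{p2.5} itself.
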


\begin{proof}
(i): Recall that $\Delta/\Delta_{22}$ is invertible if and only if ${\mathcal N}(\Delta) \subseteq {\mathcal N}(X)$, and apply Proposition~\ref{p2.5}.

(ii): Let $\Delta/\Delta_{22}$ be invertible. Then we have $(\Delta^{\dagger})_{11} = (\Delta/\Delta_{22})^{-1}$. Hence, by using (i) and Proposition~\ref{p2.5}(i) we get
\[ 
    m(\Delta;{\mathcal A}_X) = \bigl({\rm tr}(X\Delta^{\dagger}X^*) \bigr)^{-1} = \bigl({\rm tr}(X_1 (\Delta/\Delta_{22})^{-1} X_1^*)     \bigr)^{-1} . 
\]

(iii): If $\Delta_{22}$ is invertible, then the matrix $\Delta$ is invertible if and only if
 $\Delta/\Delta_{22}$ is invertible. Thus, (i) implies (iii).

(iv): Let $\Delta_{22}$ be invertible. If $\Delta$ is not invertible, then $\Delta/\Delta_{22}$ is not invertible or, equivalently, ${\mathcal N}(\Delta)$ is not a subset of ${\mathcal N}(X)$. By Proposition~\ref{p2.5}(ii) $m(\Delta;{\mathcal A}_X)=0$ and, therefore, \eqref{f2.8} follows. Now, let $\Delta$ be invertible.  Since $r=1$, we have $X_1 \in {\mathbb C}^n$.  Hence, (ii) implies  $m(\Delta;{\mathcal A}_X) = \|X_1\|_2^{-2} (\delta_{11}^{(\dagger)})^{-1}$, where $\delta_{11}^{(\dagger)}$ stands for the $(1,1)$-element of $\Delta^\dagger$.  Because of $\delta_{11}^{(\dagger)} = (\det \Delta)^{-1} \cdot \det (\Delta_{22})$, we get equality \eqref{f2.8}.
\end{proof}

\begin{exe} \label{e2.8}
 Denote the $(j,k)$-element of $A$ by $a_{jk}$.
 
(i) Choosing $X={\rm diag}(1,\ldots,1,0,\ldots,0)$ with exactly $r$ numbers $1$ on the principal diagonal, we obtain
\[ 
    m\left(\Delta; \sum_{k=1}^r a_{kk} = 1 \right) =  m\left(\Delta; \left|\sum_{k=1}^r a_{kk}\right| = 1 \right) \,  , 
\]
 where $m\Bigl(\Delta; \sum_{k=1}^r a_{kk} = 1 \Bigr) = \bigl({\rm tr}(\Delta/\Delta_{22})^{-1} \bigr)^{-1}$ if ${\mathcal R}(\Delta)$ contains the first $r$ vectors $e_1,\ldots,e_r$ of the canonical basis of ${\mathbb C}^n$, and $m\Bigl(\Delta; \sum_{k=1}^r a_{kk} = 1 \Bigr)  = 0$ if not, cf. \cite[Theorem~5.12]{A}. In particular, we get $m(\Delta; {\rm tr}(A )= 1) = m(\Delta; |{\rm tr}(A)| = 1)$, where $m(\Delta; {\rm tr}(A) = 1) = ({\rm tr}(\Delta^{-1}))^{-1}$ if $\Delta\in \mathcal I$, and $m(\Delta; {\rm tr}(A) = 1) = 0$ otherwise.

(ii) Set $s^{(r)}:=\sum_{k=1}^r e_k = (1,\ldots,1,0,\ldots,0)^\top$ with exactly $r$ positions unequal zero. If  
\[
    X=\left(\begin{array}{c} (s^{(r)})^{\top} \\ 0 \end{array}\right)
\]
we get
\[ 
   m\left(\Delta; \sum_{k=1}^r a_{1k} = 1 \right) =  m\left(\Delta; \left|\sum_{k=1}^r a_{1k}\right| = 1 \right)   \, , 
\]
where $m\Bigl(\Delta; \sum_{k=1}^r a_{1k} = 1 \Bigr)  = \bigl(  \sum_{j,k=1}^r \delta_{jk}^{(\dagger)}  \bigr)^{-1}$ for any $\Delta$ with $s^{(r)}\in {\mathcal R}(\Delta)$, and  $m\Bigl(\Delta; \sum_{k=1}^r a_{1k} = 1 \Bigr)  = 0$ if not.
 
(iii) Let  $X=\left(\begin{array}{cc} s^{(r)} & 0 \end{array}\right)$. From (i) and Corollary~\ref{c2.7}(iv) we derive
\[ 
    m\left(\Delta; \sum_{j=1}^r a_{j1} = 1 \right)  =  m\left(\Delta; \left|\sum_{j=1}^r a_{j1}\right| = 1 \right)  =  r^{-1} \cdot m\left(\Delta; a_{11} = 1 \right) \, .  
\]
\end{exe}

We also note the following consequence of Proposition \ref{p2.5} and Corollary~\ref{c2.7}.

\begin{cor} \label{c2.9}
     Let $X\in {\mathcal M}\setminus\{0\}$ and the function $f_X$ be defined on ${\mathcal M}^{\geq}$ as follows: $f_X(\Delta) = \bigl({\rm tr}(X\Delta^{\dagger}X^*) \bigr)^{-1}=({\rm tr}(X(\Delta / \Delta_{22})^{-1} X^*))^{-1}$ if ${\mathcal N}(\Delta) \subseteq {\mathcal N}(X)$, and $f_X(\Delta) = 0$ else. Then the function $f_X$ is concave and superadditive.
\end{cor}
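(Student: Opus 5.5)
Since Proposition~\ref{p2.5} and Corollary~\ref{c2.7} give $f_X(\Delta) = m(\Delta;{\mathcal A}_X)$ for every $\Delta \in {\mathcal M}^\geq$, the function $f_X$ is exactly $\iota(\,\cdot\,;{\mathcal A}_X)$, and we argue for this infimum directly. In case (i) of Proposition~\ref{p2.5} the value is $\bigl({\rm tr}(X\Delta^{\dagger}X^*)\bigr)^{-1}$, which equals $\bigl({\rm tr}(X_1(\Delta/\Delta_{22})^{-1}X_1^*)\bigr)^{-1}$ by Corollary~\ref{c2.7}(ii). In case (ii) it is $0$. These are precisely the two branches in the definition of $f_X$; the middle expression in the statement should be read with $X_1$ in place of $X$, or via the block identification $X=(X_1\ 0)$.

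We use the general remark made earlier in this section. For each fixed $A$, the map $\Delta\mapsto {\rm tr}(A\Delta A^*)$ is linear and non-negative on the convex cone ${\mathcal M}^\geq$. An infimum of linear functions is concave, so for $b\in[0,1]$ and $\Delta,\Delta'\in{\mathcal M}^\geq$,
\[
{\rm tr}\bigl(A(b\Delta+(1-b)\Delta')A^*\bigr) = b\,{\rm tr}(A\Delta A^*)+(1-b)\,{\rm tr}(A\Delta' A^*) \geq b\,\iota(\Delta;{\mathcal A}_X)+(1-b)\,\iota(\Delta';{\mathcal A}_X) .
\]
Taking the infimum over $A\in{\mathcal A}_X$ yields concavity. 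Positive homogeneity is immediate, since ${\rm tr}(A(a\Delta)A^*)=a\,{\rm tr}(A\Delta A^*)$. Also $f_X(0)=0$, because ${\mathcal N}(0)={\mathbb C}^n\not\subseteq{\mathcal N}(X)$ when $X\neq 0$.

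Superadditivity now follows by the standard argument, essentially Lemma~\ref{l2.1}(ii) applied on the cone. Take $x=\Delta$, $y=\Delta'$ and use concavity with $b=1/2$ together with homogeneity:
\[
f_X(\Delta+\Delta')=2 f_X\bigl(\tfrac12\Delta+\tfrac12\Delta'\bigr)\geq f_X(\Delta)+f_X(\Delta').
\]
Alternatively, one can prove superadditivity directly from the infimum, since the infimum of a sum is at least the sum of the infima.

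The only delicate point is the identification of $f_X$ with the infimum on all of ${\mathcal M}^\geq$, including the boundary points where ${\mathcal N}(\Delta)\not\subseteq{\mathcal N}(X)$. There, $f_X$ is defined to be $0$, and this matches Proposition~\ref{p2.5}(ii). Once this is checked, no separate continuity or limiting argument is needed: the concavity inequality holds pointwise for arbitrary $\Delta,\Delta'$, even when one of them lies in the degenerate case and the convex combination does not.
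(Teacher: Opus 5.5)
Your proof is correct and follows the paper's route. The paper gives no separate argument: it treats the corollary as a consequence of Proposition~\ref{p2.5} and Corollary~\ref{c2.7}, which identify $f_X$ with $m(\cdot;{\mathcal A}_X)$, combined with the remark earlier in Section~2 that $\iota(\cdot;{\mathcal A})$ is always concave and superadditive. You are also right that the Schur-complement expression in the statement must be read with $X_1$ in place of $X$.
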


One can also derive a well-known generalization of Bergstr\"om's inequality, cf. \cite[page~65]{Liu}. For this, let $\Delta_k$ denote the right lower $(n-k) \times (n-k)$ corner of $\Delta$.

\begin{cor} \label{c2.10}
      For all $k\in\{1,\ldots,n-1\}$, the function $f_k$ defined on ${\mathcal M}^{\geq} \cap {\mathcal I}$  via precept $f_k(\Delta):= \bigl(({\rm det}(\Delta_k))^{-1} \, {\rm det}(\Delta)\bigr)^{1/k})$ is  concave and superadditive.
\end{cor}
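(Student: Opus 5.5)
We have to show that $f_k(\Delta)=\bigl(\det(\Delta)/\det(\Delta_k)\bigr)^{1/k}$ is concave and superadditive on ${\mathcal M}^{\geq}\cap{\mathcal I}$, where $\Delta_k$ is the lower right $(n-k)\times(n-k)$ corner of $\Delta$. We read $f_k$ as the normalized determinant of a Schur complement and realize it as an infimum of concave functions, in the spirit of the hyperplane results above.

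Partition $\Delta$ as in Corollary \ref{c2.7}, with $r=k$ and $\Delta_{22}=\Delta_k$. Since $\Delta$ is invertible, $\Delta_k$ is invertible, and Schur's determinant formula gives
\[
\det(\Delta)=\det(\Delta_k)\det(\Delta/\Delta_k).
\]
Hence $f_k(\Delta)=\bigl(\det(\Delta/\Delta_k)\bigr)^{1/k}$, where $\Delta/\Delta_k=\Delta_{11}-\Delta_{12}\Delta_k^{-1}\Delta_{12}^*$ is a positive definite $k\times k$ matrix.

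We use two variational formulas.

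\emph{First formula (Schur complement as an infimum).} For each $k\times(n-k)$ matrix $B$, put $C_B=(I_k\;\;B)$, a $k\times n$ matrix. Then
\[
\Delta/\Delta_k=\min_B C_B\,\Delta\, C_B^*
\]
in Loewner order, attained at $B=-\Delta_{12}\Delta_k^{-1}$. This is the matrix version of the computation in Proposition \ref{p2.5} and follows by completing the square.

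\emph{Second formula (normalized determinant as an infimum).} For a positive definite $k\times k$ matrix $D$, apply the identity $\inf\{{\rm tr}(GDG^*):\det G=1\}=k\det(D)^{1/k}$ of \cite[Thm.~7.8.1]{HJ}, quoted in the introduction, in dimension $k$. This gives
\[
(\det D)^{1/k}=\tfrac1k\inf\{{\rm tr}(GDG^*):\ G\in{\mathcal M}_k,\ \det G=1\}.
\]

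Combining the two, and using that $D\mapsto{\rm tr}(GDG^*)$ is Loewner-monotone while $\det$ is monotone on positive definite matrices, we obtain
\[
f_k(\Delta)=\tfrac1k\inf\bigl\{{\rm tr}(G C_B\,\Delta\, C_B^*G^*):\ \det G=1,\ B\bigr\}.
\]
Writing $A=\begin{pmatrix}GC_B\\0\end{pmatrix}\in{\mathcal M}$, this says $f_k(\Delta)=\tfrac1k\,\iota(\Delta;{\mathcal A})$ for the set ${\mathcal A}$ of all such $A$.

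The main care point is exchanging the two infima. The inner step in the first formula is a Loewner minimum, and it is attained, so $\inf_B\inf_G$ equals $\inf_G$ evaluated at the minimizing $B$. Monotonicity of $\det^{1/k}$ on positive definite matrices justifies this.

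From the representation, $f_k$ is a pointwise infimum of linear functions of $\Delta$. It is therefore concave, and it is positively homogeneous. As noted after Lemma \ref{l2.2}, $\iota(\,\cdot\,;{\mathcal A})$ is concave and superadditive; alternatively, superadditivity follows from Lemma \ref{l2.1}(ii). Both properties pass to $f_k$ on the convex cone ${\mathcal M}^{\geq}\cap{\mathcal I}$, which proves the statement.
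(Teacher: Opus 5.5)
Your proof is correct, but it takes a different route from the paper. The paper uses only the rank-one hyperplane case. By Corollary~\ref{c2.7}(iv) with $X=E$, $f_1=m(\cdot\,;a_{11}=1)$ is concave and superadditive. Applied in dimension $n-j$ and composed with the linear map $\Delta\mapsto\Delta_j$, the same holds for each factor of the telescoping product $\det(\Delta)/\det(\Delta_k)=\prod_{j=0}^{k-1}\det(\Delta_j)/\det(\Delta_{j+1})$, with $\Delta_0=\Delta$. Concavity, superadditivity and coordinatewise monotonicity of the geometric mean then carry these properties over to the $k$-th root of the product.

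You instead go through the Schur complement and obtain a single extremal representation $k f_k(\Delta)=\iota(\Delta;{\mathcal A})$. Since $G$ is invertible, $GB$ runs through all $k\times(n-k)$ matrices. So your ${\mathcal A}$ is simply the set of matrices whose last $n-k$ rows vanish and whose upper left $k\times k$ block has determinant $1$. This interpolates between the hyperplane set $a_{11}=1$ ($k=1$) and the Helson--Lowdenslager set ${\mathcal A}_1$ ($k=n$).

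What your route buys is an explicit extremal representation in the spirit of the paper. It also shows the infimum is a minimum, attained at $B=-\Delta_{12}\Delta_k^{-1}$ and a suitable $G$. Concavity and superadditivity then follow automatically, with no composition argument. The paper's route is shorter given Corollary~\ref{c2.7}(iv), needing only the rank-one case and a standard fact about the geometric mean.

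One minor slip: the remark that $\iota(\,\cdot\,;{\mathcal A})$ is concave and superadditive appears right after Lemma~\ref{l2.1}, not after Lemma~\ref{l2.2}. In any case, superadditivity of an infimum of linear functionals is immediate.
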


\begin{proof}
Corollary~\ref{c2.7}(iv) reveals that $f_1$ is concave and superadditive. Similarly, any function $\Delta_k \to ({\rm det}(\Delta_{k+1}))^{-1}  {\rm det}(\Delta_k)$ is concave and superadditive. Since monotonicity, concavity, and superadditivity of the geometric mean imply that the $k$-th root of the product of $k$ non-negative concave or superadditive functions is concave or superadditive, respectively, the assertion follows.
\end{proof}

\section{Unitary invariance}
\label{s3}

The most valuable results on $\iota(\,\cdot\,;{\mathcal A}) = \inf \{ {\rm tr}(A \Delta A^*) : A \in {\mathcal A} \}$ can be established if ${\mathcal A}$ has various properties of unitary invariance. A non-empty subset $\mathcal A$ of $\mathcal M$ is called right unitarily invariant, unitary similarity invariant or unitarily invariant if ${\mathcal A}U = {\mathcal A}$, $U^*{\mathcal A }U = {\mathcal A}$ or $V{\mathcal A}U={\mathcal A}$, respectively, for all $U,V \in {\mathcal U}$. If $\mathcal A$ is right unitarily invariant or unitary similarity invariant then it is called weakly unitarily invariant. Note, that our choice of notions differs from the notion of weakly unitarily invariant (wui) norms in  \cite[Section IV.4]{Bh}. Analogously to the denotation for diagonal matrices, we denote a block-diagonal matrix with a sequence of quadratic blocks $(A_j)$, $j=1,\ldots,n$, of possibly different sizes as ${\rm diag}(A_1, \ldots , A_n)$.

\begin{pro} \label{p3.1}
    Let ${\mathcal A} \subseteq \mathcal M$ be weakly unitarily invariant, Then the function $\iota(\,\cdot\,;{\mathcal A})$ is continuous on ${\mathcal M}^\geq$.
\end{pro}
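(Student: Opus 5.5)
The plan is to reduce the problem to a function of the ordered eigenvalue vector of $\Delta$ and then to prove lower semicontinuity with a monotonicity-plus-homogeneity argument. Upper semicontinuity on ${\mathcal M}^\geq$ has already been noted in Section~\ref{s2}, since $\iota(\,\cdot\,;{\mathcal A})$ is an infimum of continuous functions.

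\emph{Step 1: reduction to eigenvalues.} Let $\Delta\in{\mathcal M}^\geq$ and $U\in{\mathcal U}_\Delta$, so that $\Delta=UDU^*$ with $D={\rm diag}(\lambda_n,\ldots,\lambda_1)$. If ${\mathcal A}$ is right unitarily invariant, then ${\rm tr}(A\Delta A^*)={\rm tr}\bigl((AU)D(AU)^*\bigr)$, and $AU$ runs through all of ${\mathcal A}$ as $A$ does. If ${\mathcal A}$ is unitary similarity invariant, then ${\rm tr}\bigl((U^*AU)D(U^*AU)^*\bigr)={\rm tr}(A\Delta A^*)$ by cyclicity of the trace, and $U^*AU$ runs through ${\mathcal A}$. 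In both cases we obtain $\iota(\Delta;{\mathcal A})=h(\lambda_n,\ldots,\lambda_1)$, where
\[
h(\mu):=\iota\bigl({\rm diag}(\mu);{\mathcal A}\bigr)=\inf_{A\in{\mathcal A}}\sum_{j=1}^n \mu_j\|Ae_j\|_2^2,\qquad \mu\in[0,+\infty)^n .
\]
By Weyl's perturbation theorem the ordered eigenvalues depend continuously on $\Delta$. Hence it suffices to show that $h$ is continuous on the closed orthant.

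\emph{Step 2: properties of $h$.} The function $h$ is an infimum of linear functionals with non-negative coefficients $\|Ae_j\|_2^2$. Therefore $h$ is upper semicontinuous, positively homogeneous, and nondecreasing in each coordinate.

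\emph{Step 3: lower semicontinuity.} This is the main obstacle, because continuity can fail at boundary points for general ${\mathcal A}$, as the example in Section~\ref{s2} shows. Here the invariance removes the problem, because we have moved to diagonal matrices with a fixed eigenbasis. Let $\mu^{(k)}\to\mu$ in $[0,+\infty)^n$. Define
\[
\delta_k:=\max\{0,\ \max_{j:\mu_j>0}(1-\mu^{(k)}_j/\mu_j)\}.
\]
Then $\delta_k\to0$, and $\mu^{(k)}_j\ge(1-\delta_k)\mu_j$ for every $j$; when $\mu_j=0$ this holds trivially since $\mu^{(k)}_j\ge0$. Monotonicity and homogeneity then give, for large $k$,
\[
h(\mu^{(k)})\ge h\bigl((1-\delta_k)\mu\bigr)=(1-\delta_k)h(\mu).
\]
Hence $\liminf_k h(\mu^{(k)})\ge h(\mu)$. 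Combined with upper semicontinuity, this shows that $h$ is continuous, and so $\iota(\,\cdot\,;{\mathcal A})$ is continuous on ${\mathcal M}^\geq$.
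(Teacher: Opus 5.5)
Your proof is correct, but its key step differs from the paper's. Both arguments start the same way. Weak unitary invariance lets you replace $\Delta_k$ and $\Delta$ by their diagonal forms with ordered eigenvalues. The paper gets convergence of these diagonal forms by extracting a convergent subsequence of diagonalizing unitaries from the compact set $\mathcal U$; you cite Weyl's perturbation theorem instead.

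For lower semicontinuity, the paper argues by contradiction at a singular point $\Delta={\rm diag}(0,\Delta')$ with $\Delta'$ invertible. It passes to the set $\mathcal A'$ of the last $r$ columns of the matrices in $\mathcal A$ and drops the non-negative contribution of the block $\Delta_k''$. It then uses that the concave function $\iota(\,\cdot\,;\mathcal A')$ is continuous at the interior point $\Delta'$.

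You replace all of this with the Loewner bound ${\rm diag}(\mu^{(k)})\geq(1-\delta_k)\,{\rm diag}(\mu)$, combined with monotonicity and positive homogeneity. This bound is available because, after the reduction, all matrices are diagonal in one fixed basis. Your route is more elementary and uniform: there is no contradiction, no subsequence, no compression, and no appeal to continuity of concave functions on the interior, and interior and boundary points are handled at once.

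It also makes the role of invariance transparent. Your $h$ is continuous for every non-empty $\mathcal A$, and invariance enters only through $\iota(\Delta;\mathcal A)=h(\lambda_n,\ldots,\lambda_1)$. The example of Section 2 fails precisely because there ${\mathcal R}(\Delta)\not\subseteq{\mathcal R}(\Delta_k)$, so no bound $\Delta_k\geq c\,\Delta$ with $c>0$ is possible.

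The paper's version, for its part, uses only facts already recorded in Section 2 (upper semicontinuity and concavity) together with compactness of $\mathcal U$. It also shows explicitly that the only possible trouble is at singular $\Delta$, where it reduces the question to an interior problem of smaller size.
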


\begin{proof}
Assume that $\iota(\,\cdot\,;{\mathcal A})$ is not continuous at point $\Delta$. Then $r={\rm rk}(\Delta)$ is smaller than $n$. Since the function $\iota(\,\cdot\,;{\mathcal A})$ is upper-semi-continuous, there exist $\varepsilon > 0$ and sequences $(\Delta_k)$ with $\Delta_k \in {\mathcal M}^\geq$, $(A_k)$ with $A_k \in \mathcal A$, such that $\lim_{k \to +\infty}\Delta_k = \Delta$ and
\begin{equation} \label{f3.1}
      {\rm tr} (A_k\Delta_kA_k^*) \leq \iota(\,\Delta\,;{\mathcal A}) - \varepsilon
\end{equation}
for any $k \in \mathbb N$. Let $a = \iota(\,\Delta\,;{\mathcal A})$ and $U_k \in {\mathcal U}_{\Delta_k}$, $\Delta_k = U_k \, {\rm diag}(\lambda_{n k}, \ldots , \lambda_{1 k}) \, U_k^*$. By the compactness of $\mathcal U$ we can suppose that $U=\lim_{k \to +\infty}U_k$ exists, what yields $\lim_{k \to +\infty} {\rm diag}(\lambda_{n k}, \ldots , \lambda_{1 k}) = \lim_{k \to +\infty} U_k^* \Delta_k U_k = U^* \Delta U = {\rm diag}(\lambda_n, \ldots ,\lambda_1)$. Since ${\rm tr}(U_k^*A_kU_kU_k^*\Delta_kU_kU_k^*A_k^*U_k) \leq a-\varepsilon$ and the assumption of weak unitary invariance of $\mathcal A$ implies $U_k^*A_kU_k \in \mathcal A$ or $A_kU_k \in \mathcal A$, we can assume from the very beginning of the present proof that $\Delta = {\rm diag}(0, \Delta')$ with an invertible $r \times r$ matrix $\Delta'$ and $\Delta_k={\rm diag}(\Delta_k'' , \Delta_k')$ with $\lim_{k \to +\infty} \Delta_k'' = 0$ and $\lim_{k \to +\infty} \Delta_k' = \Delta'$. So, partition $A$ as $A=(A'',A')$, where $A'$ is an $n \times r$ matrix, and set ${\mathcal A}'=\{ A' : A \in {\mathcal A} \}$. The equality ${\rm tr}(A \Delta A^*) = {\rm tr}(A' \Delta' (A')^*)$ yields $\iota(\,\Delta'\,;{\mathcal A}')=a$. Since the function $\iota(\,\cdot\,;{\mathcal A}')$ is continuous on the set of positive Hermitian $r \times r$ matrices, the inequality ${\rm tr}(A_k\Delta_k A_k^*)= {\rm tr}(A_k''\Delta_k''(A_k'')^*) + {\rm tr} (A_k'\Delta_k'(A_k')^*) > a-\varepsilon$ is valid for sufficiently large $k$, a contradiction to \eqref{f3.1}
\end{proof}

The next theorem is partially already known. We define a set $\mathcal D$ by
\begin{equation} \label{f3.2}
    {\mathcal D} = \{ (x_1, \ldots ,x_n)^\top \in [0,+\infty)^n : x_1 \geq \ldots \geq x_n \}
\end{equation}
and its subset ${\mathcal D}_+$ by  ${\mathcal D}_+ = {\mathcal D} \cap (0,+\infty)^n$. Note, that according to our convention \eqref{f2.1} the vector consisting of the singular values of a matrix $A \in \mathcal M$ belongs to $\mathcal D$. 

\begin{thm} \label{t3.2}
    Let $(\sigma_1^{(0)}, \ldots , \sigma_n^{(0)}) \in {\mathcal D}$ and ${\mathcal A}_\sigma = \{ A \in {\mathcal M} : \sigma_j(A)=\sigma_j^{(0)}, \, j \in \{ 1,\ldots,n\} \}$. For any $\Delta$ we have
\begin{equation} \label{f3.3}
    m(\Delta;{\mathcal A}_\sigma) = \sum_{j=1}^n \lambda_{n-j+1}(\sigma_j^{(0)})^2
\end{equation}
and
\begin{equation} \label{f3.4}
    {\mathcal A}_\sigma (\Delta) = \{  V \, {\rm diag}(\sigma_1^{(0)}, \ldots , \sigma_n^{(0)}) \, U^* : V \in {\mathcal U}, U \in {\mathcal U}_\Delta \} \, .
\end{equation}
\end{thm}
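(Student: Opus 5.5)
The plan is to reduce the problem to a statement about diagonal matrices and then apply a classical rearrangement-type trace inequality. Write $A\in{\mathcal A}_\sigma$ by its singular value decomposition $A=V\Sigma W^*$, with $\Sigma={\rm diag}(\sigma_1^{(0)},\ldots,\sigma_n^{(0)})$ and $V,W\in{\mathcal U}$. Then
\[
{\rm tr}(A\Delta A^*)={\rm tr}(\Sigma W^*\Delta W\Sigma)={\rm tr}(\Sigma^2 W^*\Delta W).
\]
Put $B=W^*\Delta W$. Then $B\in{\mathcal M}^\geq$ has eigenvalues $\lambda_1,\ldots,\lambda_n$ and diagonal entries $b_{jj}$, and the quantity to minimize becomes $\sum_j (\sigma_j^{(0)})^2 b_{jj}$. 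By Schur's theorem the vector $(b_{jj})$ is majorized by $(\lambda_j)$. The vector $((\sigma_j^{(0)})^2)$ is decreasing, so the linear functional $x\mapsto\sum_j(\sigma_j^{(0)})^2x_j$ is Schur concave on the set of such vectors, in the sense that its minimum over the convex hull of permutations of $\lambda$ is attained at the increasing rearrangement. Concretely, an Abel summation with the partial sums $\sum_{j\le k}b_{jj}\ge\sum_{j\le k}\lambda_{n-j+1}$ gives
\[
\sum_j(\sigma_j^{(0)})^2 b_{jj}\ \ge\ \sum_j(\sigma_j^{(0)})^2\lambda_{n-j+1}.
\]
Here the inequality on partial sums is the Ky Fan minimum principle: the sum of any $k$ diagonal entries is at least the sum of the $k$ smallest eigenvalues. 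This lower bound is attained at $A=\Sigma U^*$ with $U\in{\mathcal U}_\Delta$, because then $U^*\Delta U={\rm diag}(\lambda_n,\ldots,\lambda_1)$. This proves \eqref{f3.3}, and it shows that the infimum is a minimum.

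For \eqref{f3.4}, the inclusion ``$\supseteq$'' follows from the computation just made, since the left factor $V$ does not change the trace. For ``$\subseteq$'', take a minimizer $A=V\Sigma W^*$ and analyze when equality holds. Let $s_j=(\sigma_j^{(0)})^2$. Abel summation writes the difference between the two sides as
\[
\sum_{k=1}^{n}(s_k-s_{k+1})\Bigl(\sum_{j\le k}b_{jj}-\sum_{j\le k}\lambda_{n-j+1}\Bigr),\qquad s_{n+1}:=0,
\]
a sum of nonnegative terms. So equality forces the Ky Fan partial sums to be tight at every index $k$ with $s_k>s_{k+1}$. Tightness at such a $k$ means, by the equality case of Ky Fan, that the span of $e_1,\ldots,e_k$ is invariant under $B$ and that $B$ restricted to it has eigenvalues $\lambda_n,\ldots,\lambda_{n-k+1}$. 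Hence $B$ is block diagonal, with blocks corresponding to the runs of equal $s_j$, and each block carries the appropriate consecutive eigenvalues. Inside each block of equal singular values, say $\Sigma=cI$ on that block, diagonalize the block by a unitary $Q_i$ and set $Q={\rm diag}(Q_i)$. Then $\Sigma Q=Q\Sigma$, so
\[
A=V\Sigma W^*=(VQ)\Sigma (WQ)^*,\qquad (WQ)^*\Delta(WQ)={\rm diag}(\lambda_n,\ldots,\lambda_1).
\]
This means $WQ\in{\mathcal U}_\Delta$, and $A$ has the required form.

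The main obstacle I expect is precisely this equality case. The trailing run of zero singular values (when $s_k=0$) needs separate handling: there any unitary is allowed inside the block, and $\Sigma Q=Q\Sigma$ still holds because $\Sigma$ vanishes on that block. I also need to be careful when eigenvalues coincide across a block boundary. In that case the invariance claim has to be phrased through the spectral subspaces of $\Delta$ rather than through individual eigenvectors, so I would state the Ky Fan equality case carefully as invariance of the subspace.
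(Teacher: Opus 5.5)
Your proof is correct and follows the paper's overall scheme: write ${\rm tr}(A\Delta A^*)$ as ${\rm tr}(BD)$ with $D$ diagonal and $B$ of prescribed spectrum, then read off the minimizers from the block-diagonal form that equality forces. The difference is that you interchange the roles of $\Delta$ and $\Sigma^2$.

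The paper diagonalizes $\Delta$ and lets $B=U^*W\Sigma^2W^*U$ vary. Its Lemma \ref{l3.3}, proved by induction on the number of distinct values via interlacing, therefore gives blocks along runs of equal eigenvalues of $\Delta$. It then needs a further step: a suitable $W^*U'$ with $U'\in{\mathcal U}_\Delta$ commutes with $\Sigma^2$, hence with $\Sigma$, and only this lets the singular-vector unitary be moved into ${\mathcal U}_\Delta$.

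In your version the blocks are runs of equal singular values. The block unitary $Q$ therefore commutes with $\Sigma$ automatically, and the element $WQ\in{\mathcal U}_\Delta$ is explicit, so the last step is cleaner. You also prove \eqref{f3.3} directly by Abel summation and the Ky Fan minimum principle, whereas the paper quotes \cite[Thm.~20.A.2]{MOA}. Both routes rest on the same equality case of the rearrangement inequality for ${\rm tr}(BD)$; yours is more self-contained.

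Your planned handling of ties is correct and already suffices. Suppose ${\rm tr}(P_SB)$ equals the sum of the $k$ smallest eigenvalues, and let $c$ be the $k$-th smallest. Then $S$ contains every eigenvector for eigenvalues $<c$ and is orthogonal to every eigenvector for eigenvalues $>c$. So $S$ is the sum of the spectral subspaces below $c$ plus a subspace of the $c$-eigenspace. Such an $S$ is $B$-invariant whether or not eigenvalues coincide across block boundaries.
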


The relation \eqref{f3.3} is a classical result and the main assertion of this theorem, cf.~\cite[Thm.~20.A.2]{MOA}. To prove \eqref{f3.4} we use an auxiliary result:

\begin{lem} \label{l3.3}
   Let $(\lambda_1^{(0)}, \ldots ,\lambda_n^{(0)})^\top, \, ({\mu_n}, \ldots ,{\mu_1})^\top \in{\mathcal D}$, $\mu_{\ell+1} \geq \mu_\ell$ for $\ell = 1, \ldots , n-1$. Then 
\begin{equation} \label{f3.5}
       \min  \left\{ {\rm tr}(B \, {\rm diag}(\mu_1,\ldots,\mu_n)) : B \in {\mathcal M}^\geq, \lambda_j(B)=\lambda_j^{(0)} , j \in \{ 1,\ldots,n\}\right\} = \sum_{j=1}^n \mu_j\lambda_j^{(0)} \, .
\end{equation}   
Let $k \in \{1, \ldots , n-1\}$ and $1=n_0 <n_1<\ldots<n_k=n+1$ be such that $\mu_{n_{s-1}} = \ldots = \mu_{n_s - 1} < \mu_{n_s}$, $s \in \{1,\ldots,k\}$. The minimum \eqref{f3.5} is attained if and only if $B$ has the block-diagonal form 
\begin{eqnarray} \label{f3.6}
   B & = & {\rm diag}(U_0 \, {\rm diag}(\lambda_1^{(0)}, \ldots ,\lambda_{n_1-1}^{(0)}) \, U_0^*, U_1 \, {\rm diag}(\lambda_{n_1}^{(0)}, \ldots , \lambda_{n_2-1}^{(0)}) \, U_1^*,  \ldots \\ 
      &    & \ldots , U_{k-1} \, {\rm diag}(\lambda_{n_{k-1}}^{(0)}, \ldots ,\lambda_{n_k-1}^{(0)}) \, U_{k-1}^*) \, , \nonumber
\end{eqnarray}
where $U_s$ is an $(n_{s+1}-n_s) \times (n_{s+1}-n_s)$ unitary matrix for $s \in \{ 0,\ldots,k-1\}$. 
\end{lem}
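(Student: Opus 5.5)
The plan is to reduce the trace to a weighted sum of diagonal entries and apply Abel summation, so that the problem becomes a family of Ky Fan inequalities for partial sums of the diagonal of $B$. Write $D={\rm diag}(\mu_1,\ldots,\mu_n)$ and $b_{ii}$ for the diagonal entries of $B$. Then ${\rm tr}(BD)=\sum_{i}\mu_i b_{ii}$. Put $S_\ell=\sum_{i\le \ell} b_{ii}$ and $\Lambda_\ell=\sum_{i\le\ell}\lambda_i^{(0)}$. Abel summation gives
\[
{\rm tr}(BD)=\mu_n\,{\rm tr}(B)-\sum_{\ell=1}^{n-1}(\mu_{\ell+1}-\mu_\ell)\,S_\ell .
\]
Here ${\rm tr}(B)=\Lambda_n$ is fixed and $\mu_{\ell+1}-\mu_\ell\ge 0$. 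By Ky Fan's maximum principle (equivalently, Schur's majorization of the diagonal by the eigenvalues), $S_\ell\le\Lambda_\ell$ for every $\ell$. Hence ${\rm tr}(BD)\ge \mu_n\Lambda_n-\sum_\ell(\mu_{\ell+1}-\mu_\ell)\Lambda_\ell=\sum_j\mu_j\lambda_j^{(0)}$. The value is attained by $B={\rm diag}(\lambda_1^{(0)},\ldots,\lambda_n^{(0)})$, which proves \eqref{f3.5}.

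For the equality case, equality holds if and only if $S_\ell=\Lambda_\ell$ for every $\ell$ with $\mu_{\ell+1}>\mu_\ell$, that is, for $\ell=n_s-1$, $s\in\{1,\ldots,k-1\}$. For such an $\ell$ let $P$ be the orthoprojection onto ${\rm span}(e_1,\ldots,e_\ell)$. The key claim is that ${\rm tr}(PBP)=\Lambda_\ell$ forces two things. First, $(I-P)BP=0$, i.e. the range of $P$ is $B$-invariant. Second, the compression $PBP|_{{\mathcal R}(P)}$ has eigenvalues $\lambda_1^{(0)},\ldots,\lambda_\ell^{(0)}$.

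The second point follows from Cauchy interlacing. The compression has eigenvalues $\nu_i\le\lambda_i^{(0)}$ for $i\le\ell$, and their sum equals $\Lambda_\ell$, so $\nu_i=\lambda_i^{(0)}$.

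For the invariance I would use a variational argument. The subspace ${\mathcal R}(P)$ maximizes ${\rm tr}(QBQ)$ over rank-$\ell$ orthoprojections $Q$. Perturb it by $Q_t=e^{tK}Pe^{-tK}$ with $K$ skew-Hermitian of the off-diagonal form $K=X-X^*$, $X=(I-P)XP$. The first derivative at $t=0$ is ${\rm tr}([K,P]B)=-2\,{\rm Re}\,{\rm tr}\bigl(X^*(I-P)BP\bigr)$, and it must vanish for all such $X$. This gives $(I-P)BP=0$. I expect this invariance step to be the main obstacle. If the derivative computation turns out messy, I would instead use the identity ${\rm tr}(PB^2P)-{\rm tr}((PBP)^2)=\|(I-P)BP\|_2^2$ together with a Ky Fan bound for $B^2$ (after shifting $B$ so that it is non-negative).

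Applying this at each cut $\ell=n_s-1$ (these are nested, with $n_0-1=0$ and $n_k-1=n$) makes $B$ block diagonal with respect to the partition $\{n_{s},\ldots,n_{s+1}-1\}$. The eigenvalues of the first $n_s-1$ coordinates are exactly $\lambda_1^{(0)},\ldots,\lambda_{n_s-1}^{(0)}$ for each $s$. Taking successive differences, the $s$-th block has eigenvalues $\lambda_{n_s}^{(0)},\ldots,\lambda_{n_{s+1}-1}^{(0)}$, hence is of the form $U_s\,{\rm diag}(\cdots)U_s^*$ as in \eqref{f3.6}. Conversely, every $B$ of the form \eqref{f3.6} has the prescribed spectrum. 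Since $\mu$ is constant on each block, ${\rm tr}(BD)$ equals the sum over $s$ of the common block value of $\mu$ times the block trace, which is $\sum_j\mu_j\lambda_j^{(0)}$. So every such $B$ attains the minimum.
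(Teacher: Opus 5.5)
Your proof is correct, and it takes a different route from the paper's.

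\textbf{The paper's route.} The paper obtains the value in \eqref{f3.5} by quoting the classical formula \eqref{f3.3} \cite[Thm.~20.A.2]{MOA}. It proves necessity of \eqref{f3.6} by induction on the number of blocks. It writes ${\rm tr}(B\,{\rm diag}(\mu_1,\ldots,\mu_n))=\sum_s\mu_{n_s}{\rm tr}(B_{ss})$ and argues with a rearrangement inequality that a minimizer must make ${\rm tr}(B_{00})$ maximal. It then appeals to the interlacing theorem to split off the block $U_0\,{\rm diag}(\lambda_1^{(0)},\ldots,\lambda_{n_1-1}^{(0)})U_0^*$, and recurses on the remainder.

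\textbf{What your Abel summation changes.} It replaces both the citation and the induction. You exhibit the gap ${\rm tr}(BD)-\sum_j\mu_j\lambda_j^{(0)}$ as $\sum_\ell(\mu_{\ell+1}-\mu_\ell)(\Lambda_\ell-S_\ell)$, a sum of non-negative terms. So the Ky Fan inequalities $S_\ell\le\Lambda_\ell$ give the bound and, in the same step, identify exactly which constraints are active at a minimizer: all cuts $\ell=n_s-1$ at once. This makes precise the paper's rather informal ``${\rm tr}(B_{00})$ maximal and the remaining sum as small as possible.''

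\textbf{A step the paper leaves implicit.} Interlacing alone gives only the spectrum of the compressed block. The vanishing of the off-diagonal block $(I-P)BP$ needs the extra argument you supply. The identity ${\rm tr}(PB^2P)-{\rm tr}((PBP)^2)=\|(I-P)BP\|_2^2$ is the cleanest version. No shift is needed there, since $B\in{\mathcal M}^\geq$ already. In your variational version the derivative comes out as $+2\,{\rm Re}\,{\rm tr}(X^*(I-P)BP)$, but the sign is immaterial.

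\textbf{Trade-off.} The paper's route is shorter once \eqref{f3.3} is granted. Yours is self-contained and independent of Theorem~\ref{t3.2}. In it, the optimal value and the characterization of all minimizers both come out of a single identity.
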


\begin{proof}
The relation \eqref{f3.5} is an immediate consequence of \eqref{f3.3}. The sufficiency of \ref{f3.6} is obvious. Its necessity can be proved by induction on the number $k$ of different eigenvalues of $B$. If $k=1$ the matrix $B$ is a scalar multiple of the identity matrix, and the assertion is trivial. For arbitrary $k \in \{1,\ldots,n-1\}$ write $B=(B_{st})_{s,t \in \{ 0,\ldots,k-1\}}$, where $B_{st}$ is an $(n_{s+1}-n_s) \times (n_{t+1}-n_t)$ matrix. According to a well-known inequality, cf.~\cite[Prop.~6.A.3]{MOA}, the value ${\rm tr}(B \, {\rm diag}(\mu_1, \ldots , \mu_n)) = \sum_{s=0}^{k-1} \mu_{n_s} {\rm tr}(B_{ss})$ admits its minimum if ${\rm tr}(B_{00})$ is maximal and $\sum_{s=1}^{k-1} \mu_{n_s} {\rm tr}(B_{ss})$ is as small as possible. By the interlacing theorem of eigenvalues of a Hermitian matrix, we get that  ${\rm tr}(B_{00})$ is maximal if and only if $B$ has the form $B={\rm diag}(U_0 \, {\rm diag}(\lambda_1^{(0)}, \ldots , \lambda_{{n_1}-1}^{(0)}) \, U_0^*, B_1)$ for an $(n_1-n_0) \times (n_1-n_0)$ unitary matrix $U_0$ and $B_1= (B_{st})_{s,t \in \{ 1, \ldots, k-1\}}$. By the proposed induction assumption, the desired result follows. 
\end{proof}

\begin{proof}
Now we can prove relation \eqref{f3.4} of Theorem \ref{t3.2}: Obviously, the set at the right-hand side of \eqref{f3.4} is a subset of ${\mathcal A}_\sigma(\Delta)$. Conversely, if $A \in {\mathcal A}_\sigma (\Delta)$, it has the form $A=V \, {\rm diag}(\sigma_1^{(0)}, \ldots , \sigma_n^{(0)}) \, W^*$ for some $V,W \in \mathcal U$. Consequently, we observe ${\rm tr}(A \Delta A^*) = {\rm tr}(B) \, {\rm diag}(\lambda_n, \ldots , \lambda_1)$, where $B = U^*W \, {\rm diag}((\sigma_1^{(0)})^2, \ldots , (\sigma_n^{(0)})^2) \, W^*U$ for some $U \in {\mathcal U}_\Delta$. If the minimum \eqref{f3.3} is admitted at $A$, Lemma \ref{l3.3} implies that $B$ has the form $B= \tilde{U} \, {\rm diag}((\sigma_1^{(0)})^2, \ldots , (\sigma_n^{(0)})^2) \tilde{U}^*$ for some $\tilde{U} \in {\mathcal U}_\Delta$. Therefore, we get the equality 
$W^* U' \, {\rm diag}(\sigma_1^{(0)}, \ldots , \sigma_n^{(0)}) = {\rm diag}(\sigma_1^{(0)}, \ldots , \sigma_n^{(0)}) \, W^*U'$ for any $U' \in {\mathcal U}_\Delta$, and hence, 
\[
A=V \, {\rm diag}(\sigma_1^{(0)}, \ldots , \sigma_n^{(0)}) \, W^* U' (U')^* = V W^* U' \, {\rm diag}(\sigma_1^{(0)}, \ldots , \sigma_n^{(0)})  \, (U')^* \, . 
\]
\end{proof}


\begin{cor} \label{c3.4}
   Let $A \in {\mathcal M}$ such that it possesses the singular value decomposition $A = V \, {\rm diag}(\sigma_1, \ldots , \sigma_n) \, W^*$. Let ${\mathcal U}_1$ and ${\mathcal U}_2$ be subsets of $\mathcal U$ such that ${\mathcal U}_1 \cup {\mathcal U}_2 = \mathcal U$. If ${\mathcal A}= \{ AU : U \in {\mathcal U}_1 \} \cup \{ U^*AU : U \in {\mathcal U}_2 \}$, then for any $\Delta$ we have $m(\Delta; {\mathcal A}) = \sum_{j=1}^n \lambda_{n-j+1} \sigma_j^2$ and ${\mathcal A}(\Delta) = \{ AU^*: U^* \in (( W^* {\mathcal U}_1) \cap {\mathcal U}_\Delta) \} \cup \{ UW^* A WU^* : U^* \in ((W^* {\mathcal U}_2) \cap {\mathcal U}_\Delta) \}$. 
\end{cor}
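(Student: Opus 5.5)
Every element of $\mathcal A$ has the same singular values $\sigma_1,\ldots,\sigma_n$ as $A$, since $AU$ and $U^*AU$ are unitarily equivalent to $A$. Hence $\mathcal A\subseteq\mathcal A_\sigma$ with $\sigma^{(0)}_j=\sigma_j$, and the plan is to get everything from Theorem \ref{t3.2}. By \eqref{f3.3},
\[
\iota(\Delta;\mathcal A)\geq m(\Delta;\mathcal A_\sigma)=\sum_{j=1}^n\lambda_{n-j+1}\sigma_j^2 .
\]

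To show equality and that the minimum is attained, I pick $U_0\in\mathcal U_\Delta$, which is nonempty by the spectral theorem. The matrix $V\,{\rm diag}(\sigma_1,\ldots,\sigma_n)\,U_0^*=AWU_0^*$ lies in $\mathcal A_\sigma(\Delta)$ by \eqref{f3.4}. The unitary $WU_0^*$ lies in $\mathcal U_1$ or in $\mathcal U_2$.
\begin{itemize}
\item If $WU_0^*\in\mathcal U_1$, this minimizer already belongs to $\mathcal A$.
\item Otherwise $WU_0^*\in\mathcal U_2$. Then the element $(WU_0^*)^*A(WU_0^*)=U_0W^*AWU_0^*=U_0W^*V\,{\rm diag}(\sigma)\,U_0^*$ of $\mathcal A$ has the form $V'{\rm diag}(\sigma)U_0^*$ with $V'=U_0W^*V\in\mathcal U$. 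By \eqref{f3.4} it is again a minimizer.
\end{itemize}
In either case Lemma \ref{l2.2}(iii) gives $m(\Delta;\mathcal A)=m(\Delta;\mathcal A_\sigma)$, so the minimum has the stated value. It also follows that $\mathcal A(\Delta)=\mathcal A\cap\mathcal A_\sigma(\Delta)$.

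It remains to identify this intersection using \eqref{f3.4}. Since the corollary parametrizes $\mathcal A(\Delta)$ by the element $U^*\in W^*\mathcal U_1$, I write the right factor of an element of $\mathcal A$ as $WU^*$, with $WU^*\in\mathcal U_1$ or $\mathcal U_2$ respectively.
\begin{itemize}
\item \emph{Elements $AWU^*$ with $WU^*\in\mathcal U_1$.} Here $AWU^*=V\,{\rm diag}(\sigma)\,U^*$. If $U\in\mathcal U_\Delta$, \eqref{f3.4} shows it is a minimizer, which gives the inclusion of the first described set. Conversely, if $V\,{\rm diag}(\sigma)\,U^*=V''{\rm diag}(\sigma)\,U''^*$ with $U''\in\mathcal U_\Delta$, I need to deduce that one may take $U\in\mathcal U_\Delta$.
\item \emph{Elements $(WU^*)^*A(WU^*)=UW^*AWU^*$ with $WU^*\in\mathcal U_2$.} These equal $UW^*V\,{\rm diag}(\sigma)\,U^*$, and the same argument applies.
\end{itemize}

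I expect this converse direction to be the main obstacle. The representation $X=V\,{\rm diag}(\sigma)\,U^*$ does not determine $U$ when singular values repeat or vanish, and $\mathcal U_\Delta$ is not unique when eigenvalues of $\Delta$ repeat. So I would read the stated description as an equality of \emph{sets of matrices}: $\mathcal A(\Delta)$ is the set of matrices $AU^*$, or $UW^*AWU^*$, admitting \emph{some} admissible $U^*$ in the indicated set. With that reading, membership in $\mathcal A_\sigma(\Delta)$ means exactly that some right singular factor can be chosen in $\mathcal U_\Delta$. I would then verify that this factor can be chosen compatibly with the constraint $U^*\in W^*\mathcal U_i$.

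To carry out that last check I would reuse the commutation argument from the proof of \eqref{f3.4}. There $W^*U'$ commutes with ${\rm diag}(\sigma)$, which lets one absorb the ambiguity into the left unitary factor while keeping the right factor in $\mathcal U_\Delta$. If this cannot be made to respect the constraint $U^*\in W^*\mathcal U_i$, I would settle for the statement under the set-theoretic reading above. The value of $m(\Delta;\mathcal A)$ does not depend on this step.
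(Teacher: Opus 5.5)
Your computation of the minimum is correct and is the intended derivation from Theorem~\ref{t3.2}; the paper states the corollary without proof. The steps are: $\mathcal A\subseteq\mathcal A_\sigma$; one of $AWU_0^*$, $U_0W^*AWU_0^*$ lies in $\mathcal A\cap\mathcal A_\sigma(\Delta)$; Lemma~\ref{l2.2}(iii) gives $m(\Delta;\mathcal A)=\sum_j\lambda_{n-j+1}\sigma_j^2$; hence $\mathcal A(\Delta)=\mathcal A\cap\mathcal A_\sigma(\Delta)$. You rewrite the candidate minimizers as $AWU^*$ and $UW^*AWU^*$ with $WU^*\in\mathcal U_i$ and $U\in\mathcal U_\Delta$. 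That is the sensible reading of the printed formula: literally, $AU^*$ with $U^*\in W^*\mathcal U_1$ need not lie in $\mathcal A$, and the condition should be $U\in\mathcal U_\Delta$, not $U^*\in\mathcal U_\Delta$. The inclusion of that set in $\mathcal A(\Delta)$ is fine.

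The gap is the reverse inclusion, and it cannot be closed, because it is false when $A$ has repeated singular values. Take $n=2$, $A=I$ (so $V=W=I$), $\mathcal U_1=\mathcal U$, $\mathcal U_2=\{I\}$ and $\Delta={\rm diag}(1,2)$. Then $\mathcal A=\mathcal U$, and ${\rm tr}(X\Delta X^*)=3=m(\Delta;\mathcal A)$ for every unitary $X$, so $\mathcal A(\Delta)=\mathcal U$. But $\mathcal U_\Delta$ consists of the diagonal unitaries, so the described set, in your reading or the printed one, contains only diagonal unitaries. It misses, for example, the permutation matrix interchanging $e_1$ and $e_2$.

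Your fallback does not rescue this. ``The set of matrices admitting some admissible $U$'' is just the literal meaning of the set-builder notation, not a weaker claim. The commutation trick from the proof of \eqref{f3.4} does not transfer either. There the left factor of $V'\,{\rm diag}(\sigma)\,U'^*$ ranges over all of $\mathcal U$ and can absorb $W^*U'$. In $\mathcal A$ the left factor ($V$, resp.\ $UW^*V$) is tied to the right one. For invertible $A$, $AWU^*=AU_1$ even forces $WU^*=U_1$, leaving no freedom at all.

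What you can prove is the following. Let $\mathcal C_\sigma$ be the group of unitaries commuting with ${\rm diag}(\sigma_1^2,\ldots,\sigma_n^2)$. If $X^*X=R\,{\rm diag}(\sigma_1^2,\ldots,\sigma_n^2)\,R^*$ with $R\in\mathcal U$, then $R$ is determined up to right multiplication by $\mathcal C_\sigma$. By the equality case of the trace inequality behind \eqref{f3.3} (common eigenbasis, oppositely ordered eigenvalues), $X\in\mathcal A_\sigma(\Delta)$ if and only if $R\in\mathcal U_\Delta\mathcal C_\sigma$. For $X=AWU^*$ or $X=UW^*AWU^*$ one has $X^*X=U\,{\rm diag}(\sigma_1^2,\ldots,\sigma_n^2)\,U^*$, so $R=U$. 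Hence
\[
\mathcal A(\Delta)=\{AWU^*: WU^*\in\mathcal U_1,\ U\in\mathcal U_\Delta\mathcal C_\sigma\}\cup\{UW^*AWU^*: WU^*\in\mathcal U_2,\ U\in\mathcal U_\Delta\mathcal C_\sigma\}.
\]
This coincides with the stated description whenever $\mathcal U_\Delta\mathcal C_\sigma=\mathcal U_\Delta$. Two instances:
\begin{itemize}
\item $\sigma_1>\cdots>\sigma_n$: then $\mathcal C_\sigma$ consists of diagonal unitaries, and $\mathcal U_\Delta D=\mathcal U_\Delta$ for every diagonal unitary $D$.
\item $\Delta$ is a multiple of $I$.
\end{itemize}
So your argument establishes the value of the minimum, and the description of $\mathcal A(\Delta)$ under such an extra hypothesis. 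The description as printed needs this correction.
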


The next consequence of Theorem \ref{t3.2} plays a key role in the present paper. 

\begin{cor} \label{c3.5}
    Let ${\mathcal A} \subseteq {\mathcal M}$. If $\mathcal A$ is weakly unitarily invariant then $\iota(\,\Delta \,;{\mathcal A}) = \inf \{ \sum_{j=1}^n \lambda_{n-j+1} (\sigma_j(A))^2 : A \in {\mathcal A} \}$ for any $\Delta$.
\end{cor}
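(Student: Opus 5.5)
Fix $\Delta$. For each $A\in{\mathcal A}$ I will compare ${\mathcal A}$ with an orbit set treated in Corollary~\ref{c3.4}, or equivalently with ${\mathcal A}_\sigma$ from Theorem~\ref{t3.2}. Let $A\in{\mathcal A}$ have singular values $\sigma_j(A)$, and put ${\mathcal A}_{(A)}:=\{AU:U\in{\mathcal U}\}$ if ${\mathcal A}$ is right unitarily invariant, and ${\mathcal A}_{(A)}:=\{U^*AU:U\in{\mathcal U}\}$ if ${\mathcal A}$ is unitary similarity invariant. Weak unitary invariance gives ${\mathcal A}_{(A)}\subseteq{\mathcal A}$. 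Since every $A\in{\mathcal A}$ lies in its own orbit (take $U=I$), we have ${\mathcal A}=\bigcup_{A\in{\mathcal A}}{\mathcal A}_{(A)}$.

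I then apply Lemma~\ref{l2.2}(v) to this union:
\[
\iota(\Delta;{\mathcal A})=\inf\{\iota(\Delta;{\mathcal A}_{(A)}):A\in{\mathcal A}\}.
\]
For each orbit, Corollary~\ref{c3.4} applies. In the right invariant case choose ${\mathcal U}_1={\mathcal U}$ and ${\mathcal U}_2=\emptyset$; in the similarity case choose ${\mathcal U}_2={\mathcal U}$ and ${\mathcal U}_1=\emptyset$. In both cases ${\mathcal U}_1\cup{\mathcal U}_2={\mathcal U}$ holds, so Corollary~\ref{c3.4} gives $m(\Delta;{\mathcal A}_{(A)})=\sum_{j=1}^n\lambda_{n-j+1}(\sigma_j(A))^2$. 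Substituting this into the infimum yields the claimed formula.

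If one prefers not to rely on Corollary~\ref{c3.4} with an empty index set, the orbit computation can be done directly. The inequality ${\rm tr}(A\Delta A^*)\ge\sum_j\lambda_{n-j+1}\sigma_j(A)^2$ holds for every $A$, since $A\in{\mathcal A}_\sigma$ with $\sigma^{(0)}=\sigma(A)$ and Theorem~\ref{t3.2} applies. This gives ``$\ge$''. For ``$\le$'', write $A=V\,{\rm diag}(\sigma_1,\dots,\sigma_n)W^*$ and pick $U_0\in{\mathcal U}_\Delta$. Then $A':=V\,{\rm diag}(\sigma_1,\dots,\sigma_n)U_0^*$ attains the bound by \eqref{f3.4}, and the remaining task is to show that $A'$ (or a matrix with the same trace value) lies in ${\mathcal A}_{(A)}$.

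This membership check is the only real point to verify. In the right invariant case it is immediate, because $A'=A(WU_0^*)$. In the similarity case it is less obvious, since $U^*AU$ has singular value decomposition $(U^*V)\,{\rm diag}(\sigma)\,(U^*W)^*$ and the right factor is not free. Here I would argue via Lemma~\ref{l2.2}(vi) together with the left invariance of the trace expression, ${\rm tr}(VA\Delta A^*V^*)={\rm tr}(A\Delta A^*)$. Left multiplication by a unitary does not change the value, so it suffices to realise the right factor $U^*W=U_0$, which is achieved by choosing $U=WU_0^*$. The resulting matrix $U^*AU$ then has the required right singular vectors, and its value equals the bound.
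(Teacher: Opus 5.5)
Your proposal is correct and is essentially the paper's argument. The lower bound comes from Theorem~\ref{t3.2} applied to $\mathcal A_\sigma$ with $\sigma^{(0)}=\sigma(A)$. The upper bound comes from the matrices $AWU_0^*$ (right invariant case) or $(WU_0^*)^*A(WU_0^*)=U_0W^*AWU_0^*$ (similarity case), which lie in $\mathcal A\cap\mathcal A_\sigma(\Delta)$ by \eqref{f3.4}; these are exactly the matrices the paper exhibits. Your route through Lemma~\ref{l2.2}(v) and Corollary~\ref{c3.4} only repackages this, and the appeal to Lemma~\ref{l2.2}(vi) is unnecessary because \eqref{f3.4} already allows an arbitrary left unitary factor.
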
   

\begin{proof}
Let $A \in {\mathcal A}$, and let $A= V \, {\rm diag}(\sigma_1, \ldots , \sigma_n) \, W^*$ be its singular value decomposition. For any $\Delta$ and any $U \in {\mathcal U}_\Delta$, the matrices $AWU^*$ or $UW^*AWU^*$ have the same singular values as $A$ and belong to the set ${\mathcal A}_\sigma(\Delta)$ as described at \eqref{f3.4} for $\mathcal A$ being weakly unitarily invariant as assumed.
\end{proof}

Corollary \ref{c3.5} shows that the number $\iota(\,\Delta \,;{\mathcal A})$ is uniquely defined by the ordered set of eigenvalues of $\Delta$ as long as ${\mathcal A}$ is weakly unitarily invariant. Thus, it is natural to define a function $g_{\mathcal A}$ on $\mathcal D$ by 
\[
   g_{\mathcal A}(x_1, \ldots , x_n) = \iota({\rm diag} (x_1, \ldots , x_n); {\mathcal A}) \, , \,\, (x_1, \ldots , x_n)^\top \in {\mathcal D} \, .
\]   

\begin{pro} \label{p3.6}
  If  $\mathcal A$ is weakly unitarily invariant and $\Delta$ is arbitrary, then
   \begin{equation} \label{f3.7}
       \iota(\Delta; {\mathcal A}) =  g_{\mathcal A}(\lambda_1, \ldots , \lambda_n) \, .
   \end{equation}
   Conversely, if for some set $\mathcal A$ the relation \eqref{f3.7} is true for any $\Delta$ then there exists a unitarily invariant set ${\mathcal A}'$ such that $\iota( \cdot ; {\mathcal A}) = \iota( \cdot ; {\mathcal A}')$. 
\end{pro}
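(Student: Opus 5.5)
The first assertion follows directly from Corollary \ref{c3.5}. For weakly unitarily invariant $\mathcal A$ and arbitrary $\Delta$, that corollary gives
\[
\iota(\Delta;{\mathcal A}) = \inf\Bigl\{ \sum_{j=1}^n \lambda_{n-j+1}(\sigma_j(A))^2 : A\in{\mathcal A}\Bigr\}.
\]
The right-hand side depends only on the ordered eigenvalues $(\lambda_1,\ldots,\lambda_n)$ of $\Delta$. Applying the same formula to $\Delta_0={\rm diag}(\lambda_1,\ldots,\lambda_n)$, which has the same ordered eigenvalues, yields the same value, and by definition that value is $g_{\mathcal A}(\lambda_1,\ldots,\lambda_n)$. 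This proves \eqref{f3.7}.

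For the converse, suppose \eqref{f3.7} holds for every $\Delta$, i.e.\ $\iota(\Delta;{\mathcal A})$ depends only on the spectrum of $\Delta$. I would take
\[
{\mathcal A}' = \bigcup_{V,U\in{\mathcal U}} V{\mathcal A}U,
\]
the smallest unitarily invariant set containing $\mathcal A$; it is clearly unitarily invariant.

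To compare the infima, fix $\Delta$. We have ${\rm tr}(VAU\Delta U^*A^*V^*) = {\rm tr}(A(U\Delta U^*)A^*)$, so the left unitary factor plays no role, as in Lemma \ref{l2.2}(vi). Hence
\[
\iota(\Delta;{\mathcal A}') = \inf_{U\in{\mathcal U}} \iota(U\Delta U^*;{\mathcal A}),
\]
using Lemma \ref{l2.2}(v) to pass the infimum over the union. Since $U\Delta U^*$ has the same eigenvalues as $\Delta$, the hypothesis \eqref{f3.7} gives $\iota(U\Delta U^*;{\mathcal A}) = g_{\mathcal A}(\lambda_1,\ldots,\lambda_n) = \iota(\Delta;{\mathcal A})$ for every $U$. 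The infimum over $U$ is therefore constant and equals $\iota(\Delta;{\mathcal A})$, which is the required equality $\iota(\cdot;{\mathcal A})=\iota(\cdot;{\mathcal A}')$.

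No step is really an obstacle. The only point needing care is to verify $\iota(V A U;\Delta)$-type identities correctly, namely that the left factor $V$ drops out of the trace and the right factor $U$ becomes the conjugation $\Delta\mapsto U\Delta U^*$. One should also note that nothing requires $\mathcal A$ itself to be invariant: the hypothesis is used only through the invariance of $\iota(\cdot;{\mathcal A})$ under unitary conjugation of $\Delta$.
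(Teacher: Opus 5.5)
Your proof is correct and follows the paper's argument. The first assertion comes from Corollary \ref{c3.5}, which the paper calls obvious. For the converse you take the same set ${\mathcal A}'=\bigcup_{V,U\in{\mathcal U}}V{\mathcal A}U$, use $\iota(\Delta;V{\mathcal A}U)=\iota(U\Delta U^*;{\mathcal A})=\iota(\Delta;{\mathcal A})$ via \eqref{f3.7}, and conclude with Lemma \ref{l2.2}(v).
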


\begin{proof}       
The first assertion is obvious. To prove its converse, note first
\begin{eqnarray} \label{f3.8}
    \, \, \iota( \Delta ; {\mathcal A}) & = & \inf \{ {\rm tr}(A \Delta A^*) : A \in {\mathcal A } \} =  \inf \{ {\rm tr}(VAU \Delta U^*A^*V^*) : A \in {\mathcal A}  \} \\
                                          & = & \inf \{ {\rm tr}(A \Delta A^*) : A \in V {\mathcal A} U \} = \iota( \Delta ; V{\mathcal A}U ) \nonumber
\end{eqnarray}    
for any $U,V \in {\mathcal U}$, by \eqref{f3.7} and because $\lambda_j(\Delta) = \lambda_j(U \Delta U^*)$ for any $j \in \{ 1, \ldots , n\}$. Since the accumulated set ${\mathcal A}'= \bigcup_{V,U \in {\mathcal U}} V {\mathcal A} U$ is unitarily invariant the result follows from Lemma \ref{l2.2}(v).
\end{proof}

Given $g_{\mathcal A}$ we introduce its permutation invariant extension to $[0,+\infty)^n$ denoted by $({g}_{\mathcal A})\tilde{\,}$. We would like to describe the set  $\tilde{{\mathcal G}} = \{ ({g}_{\mathcal A})\tilde{\,}: {\mathcal A} \,\, {\rm is} \, {\rm unitarily} \, {\rm invariant} \}$. In particular, $({g}_{\mathcal A})\tilde{\,}$ can be considered as the restriction of $\iota( \cdot ; {\mathcal A})$ to the set of non-negative Hermitian diagonal matrices. Therefore, the function $({g}_{\mathcal A})\tilde{\,}$ is non-negative, positively homogeneous, superadditive, continuous, permutation invariant and concave, so in particular, Schur concave, cf.~\cite[Prop.~3.C.2]{MOA}. As a question, does a function belong to $\tilde{{\mathcal G}}$ provided it has the named properties? The following example shows that the property of Schur concavity instead of concavity does not warrant that a respective function will be an element of $\tilde{{\mathcal G}}$.

\begin{exe} \label{e3.7}
For $(x_1,x_2)^\top\in [0,+\infty)^2$ set $({g}\tilde{\,}(x_1,x_2)) = \max \{ (x_1^{-1}+x_2^{-1})^{-1}, 3/5 \cdot \min \{ x_1, x_2 \} \}$ if $\min \{ x_1, x_2\} > 0$, and $({g}\tilde{\,}(x_1,x_2))=0$ in complementary case. Clearly, $g\tilde{\,}$ is non-negative, positively homogeneous, continuous, permutation invariant. Since both the function within the braces are concave, they are Schur concave, and their maximum is Schur concave as well, cf.~\cite[3.B.1.c]{MOA}. However, $({g}\tilde{\,}(1+2,1+1)) = 6/5 < 3/5+2/3 = ({g}\tilde{\,}(1,1)) + ({g}\tilde{\,}(2,1))$, so $g\tilde{\,}$ is not superadditive, or equivalently, not concave, and does not belong to $\tilde{{\mathcal G}}$. 
\end{exe}

The next assertion is a first result on the structure of $\tilde{{\mathcal G}}$, cf.~Proposition \ref{p5.6} below for its multiplicative analogue. 

\begin{pro}  \label{p3.8}
   Let $\{ g_j : j \in \{ 1, \ldots ,n \} \}$ be arbitrary complex-valued functions on $[0,+\infty)$, and let $g(x_1, \ldots ,x_n) = \sum_{j=1}^n g_j(x_j)$ for $(x_1, \ldots ,x_n)^\top \in {\mathcal D}$. The permutation invariant extension $g\tilde{\,}$ of $g$ belongs to $\tilde{{\mathcal G}}$ if and only if $g(x_1, \ldots ,x_n) = \sum_{j=1}^n b_jx_j$ for some $ (b_n,\ldots,b_1)^\top \in {\mathcal D}$. 
\end{pro}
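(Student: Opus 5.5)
Both implications are proved with Theorem \ref{t3.2} and the structural properties of functions in $\tilde{\mathcal G}$ listed before Example \ref{e3.7}.

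\emph{Sufficiency.} Let $(b_n,\ldots,b_1)^\top\in\mathcal D$, i.e.\ $0\le b_1\le\ldots\le b_n$. Put $\sigma_j^{(0)}=b_{n-j+1}^{1/2}$. Then $(\sigma_1^{(0)},\ldots,\sigma_n^{(0)})^\top\in\mathcal D$. Take $\mathcal A=\mathcal A_\sigma$, which is unitarily invariant. By \eqref{f3.3}, for $x\in\mathcal D$ we get
\[
g_{\mathcal A}(x)=\sum_{j=1}^n x_{n-j+1}(\sigma_j^{(0)})^2=\sum_{j=1}^n x_{n-j+1}b_{n-j+1}=\sum_{j=1}^n b_jx_j .
\]
Hence $g\tilde{\,}=(g_{\mathcal A})\tilde{\,}\in\tilde{\mathcal G}$. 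If all $b_j=0$, use $\mathcal A=\{0\}$; this is harmless, because $\mathcal F$ plays no role here.

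\emph{Necessity.} Suppose $g\tilde{\,}=(g_{\mathcal A})\tilde{\,}$ for some unitarily invariant $\mathcal A$. Then $g\tilde{\,}$ is real, non-negative, positively homogeneous, continuous, permutation invariant and concave.

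First, the $g_j$ are affine on a suitable set. Fix $j$ and choose $x,y\in\mathcal D$ that differ only in the $j$-th coordinate. Every point $z$ on the segment between them lies in $\mathcal D$ and differs from $x$ only in coordinate $j$. So $t\mapsto g_j(t)$, plus a constant, is concave on every interval $[x_{j+1},x_{j-1}]$ (with $x_0=+\infty$, $x_{n+1}=0$). Thus $g_j$ is concave on $[0,+\infty)$ and real-valued there.

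Second, homogeneity is used to show $g_j(t)=b_jt$. Take $x=(t,\ldots,t)$. Then $\sum_j g_j(at)=a\sum_j g_j(t)$ for all $a>0$, and $g(0)=0$ by continuity and homogeneity. More usefully, take $x=(s,\ldots,s,t,\ldots,t)$ with $s\ge t$ and the first $k$ entries equal to $s$. Homogeneity gives $G_k(as)+H_k(at)=a(G_k(s)+H_k(t))$, where $G_k=\sum_{j\le k}g_j$ and $H_k=\sum_{j>k}g_j$. Setting $t=0$ shows $G_k$ is linear up to the constant $H_k(0)$. Since $g(0)=0$ gives $G_k(0)+H_k(0)=0$, the function $G_k-G_k(0)$ is linear. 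Differencing in $k$ yields $g_k(t)=b_kt+c_k$ with $\sum_k c_k=0$. Hence $g(x)=\sum_j b_jx_j$ on $\mathcal D$.

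Third, the $b_j$ must be increasing and non-negative. Apply concavity of $g\tilde{\,}$ to a point $x\in\mathcal D$ with $x_j>x_{j+1}$ and its transposition $x'$, where $x_j$ and $x_{j+1}$ are swapped. Both have value $g(x)$, since $g\tilde{\,}$ is permutation invariant. The midpoint is the vector in $\mathcal D$ with both entries $(x_j+x_{j+1})/2$. Concavity gives
\[
b_j\tfrac{x_j+x_{j+1}}2+b_{j+1}\tfrac{x_j+x_{j+1}}2\ \ge\ b_jx_j+b_{j+1}x_{j+1},
\]
that is, $(b_{j+1}-b_j)(x_j-x_{j+1})\ge0$, so $b_{j+1}\ge b_j$. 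Non-negativity of $g$ at $(1,0,\ldots,0)$ gives $b_1\ge0$. Therefore $(b_n,\ldots,b_1)^\top\in\mathcal D$.

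The main obstacle is the second step, extracting linearity from homogeneity on the restricted cone $\mathcal D$ without any regularity assumptions on the $g_j$. The telescoping through the test vectors $(s,\ldots,s,0,\ldots,0)$ is what I would try first; continuity of $g\tilde{\,}$ is available if a limiting argument is needed.
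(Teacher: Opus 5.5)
Your proof is correct and follows the paper's route. Sufficiency comes from Theorem~\ref{t3.2} applied to the unitarily invariant set $\mathcal A_\sigma$ (the paper passes through Corollary~\ref{c3.4} and Proposition~\ref{p3.6}). Linearity comes from telescoping positive homogeneity along the test vectors $(s,\ldots,s,0,\ldots,0)$, exactly as in the paper's induction on $k$. The ordering $b_1\le\ldots\le b_n$ comes from concavity, which you check by hand with the swap-and-midpoint argument where the paper cites Schur concavity and \cite[Thm.~3.A.3]{MOA}.

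Two small remarks. Your first step is never used, and it only shows that $g_j-g_j(0)$, not $g_j$ itself, is real. Also, no regularity or limiting argument is needed: a function $\varphi$ on $[0,+\infty)$ with $\varphi(as)=a\varphi(s)$ for all $a>0$ automatically satisfies $\varphi(s)=s\,\varphi(1)$, so your telescoping closes as written.
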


\begin{proof}
 If $g(x_1, \ldots ,x_n) = \sum_{j=1}^n b_jx_j$ for some $ (b_1,\ldots,b_n)^\top \in {\mathcal D}$ then $g\tilde{\,} \in \tilde{{\mathcal G}}$ follows from Corollary \ref{c3.4} and Proposition \ref{p3.6}. Conversely, assume that $g\tilde{\,} \in \tilde{{\mathcal G}}$. Setting $h_j(x)=g_j(x)-g_j(0)$ for $x \in [0,+\infty)$, $j \in \{ 1,\ldots,n\}$, we obtain $h_j(0)=0$ for any $j \in \{ 1,\ldots,n \}$ and $g(x_1,\ldots,x_n)= \sum_{j=1}^n h_j(x_j)$ because $g(0,\ldots,0)=\sum_{j=1}^n g_j(0) = 0$. Since the function $x \to g(x,0,\ldots,0)=h_1(x)$ is non-negative and positively homogeneous, we get $h_1(x)=b_1x$ for some $b_1 \in [0,+\infty)$. Using the method of inductive conclusion on the set of functions $(x_1,\ldots,x_k) \to g(x_1,\ldots,x_k,0,\ldots,0)$ over $k \in \{ 1,\ldots,n \}$ we find $h_k(x)=b_kx$ for some $b_k \in [0,+\infty)$. Finally, if the resulting vector $(b_n,\ldots,b_1)^\top \not\in {\mathcal D}$, the function $g$ would be not Schur concave according to \cite[Thm.~3.A.3]{MOA}. 
\end{proof}

Let $\phi$ be an element of the set $\mathcal F$ of non-negative and positively homogeneous functions on $\mathcal M$ different from the zero function. We are going to discuss $\iota( \cdot; {\mathcal S}_\phi)$ for the 'unit sphere' ${\mathcal S}_\phi = \{ A \in {\mathcal M}: \phi(A)=1 \}$ of $\phi$. The following facts are repeatedly used in the sequel without explicit arguments:

\begin{lem} \label{l3.9}
   Let $\phi, \psi \in \mathcal F$. Then:
   \begin{itemize}
   \item[(i)] $\iota( \cdot; \phi(A)=1 ) = \iota( \cdot; \phi(A)^a=1 ) = \iota( \cdot; \phi(A) \geq 1)$ for any $a \in {\mathbb R} \setminus \{0\}$.
   \item[(ii)] If $\phi \leq \psi$ then $\iota( \cdot ; {\mathcal S}_\psi) \leq \iota( \cdot ; {\mathcal S}_\phi)$.
   \item[(iii)] If $\phi = a \psi$ for some $a \in (0,+\infty)$ then $\iota( \cdot ; {\mathcal S}_\phi) = a^{-2} \, \iota( \cdot ; {\mathcal S}_\psi)$.
   \item[(iv)] $\iota( \cdot , \max \{ \phi(A), \psi(A)\} =1 ) = \min \{ \iota( \cdot ; {\mathcal S}_\phi), \iota( \cdot ; {\mathcal S}_\psi) \}$.
   \end{itemize}
\end{lem}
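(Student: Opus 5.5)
The whole lemma rests on positive homogeneity: $\phi(cA)=c\,\phi(A)$ for $c>0$, and ${\rm tr}((cA)\Delta(cA)^*)=c^2\,{\rm tr}(A\Delta A^*)$. Every item reduces to a rescaling argument or to Lemma \ref{l2.2}(v).

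For (i), $\phi\ge 0$ gives $\phi(A)^a=1$ if and only if $\phi(A)=1$ for every real $a\neq 0$, so the first two sets coincide. The inclusion $\{\phi(A)=1\}\subseteq\{\phi(A)\ge 1\}$ gives $\iota(\cdot;\phi(A)\ge 1)\le\iota(\cdot;\phi(A)=1)$. For the reverse inequality, take $A$ with $t:=\phi(A)\ge 1$ and put $A'=t^{-1}A$. Then $\phi(A')=1$ and ${\rm tr}(A'\Delta A'^*)=t^{-2}{\rm tr}(A\Delta A^*)\le {\rm tr}(A\Delta A^*)$, so the reverse inequality holds. If one worries that ${\mathcal S}_\phi$ might be empty, note that it is not: $\phi\neq 0$, so some $B$ has $\phi(B)>0$, and $\phi(B)^{-1}B\in{\mathcal S}_\phi$.

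For (ii), take $A\in{\mathcal S}_\phi$. Then $\psi(A)\ge\phi(A)=1$, so $A$ lies in $\{\psi\ge 1\}$. Hence $\iota(\cdot;{\mathcal S}_\psi)=\iota(\cdot;\psi(A)\ge 1)\le\iota(\cdot;{\mathcal S}_\phi)$, using (i) for the equality.

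For (iii), the map $A\mapsto aA$ is a bijection from ${\mathcal S}_\phi$ onto ${\mathcal S}_\psi$, because $\psi(aA)=a\psi(A)=\phi(A)$. It multiplies the trace functional by $a^2$, so $\iota(\cdot;{\mathcal S}_\psi)=a^2\,\iota(\cdot;{\mathcal S}_\phi)$. This is the stated identity $\iota(\cdot;{\mathcal S}_\phi)=a^{-2}\iota(\cdot;{\mathcal S}_\psi)$.

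For (iv), $\chi:=\max\{\phi,\psi\}$ again lies in $\mathcal F$. By (i), $\iota(\cdot;{\mathcal S}_\chi)=\iota(\cdot;\chi(A)\ge1)$. The set $\{\chi\ge1\}$ is the union $\{\phi\ge1\}\cup\{\psi\ge1\}$. Lemma \ref{l2.2}(v) then gives the minimum of the two infima, and applying (i) once more turns these back into $\iota(\cdot;{\mathcal S}_\phi)$ and $\iota(\cdot;{\mathcal S}_\psi)$.

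No step is a genuine obstacle; the only point needing care is the rescaling in (i), which the other items all reuse.
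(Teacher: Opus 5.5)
Your proof is correct. Items (i)–(iii) follow from positive homogeneity: the rescaling $A\mapsto \phi(A)^{-1}A$ multiplies ${\rm tr}(A\Delta A^*)$ by $\phi(A)^{-2}\le 1$, and $A\mapsto aA$ is a bijection $\mathcal S_\phi\to\mathcal S_\psi$. Item (iv) additionally uses Lemma~\ref{l2.2}(v) for the union $\{\max\{\phi,\psi\}\ge 1\}=\{\phi\ge1\}\cup\{\psi\ge1\}$.

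The paper gives no proof of this lemma; it states these as facts used ``without explicit arguments''. Your argument is the routine verification it leaves to the reader.
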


We will also use an analogue of Proposition \ref{p2.4}: 

\begin{pro} \label{p3.10}
For any $\Delta$ we have $\iota(\Delta; {\mathcal S}_\phi) \geq \lambda_n$     if and only if
\begin{equation} \label{f3.9}
      \phi( \cdot ) \leq \| \cdot \|_2  \, .
\end{equation}
\end{pro}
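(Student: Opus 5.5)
Both directions follow from Proposition \ref{p2.4}, once we observe that condition \eqref{f3.9} is exactly condition \eqref{f2.5} for the set ${\mathcal A}={\mathcal S}_\phi$; positive homogeneity of $\phi$ is what links the two.

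\emph{Sufficiency.} Assume $\phi(A)\leq \|A\|_2$ for all $A\in{\mathcal M}$. For $A\in{\mathcal S}_\phi$ we have $\phi(A)=1$, hence $\|A\|_2\geq 1$. Then \eqref{f2.5} holds for ${\mathcal A}={\mathcal S}_\phi$. Proposition \ref{p2.4}, or directly inequality \eqref{f2.6} ${\rm tr}(A\Delta A^*)\geq \lambda_n\|A\|_2^2\geq\lambda_n$, gives $\iota(\Delta;{\mathcal S}_\phi)\geq\lambda_n$ for every $\Delta$.

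\emph{Necessity.} Assume \eqref{f3.9} fails, so there is $A_0$ with $\phi(A_0)>\|A_0\|_2$. Then $\phi(A_0)>0$, and we set $A:=\phi(A_0)^{-1}A_0$. By positive homogeneity $\phi(A)=1$, so $A\in{\mathcal S}_\phi$, and $\|A\|_2=\|A_0\|_2/\phi(A_0)<1$. Choosing $\Delta=I$, as in the converse part of Proposition \ref{p2.4}, yields
\[
\iota(I;{\mathcal S}_\phi)\leq {\rm tr}(AA^*)=\|A\|_2^2<1=\lambda_n(I),
\]
so the inequality fails for this $\Delta$. (If $A_0=0$, then $\phi(0)=0$ by convention, so the strict inequality cannot hold; hence $A_0\neq 0$ automatically.)

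The argument has no real obstacle. The one point needing care is the normalization step, which turns a pointwise violation of \eqref{f3.9} into an element of the unit sphere ${\mathcal S}_\phi$ with Frobenius norm below $1$; this uses only positive homogeneity and $\phi(A_0)>0$. This equivalence is exactly the statement that $\phi\leq\|\cdot\|_2$ on ${\mathcal M}$ if and only if $\|\cdot\|_2\geq 1$ on ${\mathcal S}_\phi$.
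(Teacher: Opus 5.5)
Your proof is correct and takes the same route as the paper, which gives no separate proof but presents the statement as an analogue of Proposition~\ref{p2.4}. Your observation that, by positive homogeneity, \eqref{f3.9} is equivalent to condition \eqref{f2.5} for ${\mathcal A}={\mathcal S}_\phi$ is exactly the link that makes both directions immediate, including the test matrix $\Delta=I$ for the converse.
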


\begin{rmk} \label{r3.11}
Clearly, the inequality
\begin{equation}  \label{f3.10}
     \| \cdot \|_2 \leq \phi( \cdot )
\end{equation}
implies the inequality
\begin{equation}    \label{f3.11}
     \iota( \cdot ; {\mathcal S}_\phi ) \leq \iota( \cdot ; \|A\|_2=1 ) \, .
\end{equation}
In comparison to Proposition \ref{p3.10} condition \eqref{f3.11} is not sufficient for condition \eqref{f3.10}. Indeed, if we choose $a \in (1, \sqrt{n})$ and set $\phi(A)=a \sigma_1$ for $A \in \mathcal M$, we get $\phi(I) < \|I\|_2$, so condition \eqref{f3.10}  does not hold. On the other hand, Corollary \ref{c3.12} below and Lemma \ref{l3.9}(iii)  imply that $m(\Delta ; {\mathcal S}_\phi) = a^{-2} \lambda_n < \lambda_n = m(\Delta ; \|A\|_2 =1)$. 
\end{rmk}

A function $\phi$ on $\mathcal M$ is called right unitarily invariant, unitary similarity invariant or unitarily invariant if  $\phi(AU)=\phi(A)$, $\phi(U^*AU) = \phi(A)$ or $\phi(VAU) = \phi(A)$, respectively, for all $A \in \mathcal M$, any $U,V \in \mathcal U$. If $\phi$ is right unitarily invariant or unitary similarity invariant it is called weakly unitarily invariant. Clearly, $\phi$ has one of these properties if and only if its unit sphere ${\mathcal S}_\phi$ has the corresponding property. In accordance with a common normalization of unitarily invariant norms we will often assume additionally that 
\begin{equation} \label{f3.12}
      \phi(E)=1
\end{equation}      
for the minimal projection $E = {\rm diag}(1,0,\ldots,0)$. 

\begin{cor} \label{c3.12}
   Let $\phi \in \mathcal F$ be weakly unitarily invariant and normalized by \eqref{f3.12}. For any $\Delta$ one has $m(\Delta ; {\mathcal S}_\phi) = \lambda_n$ if and only if condition \eqref{f3.9} is satisfied.
\end{cor}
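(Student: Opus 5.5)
The plan is to combine Proposition \ref{p3.10} with Corollary \ref{c3.5}: the former gives the lower bound $\lambda_n$, and weak unitary invariance together with the normalization \eqref{f3.12} gives a matching upper bound that is attained.

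\emph{Sufficiency.} Assume \eqref{f3.9}. By Proposition \ref{p3.10} we have $\iota(\Delta;{\mathcal S}_\phi)\geq \lambda_n$ for every $\Delta$. For the reverse inequality and attainment, fix $U \in {\mathcal U}_\Delta$ and set $A:=EU^*$. If $\phi$ is right unitarily invariant, then $\phi(A)=\phi(E)=1$. If $\phi$ is unitary similarity invariant, use $A:=UEU^*$ instead; then $\phi(A)=\phi(E)=1$ as well. In either case $A\in{\mathcal S}_\phi$. Since the entry $\lambda_n$ sits top left in \eqref{f2.3}, we get ${\rm tr}(A\Delta A^*)={\rm tr}(E\,{\rm diag}(\lambda_n,\ldots,\lambda_1)E)=\lambda_n$. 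Hence the infimum equals $\lambda_n$ and is attained, so $m(\Delta;{\mathcal S}_\phi)=\lambda_n$. The same computation is just Theorem \ref{t3.2} applied to the singular values $(1,0,\ldots,0)$ of $E$, read through Corollary \ref{c3.5}.

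\emph{Necessity.} Assume $m(\Delta;{\mathcal S}_\phi)=\lambda_n$ for all $\Delta$; in particular $\iota(\Delta;{\mathcal S}_\phi)\geq\lambda_n$ for all $\Delta$. Then Proposition \ref{p3.10} yields \eqref{f3.9} directly.

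Proposition \ref{p3.10} is only stated, so I will supply its proof in the form needed here. The direction ``\eqref{f3.9} implies the bound'' is immediate. If $\phi(A)=1$, then \eqref{f3.9} gives $\|A\|_2\geq 1$, and \eqref{f2.6} gives ${\rm tr}(A\Delta A^*)\geq\lambda_n\|A\|_2^2\geq\lambda_n$. For the converse, suppose $\phi(A_0)>\|A_0\|_2$ for some $A_0$. Then $A_0\neq 0$ and $\phi(A_0)>0$, so by positive homogeneity $A:=A_0/\phi(A_0)$ satisfies $\phi(A)=1$ and $\|A\|_2<1$. Taking $\Delta=I$ gives $\iota(I;{\mathcal S}_\phi)\leq\|A\|_2^2<1=\lambda_n(I)$, a contradiction. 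This is precisely the argument of Proposition \ref{p2.4} applied to ${\mathcal A}={\mathcal S}_\phi$.

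The only delicate point is to get the direction of \eqref{f2.3} right, so that the minimal projection picks out $\lambda_n$ rather than $\lambda_1$. One must also handle both types of weak unitary invariance, which requires choosing $A$ as $EU^*$ in one case and as $UEU^*$ in the other. Beyond this bookkeeping I expect no real obstacle. Note that necessity does not use the normalization \eqref{f3.12}; the normalization is needed only to place $E$ in ${\mathcal S}_\phi$.
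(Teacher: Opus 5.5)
Your proof is correct and follows the paper's argument. Proposition \ref{p3.10} gives both the lower bound and the necessity. The test matrices $EU^*$ (right unitary invariance) or $UEU^*$ (unitary similarity invariance), with $U\in{\mathcal U}_\Delta$, give ${\rm tr}(A\Delta A^*)=\lambda_n$ and hence attainment. Your added proof of Proposition \ref{p3.10}, which transfers the argument of Proposition \ref{p2.4} to ${\mathcal S}_\phi$ by rescaling, is also sound.
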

   
\begin{proof}
Apply Proposition \ref{p3.10} and the fact that ${\rm tr}(EU^*\Delta UE) = \lambda_n$, as well as $EU^* \in {\mathcal S}_\phi$ or $UEU^* \in {\mathcal S}_\phi$ for any $U \in {\mathcal U}_\Delta$, in case $\phi$ is normalized by \eqref{f3.12} and weakly unitarily invariant.
\end{proof}
 
In contrast to the preceding corollary there exist unitarily invariant functions $\phi \in \mathcal F$ satisfying the conditions \eqref{f3.10}, \eqref{f3.12}, and nevertheless $m( \Delta ; {\mathcal S}_\phi ) < \lambda_n$ for all invertible $\Delta$, cf.~Proposition \ref{p4.2} below. Even more, we do not know any other function $\phi$ different from $\| \cdot \|_2$ that satisfies 
\begin{equation} \label{f3.13}
      \iota(\Delta ; {\mathcal S}_\phi ) = \lambda_n
\end{equation}
for any $\Delta \in {\mathcal I}$. We give an example where \eqref{f3.13} appears for some invertible $\Delta$.
       
\begin{exe}
Let $n=2$. For $a \in (0,1)$ set $\psi(A) = a (\sigma_1+\sigma_2)$ and $\phi(A) = \max \{ \psi( \cdot ), \| \cdot \|_2 \}$. If $a$ is close to $1$,  there exists an $A \in \mathcal M$ with $\phi(A) > \|A\|_2$. Hence, $\phi$ satisfies \eqref{f3.10} and differs from $\| \cdot \|_2$. By Lemma \ref{l3.9}(iv) and (iii) and by Proposition \ref{p4.2} below, $m(\Delta ; {\mathcal S}_\phi ) = \min \{ m(\Delta ; {\mathcal S}_\psi ), m(\Delta ; \|A\|_2=1) \} = \min \{ a^{-2} \lambda_1 \lambda_2 (\lambda_1+\lambda_2)^{-1}, \lambda_2\}$, what equals to $\lambda_2$ if and only if $\lambda_1 \geq a^2(1-a^2)^{-1} \lambda_2$. 
\end{exe}

By Lemma \ref{l2.3} one obtains a large class of functions $\phi \in \mathcal F$ satisfying $m(\Delta ; {\mathcal S}_\phi )=0$ if $\Delta$ is not invertible. 

\begin{lem} \label{l3.14}
   Let $\phi \in \mathcal F$ be weakly unitarily invariant. The equality $m(\Delta ; {\mathcal S}_\phi )=0$ is true for all non-invertible $\Delta$ if and only if there exists an $A \in \mathcal M$ such that 
   \begin{equation} \label{f3.14}
        \phi(A) > 0 \,\, {\rm and} \,\,  {\rm rk}(A)=1 \, .
   \end{equation}
\end{lem}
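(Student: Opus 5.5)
Both directions rest on Lemma \ref{l2.3} and Corollary \ref{c3.5}, using weak unitary invariance to move a rank-one matrix into a convenient position.

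\emph{Sufficiency.} Assume some $A_0$ satisfies \eqref{f3.14}. By positive homogeneity we may rescale $A_0$ so that $\phi(A_0)=1$, i.e.\ $A_0\in{\mathcal S}_\phi$ with ${\rm rk}(A_0)=1$. Its singular values are $(\sigma_1,0,\ldots,0)$ with $\sigma_1>0$. Let $\Delta$ be non-invertible, so $\lambda_n=0$. Corollary \ref{c3.5} gives
\[
\iota(\Delta;{\mathcal S}_\phi)\le \lambda_n\sigma_1^2=0 .
\]
To see that the value $0$ is attained, write $A_0=V\,{\rm diag}(\sigma_1,0,\ldots,0)\,W^*$. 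The proof of Corollary \ref{c3.5} shows that $A_0WU^*$ or $UW^*A_0WU^*$, for $U\in{\mathcal U}_\Delta$, lies in ${\mathcal S}_\phi$ (by right invariance, respectively similarity invariance). By Theorem \ref{t3.2} this matrix realizes $\sum_j\lambda_{n-j+1}\sigma_j^2=0$. Hence $m(\Delta;{\mathcal S}_\phi)=0$. Equivalently, by Lemma \ref{l2.3}, the range of this rank-one matrix's adjoint is spanned by the first column $Ue_1$ of $U$, which is a $\lambda_n$-eigenvector lying in ${\mathcal N}(\Delta)$. Its kernel therefore contains ${\mathcal R}(\Delta)$.

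\emph{Necessity.} Suppose every $A$ with $\phi(A)>0$ has rank at least $2$. Take $\Delta={\rm diag}(0,1,\ldots,1)$, which is non-invertible with ${\mathcal R}(\Delta)={\rm span}\{e_2,\ldots,e_n\}$ and orthoprojection $P=I-E$. By Lemma \ref{l2.3}, $m(\Delta;{\mathcal S}_\phi)=0$ would force some $A\in{\mathcal S}_\phi$ with $AP=0$. Such an $A$ satisfies $A=AE$, so ${\rm rk}(A)\le 1$. Since $\phi(A)=1>0$, the rank cannot be $0$ (because $\phi(0)=0$), so ${\rm rk}(A)=1$, contradicting the assumption. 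Thus the equality fails for this $\Delta$.

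The main obstacle is that the statement is about the minimum, whereas Lemma \ref{l2.3} also treats the infimum via a sequence $A_kP\to 0$. In the necessity part I use only the attainment version, where $m(\Delta;\cdot)=0$ means a minimizer exists, so the "in this case ${\mathcal A}(\Delta)$" clause of Lemma \ref{l2.3} suffices. If the infimum reading were intended, I would need to exclude $\iota=0$ without attainment. By Corollary \ref{c3.5}, $\iota(\Delta;{\mathcal S}_\phi)=\inf\{\sigma_2^2+\cdots+\sigma_n^2: A\in{\mathcal S}_\phi\}$, and this infimum could be $0$ along a sequence with $\sigma_1\to\infty$ if $\phi$ is not continuous. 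I would therefore read the statement with $m$ as attained, consistent with the paper's convention.
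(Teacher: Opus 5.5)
Your proof is correct and follows the route the paper indicates, since the paper only says the lemma follows from Lemma \ref{l2.3}: weak unitary invariance lets you move a rank-one element of ${\mathcal S}_\phi$ so that its kernel contains ${\mathcal R}(\Delta)$, and conversely a zero-attaining $A$ for $\Delta={\rm diag}(0,1,\ldots,1)$ must have rank one. Reading $m$ as an attained minimum is exactly the paper's intent, as the remark after the lemma notes via Proposition \ref{p5.1} that the infimum version fails; however, the qualifier ``if $\phi$ is not continuous'' in your last paragraph is unnecessary, because $\phi(A)=|{\rm det}(A)|^{1/n}$ is continuous, vanishes on rank-one matrices for $n\geq 2$, and still gives $\iota(\Delta;{\mathcal S}_\phi)=0$ without attainment for every non-invertible $\Delta\neq 0$.
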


If \eqref{f3.14} is not satisfied the value $m(\Delta ; {\mathcal S}_\phi )$ can be strictly greater than zero for any $\Delta \in ({\mathcal M}^\geq \setminus \{ 0 \} )$.  If, e.g., $\phi(A)=\sigma_n$ for $A \in \mathcal M$, then $m(\Delta ; {\mathcal S}_\phi ) = \sum_{j=1}^n \lambda_j$ by Corollary \ref{c3.5}. Note, however, that the realization of condition \eqref{f3.14} is not necessary for the equality $\iota(\Delta ; {\mathcal S}_\phi ) = 0$ to hold for any non-invertible $\Delta$, cf.~Proposition \ref{p5.1} below.


\section{Properties of $m( \cdot ; {\mathcal S}_{\| \cdot \|})$ as an important particular case }
\label{s4}

We are going to deal with the calculation of the minimum $m( \cdot ; {\mathcal S}_\phi)$ if $\phi( \cdot ) = \| \cdot \|$ is a norm on $\mathcal M$ which has one of the three properties of unitary invariance under consideration. Taking into account Lemma \ref{l3.14} we can and shall assume throughout this section that $\Delta$ is invertible to compute $m( \Delta ; {\mathcal S}_{\| \cdot \|})$.

A unitarily invariant norm $\| \cdot \|$ on $\mathcal M$ is called a Q-norm if there exists a unitarily invariant norm $\| \cdot \| \widehat{\,\,}$ such that $\| A \|^2=\| A^*A \| \widehat{\,\,}$ for $A \in \mathcal M$, cf.~\cite[Def.~IV.2.9]{Bh}. Let $\| \cdot \|_1$ denote the Schatten 1-norm, i.e. $\| A \|_1 = \sum_{j=1}^n \sigma_j$. Since for a Q-norm $\| \cdot \|$ normalised by \eqref{f3.12}, the chain of inequalities $\| A \|^2 = \| A^*A \| \widehat{\,\,} \leq \| A^*A \|_1 = \| A \|_2^2$ is true, we derive the next assertion by Corollary \ref{c3.12}: 

\begin{pro} \label{p4.1}
If $\| \cdot \|$ is a Q-norm then $m( \Delta ; {\mathcal S}_{\| \cdot \|}) = \| E \|^{-2} \lambda_n$ for any $\Delta$. 
\end{pro}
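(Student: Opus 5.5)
The plan is to rescale $\|\cdot\|$ so that it satisfies the normalization \eqref{f3.12} and then apply Corollary~\ref{c3.12}. Set $\phi(A)=\|E\|^{-1}\|A\|$. Because $E\neq 0$, we have $\|E\|>0$, and $\phi$ lies in $\mathcal F$. It is unitarily invariant, in particular weakly unitarily invariant, and $\phi(E)=1$.

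\textbf{Step 1: $\phi$ is again a Q-norm.} We have
\[
\phi(A)^2=\|E\|^{-2}\|A^*A\|\widehat{\,\,},
\]
so we take $\|\cdot\|'\widehat{\,\,}:=\|E\|^{-2}\|\cdot\|\widehat{\,\,}$. This is a unitarily invariant norm. Its value at $E$ is
\[
\|E^*E\|'\widehat{\,\,}=\|E\|^{-2}\|E\|\widehat{\,\,}=\|E\|^{-2}\|E\|^2=1,
\]
using $E^*E=E$.

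\textbf{Step 2: the comparison $\phi\le\|\cdot\|_2$.} This is the chain stated just before the proposition:
\[
\phi(A)^2=\|A^*A\|'\widehat{\,\,}\le\|A^*A\|_1=\|A\|_2^2 .
\]
The middle inequality is the standard fact that every unitarily invariant norm normalized on the minimal projection $E$ is dominated by the Schatten $1$-norm. To justify it, write
\[
A^*A=\sum_j\sigma_j^2P_j
\]
with rank-one orthoprojections $P_j$. Each $P_j$ is unitarily similar to $E$, so the norm of $P_j$ equals $1$. The triangle inequality then gives the bound $\sum_j\sigma_j^2=\|A^*A\|_1$. This verifies condition \eqref{f3.9}.

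\textbf{Step 3: conclusion.} By Corollary~\ref{c3.12}, $m(\Delta;{\mathcal S}_\phi)=\lambda_n$ for every $\Delta$. Since $\|\cdot\|=\|E\|\,\phi$, Lemma~\ref{l3.9}(iii) yields
\[
m(\Delta;{\mathcal S}_{\|\cdot\|})=\|E\|^{-2}\,m(\Delta;{\mathcal S}_\phi)=\|E\|^{-2}\lambda_n .
\]
The infimum is attained because it is attained for $\phi$, for instance at $EU^*$ with $U\in{\mathcal U}_\Delta$. Rescaling by $\|E\|$ moves this to the point $\|E\|^{-1}EU^*$ of ${\mathcal S}_{\|\cdot\|}$.

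The only genuine content is the inequality $\|B\|\widehat{\,\,}\le\|B\|_1$ for $B\ge0$ under the normalization. Everything else is bookkeeping with Corollary~\ref{c3.12} and Lemma~\ref{l3.9}(iii), so I expect no real obstacle.
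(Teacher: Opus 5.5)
Your proof is correct and follows essentially the same argument as the paper. The paper normalizes by \eqref{f3.12}, uses the chain $\|A\|^2=\|A^*A\|\widehat{\,\,}\le\|A^*A\|_1=\|A\|_2^2$ to get \eqref{f3.9}, and invokes Corollary~\ref{c3.12}; you additionally spell out the rescaling via Lemma~\ref{l3.9}(iii) and justify the Schatten-$1$ domination through the rank-one spectral decomposition, which the paper takes as known.
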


Let us mention that there exists a unitarily invariant norm  $\| \cdot \|$ satisfying $\| \cdot \| \leq \| \cdot \|_2$ and $\| E \| = 1$, which is not a Q-norm. For example, if the norm defined by $\| A \| = 1/2 \cdot (\| A \|_2 + \sigma_1(A))$ would be a Q-norm, there would exist a symmetric gauge function $g$ such that $g(x_1,\ldots,x_n)=1/4 \cdot ((\sum_{j=1}^n x_j)^{1/2} + x_1)^2$ for $(x_1,\ldots,x_n)^\top \in \mathcal D$. Elementary calculations reveal that $g$ does not satisfy the triangle inequality and, thus, $g$ cannot be a symmetric gauge function. 

For ${\mathbf t}=(t_1,\ldots,t_n)^\top \in [0,+\infty)^n \setminus \{ 0 \}$ and $p \in (0,+\infty)$ define a unitarily invariant function $\phi_{{\mathbf t},p} \in \mathcal F$ by
\begin{equation*} 
   \phi_{{\mathbf t},p}(A) = \left( \sum_{j=1}^n t_j\sigma_j^p \right)^{1/p} \, , \,\, A \in \mathcal M \, .
\end{equation*}

If ${\mathbf t} \in \mathcal D$ and $p \in [1,+\infty)$, then $\phi_{{\mathbf t},p}$ is a norm, cf.~\cite[Ex.~IV.1.19]{Bh} for a more general result. Conversely, if $ \phi_{{\mathbf t},p}$ is a norm, its restriction to the set of non-negative Hermitian diagonal matrices can be identified with a convex and permutation invariant function on $[0,+\infty)^n$. By \cite[3.C.2]{MOA} this function is Schur convex, and \cite[Thm.~3.A.3]{MOA} shows ${\mathbf t} \in \mathcal D$ and $p \in [1,+\infty)$. To simplify denotation we use ${\mathcal S}_{{\mathbf t},p}$ instead of  ${\mathcal S}_{\phi_{{\mathbf t},p}}$ in the sequel.

\begin{pro} \label{p4.2}
    Let ${\mathbf t} \in {\mathcal D}$, $t_1 > 0$ and $\Delta \in {\mathcal I}$. 
    \begin{itemize}
    \item[(i)] If $p \in [2,+\infty)$ then $m(\Delta ; {\mathcal S}_{{\mathbf t},p}) = t_1^{-2/p}\lambda_n$. 
    \item[(ii)] If $p \in (0,2)$ then 
            \begin{equation} \label{f4.1}
                   m(\Delta ; {\mathcal S}_{{\mathbf t},p}) = \left( \sum_{j=1}^n \left( t_j^{2/p} \lambda_{n-j+1}^{-1} \right)^{p/(2-p)} \right)^{(p-2) / p}
             \end{equation}
             and ${\mathcal S}_{{\mathbf t},p}(\Delta)$ is the set of all $A$ of the form
             \begin{equation} \label{f4.2}
                   A = \left( \sum_{j=1}^n \left(  t_j^{2/p} \lambda_{n-j+1}^{-1} \right)^{p/(2-p)} \right)^{-1/p} \cdot V \,  {\rm diag}((t_1 \lambda_n^{-1})^{1/(2-p)}, \ldots ,( t_n \lambda_1^{-1})^{1/(2-p)} ) \, U^* 
             \end{equation}
              with $V \in {\mathcal U}$, $U \in {\mathcal U}_\Delta$.
     \end{itemize}
\end{pro}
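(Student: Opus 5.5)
Since $\phi_{{\mathbf t},p}$ is unitarily invariant, Corollary \ref{c3.5} reduces the problem to a finite-dimensional optimization over singular values. With $\mu_j:=\lambda_{n-j+1}>0$ (increasing in $j$) and $y_j:=\sigma_j^2$, we must compute
\[
\inf\Bigl\{ \sum_{j=1}^n \mu_j y_j : y\in{\mathcal D},\ \sum_{j=1}^n t_j y_j^{p/2}=1 \Bigr\}.
\]
The form of minimizers then follows from Theorem \ref{t3.2}, formula \eqref{f3.4}: once the optimal singular value vector $\sigma^{(0)}$ is identified, every minimizer in ${\mathcal S}_{{\mathbf t},p}$ has these singular values, and the minimizers among matrices with singular values $\sigma^{(0)}$ are exactly $V\,{\rm diag}(\sigma^{(0)})\,U^*$ with $V\in{\mathcal U}$, $U\in{\mathcal U}_\Delta$. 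It remains to check that the candidate $\sigma^{(0)}$ is decreasing. This is automatic: $t_j$ decreases and $\mu_j$ increases, so $t_j\mu_j^{-1}$ decreases.

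\emph{Case (i), $p\ge 2$.} Put $q=p/2\ge1$ and $z_j=y_j^{q}$, so the constraint is $\sum t_j z_j=1$ and we minimize $\sum \mu_j z_j^{1/q}$. First I would bound $\sum t_j z_j\le t_1 z_1+\dots\le t_1\sum z_j$, hence $\sum z_j\ge t_1^{-1}$. Next, since $1/q\le1$, the map $z\mapsto\sum z_j^{1/q}$ is superadditive, giving $\sum z_j^{1/q}\ge(\sum z_j)^{1/q}$. With $\mu_j\ge\mu_1=\lambda_n$ this yields
\[
\sum\mu_j y_j\ \ge\ \lambda_n\bigl(\textstyle\sum z_j\bigr)^{1/q}\ \ge\ \lambda_n t_1^{-2/p}.
\]
The bound is attained by $y=(t_1^{-2/p},0,\dots,0)$, i.e.\ $A=t_1^{-1/p}E U^*$ with $U\in{\mathcal U}_\Delta$. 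Alternatively this also follows from Corollary \ref{c3.12} combined with Lemma \ref{l3.9}(iii), since $\phi\le t_1^{1/p}\|\cdot\|_p\le t_1^{1/p}\|\cdot\|_2$ for $p\ge2$.

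\emph{Case (ii), $p<2$.} Here I would use the weighted H\"older inequality. Write $s=p/2\in(0,1)$ and $t_j y_j^{s}=(\mu_j y_j)^{s}\,(t_j\mu_j^{-s})$, then apply H\"older with exponents $1/s$ and $1/(1-s)$:
\[
1=\sum t_j y_j^s\le\Bigl(\sum\mu_j y_j\Bigr)^{s}\Bigl(\sum (t_j\mu_j^{-s})^{1/(1-s)}\Bigr)^{1-s}.
\]
Rearranging gives $\sum\mu_jy_j\ge\bigl(\sum (t_j^{2/p}\mu_j^{-1})^{p/(2-p)}\bigr)^{(p-2)/p}$, which is \eqref{f4.1}, since $(t_j\mu_j^{-s})^{1/(1-s)}=(t_j^{2/p}\mu_j^{-1})^{p/(2-p)}$.

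Equality in H\"older holds exactly when $\mu_jy_j\propto (t_j\mu_j^{-s})^{1/(1-s)}$, i.e.\ $y_j\propto (t_j\mu_j^{-1})^{2/(2-p)}$, so $\sigma_j\propto (t_j\lambda_{n-j+1}^{-1})^{1/(2-p)}$. This is decreasing by the observation above. Normalizing by the constraint gives the constant in \eqref{f4.2}. When $t_j=0$ for some $j$, the corresponding $\sigma_j$ is forced to $0$; this is consistent with the formula and with H\"older's equality case.

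The main obstacle I expect is the characterization of ${\mathcal S}_{{\mathbf t},p}(\Delta)$. One must argue that every minimizer has exactly the H\"older-extremal singular values, which follows because equality is forced in both steps, and then invoke \eqref{f3.4} with ${\mathcal A}_\sigma\subseteq{\mathcal S}_{{\mathbf t},p}$ together with Lemma \ref{l2.2}(iii). Care is also needed in the degenerate entries $t_j=0$.
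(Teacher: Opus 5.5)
Your proof is correct. For the value of the minimum it is essentially the paper's argument. In (ii) you reduce by Corollary~\ref{c3.5} and apply H\"older's inequality with exponents $2/p$ and $2/(2-p)$, as the paper does. Your alternative for (i), via Corollary~\ref{c3.12} and Lemma~\ref{l3.9}(iii), is the paper's route through Proposition~\ref{p4.1} without the Q-norm language. Your primary argument for (i) is a self-contained elementary bound that also exhibits the minimizer $t_1^{-1/p}EU^*$ explicitly.

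Where you genuinely differ is in identifying ${\mathcal S}_{{\mathbf t},p}(\Delta)$. The paper applies H\"older only to the indices $j\le k=\max\{j:t_j>0\}$ and handles the remaining indices by a separate rank condition. It then redoes the equality analysis by hand for diagonal $\Delta$, through the entries $w_{j\ell}$ of the right singular vectors. You apply H\"older to all $n$ indices, so its equality case already forces $\sigma_j=0$ whenever $t_j=0$, and you then feed the forced singular values $\sigma^{(0)}$ into \eqref{f3.4}.

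Besides being shorter, your route handles repeated eigenvalues of $\Delta$ correctly. In that case minimality of ${\rm tr}(A\Delta A^*)$ does not force $|w_{j\ell}|=\delta_{j\ell}$: for $\Delta=I$ and $t_1>t_2$, every matrix with singular values $\sigma^{(0)}$ is a minimizer, whatever its right singular vectors. This is exactly the freedom that $U\in{\mathcal U}_\Delta$ in \eqref{f3.4} accounts for, so you avoid the point where the paper's hand computation needs more care.

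One wording slip in (i): the map $z\mapsto\sum_j z_j^{1/q}$ is subadditive, not superadditive. The inequality you actually use, $\sum_j z_j^{1/q}\ge(\sum_j z_j)^{1/q}$, is the subadditivity of $x\mapsto x^{1/q}$ on $[0,+\infty)$ for $1/q\le 1$, so the step itself is fine.
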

              
\begin{proof}
(i) We know ${\phi}_{{\mathbf t},p}$ is a Q-norm if $p \in [2,+\infty)$. Therefore, the assertion follows from Proposition \ref{p4.1}. 

(ii) Let $p \in (0,2)$. If $a_j, b_j$ are positive real numbers for $j \in \{1,\ldots,n\}$ and let $\sum_{j=1}^k a_j^p = 1$ for some $k$, H\"olders inequality for exponents $2/p$ and $2/(2-p)$ yields $1 = \sum_{j=1}^k b_j^{-p/2}b_j^{p/2}a_j^p \leq \left( \sum_{j=1}^k b_ja_j^2\right)^{p/2} \left( \sum_{j=1}^k b_j^{p/(p-2)} \right)^{(2-p)/2}$ or equivalently, 
\begin{equation} \label{f4.3}
             \sum_{j=1}^k b_ja_j^2 \geq \left( \sum_{j=1}^k b_j^{p/(p-2)} \right)^{(p-2)/p} \, .
\end{equation}
Set $k = \max \{ j : t_j > 0 \}$ and 
\begin{equation}   \label{f4.4}
       a_j=t_j^{1/p} \sigma_j \, , \,\, b_j=\lambda_{n-j+1}t_j^{-2/p} \, .
\end{equation}         
Let ${\phi}_{{\mathbf t},p}(A) = 1$. By Corollary \ref{c3.5} and \eqref{f4.3} we have the inequality
\begin{eqnarray} \label{f4.5}
       {\rm tr}(A \Delta A^*) & \geq  & \sum_{j=1}^n \lambda_{n-j+1} \sigma_j^2 \geq \sum_{j=1}^k  \lambda_{n-j+1} \sigma_j^2 \\
       & \geq &        \left( \sum_{j=1}^k \left( t_j^{2/p}\lambda_{n-j+1}^{-1} \right)^{p/(2-p)} \right)^{(p-2)/p}  \nonumber \\
       & = &        \left( \sum_{j=1}^n \left( t_j^{2/p}\lambda_{n-j+1}^{-1} \right)^{p/(2-p)} \right)^{(p-2)/p} \, . \nonumber
\end{eqnarray}
If $A$ has the form \eqref{f4.2} the chain of inequalities 
\begin{equation} \label{f4.6}
       t_1 \lambda_n^{-1} \geq \ldots \geq t_n \lambda_1^{-1}
\end{equation}
implies that $\sigma_\ell = \left( t_\ell \lambda_{n-\ell+1}^{-1} \right)^{1/(2-p)} \cdot \left( \sum_{j=1}^n ( t_j^{p/2} \lambda_{n-j+1}^{-1})^{p/(2-p)} \right)^{-1/p}$ for $\ell$ in 
\linebreak[4]
$\{ 1,\ldots,n\}$. Elementary computations show that ${\phi}_{{\mathbf t},p}(A)=1$ and ${\rm tr}(A \Delta A^*)$ is equal to the right-hand side of \eqref{f4.1}.              

Conversely, let $A$ be such that ${\phi}_{{\mathbf t},p}(A)=1$ and ${\rm tr}(A \Delta A^*)$ is equal to the right-hand side of \eqref{f4.1}. Then all inequalities of \ref{f4.5} must be equalities. In particular, an equality in the second inequality is equivalent to ${\rm rk}(A) \leq k$. Now, assume for a moment that $\Delta = {\rm diag}(\lambda_n, \ldots ,\lambda_1)$. If $A=V \, {\rm diag}(\sigma_1,\ldots,\sigma_k, 0,\ldots,0)\, W^*$ is a singular value decomposition of $A$, one has ${\rm tr}(A \Delta A^*) = \sum_{\ell=1}^k \sigma_\ell^2 \sum_{j=1}^n \lambda_{n-j+1} |w_{j\ell}|^2 \geq \sum_{j=1}^k \sigma_j^2 \lambda_{n-j+1} |w_{jj}|^2$ with equality if and only if $|w_{j\ell}|=\delta_{j\ell}$, the Kronecker symbol, for any $\ell \in \{ 1,\ldots,k\}$, $j \in \{ 1,\ldots,n\}$. Consequently, 
\[
A= V \, {\rm diag}({\rm e}^{{\rm i}\,{\rm arg}(w_{11})}, \ldots , {\rm e}^{{\rm i}\,{\rm arg}(w_{kk})}, 0,  \ldots , 0) \cdot {\rm diag}(\sigma_1,\ldots,\sigma_k,0,\ldots,0) \, .
\]
To continue we have to take into account the equality condition in H\"older's inequality. Indeed, \eqref{f4.3} is an equality if and only if $(b_j^{2/p}a_j^p)^{2/p} = c (b_j^{-2/p})^{2/(2-p)}$, or equivalently, $a_j = c^{1/2} b_j^{1/(p-2)}$ for some $c \in (0,+\infty)$, any $j \in \{ 1,\ldots,k\}$. By \eqref{f4.4} one obtains $\sigma_j=c^{1/2} \lambda_{n-j+1}^{1/(p-2)} t_j^{1/(2-p)}$, and the condition $\phi_{{\mathbf t}, p}(A)=1$ yields 
$c=\left( \sum_{j=1}^n (t_j^{2/p} \lambda_{n-j+1}^{-1})^{p/(2-p)}\right)^{-2/p}$, giving the form \eqref{f4.2} for $A$. Finally, if $\Delta \in {\mathcal I}$ is arbitrary, use the form of $AU$, $U \in {\mathcal U}_\Delta$, by the result just proven and obtain \eqref{f4.2}. 
\end{proof}

From the proof of assertion (ii) of the preceding proposition we see that formula \eqref{f4.1} is true for some $\Delta$ as soon as the chain of inequalities \eqref{f4.6} is valid, even if ${\mathbf t} \not\in {\mathcal D}$. Note that the condition ${\mathbf t} \in {\mathcal D}$ is equivalent to the fact that \eqref{f4.6} is true for all $\Delta \in {\mathcal I}$. This observation allows to give the assertion of Proposition \ref{p4.2}(ii) a slightly more general expression. 

\begin{cor} \label{c4.3}
   Let $p \in (0,2)$ and $\Delta \in \mathcal I$. Let $0=n_0 < n_1 < \ldots < n_k \leq n$  be non-negative integers and let ${\mathbf t}=(t_1,\ldots,t_n)^\top \in [0,+\infty)^n$ be such that 
 \begin{equation} \label{f4.7}
           t_{n_1}(n_1-n_0)^{-1} \geq t_{n_2}(n_2-n_1)^{-1} \geq \ldots \geq t_{n_k} (n_k-n_{k-1})^{-1}
\end{equation}
and let $t_s=0$ for $s \not=n_\ell$, $\ell \in \{ 1,\ldots,k\}$. Then 
\begin{equation} \label{f4.8}
           m(\Delta; {\mathcal S}_{{\mathbf t},p}) = \left(  \sum_{\ell=1}^k t_{n_\ell}^2 \left(  \sum_{s=n_{\ell-1}}^{n_\ell-1} \lambda_{n-s} \right)^{p/(p-2)} \right)^{(p-2)/p} \, .
\end{equation}
\end{cor}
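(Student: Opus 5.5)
The plan is to reduce everything to a finite-dimensional optimization over singular values via Corollary \ref{c3.5}. Since $\phi_{{\mathbf t},p}$ is unitarily invariant, $\iota(\Delta;{\mathcal S}_{{\mathbf t},p})$ is the infimum of $\sum_{j=1}^n \lambda_{n-j+1}\sigma_j^2$ over decreasing vectors $(\sigma_1,\ldots,\sigma_n)^\top\in{\mathcal D}$ with $\sum_{\ell=1}^k t_{n_\ell}\sigma_{n_\ell}^p=1$. Moreover, any such vector is realized by a matrix $V\,{\rm diag}(\sigma_1,\ldots,\sigma_n)\,U^*$ with $U\in{\mathcal U}_\Delta$, which also shows the infimum is attained once the scalar problem has a minimizer.

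\textbf{Step 1 (reduction to block variables).} The constraint involves only the values $s_\ell:=\sigma_{n_\ell}$. Monotonicity forces $\sigma_j\ge s_\ell$ for $n_{\ell-1}<j\le n_\ell$, and $\sigma_j\ge 0$ for $j>n_k$. Since all $\lambda$'s are positive, the objective is minimized for fixed $(s_\ell)$ by taking $\sigma_j=s_\ell$ on the $\ell$-th block and $\sigma_j=0$ for $j>n_k$. This choice is admissible, i.e. lies in ${\mathcal D}$, provided $s_1\ge\ldots\ge s_k$. The objective then becomes $\sum_{\ell=1}^k \Lambda_\ell s_\ell^2$ with
\[
\Lambda_\ell=\sum_{s=n_{\ell-1}}^{n_\ell-1}\lambda_{n-s}.
\]
If we drop the ordering requirement on $(s_\ell)$, we only get a lower bound for the infimum.

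\textbf{Step 2 (H\"older).} Apply \eqref{f4.3} exactly as in the proof of Proposition \ref{p4.2}(ii), now with $k$ terms, $a_\ell=t_{n_\ell}^{1/p}s_\ell$ and $b_\ell=\Lambda_\ell t_{n_\ell}^{-2/p}$. This yields the lower bound
\[
\Bigl(\sum_{\ell=1}^k \bigl(t_{n_\ell}^{2/p}\Lambda_\ell^{-1}\bigr)^{p/(2-p)}\Bigr)^{(p-2)/p}.
\]
The equality case of H\"older gives $s_\ell=c\,(t_{n_\ell}\Lambda_\ell^{-1})^{1/(2-p)}$, with $c>0$ fixed by the constraint.

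\textbf{Step 3 (the key point: admissibility of the H\"older optimizer).} We must check $s_1\ge\ldots\ge s_k$, i.e. that $t_{n_\ell}/\Lambda_\ell$ is decreasing in $\ell$. Write
\[
\frac{t_{n_\ell}}{\Lambda_\ell}=\frac{t_{n_\ell}/(n_\ell-n_{\ell-1})}{\Lambda_\ell/(n_\ell-n_{\ell-1})} .
\]
The numerator is decreasing by \eqref{f4.7}. The denominator is the mean of the $\lambda_{n-s}$ over consecutive index blocks; since $\lambda_{n-s}$ increases with $s$, these block averages are increasing in $\ell$. Hence the quotient decreases, so the lower bound is attained and equals the minimum. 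This is the analogue of condition \eqref{f4.6} mentioned in the remark preceding the corollary, and I expect it to be the only substantive step.

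\textbf{Step 4 (matching with \eqref{f4.8}).} Finally, I will rewrite the resulting expression in the form \eqref{f4.8}, which should be pure exponent bookkeeping. If the exponents in \eqref{f4.8} do not match the H\"older expression of Step 2 literally, I will trust the H\"older computation and note the discrepancy.
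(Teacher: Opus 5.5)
Your proposal is essentially the paper's proof: collapse each block onto $\sigma_{n_\ell}$ via Corollary~\ref{c3.5}, solve the reduced problem by the H\"older argument behind Proposition~\ref{p4.2}(ii), and derive the block form of \eqref{f4.6} from \eqref{f4.7}; your block-average argument does this last step more explicitly than the paper, and only the ``if'' direction is needed. The discrepancy you anticipate in Step~4 is real: your computation gives $t_{n_\ell}^{2/(2-p)}$ where \eqref{f4.8} prints $t_{n_\ell}^{2}$, and already for $k=n_1=1$ the minimum is $t_1^{-2/p}\lambda_n$ whereas \eqref{f4.8} yields $t_1^{2(p-2)/p}\lambda_n$, so the printed formula holds only for $p=1$ and you should state the corrected exponent.
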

           
\begin{proof}
If $s \not= n_\ell$ then $\sigma_s \geq \sigma_{s+1}$ is the only condition on $\sigma_s$. Taking $\sigma_s$ as small as possible we have to minimize $\sum_{\ell=1}^k \left(\sum_{s=n_{\ell-1}}^{n_\ell-1} \lambda_{n-s} \right)\sigma_{n_\ell}^2$ under the condition $\sum_{\ell=1}^k t_{n_\ell}\sigma_{n_\ell}^p =1$. The inequalities \eqref{f4.6} take the form $t_{n_1} \left(\sum_{s=n_0}^{n_1-1} \lambda_{n-s} \right)^{-1} \geq t_{n_2} \left(\sum_{s=n_1}^{n_2-1}  \lambda_{n-s} \right)^{-1}  \geq \ldots \geq t_{n_k} \left(\sum_{s=n_{k-1}}^{n_k-1}  \lambda_{n-s} \right)^{-1} $ and are satisfied for any vector $(\lambda_1,\ldots,\lambda_k)^\top \in {\mathcal D}_+$ if and only if \eqref{f4.7} is true. Now, Proposition \ref{p4.2}(ii) yields \eqref{f4.8}.
\end{proof}

For $\mathbf t$ as in the corollary, a conclusion analogous to that one of Proposition \ref{p4.2}(i) fails for $p \in [2,+\infty)$. Indeed, to prove the inequality $\phi( \cdot ) \leq \| \cdot \|_2$ we have to verify the inequality $\left( \sum_{\ell=1}^k t_{n_\ell} \sigma_{n_\ell}^p \right)^{1/p} \leq \left(\sum_{\ell=1}^k (n_\ell-n_{\ell-1}) \sigma_{n_\ell}^2\right)^{1/2}$ which cannot be obtained, in general, replacing $\sigma_{n_l}^2$ by the particular values $(\sigma_{n_\ell}')^2 =(n_\ell-n_{\ell-1}) \sigma_{n_\ell}^2$, since  the order of $\sigma_{n_\ell}'$ might be destroyed. Only a weaker extension of Proposition \ref{p4.2}(i) can be stated.

\begin{cor}  \label{c4.4}
   Let $p \in [2,+\infty)$ and $\mathbf t$ and $\Delta$ as in Corollary \ref{c4.3}. If additionally $n_1-n_0 \geq n_2-n_1 \geq \ldots \geq n_k-n_{k-1}$    then $m(\Delta ; {\mathcal S}_{{\mathbf t}, p}) = t_{n_1}^{-2/p} \sum_{s=1}^{n_1 - 1} \lambda_{n-s+1}$. 
\end{cor}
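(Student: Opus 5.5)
The plan is to reduce to a finite-dimensional minimization over the distinct singular values, as in the proof of Corollary \ref{c4.3}, and then to prove a matching lower bound by a convexity argument on the extreme rays of a cone.

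Put $d_\ell = n_\ell - n_{\ell-1}$, $q=p/2\geq 1$ and $L_\ell=\sum_{s=n_{\ell-1}}^{n_\ell-1}\lambda_{n-s}$. So $L_1$ is the sum of the $d_1=n_1$ smallest eigenvalues of $\Delta$, i.e. the block sum appearing in \eqref{f4.8}. The target value is therefore $t_{n_1}^{-2/p}L_1$.

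\emph{Reduction.} By Corollary \ref{c3.5}, ${\rm tr}(A\Delta A^*)\geq\sum_j\lambda_{n-j+1}\sigma_j^2$. Since only $\sigma_{n_\ell}$ enters $\phi_{{\mathbf t},p}$, we may lower each $\sigma_s$ with $n_{\ell-1}<s<n_\ell$ to $\sigma_{n_\ell}$, exactly as in Corollary \ref{c4.3}. With $x_\ell=\sigma_{n_\ell}^2$ and $x_1\geq\ldots\geq x_k\geq 0$, the problem becomes: minimize $\sum_\ell L_\ell x_\ell$ subject to $F(x):=\sum_\ell t_{n_\ell}x_\ell^{q}=1$. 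This infimum is attained by matrices of the form $V\,{\rm diag}(\cdot)\,U^*$ with $U\in{\mathcal U}_\Delta$.

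\emph{Upper bound.} Take $x=(t_{n_1}^{-1/q},0,\ldots,0)$. This corresponds to $A=t_{n_1}^{-1/p}\,{\rm diag}(1,\ldots,1,0,\ldots,0)\,U^*$ with $n_1$ ones and $U\in{\mathcal U}_\Delta$. It satisfies $\phi_{{\mathbf t},p}(A)=1$ and ${\rm tr}(A\Delta A^*)=t_{n_1}^{-2/p}L_1$.

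\emph{Lower bound.} This is the key step. Parametrize the monotone cone by $x_\ell=\sum_{m\geq\ell}u_m$ with $u_m\geq0$. Then the objective $\Lambda(u)=\sum_m u_m(L_1+\ldots+L_m)$ is linear, while $G(u)=F(x(u))^{1/q}$ is a weighted $\ell^q$-norm of a linear image of $u$. Hence $G$ is convex and positively homogeneous of degree one, so $G(u)\leq\sum_m u_m G(e_m)$. It follows that
\[
\frac{\Lambda(u)}{G(u)}\geq\min_m\frac{\Lambda(e_m)}{G(e_m)} .
\]
By homogeneity the infimum over $\{F=1\}$ equals $\inf \Lambda/G$, so it suffices to check, for each $m$,
\[
L_1+\ldots+L_m\;\geq\;t_{n_1}^{-1/q}L_1\,(t_{n_1}+\ldots+t_{n_m})^{1/q}.
\]
This follows from three observations, with $D_m=d_1+\ldots+d_m$:
\begin{itemize}
\item[(a)] Condition \eqref{f4.7} gives $t_{n_\ell}\leq t_{n_1}d_\ell/d_1$, hence $t_{n_1}+\ldots+t_{n_m}\leq t_{n_1}D_m/d_1$.
\item[(b)] Since $D_m/d_1\geq1$ and $1/q\leq1$, we have $(D_m/d_1)^{1/q}\leq D_m/d_1$.
\item[(c)] Each block average $L_\ell/d_\ell$ runs over larger eigenvalues than $L_1/d_1$, so $L_\ell/d_\ell\geq L_1/d_1$. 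Summing gives $L_1+\ldots+L_m\geq L_1D_m/d_1$.
\end{itemize}
Combining (a)--(c) gives the displayed inequality, and the lower bound matches the upper bound.

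The main obstacle is this lower bound. A naive comparison $(\sum t x^q)^{1/q}\leq\sum t^{1/q}x$ produces the factor $(t_{n_\ell}/t_{n_1})^{1/q}$, which has the wrong monotonicity when $q\geq1$. The monotonicity $x_1\geq\ldots\geq x_k$, exploited through the extreme rays $e_m$, is what makes the argument work. The extra hypothesis $d_1\geq\ldots\geq d_k$ should be checked for consistency (it keeps the ordering in the reduction step intact), but I expect it not to be needed beyond that.
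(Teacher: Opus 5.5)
Your argument is correct, and it takes a different route from the one the paper indicates.

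The paper gives no separate proof. The remark preceding the corollary suggests it is obtained by transplanting the $p\ge 2$ argument of Proposition \ref{p4.2}(i) to the rescaled values $(\sigma'_{n_\ell})^2=(n_\ell-n_{\ell-1})\sigma_{n_\ell}^2$. That argument is domination by a Frobenius-type norm, as in Corollary \ref{c3.12}. The hypothesis $n_1-n_0\ge n_2-n_1\ge\ldots$ is imposed exactly so that these rescaled values stay decreasingly ordered.

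You instead work directly on the reduced problem. You write the monotone cone as the cone generated by the vectors $(1,\ldots,1,0,\ldots,0)$ and use sublinearity of a weighted $\ell^q$-norm for $q\ge 1$. This reduces the lower bound to $k$ scalar inequalities on the extreme rays, which (a)--(c) settle.

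Your route is self-contained and explicit, and it proves more than the paper's. None of your steps uses $d_1\ge d_2\ge\ldots\ge d_k$: not the reduction, not the sublinearity/mediant step, and not (a)--(c). So you have actually proved the corollary without that hypothesis. Your closing remark that the hypothesis ``keeps the ordering in the reduction step intact'' is therefore inaccurate. Lowering $\sigma_s$ to $\sigma_{n_\ell}$ preserves the ordering automatically; only the paper's rescaling $\sigma'_{n_\ell}$ needs the hypothesis.

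You should make one point explicit rather than pass over it. The printed formula sums $\lambda_{n-s+1}$ only for $s=1,\ldots,n_1-1$, whereas your $L_1=\sum_{s=1}^{n_1}\lambda_{n-s+1}$ has $n_1$ terms. Your value is the correct one. Your explicit minimizer together with your lower bound shows the printed value is strictly too small for every invertible $\Delta$; for $n_1=1$ it would even give $0$. So state that you are correcting an index slip in the statement, rather than calling $t_{n_1}^{-2/p}L_1$ ``the target value'' as if it were what is printed.

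Also record that $t_{n_1}>0$. You need it for $G(e_m)>0$ in the mediant step. It follows from \eqref{f4.7} together with ${\mathbf t}\neq 0$, and it is implicit in the formula anyway.
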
   

An example shows that for general $\mathbf t$ one cannot expect to describe $m(\Delta ; {\mathcal S}_{{\mathbf t}, p})$ by a single formula.

\begin{exe}  \label{e4.5}
    Let $p \in (0,+\infty)$, $t \in (0,+\infty)$ and $\phi(A)= (\sigma_1^p+t \sigma_n^p)^{1/p}$. According to Corollary \ref{c3.5} we have to minimize the function $f$ defined by $f(x_1,x_2) = \lambda_nx_1 + \sum_{j=1}^{n-1} \lambda_j x_2$ for $x_1 \geq x_2 \geq 0$ with the additional condition $x_1^q + tx_2^q =1$, where $q=p/2$. A solution can be easily found looking at an $x_1,x_2$ Cartesian system of coordinates. If $q \in [1,+\infty)$, or equivalently $p \in [2,+\infty)$, the function
    \begin{equation} \label{f4.9}
        x_2= t^{-1/q}(1-x_1^q)^{1/q}  \,  ,  \,  x_1 \in [0,1]
    \end{equation}
is concave, and the minimum of $f$ on this function \eqref{f4.9} is attained at $(1,0)$  or at $((1+t)^{-1/q}, (1+t)^{-1/q})$. Since $f((1,0))= \lambda_n$ and $f((1+t)^{-1/q}, (1+t)^{-1/q})=(1+t)^{-1/q} \sum_{j=1}^n \lambda_j$ we get  $m(\Delta ; {\mathcal S}_{{\mathbf t}, p})=\lambda_n$ if $(\sum_{j=1}^n \lambda_j )^p \geq (1+t)^2 \lambda_n^p$ and  $m(\Delta ; {\mathcal S}_{{\mathbf t}, p})=(1+t)^{-2/p} \sum_{j=1}^n \lambda_j$ if $(\sum_{j=1}^n \lambda_j )^p < (1+t)^2 \lambda_n^p$. For $q \in (0,1)$, or equivalently $p \in (0,2)$, the function \eqref{f4.9} is convex. If $\Delta$ is such that $t \lambda_n \leq \sum_{j=1}^{n-1} \lambda_j$, or equivalently, $\lambda_n^{-1} \geq t (\sum_{j=1}^{n-1} \lambda_j)^{-1}$, by the arguments in the proof of Proposition \ref{p4.2}(ii) we obtain  $m(\Delta ; {\mathcal S}_{{\mathbf t}, p})= \left( \lambda_n^{-p/(2-p))} + t^{2/(2-p)} \left(\sum_{j=1}^{n-1} \lambda_j\right)^{-p/(2-p)} \right)^{(p-2)/p}$. If $t \lambda_n > \sum_{j=1}^{n-1} \lambda_j$, or equivalently $-t^{-1} > -\lambda_n (\sum_{j=1}^{n-1} \lambda_j )^{-1}$, the minimum of $f$ on the curve of the function \eqref{f4.9} is attained at the point $((1+t)^{-1/q}, (1+t)^{-1/q})$ since the derivative of the function \eqref{f4.9} at this point equals to $-t^{-1}$. Thus,  $m(\Delta ; {\mathcal S}_{{\mathbf t}, p}) = (1+t)^{-1} \sum_{j=1}^{n} \lambda_j$ in that case. 
\end{exe}

Recall, that for $C \in \mathcal M$ the $C$-numerical radius $w_C(A)$ of a matrix $A$ is defined by
\begin{equation} \label{f4.10}
    w_C(A) = \max \{ | {\rm tr}(UCU^*A) | : U \in {\mathcal U} \} \, .
\end{equation}
The function $w_C$ is a unitary similarity invariant seminorm on $\mathcal M$ and a norm if and only if ${\rm tr}(C) \not= 0$ and $C$ is not a scalar multiple of $I$, cf.~\cite[IV.4.4]{Bh}. To simplify denotation set ${\mathcal S}_{w_C} = {\mathcal S}_C$.

\begin{pro} \label{p4.6}
     Let $C \in {\mathcal M} \setminus \{ 0 \}$. For any $\Delta \in {\mathcal I}$ we have
     \begin{equation} \label{f4.11}
           m(\Delta ; {\mathcal S}_C) = \left( \sum_{j=1}^n \sigma_j(C) \, \lambda_{n-j+1}^{-1} \right)^{-1} \, .
     \end{equation}
\end{pro}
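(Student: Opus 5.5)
The plan is to write the unit sphere of $w_C$ as a union of hyperplane-type sets and then use Proposition~\ref{p2.5}. By \eqref{f4.10}, $w_C(A)\geq 1$ holds if and only if $|{\rm tr}(UCU^*A)|\geq 1$ for some $U\in{\mathcal U}$. Lemma~\ref{l3.9}(i) gives $\iota(\Delta;{\mathcal S}_C)=\iota(\Delta; w_C(A)\geq 1)$. Lemma~\ref{l2.2}(v) then gives
\[
\iota(\Delta;{\mathcal S}_C)=\inf_{U\in{\mathcal U}} \iota\bigl(\Delta; |{\rm tr}(A X_U^*)|\geq 1\bigr), \qquad X_U:=UC^*U^* .
\]
For each $U$, the same rescaling argument as in Lemma~\ref{l3.9}(i) replaces $|{\rm tr}(AX_U^*)|\geq1$ by $|{\rm tr}(AX_U^*)|=1$: if $|{\rm tr}(AX_U^*)|=s\geq1$, then $A/s$ lies in ${\mathcal A}'_{X_U}$ and has smaller value. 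So the inner infimum is $m(\Delta;{\mathcal A}'_{X_U})$.

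Since $\Delta\in{\mathcal I}$, the hypothesis ${\mathcal N}(\Delta)\subseteq{\mathcal N}(X_U)$ of Proposition~\ref{p2.5}(i) holds trivially. That proposition then gives
\[
m(\Delta;{\mathcal A}'_{X_U})=\bigl({\rm tr}(X_U\Delta^{-1}X_U^*)\bigr)^{-1}=\bigl({\rm tr}(\Delta^{-1}\,U CC^* U^*)\bigr)^{-1}.
\]
Hence
\[
\iota(\Delta;{\mathcal S}_C)=\Bigl(\sup_{U\in{\mathcal U}}{\rm tr}(\Delta^{-1}UCC^*U^*)\Bigr)^{-1}.
\]

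The remaining step is the classical extremal trace problem, i.e.\ the ``max'' counterpart of Theorem~\ref{t3.2} / Lemma~\ref{l3.3}. The maximum of ${\rm tr}(\Delta^{-1}UBU^*)$ over unitary $U$ equals the sum of products of the eigenvalues of $\Delta^{-1}$ and $B=CC^*$, both ordered decreasingly (von Neumann / \cite[Thm.~20.A.2]{MOA}). The largest eigenvalue of $\Delta^{-1}$ is $\lambda_n^{-1}$, and the eigenvalues of $CC^*$ are $\sigma_j(C)^2$. The supremum is attained by compactness of ${\mathcal U}$: take $U$ aligning the eigenvectors of $CC^*$ with those of $\Delta$, using $U\in{\mathcal U}_\Delta$-type choices. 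Proposition~\ref{p2.5} then also produces a minimizing $A$, so the infimum is a minimum and we may write $m$.

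The step I expect to be delicate is the bookkeeping of exponents, namely whether the weights are $\sigma_j(C)$ or $\sigma_j(C)^2$. The computation above yields
\[
m(\Delta;{\mathcal S}_C)=\Bigl(\sum_{j=1}^n\sigma_j(C)^2\lambda_{n-j+1}^{-1}\Bigr)^{-1}.
\]
The scalar case $n=1$ confirms this: there $w_C(a)=|c|\,|a|$, so the minimum is $\lambda/|c|^2$. So, as written, \eqref{f4.11} appears to need $\sigma_j(C)$ replaced by $\sigma_j(C)^2$. I would carry out the proof with the squared singular values and check the paper's convention for $\sigma_j(C)$ against this scalar case.
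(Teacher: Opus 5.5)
Your proof is correct, and you are right about the exponent: the printed formula \eqref{f4.11} is missing a square. The paper's own proof ends by applying Proposition~\ref{p4.2}(ii) with $p=1$ and $t_j=\sigma_j(C)$. At $p=1$, \eqref{f4.1} reads $\bigl(\sum_j t_j^{2}\lambda_{n-j+1}^{-1}\bigr)^{-1}$, which is exactly your $\bigl(\sum_j\sigma_j(C)^2\lambda_{n-j+1}^{-1}\bigr)^{-1}$. The case $C=2I$ also confirms it: there $\mathcal S_C=\{|{\rm tr}(A)|=1/2\}$, and Example~\ref{e2.8}(i) together with Lemma~\ref{l3.9}(iii) gives $\frac14({\rm tr}(\Delta^{-1}))^{-1}$.

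Your route differs from the paper's.
\begin{itemize}
\item \emph{The paper's route.} It bounds $w_C(A)\le\sum_j\sigma_j(C)\sigma_j(A)$ and shows that for every $A$ some $X|A|$, with $X$ unitary, attains this bound. Via Lemma~\ref{l2.2}(i) it deduces that $\mathcal S_C$ has the same minimum function as the unit sphere of the unitarily invariant norm $\phi_{\mathbf t,1}$ with $t_j=\sigma_j(C)$. The value then comes from Proposition~\ref{p4.2}(ii), i.e.\ from Corollary~\ref{c3.5} plus H\"older.
\item \emph{Your route.} You slice $\{w_C\ge1\}$ into the hyperplane-type sets $\{|{\rm tr}(AX_U^*)|\ge1\}$ and solve each by Proposition~\ref{p2.5}(i). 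You finish with a single Fan--von Neumann maximization of ${\rm tr}(\Delta^{-1}UCC^*U^*)$ over $U$. This is the same union-of-sets device (Lemma~\ref{l2.2}(v)) the paper uses for Proposition~\ref{p4.8}.
\end{itemize}

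What each buys:
\begin{itemize}
\item Your version is shorter and bypasses the singular-value reduction and H\"older entirely. It makes the weights $\sigma_j(C)^2$ visible at once.
\item It also gives minimizers explicitly as $e^{{\bf i}a}\bigl({\rm tr}(X_{U_0}\Delta^{-1}X_{U_0}^*)\bigr)^{-1}X_{U_0}\Delta^{-1}$ for a maximizing $U_0$.
\item The paper's version buys a structural fact: $w_C$ and the weighted Ky Fan-type norm $\phi_{\mathbf t,1}$ are indistinguishable for this extremal problem. That places the $C$-numerical radius inside the family treated in Proposition~\ref{p4.2}.
\end{itemize}

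One small point should be stated for attainment. The minimizer $A_0$ produced by Proposition~\ref{p2.5} a priori only satisfies $w_C(A_0)\ge1$. Since the infimum is positive, $w_C(A_0)>1$ would make $A_0/w_C(A_0)\in\mathcal S_C$ strictly better. Hence $A_0\in\mathcal S_C$, and the infimum is indeed a minimum.
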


\begin{proof}
Recall that
\begin{equation} \label{f4.12}
        w_C(A) \leq \sum_{j=1}^n \sigma_j(C) \sigma_j(A) 
\end{equation}
for $C \in {\mathcal M} \setminus \{ 0 \}$, cf.~\cite[20.B.1]{MOA}. For $A \in \mathcal M$, set $|A|=(A^*A)^{1/2}$ and denote the set of all $X \in \mathcal U$ satisfying $w_C(X|A|) = \sum_{j=1}^n \sigma_j(C) \sigma_j(A)$ by ${\mathcal U}^{(A)}$. We show that ${\mathcal U}^{(A)}$ is not empty. Indeed, let $C= V_C \, {\rm diag}(\sigma_1(C), \ldots , \sigma_n(C) ) \, W_C^*$ be a singular value decomposition of $C$. If $V \in \mathcal U$ is such that $|A| = V \, {\rm diag}(\sigma_1(A), \ldots , \sigma_n(A)) \, V^*$, set $U=VV_C$ at the right-hand side of \eqref{f4.10} and set $X=VV_C^*W_CV^*$. We get ${\rm tr}(UCU^*X|A|) = \sum_{j=1}^n \sigma_j(C) \sigma_j(A)$, and hence, $X \in {\mathcal U}^{(A)}$ by \eqref{f4.12} and $\sigma_j(A) = \sigma_j(X|A|)$ for any $j \in \{ 1,\ldots,n\}$. For $A \in {\mathcal M} \setminus \{0\}$ set $a=\sum_{j=1}^n \sigma_j(C) \sigma_j(A)$. Let $\tilde{{\mathcal A}}$ be the set of all $\tilde{A} \in \mathcal M$ of the form $\tilde{A} = a^{-1}X|A|$, $X \in {\mathcal U}^{(A)}$, $A \in {\mathcal M} \setminus \{0\}$. Since $w_C(\tilde{A})=a^{-1} w_C(X|A|) = 1$, the set $\tilde{\mathcal A}$ is a subset of ${\mathcal S}_C$. Inequality \eqref{f4.12} yields $a \geq 1$, what implies that ${\rm tr}(\tilde{A}\Delta \tilde{A}) = a^{-2} {\rm tr}(A\Delta A) \leq {\rm tr}(A \Delta A)$ and
        \begin{equation} \label{f4.13}
              m( \cdot ; {\mathcal S}_C) = m( \cdot ; \tilde{\mathcal A})
        \end{equation}
by Lemma \ref{l2.2}(i). Since 
\[ 
\sum_{j=1}^n \sigma_j(C) \sigma_j(\tilde{A}) = a^{-1} \sum_{j=1}^n \sigma_j(C) \sigma_j(X|A|) =a^{-1} \sum_{j=1}^n \sigma_j(C) \sigma_j(A) = 1 \, , 
\]
the set $\tilde{A}$ is also a subset of the set ${\mathcal A}' = \{ A' \in {\mathcal M} : \sum_{j=1}^n \sigma_j(C) \sigma_j(A')=1 \}$, so
\begin{equation} \label{f4.14}
             m( \cdot ; {\mathcal A}') \leq m( \cdot ; {\mathcal S}_C)
\end{equation}
by \eqref{f4.13}. If $A' \in {\mathcal A}'$ and $V \in \mathcal U$ are such that $|A'| = V \, {\rm diag}(\sigma_1(A'), \ldots , \sigma_n(A')) \, V^*$, the matrix $A= VV_C^*W_C \, {\rm diag}(\sigma_1(A'), \ldots , \sigma_n(A')) \, V^*$ has the same singular values as $A'$. We calculate ${\rm tr}(VV_C^*CV_CV^* A) =     \sum_{j=1}^n \sigma_j(C) \sigma_j(A') = 1$, and therefore, $w_C(A) = 1$ by \eqref{f4.12}. Since $A^*A=V \, {\rm diag}((\sigma_1(A')^2, \ldots, \sigma_n(A')^2) \, V^*$ we get $|A|^2 = |A'|^2$, and hence, ${\rm tr}(A \Delta A) = {\rm tr} (A' \Delta A')$ and $m( \cdot ; {\mathcal S}_C) = m( \cdot ; {\mathcal A}')$ by \eqref{f4.14} and Lemma \ref{l2.2}(i). To complete the proof apply Proposition \ref{p4.2}(ii) for $p=1$ and $t_j = \sigma_j(C)$, $j \in \{1,\ldots,n\}$.
\end{proof}

\begin{cor} \label{c4.7}
   If
   \begin{equation} \label{f4.15}
       w_C(E)=1 
   \end{equation}
   the following conditions are equivalent:
   \begin{itemize}
   \item[(i)] $C = z P$ for a one-dimensional orthogonal projection $P$ and some $z \in \mathbb C$ with $|z|=1$.
   \item[(ii)] $w_C$ is the numerical radius $w_C(A) = \sup_{\{ {\mathbf x} : \|{\mathbf x}\|_2=1 \} } | \langle {\mathbf x}, A ({\mathbf x}) \rangle |$ for any $A \in \mathcal M$.
   \item[(iii)] $w_C(A) \leq \|A\|_2$ for any $A \in \mathcal M$.
   \end{itemize}
\end{cor}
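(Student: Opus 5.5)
The plan is to go around the cycle (i)$\Rightarrow$(ii)$\Rightarrow$(iii)$\Rightarrow$(i). First I rewrite the normalization \eqref{f4.15}. Since $E$ is the orthoprojection onto ${\rm span}\{e_1\}$, for $U\in{\mathcal U}$ we have ${\rm tr}(UCU^*E)=(U^*e_1)^*C(U^*e_1)$. As $U$ runs through ${\mathcal U}$, the vector $U^*e_1$ runs through all unit vectors. Hence $w_C(E)=\max_{\|{\mathbf x}\|_2=1}|\langle {\mathbf x},C{\mathbf x}\rangle|=w(C)$, the classical numerical radius of $C$, and \eqref{f4.15} says exactly $w(C)=1$.

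(i)$\Rightarrow$(ii): Let $C=z\,{\mathbf u}{\mathbf u}^*$ with $\|{\mathbf u}\|_2=1$ and $|z|=1$. Then ${\rm tr}(UCU^*A)=z\,(U{\mathbf u})^*A(U{\mathbf u})$. Since $U{\mathbf u}$ ranges over all unit vectors, \eqref{f4.10} becomes the numerical radius of $A$.

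(ii)$\Rightarrow$(iii): This is the chain $|\langle{\mathbf x},A{\mathbf x}\rangle|\le\sigma_1(A)\le\|A\|_2$ for unit ${\mathbf x}$.

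(iii)$\Rightarrow$(i): This is the substantive step, and I would use the machinery already available. By Proposition \ref{p3.10}, condition (iii), i.e. \eqref{f3.9} for $\phi=w_C$, gives $m(\Delta;{\mathcal S}_C)\ge\lambda_n$ for all $\Delta$. Taking $\Delta=I\in{\mathcal I}$ in Proposition \ref{p4.6} yields
\[
\Bigl(\sum_{j=1}^n\sigma_j(C)\Bigr)^{-1}\ge 1, \quad\text{that is,}\quad \|C\|_1\le 1 .
\]
(Alternatively, one can test (iii) directly on a suitable rank-one matrix, but the route via Proposition \ref{p4.6} is cleaner.) On the other hand,
\[
1=w(C)\le\sigma_1(C)\le\|C\|_1\le 1 .
\]
Hence $\sigma_1(C)=1$ and $\sigma_j(C)=0$ for $j\ge2$, so $C={\mathbf v}{\mathbf u}^*$ with unit vectors ${\mathbf u},{\mathbf v}$.

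It remains to use $w(C)=1$ once more. For a unit vector ${\mathbf x}$ we have
\[
|\langle{\mathbf x},C{\mathbf x}\rangle|=|{\mathbf x}^*{\mathbf v}|\,|{\mathbf u}^*{\mathbf x}|\le 1 .
\]
By compactness the maximum over unit ${\mathbf x}$ is attained. At the maximizing ${\mathbf x}$, both Cauchy--Schwarz inequalities are equalities, so ${\mathbf x}$ is a unimodular multiple of both ${\mathbf v}$ and ${\mathbf u}$. Therefore ${\mathbf v}=z{\mathbf u}$ with $|z|=1$, and $C=z\,{\mathbf u}{\mathbf u}^*=zP$, which is (i).

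The main obstacle is this last implication: one has to extract $\|C\|_1\le1$ from (iii) and then pin down the phase relation between ${\mathbf u}$ and ${\mathbf v}$. The rest is bookkeeping.
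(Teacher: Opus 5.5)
Your proof is correct and follows essentially the paper's route. The paper likewise feeds (iii) into the extremal machinery (Corollary \ref{c3.12} together with \eqref{f4.11}, used for all invertible $\Delta$) to obtain ${\rm rk}(C)=1$ and $\sigma_1(C)=1$, and then fixes the phase by a Schur-triangularization argument. You differ in two small ways: you use the single choice $\Delta=I$ together with $w_C(E)=w(C)\le\sigma_1(C)$, and you finish with the equality case of Cauchy--Schwarz, which is a bit cleaner.

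One caveat concerns the step you borrowed. Formula \eqref{f4.11} as printed has lost the squares. Proposition \ref{p4.2}(ii) with $p=1$ and $t_j=\sigma_j(C)$ actually gives
\[
m(\Delta;{\mathcal S}_C)=\Bigl(\sum_{j=1}^n\sigma_j(C)^2\lambda_{n-j+1}^{-1}\Bigr)^{-1}.
\]
The homogeneity ${\mathcal S}_{aC}=a^{-1}{\mathcal S}_C$ forces this as well, since $m$ must scale like $a^{-2}$.

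So $\Delta=I$ really yields $\|C\|_2\le 1$, not $\|C\|_1\le 1$. The latter does not follow from (iii) by itself: for $n=2$, the matrix $C=2^{-1/2}I$ satisfies (iii) but has $\|C\|_1=\sqrt2$. Your argument survives unchanged with $\|C\|_2$ in place of $\|C\|_1$, because $1=w(C)\le\sigma_1(C)\le\|C\|_2\le 1$ still forces $\sigma_2(C)=\cdots=\sigma_n(C)=0$.

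In fact you can bypass Propositions \ref{p3.10} and \ref{p4.6} entirely. Taking $U=I$ in \eqref{f4.10} and $A=C^*$ in (iii) gives $\|C\|_2^2={\rm tr}(CC^*)\le w_C(C^*)\le\|C\|_2$.
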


\begin{proof}
The implications (i)$\to$(ii)$\to$(iii) are obvious. \newline
(iii)$\to$(i): By Corollary \ref{c3.12}, assertion (iii) is equivalent to the equality $m(\Delta ; {\mathcal S}_C) = \lambda_n$ for any $\Delta$. Therefore, \eqref{f4.11} implies ${\rm rk}(C)=1$ and $\sigma_1(C)=1$. According to Schur's triangulization theorem there exist $U \in {\mathcal U}$ and ${\mathbf a}=(a_1,\ldots,a_n)^\top \in {\mathbb C}^n$ such that $C=U \left( \begin{array}{c} {\mathbf a}^\top \\ 0 \end{array} \right) U^*$ and $\| {\mathbf a}\|_2=1$. If there would exist $j \in \{ 2,\ldots,n \}$ with $a_j \not= 0$, the inequality 
\begin{equation} \label{f4.16}
     \left| {\mathbf y}^* C {\mathbf y} \right| < 1
\end{equation}
would follow for any ${\mathbf y} \in {\mathbb C}^n$ with $\| {\mathbf y} \|_2=1$.  To see this let ${\mathbf x}=(x_1,\ldots,x_n)^\top = U^* {\mathbf y}$. Then 
$ \left| {\mathbf y}^* C {\mathbf y} \right| = | a_1 |< \| {\mathbf a} \|_2=1$ if $| x_1 | =1$, and $ \left| {\mathbf y}^* C {\mathbf y} \right| = | x_1^* \sum_{j=1}^n a_jx_j | \leq | x_1 | \| {\mathbf a} \|_2 \| {\mathbf x} \|_2 < 1$ if $| x_1 | < 1$. Since \eqref{f4.16} is a contradiction to \eqref{f4.15}, assertion (i) is proved. 
\end{proof}   

Let $\| \cdot \|$ be a unitarily invariant norm. Recall, that its dual norm $\| \cdot \|^D$ is defined by $\| A \|^D = \max \{ | {\rm tr}(AC^*)| : \| C \| = 1 \}$, cf.~\cite[5.4.12]{HJ2}. Another consequence of Proposition \ref{p4.2} is an alternative extremal representation of $m( \cdot ; {\mathcal S}_{\| \cdot \|})$ in terms of the dual norm of $\| \cdot \|$.

\begin{pro} \label{p4.8}
    For invertible $\Delta \in {\mathcal M}^\geq$ we have 
    \begin{equation*}
    m(\Delta ;  {\mathcal S}_{\| \cdot \|}) = \min \left\{ \left( \sum_{j=1}^n (\sigma_j(X))^2 \lambda_{n-j+1}^{-1} \right)^{-1} : X \in {\mathcal M} \, , \, \,\| X \|^D=1 \right\} \, .
    \end{equation*}
\end{pro}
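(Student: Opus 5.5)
The plan is to reduce the problem to the hyperplane case of Proposition~\ref{p2.5} by norm duality, and then to optimize over the dual unit sphere with the rearrangement behind Theorem~\ref{t3.2}.

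Since $\mathcal M$ is finite-dimensional, the bidual of $\| \cdot \|$ is $\| \cdot \|$ itself, so
\[
\|A\| = \max \{ |{\rm tr}(AX^*)| : \|X\|^D = 1 \}, \qquad A \in \mathcal M .
\]
Hence $\|A\| \geq 1$ holds if and only if there is some $X$ with $\|X\|^D=1$ and $|{\rm tr}(AX^*)| \geq 1$. This gives the set identity
\[
\{A : \|A\| \geq 1\} = \bigcup_{\|X\|^D=1} \{ A : |{\rm tr}(AX^*)| \geq 1 \}.
\]
I then combine three facts:
\begin{itemize}
\item Lemma~\ref{l3.9}(i) gives $\iota(\Delta;{\mathcal S}_{\|\cdot\|}) = \iota(\Delta; \|A\|\geq 1)$.
\item Lemma~\ref{l2.2}(v) splits the infimum over the union.
\item The same scaling argument as in Lemma~\ref{l3.9}(i), applied to the seminorm $A \mapsto |{\rm tr}(AX^*)|$, replaces the condition $|{\rm tr}(AX^*)| \geq 1$ by $=1$, i.e.\ by the set ${\mathcal A}_X'$.
\end{itemize}
Together they yield
\[
\iota(\Delta;{\mathcal S}_{\|\cdot\|}) = \inf \{ \iota(\Delta;{\mathcal A}_X') : \|X\|^D=1 \}.
\]
Because $\Delta$ is invertible, the inclusion ${\mathcal N}(\Delta)\subseteq{\mathcal N}(X)$ holds trivially. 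Proposition~\ref{p2.5}(i) then gives $m(\Delta;{\mathcal A}_X') = ({\rm tr}(X\Delta^{-1}X^*))^{-1}$.

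Next I fix the singular values of $X$ and optimize ${\rm tr}(X\Delta^{-1}X^*)$ over the unitary orbit. The dual norm of a unitarily invariant norm is again unitarily invariant, so $XU$ stays on the dual unit sphere for every $U \in \mathcal U$. The eigenvalues of $\Delta^{-1}$ in decreasing order are $\lambda_n^{-1} \geq \dots \geq \lambda_1^{-1}$. The maximal version of the classical result behind \eqref{f3.3} (cf.~\cite[20.A.2]{MOA}, or von Neumann's trace inequality) states
\[
\max_{U \in \mathcal U} {\rm tr}(XU\Delta^{-1}U^*X^*) = \sum_{j=1}^n \sigma_j(X)^2 \lambda_{n-j+1}^{-1},
\]
with equality attained at a suitable $U$ built from ${\mathcal U}_\Delta$ as in \eqref{f3.4}. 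Minimizing the reciprocal over the orbit and then over all singular value vectors admissible on the dual sphere yields the claimed formula as an infimum.

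It remains to show that both infima are attained; this is the step requiring the most care.
\begin{itemize}
\item The dual unit sphere is compact. The map $X \mapsto (\sum_j \sigma_j(X)^2\lambda_{n-j+1}^{-1})^{-1}$ is continuous there, since singular values are continuous and the sum is positive for $X\neq 0$. Hence the outer minimum is attained at some $X_0$.
\item For this $X_0$, Proposition~\ref{p2.5}(i) provides an $A_0$ with $|{\rm tr}(A_0X_0^*)|=1$ realizing $m(\Delta;{\mathcal A}_{X_0}')$. By duality $\|A_0\| \geq 1$.
\item Suppose $\|A_0\| > 1$. Then $A_0/\|A_0\| \in {\mathcal S}_{\|\cdot\|}$ would give a value strictly below the infimum just computed, which is impossible. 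So $\|A_0\|=1$, and the infimum $\iota(\Delta;{\mathcal S}_{\|\cdot\|})$ is a minimum.
\end{itemize}
This also justifies writing $m$ on the left-hand side.
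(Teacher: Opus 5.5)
Your proof is correct, but it follows a different route from the paper. The paper quotes the representation of a unitarily invariant norm as a maximum of weighted Ky Fan norms, $\|A\| = \max\{\phi_{{\mathbf s}(X),1}(A) : \|X\|^D=1\}$ \cite[Thm.~3.5.5]{HJ2}. It splits the infimum with Lemma~\ref{l2.2}(v) and then applies Proposition~\ref{p4.2}(ii) with $p=1$ and ${\mathbf t}={\mathbf s}(X)$. There the rearrangement is done on the side of $A$ (Corollary~\ref{c3.5}), and what remains is Cauchy--Schwarz on singular values.

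You instead use bare trace duality, so the unit sphere becomes a union of the hyperplane-type sets ${\mathcal A}_X'$. You solve each piece by Proposition~\ref{p2.5}(i), and only then rearrange, on the side of $X$, via von Neumann's trace inequality applied to ${\rm tr}(X^*X\,U\Delta^{-1}U^*)$. Both arguments rest on trace duality plus a rearrangement inequality, but yours yields more along the way:
\begin{itemize}
\item The identity $m(\Delta;{\mathcal S}_{\|\cdot\|}) = \min\{({\rm tr}(X\Delta^{-1}X^*))^{-1} : \|X\|^D=1\}$ holds for every norm on $\mathcal M$.
\item Unitary invariance enters only in the last step, and there only right unitary invariance of $\|\cdot\|^D$ is used.
\item Proposition~\ref{p2.5}(i) gives explicit minimizers $A_0 = \exp({\bf i}a)\,({\rm tr}(X_0\Delta^{-1}X_0^*))^{-1}X_0\Delta^{-1}$, which agree with \eqref{f4.2} for $p=1$.
\end{itemize}
You also make explicit the passage from $=1$ to $\geq 1$ (via Lemma~\ref{l3.9}(i)) and the attainment of the minimum, which the paper's short proof takes for granted. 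The paper's route, in exchange, is shorter once Proposition~\ref{p4.2} is available, and it exhibits $m(\,\cdot\,;{\mathcal S}_{\|\cdot\|})$ as a minimum over the weighted Ky Fan norms $\phi_{{\mathbf t},1}$.

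One small repair is needed in your attainment step. A minimizer $X_0$ of $(\sum_j\sigma_j(X)^2\lambda_{n-j+1}^{-1})^{-1}$ need not satisfy ${\rm tr}(X_0\Delta^{-1}X_0^*) = \sum_j\sigma_j(X_0)^2\lambda_{n-j+1}^{-1}$. In that case $m(\Delta;{\mathcal A}_{X_0}')$ can exceed the infimum, and your contradiction with $\|A_0\|>1$ does not fire. Fix this in either of two ways:
\begin{itemize}
\item Replace $X_0$ by $X_0U_0$ with a maximizing $U_0$; this stays on the dual unit sphere.
\item Minimize the continuous function $X\mapsto({\rm tr}(X\Delta^{-1}X^*))^{-1}$ directly over the compact dual sphere.
\end{itemize}
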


\begin{proof}
For $X \in {\mathcal M}$ denote by ${\mathbf s}(X)$ the vector of the singular values of $X$ ordered decreasingly, ${\mathbf s}(X) = (\sigma_1(X),\ldots,\sigma_n(X) )^\top$. Applying \cite[Thm.~3.5.5]{HJ2}, Lemma \ref{l2.2}(v) and Proposition \ref{p4.2}(ii) with $p=1$, one obtains 
\begin{eqnarray*}
m(\Delta ;  {\mathcal S}_{\| \cdot \|}) & = &  m(\Delta ; \{ A \in {\mathcal M} : \max \{ \phi_{{\mathbf s}(X), 1}(A) :   \,\| X \|^D=1 \} = 1 \}) \\
          & = & \min \left\{ m(\Delta; \{ A \in {\mathcal M} : {\phi}_{{{\mathbf s}(X)},1} (A)=1 \}) : \,\| X \|^D=1 \right\} \\
          & = & \min \left\{ \left( \sum_{j=1}^n (\sigma_j(X))^2 \lambda_{n-j+1}^{-1} \right)^{-1} :  X \in {\mathcal M} \, , \,\| X \|^D=1 \right\} \, . 
\end{eqnarray*}         
\end{proof}    

We finish this section giving a positive lower bound for $m(\Delta ; {\mathcal S}_{\| \cdot \|})$, $\Delta \in \mathcal I$. Let $g$ denote the symmetric gauge function of a unitarily invariant norm $\| \cdot \|$. Recall, that the Ky Fan $k$-norm is defined as the sum of the first $k$ singular values of a given $m \times n$ matrix $A$, i.e.~$\|A\|_{(k)} = \sum_{i=1}^k s_i(A)$ with $k \leq \min \{ m,n\}$, where the singular values are assumed to be ordered in decreasing order, cf.~\cite[p.~92]{Bh}, \cite{Fan,Watson,Ding}.

\begin{pro} \label{p4.9}
     For invertible $\Delta \in \mathcal M^\geq$ we have the estimate
     \begin{equation}  \label{f4.17}
          m(\Delta ; {\mathcal S}_{\| \cdot \|} ) \geq ( \| E \| g(\lambda_1^{-1},\ldots,\lambda_n^{-1}))^{-1} \, .
     \end{equation}     
     In particular, let $\| \cdot \|$ be a norm of the form $\phi_{{\mathbf t},p}$. If $p=1$ and all elements of $\mathbf t$ different from $0$ coincide, i.e.~if $\| \cdot \|$ is a positive multiple of a Ky Fan $k$-norm, \eqref{f4.17} is an equality for any invertible $\Delta$. Otherwise, \eqref{f4.17} is a strong inequality for all invertible $\Delta$.
\end{pro}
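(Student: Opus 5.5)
The plan is to start from the dual representation in Proposition \ref{p4.8}. Since $\Delta$ is invertible, that proposition gives
\[
m(\Delta;{\mathcal S}_{\|\cdot\|})^{-1} = \max\Bigl\{ \sum_{j=1}^n \sigma_j(X)^2\mu_j : \|X\|^D = 1 \Bigr\}, \qquad \mu_j := \lambda_{n-j+1}^{-1}.
\]
So \eqref{f4.17} is equivalent to the bound $\sum_j \sigma_j(X)^2\mu_j \le \|E\|\, g(\lambda_1^{-1},\ldots,\lambda_n^{-1})$ for every $X$ with $\|X\|^D=1$. Note that $\mu_1\ge\cdots\ge\mu_n>0$, and $g(\mu_1,\ldots,\mu_n)=g(\lambda_1^{-1},\ldots,\lambda_n^{-1})$ because $g$ is symmetric.

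I would prove this bound in two steps. First, $\sigma_j(X)^2\le \sigma_1(X)\sigma_j(X)$ gives $\sum_j\sigma_j(X)^2\mu_j\le \sigma_1(X)\sum_j\sigma_j(X)\mu_j$. Second, I bound the two factors separately.

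For the first factor, write $\sigma_1(X)=|{\rm tr}(XC^*)|$ with $C$ a rank-one partial isometry; then $\|C\|=\|E\|$ by unitary invariance, so $\sigma_1(X)\le \|E\|\,\|X\|^D=\|E\|$.

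For the second factor, choose an SVD $X=V\,{\rm diag}(\sigma_j(X))\,W^*$ and set $D=V\,{\rm diag}(\mu_1,\ldots,\mu_n)\,W^*$. Then ${\rm tr}(XD^*)=\sum_j\sigma_j(X)\mu_j$ and $\|D\|=g(\mu)$, so by the definition of the dual norm $\sum_j\sigma_j(X)\mu_j\le \|X\|^D g(\mu)=g(\mu)$. Multiplying the two bounds gives \eqref{f4.17}.

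For the sharpness statement I would work with $\|\cdot\|=\phi_{{\mathbf t},p}$, so that $g(x)=(\sum_j t_j x_{[j]}^p)^{1/p}$, where $x_{[1]}\ge\cdots\ge x_{[n]}$ denotes the decreasing rearrangement of $x$. Then $\|E\|=t_1^{1/p}$ and $g(\lambda^{-1})=(\sum_j t_j\lambda_{n-j+1}^{-p})^{1/p}$, and I compare this with the explicit values in Proposition \ref{p4.2}.

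In the Ky Fan case, $p=1$ and ${\mathbf t}=c(1,\ldots,1,0,\ldots,0)$ with $k$ ones. Formula \eqref{f4.1} gives $m=(c^2\sum_{j\le k}\lambda_{n-j+1}^{-1})^{-1}$, while the right-hand side of \eqref{f4.17} is $(c\cdot c\sum_{j\le k}\lambda_{n-j+1}^{-1})^{-1}$. These agree, so \eqref{f4.17} is an equality.

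The cases $p\ge 2$ have to be handled so that they match the wording "otherwise". By Proposition \ref{p4.2}(i), $m=t_1^{-2/p}\lambda_n$. Meanwhile $\|E\|g(\lambda^{-1})=t_1^{1/p}(\sum_j t_j\lambda_{n-j+1}^{-p})^{1/p}\ge t_1^{2/p}\lambda_n^{-1}$, with equality iff $t_j=0$ for all $j\ge2$. In that subcase the norm is $t_1^{1/p}\sigma_1$, which is a multiple of the Ky Fan $1$-norm, so it belongs to the equality case anyway. For every other $\mathbf t$ the inequality is strict.

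For $p\in[1,2)$, outside the Ky Fan case, I would again use \eqref{f4.1}. Set $q=p/(2-p)$ and $k=\max\{j:t_j>0\}$. The claim then reduces to
\[
\Bigl(\sum_{j\le k}(t_j^{2/p}\mu_j)^{q}\Bigr)^{1/q} < t_1^{1/p}\Bigl(\sum_{j\le k} t_j\mu_j^{p}\Bigr)^{1/p}.
\]
I would obtain this by retracing the two-step argument above with the extremal $X$. Equality in the chain forces two things. The first step, $\sigma_j^2\le\sigma_1\sigma_j$ with all $\mu_j>0$, forces $\sigma_j(X)\in\{0,\sigma_1(X)\}$, so $X$ is a multiple of a rank-$r$ partial isometry. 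The duality step forces equality in Hölder's inequality between $\sigma(X)$ and $\mu$ for the weighted $\ell^p$ gauge. When $p>1$, Hölder equality requires $\sigma_j(X)$ to be proportional to $(t_j^{-1}\ldots)$-type powers of $\mu_j$. Since the $\mu_j$ can be arbitrary and strictly decreasing, this is incompatible with $\sigma(X)$ being two-valued unless $k=1$. When $p=1$, equality requires $t_j$ to be constant on the support of $\sigma(X)$, and the maximizing $X$ from \eqref{f4.2} has rank $k$. Hence $t$ would have to be constant on $\{1,\ldots,k\}$, which is the Ky Fan case.

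The main obstacle I expect is this equality analysis: making the Hölder equality conditions precise for a weighted $\ell^p$ dual norm, and confirming that the maximizer in Proposition \ref{p4.8} really has the structure given by \eqref{f4.2}. As a fallback I would verify the displayed strict inequality directly, using strict convexity of $x\mapsto x^{1/q}$ and the ordering $t_1\mu_1^{p}\ge t_j\mu_j^{p}$.
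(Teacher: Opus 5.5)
Your proof of the inequality \eqref{f4.17} is correct, but it takes a different route from the paper. The paper works with the primal problem. For $\|A\|=1$ it uses ${\rm tr}(A\Delta A^*)\ge\sum_j\lambda_{n-j+1}\sigma_j^2\ge\|E\|^{-1}g(\lambda_1\sigma_n^2,\ldots,\lambda_n\sigma_1^2)$, which holds because $\|E\|\,\|\cdot\|_1$ dominates $\|\cdot\|$. It then applies a Cauchy--Schwarz inequality for symmetric gauge functions, $g(\sigma)^2\le g(\lambda_1^{-1},\ldots,\lambda_n^{-1})\,g(\lambda_1\sigma_n^2,\ldots,\lambda_n\sigma_1^2)$. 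You instead pass to the dual representation of Proposition~\ref{p4.8}. There you only need the defining inequality of $\|\cdot\|^D$, tested against a rank-one partial isometry and against $V\,{\rm diag}(\mu_1,\ldots,\mu_n)W^*$. The cost is that your argument depends on Proposition~\ref{p4.8}, and hence on Proposition~\ref{p4.2}(ii) and norm duality. Your Ky Fan computation and your treatment of $p\in[2,+\infty)$ coincide with the paper's.

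The gap is the strictness claim for $p\in[1,2)$. Equality in your chain requires equality in three places:
(a) $\sigma_j(X)^2\le\sigma_1(X)\sigma_j(X)$;
(b) $\sigma_1(X)\le\|E\|\,\|X\|^D$;
(c) $\sum_j\sigma_j(X)\mu_j\le\|X\|^D g(\mu)$.
You use only (a) and (c), and you dispose of (c) by saying that the $\mu_j$ ``can be arbitrary and strictly decreasing''. That inverts the quantifier. The proposition claims strictness for \emph{every} invertible $\Delta$, so $\mu$ is fixed and may have ties; there $g$ is not differentiable and equality in (c) is far less restrictive.

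Here is a concrete case. Take $n=2$, ${\mathbf t}=(1,1)^\top$, $p=3/2$ (the Schatten $3/2$-norm, whose dual is the Schatten $3$-norm) and $\Delta=I$.
\begin{itemize}
\item The maximizer in Proposition~\ref{p4.8} has $\sigma(X)=2^{-1/3}(1,1)^\top$, which gives $m=2^{-1/3}$.
\item Both (a) and (c) are equalities there; note $\sum_j\sigma_j\mu_j=2^{2/3}=g(1,1)$.
\item The strict inequality $2^{-1/3}>2^{-2/3}$ comes entirely from (b), since $\sigma_1(X)=2^{-1/3}<1=\|E\|$.
\end{itemize}
So your sketch cannot yield strictness for all $\Delta$. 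Two further points are loose. The dual gauge of $\phi_{{\mathbf t},p}$ is in general not a weighted $\ell^{p'}$ gauge, so ``H\"older equality'' is not the textbook condition. Also, \eqref{f4.2} describes the minimizing $A$, not the maximizing dual matrix $X$.

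Either of two repairs works.

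Within your framework, use all three conditions. Conditions (a) and (b) force $\sigma(X)=t_1^{1/p}(1,\ldots,1,0,\ldots,0)^\top$ with $r$ ones. Then $\|X\|^D=1$ means the dual gauge of this $0$--$1$ vector equals $t_1^{-1/p}$.
\begin{itemize}
\item For $p>1$ and $r\ge2$, testing against $e_1+e_2$ shows this is impossible, since $2(t_1+t_2)^{-1/p}\ge 2^{1-1/p}t_1^{-1/p}>t_1^{-1/p}$.
\item For $p=1$, testing against the same $0$--$1$ vector forces $t_1=\cdots=t_r$.
\end{itemize}
In both cases (c) then forces $t_j=0$ for $j>r$.

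Alternatively, as in the paper, verify your displayed inequality directly:
\[
\Bigl(\sum_j t_j^{2/(2-p)}\mu_j^{p/(2-p)}\Bigr)^{2-p}\le\sum_j t_j^2\mu_j^p\le t_1\sum_j t_j\mu_j^p .
\]
The first step is the subadditivity of $s\mapsto s^{2-p}$, valid since $0<2-p\le1$; the second uses $t_j\le t_1$. Equality in the first step requires $p=1$ or $t_2=0$, and equality in the second requires $t_j\in\{0,t_1\}$. Your fallback points in this direction but names the wrong tools. The map $x\mapsto x^{1/q}$ is concave for $q=p/(2-p)\ge1$, not convex. The relevant ordering is $t_j\le t_1$, not $t_1\mu_1^p\ge t_j\mu_j^p$.
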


\begin{proof}
Let $\| A \|=1$. The Schatten 1-norm is known to be the largest unitarily invariant norm satisfying the normalization \eqref{f3.12}. By Theorem \ref{t3.2} and by \cite[IV.1.6]{Bh} we get 
   \begin{eqnarray*}
        g(\lambda_1^{-1}, \ldots , \lambda_n^{-1}) \, {\rm tr}(A \Delta A^*)
             & \geq & g(\lambda_1^{-1}, \ldots , \lambda_n^{-1})\sum_{j=1}^n \lambda_{n-j+1} \sigma_j^2 \\
             & \geq &  g(\lambda_1^{-1}, \ldots , \lambda_n^{-1}) \| E \|^{-1}   g(\lambda_1 \sigma_n^2, \ldots , \lambda_n \sigma_1^2) \\
             & \geq & \| E \|^{-1}   (g(\sigma_1, \ldots , \sigma_n))^2 = \| E \|^{-1}\, . 
   \end{eqnarray*}
So \eqref{f4.17} follows. Also, \eqref{f4.1} shows that \eqref{f4.17} is an equality for any $\Delta \in \mathcal I$ if $\| \cdot \|$ is a positive multiple of a Ky Fan $k$-norm. 

Now, assume that $\| \cdot \|$ is a norm of the form $\phi_{{\mathbf t}.p}$ and \eqref{f4.17} is an equality for some $\Delta \in {\mathcal I}$. If $p \in [2,+\infty)$ we get $(t_1^{1/p}(\sum_{j=1}^n t_j \lambda_{n-j+1}^{-p})^{1/p})^{-1} = t_1^{-2/p} \lambda_n$ by Proposition \ref{p4.2}(i). Setting $x_j = \lambda_{n-j+1}^{-1}$ for $j \in \{ 1,\ldots, n \}$ this equality indicates that there exists a solution $(x_1,\ldots,x_n)^\top \in {\mathcal D}_+$ of the equation $\sum_{j=1}^n t_j^px_j = t_1 x_1$, what is equivalent to $t_2=0$ since ${\mathbf t} \in \mathcal D$. Similarly, if $p \in [1,2)$ then there is a solution $(x_1,\ldots,x_n)^\top \in {\mathcal D}_+$ of the equation
\begin{equation} \label{f4.18}
    \left( \sum_{j=1}^n t_j^{2 / (2-p)} x_j^{p/(2-p)} \right)^{2-p} = t_1 \sum_{j=1}^n t_j x_j^p 
\end{equation}
by Proposition \ref{p4.2}(ii). The inequality $(\sum_{j=1}^n t_1^{2/(2-p)} x_j^{p / (2-p)} )^{(2-p)} \leq \sum_{j=1}^n t_j^2 x_j^p$ holds since $(2-p) \in (0,1]$ . Equality is satisfied if and only if $p=1$ or $t_2=0$. In case $p=1$ the equality \eqref{f4.18} can be rewritten as $\sum_{j=1}^n t_j^2 x_j = t_1 \sum_{j=1}^n t_j x_j$, and we get $t_j^2 = t_1 t_j$ for any $j \in \{1,\ldots,n \}$, i.e. $t_1= \ldots = t_k > 0$ and $t_{k+1}= \ldots =  t_n=0$ for some $k \in \{ 1,\ldots, n-1 \}$.        
\end{proof}   

Because of the preceding proposition one might guess that for all unitarily invariant norms the inequality \eqref{f4.17} should be an equality either for any $\Delta \in {\mathcal I}$ or for no $\Delta \in {\mathcal I}$. We give an example where \eqref{f4.17} is an equality for a non-trivial subset of ${\mathcal M}^\geq \cap \mathcal I$. The problem of a suitable description of the set of all unitarily invariant norms such that the inequality \eqref{f4.17} is an equality for all or for no $\Delta \in \mathcal I$, respectively,  remains to be open. 

\begin{exe} \label{e4.10}
Set $n=2$ and define a symmetric gauge function $g(x_1,x_2)= \max \{ \sqrt{2}x_1, x_1+x_2\}$,  $(x_1,x_2)^\top \in \mathcal D$. Then $\| E \| = \sqrt{2}$ for the corresponding unitarily invariant norm $\| \cdot \|$. Thus, the right-hand side of \eqref{f4.17} is equal to 
\begin{eqnarray*}
    \| E \|^{-1}g(\lambda_1^{-1},\lambda_1^{-1})^{-1} 
    & = & \sqrt{2}^{-1} (\max \{ \sqrt{2}\lambda_1^{-1}, \lambda_1^{-1}+\lambda_2^{-1}\} )^{-1} \\
    & = & \sqrt{2}^{-1} \min \{ \sqrt{2}^{-1} \lambda_1, (\lambda_1^{-1}+\lambda_2^{-1})^{-1} \} = \frac{1}{2} \lambda_1
\end{eqnarray*}
if and only if $\sqrt{2}^{-1} \lambda_1 \leq (\lambda_1^{-1}+\lambda_2^{-1})^{-1}$, or equivalently, $\lambda_1 \leq (\sqrt{2}-1) \lambda_2$. On the other hand, $m(\Delta ; {\mathcal S}_{\| \cdot \|}) = \min \{ \lambda_1 / 2, (\lambda_1^{-1}+\lambda_2^{-1})^{-1} \}$ by Lemma \ref{l3.9}(iv) and Proposition \ref{p4.2}(ii). It is not hard to see that $m(\Delta ; {\mathcal S}_{\| \cdot \|}) = \lambda_1/2$ if and only if $\lambda_1=\lambda_2$. Therefore, the inequality \eqref{f4.17} is an equality if and only if $\lambda_1=\lambda_2$ or $\lambda_1 \geq (\sqrt{2}-1) \lambda_2$.
\end{exe}

\section{Further examples and related inequalities}
\label{s5}

First, we present an amplification of a result by Helson and Lowdenslager \cite[Thm.~8]{HL} (cf. Thm.~\ref{t1.1} above). For ${\mathbf t}=(t_1,\ldots,t_n)^\top \in (0,+\infty)^n$ set $t = \sum_{j=1}^n t_j$ and ${\mathcal S}_{\mathbf t} = \{ A \in {\mathcal M} : \prod_{j=1}^n \sigma_j^{t_j} = 1 \}$. 

\begin{pro} \label{p5.1}
Let ${\mathbf t}=(t_1,\ldots,t_n)^\top \in (0,+\infty)^n$ and $\Delta \in {\mathcal M}^\geq$.
\begin{itemize}
\item[(i)] If $\Delta$ is not invertible then $\iota(\Delta ; {\mathcal S}_{\mathbf t}) = 0$ and the infimum is attained if and only if $\Delta=0$. 
\item[(ii)] Suppose, ${\mathbf t} \in {\mathcal D}_+$. For invertible $\Delta$,
\begin{equation} \label{f5.1}
     m(\Delta ; {\mathcal S}_{\mathbf t}) = t \left( \prod_{j=1}^n (\lambda_{n-j+1} t_j^{-1})^{t_j} \right)^{1/t} \, .
\end{equation}
and 
\begin{eqnarray*}
 \quad \quad {\mathcal S}_{\mathbf t}(\Delta) = \{ A \in {\mathcal M} : A=t^{-1}  m(\Delta ; {\mathcal S}_{\mathbf t})^{1/2} \, V \, {\rm diag}((t_1\lambda_n^{-1})^{1/2}, \ldots , (t_n \lambda_1^{-1})^{1/2}) U^* \, , & &\\
V \in {\mathcal U}, U \in {\mathcal U}_\Delta \} \, . &  & 
\end{eqnarray*}
\end{itemize}
\end{pro}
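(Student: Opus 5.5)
Since ${\mathcal S}_{\mathbf t}$ is unitarily invariant, Corollary \ref{c3.5} reduces the problem to a scalar one:
\[
\iota(\Delta;{\mathcal S}_{\mathbf t}) = \inf \Bigl\{ \sum_{j=1}^n \lambda_{n-j+1}\sigma_j^2 : \sigma_1\ge\ldots\ge\sigma_n>0,\ \prod_{j=1}^n \sigma_j^{t_j}=1 \Bigr\}.
\]
Every element of ${\mathcal S}_{\mathbf t}$ is invertible, because all $\sigma_j>0$.

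For (i), let $\Delta$ be non-invertible. Then $\lambda_n=0$, and the matrices $A_k=\mathrm{diag}(k^{t_n/t_1},1,\ldots,1,k^{-1})$, suitably rotated by some $U\in{\mathcal U}_\Delta$ and reordered, lie in ${\mathcal S}_{\mathbf t}$. Concretely, take singular values $\sigma_1=k^{t_n/t_1}$, $\sigma_2=\ldots=\sigma_{n-1}=1$, $\sigma_n=k^{-1}$, and place the largest one on an eigenvector of $\Delta$ for the eigenvalue $0$. Then ${\rm tr}(A_k\Delta A_k^*)\le \lambda_1 (n-2) + \lambda_1 k^{-2} + 0$. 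That bound is not yet small, so I would instead choose $\sigma_1=k^{(t-t_1)/t_1}$ and $\sigma_2=\ldots=\sigma_n=k^{-1}$, with $\sigma_1$ assigned to the kernel direction; this gives a value $\le \lambda_1 (n-1)k^{-2}\to0$. Hence $\iota=0$.

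Attainment at $A\in{\mathcal S}_{\mathbf t}$ would require ${\rm tr}(A\Delta A^*)=0$, i.e.\ $A\Delta^{1/2}=0$. Since $A$ is invertible, this forces $\Delta=0$. Conversely, for $\Delta=0$ any $A$ attains the infimum.

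For (ii), let ${\mathbf t}\in{\mathcal D}_+$ and $\Delta\in{\mathcal I}$. The key step is the weighted AM--GM inequality with weights $t_j/t$ applied to the numbers $\lambda_{n-j+1}\sigma_j^2/t_j$:
\[
\sum_{j=1}^n \lambda_{n-j+1}\sigma_j^2 = t\sum_{j=1}^n \frac{t_j}{t}\cdot\frac{\lambda_{n-j+1}\sigma_j^2}{t_j} \ \ge\ t\prod_{j=1}^n\Bigl(\frac{\lambda_{n-j+1}\sigma_j^2}{t_j}\Bigr)^{t_j/t}.
\]
The right-hand side equals the right-hand side of \eqref{f5.1}, since $\prod_j \sigma_j^{2t_j}=1$. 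This gives the lower bound without needing the ordering of the $\sigma_j$.

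Equality in AM--GM holds iff $\lambda_{n-j+1}\sigma_j^2/t_j=c$ is independent of $j$, i.e.\ $\sigma_j=(c\,t_j\lambda_{n-j+1}^{-1})^{1/2}$. This is the main point requiring care: the candidate must be admissible, i.e.\ decreasing. Because ${\mathbf t}\in{\mathcal D}$ and $\lambda_n\le\ldots\le\lambda_1$, the sequence $t_j\lambda_{n-j+1}^{-1}$ is decreasing, as in \eqref{f4.6}, so it is. The constant $c$ is fixed by $\prod\sigma_j^{t_j}=1$. Since the common value is $c=m/t$, we get $c^{1/2}=t^{-1}m^{1/2}\cdot t^{1/2}$. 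I would check this normalization against the stated prefactor $t^{-1}m^{1/2}$ by computing $\sum\lambda_{n-j+1}\sigma_j^2=ct=m$; the prefactor may need to read $t^{-1/2}m^{1/2}$, and I would verify this carefully.

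So the minimum is attained, with value \eqref{f5.1}, exactly at those $A$ whose singular values are $\sigma_j$. By Theorem \ref{t3.2}, formula \eqref{f3.4}, the minimizers with these prescribed singular values are exactly $V\,\mathrm{diag}(\sigma_1,\ldots,\sigma_n)U^*$ with $V\in{\mathcal U}$ and $U\in{\mathcal U}_\Delta$. Conversely, any minimizer must have equality in both the Theorem \ref{t3.2} bound and AM--GM, and so has this form. This yields the description of ${\mathcal S}_{\mathbf t}(\Delta)$.
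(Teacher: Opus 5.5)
Your proof is correct. Part (i) is essentially the paper's argument. The paper takes singular values $b_k,c_k,\ldots,c_k$ with $b_k^{t_1}c_k^{t-t_1}=1$ and $c_k\to 0$, and places $b_k$ on a null direction of $\Delta$ via $U\in{\mathcal U}_\Delta$. It then excludes attainment for $\Delta\neq 0$ through Lemma \ref{l2.3}, since every element of ${\mathcal S}_{\mathbf t}$ is invertible. Your choice $\sigma_1=k^{(t-t_1)/t_1}$, $\sigma_j=k^{-1}$ is a special case of that construction; just delete the abandoned first attempt.

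In (ii) you take a genuinely different route. After the same reduction by Corollary \ref{c3.5}, the paper proves Lemma \ref{l5.2} with Lagrange multipliers: a minimum exists by coercivity, the stationary point is unique, so it is the minimizer. It then checks that this point is decreasing. Your weighted AM--GM inequality with weights $t_j/t$ gives the lower bound on all of $(0,+\infty)^n$ in one line. Its equality case, all $\lambda_{n-j+1}\sigma_j^2/t_j$ equal, gives at once the uniqueness of the optimal singular-value vector and its exact normalization, with no calculus or existence argument. Combining this with the equality case \eqref{f3.4} of Theorem \ref{t3.2} then yields the minimizer set, a step the paper leaves implicit. The Lagrange approach has the advantage of being the template the paper reuses for constraints where no AM--GM-type inequality is available (Proposition \ref{p5.7}(ii), Lemma \ref{l5.8}).

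You need not hedge on the normalization; your computation already settles it. From $\sum_j\lambda_{n-j+1}\sigma_j^2=ct=m(\Delta;{\mathcal S}_{\mathbf t})$ you get $c^{1/2}=t^{-1/2}\,m(\Delta;{\mathcal S}_{\mathbf t})^{1/2}$. So the prefactor in the description of ${\mathcal S}_{\mathbf t}(\Delta)$ must be $t^{-1/2}$, not $t^{-1}$. The misprint traces back to \eqref{f5.4}, which omits the factor $\mu$: the multiplier there equals $\mu/t$, so $x_j^{(0)}=t^{-1}\mu\, t_ja_j^{-1}$. Already for $n=1$ and $t_1=2$, the matrices in the stated formula have modulus $2^{-1/2}$, so they do not even lie in ${\mathcal S}_{\mathbf t}$.
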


To prepare the proof we need the following lemma to apply Corollary \ref{c3.5}:

\begin{lem} \label{l5.2}
    Let ${\mathbf t}=(t_1,\ldots,t_n)^\top \in (0,+\infty)^n$ and $(a_1,\ldots,a_n)^\top \in (0,+\infty)^n$. The minimum $\mu$ of the function $f(x_1,\ldots,x_n) = \sum_{j=1}^n a_j x_j$ for $(x_1,\ldots,x_n)^\top \in (0,+\infty)^n$ with the condition 
\begin{equation} \label{f5.2}
     \prod_{j=1}^n x_j^{t_j} = 1
\end{equation}
equals to
\begin{equation} \label{f5.3}
     \mu = t \left( \prod_{j=1}^n (a_j t_j^{-1})^{t_j} \right)^{1/t}\, ,
\end{equation}
and the minimum $\mu$ is attained at a unique point 
\begin{equation} \label{f5.4}
     (x_1^{(0)}, \ldots , x_n^{(0)})^\top= t^{-1} (t_1a_1^{-1},\ldots,t_n a_n^{-1})^\top  \, .
\end{equation}
\end{lem}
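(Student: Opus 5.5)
The plan is to apply the weighted arithmetic--geometric mean inequality to the terms $a_jx_j$, with weights $t_j/t$. Write
\[
\sum_{j=1}^n a_jx_j=\sum_{j=1}^n \frac{t_j}{t}\cdot\frac{t\,a_jx_j}{t_j}
\]
and use weighted AM--GM, i.e.\ concavity of $\log$. This gives
\[
\sum_{j=1}^n a_jx_j \geq \prod_{j=1}^n\left(\frac{t\,a_jx_j}{t_j}\right)^{t_j/t}
= t\left(\prod_{j=1}^n (a_jt_j^{-1})^{t_j}\right)^{1/t}\left(\prod_{j=1}^n x_j^{t_j}\right)^{1/t}.
\]
Here I use $\sum_j t_j/t=1$ to pull the factor $t$ out. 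Under the constraint \eqref{f5.2} the last factor equals $1$, so $f\ge\mu$ with $\mu$ as in \eqref{f5.3} on the whole constraint set.

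For attainment and uniqueness I will use the equality case of weighted AM--GM. Since all weights $t_j/t$ are strictly positive, equality holds if and only if all the quantities $t\,a_jx_j/t_j$ coincide, say with a common value $c>0$. This forces $x_j=c\,t_ja_j^{-1}t^{-1}$ for every $j$.

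Substituting into \eqref{f5.2} determines $c$ uniquely, namely
\[
c^{t}\prod_{j=1}^n (t_ja_j^{-1}t^{-1})^{t_j}=1 .
\]
At the resulting point the value of $f$ is $\sum_j c\,t_j/t=c$. Because equality holds in AM--GM there, this value must be $\mu$, so $c=\mu$. Hence the minimizer is $x_j^{(0)}=t^{-1}\mu\,t_ja_j^{-1}$.

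I should flag that this differs from \eqref{f5.4} as printed by the factor $\mu$. So either the stated formula implicitly contains this normalization, or it is a typo. I would verify this by checking the constraint $\prod_j (x_j^{(0)})^{t_j}=1$, and I would present the minimizer with the factor $\mu$. This is consistent with the factor $m(\Delta;{\mathcal S}_{\mathbf t})^{1/2}$ appearing in Proposition \ref{p5.1}(ii), where $x_j=\sigma_j^2$.

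The main step needing care is exactly this normalization bookkeeping. The inequality itself is routine, and since the equality condition is an "if and only if", uniqueness comes for free.
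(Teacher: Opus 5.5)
Your proof is correct, but it takes a different route from the paper. The paper uses Lagrange multipliers. It first asserts that a minimizer exists because $f$ grows (implicitly using that, on the constraint set, $x_j\to 0$ forces some other coordinate to $+\infty$, so sublevel sets are compact in $(0,+\infty)^n$). It then solves $a_k-\lambda t_kx_k^{-1}\prod_j x_j^{t_j}=0$, i.e.\ $a_kx_k=\lambda t_k$, finds a single stationary point, and concludes that this point is the minimizer.

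Your weighted AM--GM argument replaces all of this with one global inequality, $f(x)\ge\mu\,\bigl(\prod_j x_j^{t_j}\bigr)^{1/t}$ on all of $(0,+\infty)^n$. Attainment and uniqueness then come from the equality case. You need neither the existence/compactness step nor the fact that constrained minimizers are stationary points, and the step ``only one stationary point, hence unique minimizer'' becomes automatic. What the paper's method buys is uniformity: the same Lagrange template is reused in Proposition \ref{p5.7}(ii) and Lemma \ref{l5.8}. There no ready-made inequality like AM--GM is available, and boundary behaviour has to be analysed separately.

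Your flag about \eqref{f5.4} is justified. The paper's own computation gives $x_k=\lambda t_ka_k^{-1}$ with $\lambda=\mu/t=\bigl(\prod_j(a_jt_j^{-1})^{t_j}\bigr)^{1/t}$; the expression for $\lambda$ printed in the paper's proof is inverted. So the minimizer is $x_j^{(0)}=t^{-1}\mu\,t_ja_j^{-1}$, exactly as you found. The printed \eqref{f5.4} violates \eqref{f5.2} in general: for $n=1$, $t_1=1$, $a_1=2$ it gives $x_1^{(0)}=1/2$.

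Your consistency check with Proposition \ref{p5.1}(ii) is right about the factor $m(\Delta;{\mathcal S}_{\mathbf t})^{1/2}$. However, with $x_j=\sigma_j^2$ one gets $\sigma_j=t^{-1/2}\mu^{1/2}(t_ja_j^{-1})^{1/2}$, so the accompanying $t^{-1}$ there should be $t^{-1/2}$.
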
     

\begin{proof}
Since $\lim_{x_j \to +\infty} f(x_1,\ldots,x_n) = +\infty$ for any $j \in \{ 1,\ldots,n\}$ a minimum exists. We compute it using the Lagrange mollifier method. Set $F(x_1,\ldots,x_n,\lambda) = f(x_1,\ldots,x_n) - \lambda  \, \left(\prod_{j=1}^n x_j^{t_j} - 1\right)$ and compute the stationary points solving the system $\frac{{\partial} F}{{\partial} x_k} (x_1,\ldots,x_n) = a_k - \lambda t_kx_k^{-1} \cdot \prod_{j=1}^n x_j^{t_j} =0$, $k \in \{ 1,\ldots,n \}$, under \eqref{f5.2}. One obtaines $a_kx_k=\lambda t_k$, what implies $\prod_{j=1}^n x_j^{t_j} \, \prod_{j=1}^n a_j^{t_j} = \lambda^t \, \prod_{j=1}^n t_j^{t_j}$. Therefore, $\lambda= \left( \prod_{j=1}^n (t_j a_j^{-1})^{t_j} \right)^{1/t}$ showing that \eqref{f5.4} is the only stationary point. Since a minimum exists and any point of local minimum is a stationary point, relation \eqref{f5.3} follows by elementary computations.
\end{proof}

\begin{proof} (of Proposition \ref{p5.1})
(i) Choose two sequences $(b_k)$ and $(c_k)$ of positive numbers such that $b_k^{t_1} c_k^{t-t_1} = 1$ and $\lim_{k \to +\infty} c_k =0$. If $U \in{\mathcal U}_\Delta$ and $A_k= {\rm diag}(b_k,c_k,\ldots,c_k) \, U^*$ then the sequence $(A_k)$ meets the condition of Lemma \ref{l2.3}. If $\Delta$ is not invertible and different from zero, the space ${\mathcal N}(\Delta)$ is a proper subspace of ${\mathbb C}^n$. Since all matrices of ${\mathcal S}_{\mathbf t}$ are invertible,  ${\mathcal S}_{\mathbf t}(\Delta) = \emptyset$ by Lemma \ref{l2.3}.

(ii) Note, that  $A \in {\mathcal S}_{\mathbf t}$ if and only if $\prod_{j=1}^n (\sigma_j^2)^{t_j}=1$. Set $x_j=\sigma_j$ for $j \in \{ 1,\ldots,n \}$ and apply Corollary \ref{c3.5} and Lemma \ref{l5.2} choosing $a_j=\lambda_{n-j+1}$. In particular, observe that ${\mathbf t} \in {\mathcal D}_+$ implies that the numbers $\sigma_j^{(0)}=(x_j^{(0)})^{1/2}$, $j \in \{ 1,\ldots,n \}$, satisfy the chain of inequalities $\sigma_1^{(0)} \geq \ldots \geq \sigma_n^{(0)}$ by \eqref{f5.4}. This is a necessary condition by convention \eqref{f2.1}.
\end{proof}

\begin{rmk} \label{r5.3}
If ${\mathbf t} \not\in {\mathcal D}_+$, the proof of Proposition \ref{p5.1}(ii) reveals that \eqref{f5.1}is true for any matrix $\Delta$, whose eigenvalues satisfy the chain of inequalities \eqref{f4.6}. If $\Delta$ is such that \eqref{f4.6} fails, formula \eqref{f5.1} cannot be applied. For example, if $n=2$ and \eqref{f4.6} is not true, one has to move on the curve $x_1^{t_1} x_2^{t_2}=1$ to the right as far as one reaches a point $(x_1,x_2)$ with $x_1=x_2$, i.e., at the point $(1,1)$. One has $m(\Delta ; {\mathcal S}_{\mathbf t})= \lambda_1+\lambda_2$, in this case. 
\end{rmk}

\begin{rmk} \label{r5.4}
If ${\mathbf t} \in{\mathcal D}$ is such that $t_k > 0$ and $t_{k+1}=0$ for some $k \in \{ 1,\ldots,n-1\}$, the condition $\prod x_j^{t_j}=1$ can be interpreted in two ways. Either we set $0^0=1$ or we consider $0^0$ as not defined. In the first case one can assume $\sigma_{k+1}=0$, in the second case all singular values have to be positive, but $\sigma_{k+1}$ can be arbitrary small. In both these cases we have only to apply Lemma \ref{l5.2} with $n=k$. The only difference is that in the first case the minimum is admitted as soon as ${\rm rk}(\Delta) \geq k$, whereas in the second case the infimum is not attained similarly as in Proposition \ref{p5.1}(i).
\end{rmk}

Using Proposition \ref{p5.1} and the following lemma a multiplicative analogue of Proposition \ref{p3.8} can be proved. 

\begin{lem} \label{l5.5}
   Let $\{ g_i : i \in \{ 1,\ldots,n \}\}$ be measurable complex-valued functions on $(0,+\infty)$ and let $g(x_1,\ldots,x_n)=\prod_{j=1}^n g_j(x_j)$, where $(x_1,\ldots,x_n)^\top \in {\mathcal D}_+$. Let $g \not= 0$ be non-negative, positively homogeneous and Schur concave. Suppose,
   \begin{equation}\ \label{f5.6}
       g(x_1,\ldots,x_n) \leq g(y_1,\ldots,y_n) \,\, {\rm if} \,\, x_j \leq y_j \,\, {\rm for} \,\, j \in \{1,\ldots,n\} \, .
   \end{equation}       
Then $g$ is of the form $g(x_1,\ldots,x_n) = c \cdot \prod_{j=1}^n x_j^{t_j}$ for some constant $c \in (0,+\infty)$ and a vector $(t_n,\ldots,t_1)^\top \in {\mathcal D}$ such that $t=\sum_{j=1}^n t_j = 1$.
\end{lem}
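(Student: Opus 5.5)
The plan is to turn the multiplicative structure plus positive homogeneity into a multiplicative Cauchy equation for each factor $g_j$. Monotonicity \eqref{f5.6} will then give $t_j\ge 0$, and Schur concavity will give the ordering of the exponents.

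\emph{Step 1: $g>0$ on ${\mathcal D}_+$ and every $g_j$ is zero-free.} Since $g\neq 0$ and $g\ge 0$, pick $x\in{\mathcal D}_+$ with $g(x)>0$. For any $y\in{\mathcal D}_+$ choose $a>0$ so large that $x_j\le a y_j$ for all $j$. Then \eqref{f5.6} and homogeneity give $a\,g(y)=g(ay)\ge g(x)>0$. Every $s\in(0,+\infty)$ occurs as the $j$-th coordinate of some point of ${\mathcal D}_+$, for instance $(s,\ldots,s)^\top$. Hence $g_j(s)\neq 0$ for all $s>0$ and all $j$. Because $g=|g|$, we may replace each $g_j$ by $|g_j|$ and assume all $g_j>0$.

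\emph{Step 2: separation of variables.} Homogeneity reads $\prod_j g_j(ax_j)=a\prod_j g_j(x_j)$, that is, $\prod_j k_j(a,x_j)=a$ with $k_j(a,s)=g_j(as)/g_j(s)$. Fix a point $x$ in the interior of ${\mathcal D}_+$, so that $x_1>\cdots>x_n>0$. Varying only $x_j$ in a small open interval keeps the point in ${\mathcal D}_+$ and leaves the other factors fixed, so $k_j(a,\cdot)$ is locally constant. To pass from local to global constancy, chain overlapping intervals. Any two values $s<s'$ can serve as the $j$-th coordinate of strictly decreasing vectors: take the other coordinates suitably far above or below, and move between configurations through the interior. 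This gives $k_j(a,s)=\kappa_j(a)$ for all $s>0$. I expect this bookkeeping with the ordering constraints of ${\mathcal D}_+$ to be the main (though elementary) obstacle.

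Consequently $\varphi_j(s):=g_j(s)/g_j(1)$ satisfies $\varphi_j(as)=\varphi_j(a)\varphi_j(s)$ and is measurable and positive. The standard solution of the measurable multiplicative Cauchy equation gives $\varphi_j(s)=s^{t_j}$ for some real $t_j$. Therefore
\[
g(x)=c\prod_{j=1}^n x_j^{t_j},\qquad c=\prod_{j=1}^n g_j(1)>0 .
\]
Comparing both sides of $g(ax)=a\,g(x)$ then yields $t=\sum_j t_j=1$.

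\emph{Step 3: signs and ordering of the exponents.} If some $t_j<0$, increasing $x_j$ alone (possible at an interior point of ${\mathcal D}_+$) would decrease $g$, contradicting \eqref{f5.6}; hence $t_j\ge 0$. For the ordering, use the Schur--Ostrowski criterion \cite[Thm.~3.A.3]{MOA}, valid for the differentiable function $g$ on the interior of ${\mathcal D}_+$. Schur concavity requires $\partial_i g\le\partial_j g$ whenever $i<j$, i.e. $t_i/x_i\le t_j/x_j$ for all $x_i\ge x_j>0$. Letting $x_i\downarrow x_j$ gives $t_i\le t_j$ for $i<j$. Thus $t_n\ge\cdots\ge t_1\ge 0$, i.e. $(t_n,\ldots,t_1)^\top\in{\mathcal D}$, which completes the proof of the claimed form.
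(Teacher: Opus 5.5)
Your proof is correct, but it is organized differently from the paper's.

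\textbf{The paper's route.} The paper normalizes to $g_j>0$, $g_j(1)=1$, and uses \eqref{f5.6} at the outset to make each $g_j$ increasing. It then passes to logarithmic coordinates $h_j(x)=\log g_j(e^x)$ on $[0,+\infty)$, which are nonnegative. Homogeneity is written as the diagonal-shift identity $h(x_1+y,\ldots,x_n+y)=y+h(x_1,\ldots,x_n)$. This identity is evaluated only at tied points of $\mathcal{D}$ of the form $(x+y,\ldots,x+y,y,\ldots,y)$.

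Such points isolate one unknown function at a time. So the paper needs an induction on $k$, using the already determined $h_1,\ldots,h_k$, to obtain the additive Cauchy equation for $h_{k+1}$. It first gets the product formula only on $\mathcal{D}\cap[1,+\infty)^n$ and then extends it by homogeneity. Nonnegativity of the $t_k$ comes from $h_k\ge 0$.

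\textbf{Your route.} You work at strictly decreasing points, where each coordinate can be moved independently of the others. This decouples the factors at once: $g_j(as)/g_j(s)$ does not depend on $s$. You therefore get a multiplicative Cauchy equation for each $g_j$ separately, on all of $(0,+\infty)$, with no induction and no extension step. You use \eqref{f5.6} only for the signs of the exponents.

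\textbf{Two simplifications.}
\begin{itemize}
\item The zero-freeness in your Step~1 already follows from homogeneity alone, as the paper notes. If $g(x)>0$, then $g(ax)=a\,g(x)>0$ for all $a>0$, and $ax_j$ runs through all of $(0,+\infty)$.
\item The ``chaining'' you flag as the main obstacle in Step~2 is unnecessary. With the other coordinates fixed, $x_j$ may range over the whole interval $(x_{j+1},x_{j-1})$, with $x_0=+\infty$ and $x_{n+1}=0$. On that interval $k_j(a,\cdot)$ equals $a/\prod_{i\ne j}k_i(a,x_i)$, which is constant. Choosing $x_{j+1}$ small and $x_{j-1}$ large makes this interval contain any given $[s,s']$.
\end{itemize}

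Your final Schur--Ostrowski step coincides with the paper's.
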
   

\begin{proof}
Since $g$ is non-zero and positively homogeneous, the functions $g_j$, $ j=1,\ldots,n$,  do not have zero values. Replacing these functions $g_j$ by functions
\linebreak[4]
 ${\rm exp} \left( {\mathbf i} \, {\rm arg}(g_j(1)-{\rm arg}(g_j) \right) (g_j(1))^{-1} g_j$ for $j \in \{1,\ldots,n\}$, we can assume that all the functions $g_j$ are positive and $g_j(1)=1$. Condition \eqref{f5.6} implies that these functions $g_j$ are increasing. Thus, a non-negative measurable function $h$, can be defined by $h_j(x)=\log g_j(e^x)$, for $x \in [0,+\infty)$. Also, define $h$ by $h(x_1,\ldots,x_n)= \log \, g(e^{x_1},\ldots,e^{x_n})$, $(x_1,\ldots,x_n)^\top \in \mathcal D$. So, $h_j(0)=0$,
\begin{equation} \label{f5.7}
      h(x_1,\ldots,x_n)=\sum_{j=1}^n h_j(x_j) 
\end{equation}
and 
\begin{equation}   \label{f5.8}
     h(x_1+y,\ldots,x_n+y) = y + h(x_1,\ldots,x_n) \, , \,\, (x_1,\ldots,x_n)^\top \in {\mathcal D} \, ,\, y \in [0,+\infty) \, ,
\end{equation}           
since $g$ is positively homogeneous. Now, we prove by induction on $k \in \{1,\ldots,n\}$  that for some $t_k \in [0,+\infty)$,
\begin{equation} \label{f5.9}
     h_k(x)=t_k x \, , \,\, x \in[0,+\infty) \, .
\end{equation}
Setting $x_1=0$ in \eqref{f5.8} one obtains
\begin{equation} \label{f5.10}
     \sum_{j=2}^n h_j(y) = y - h_1(y)
\end{equation}
by \eqref{f5.7}. If $x_1=x$, $x_2=0$ then $h_1(x+y) + \sum_{j=2}^n h_2(y) = h(x+y,y,\ldots,y) = y+h_1(x)$. Consequently, \eqref{f5.10} yields $h_1(x+y)=h_1(x)+h_1(y)$, and measurability of $h_1$ gives \eqref{f5.9} for $k=1$. 

Now, let $k \in \{1,\ldots,n-1\}$. Again, setting $x_1=0$ in \eqref{f5.8}, by the induction hypothesis one has
\begin{equation}   \label{f5.11}
     \sum_{j=k+2}^n h_j(y) = y - \sum_{j=1}^k t_j y -h_{k+1}(y) \, ,
\end{equation}
where $\sum_{j=k+2}^n h_j(0) =0$    if $k=n-1$. Setting $x_1=\ldots=x_{k+1}=x$ and $x_{k+2}=0$ we can conclude
\begin{eqnarray*}
\begin{split}
   & \sum_{j=1}^k t_j(x+y) +h_{k+1}(x+y) + \sum_{j=k+2}^n h_j(y) = \\
    &\quad \quad \quad = h(x+y,\ldots,x+y,y,\ldots,y) = y + \sum_{j=1}^k t_jx + h_{k+1}(x)
\end{split}
\end{eqnarray*}
(with argument change at position $k+2$), so $h_{k+1}(x+y) = h_{k+1}(x) + h_{k+1}(y)$ by \eqref{f5.11}. Measurability of $h_{k+1}$ completes the proof of \eqref{f5.9}.  Thus, $g(x_1,\ldots,x_n) = g(1,\ldots,1) \, \prod_{j=1}^n x_j^{t_j}$ for $(x_1,\ldots,x_n)^\top \in {\mathcal D} \cap [1,+\infty)^n$ and, moreover, for any vector $(x_1,\ldots,x_n)^\top \in \mathcal D$ since $g$ is positively homogeneous. Additionally, $\sum_{j=1}^n t_j = 1$. Finally,  $\frac{{\partial} g(x_1,\ldots,x_n)}{{\partial} x_k} = t_kx_k^{-1} \, \prod_{j=1}^n x_j^{t_j}$, and Schur concavity of $g$ yields $(t_n,\ldots,t_1)^\top \in \mathcal D$, cf.~\cite[Thm.~3.A.3]{MOA}. 
\end{proof}

\begin{pro} \label{p5.6}
   Let $\{ g_j : j = 1,\ldots,n \}$ be arbitrary complex-valued functions on $[0,+\infty)$ and set
   \begin{equation} \label{f5.12}
        g(x_1,\ldots,x_n) = \prod_{j=1}^n g_j(x_j) \, , \,\, (x_1,\ldots,x_n)^\top \in {\mathcal D} \, .
   \end{equation}
   The permutation invariant extension $g\tilde{\,}$ belongs to the set $\tilde{{\mathcal G}} = \{ ({g}_{{\mathcal A}})\tilde{\,} : {\mathcal A} \,\, {\it is} \, {\it unitarily}$ ${\it invariant} \}$ if and only if 
   \begin{equation} \label{f5.13}
        g(x_1,\ldots,x_n) = c \cdot \prod_{j=1}^n x_j^{t_j}     
   \end{equation}     
   for some constant $c \in [0,+\infty)$ and $(t_n,\ldots,t_1)^\top \in \mathcal D$ such that $\sum_{j=1}^n t_j=1$.
\end{pro}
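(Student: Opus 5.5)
The proof splits into the two implications. In both we use that elements of $\tilde{{\mathcal G}}$ are non-negative, positively homogeneous, continuous, concave (hence Schur concave), permutation invariant and increasing in each coordinate. The last property holds because $\iota(\cdot\,;{\mathcal A})$ is increasing with respect to Loewner's semiordering, and diagonal matrices with coordinatewise larger entries are Loewner-larger.

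\emph{Sufficiency.} Suppose $g$ has the form \eqref{f5.13}. If $c=0$, take for instance ${\mathcal A}=\{0\}$, which is unitarily invariant, so that $g_{\mathcal A}\equiv 0$. If $c>0$, set $s_j:=t_{n-j+1}$, so that $(s_1,\ldots,s_n)^\top\in{\mathcal D}$ and $\sum_j s_j=1$. Proposition~\ref{p5.1}(ii) (together with Remark~\ref{r5.4} when some $s_j=0$) applies to the unitarily invariant set ${\mathcal S}_{\mathbf s}$. It gives, on ${\mathcal D}_+$,
\[
g_{{\mathcal S}_{\mathbf s}}(x_1,\ldots,x_n)=\prod_{j=1}^n s_j^{-s_j}\,\prod_{j=1}^n x_{n-j+1}^{s_j}=\prod_{j}s_j^{-s_j}\prod_{j}x_j^{t_j}.
\]
The constant is adjusted using Lemma~\ref{l3.9}(iii) (rescaling $\phi$ multiplies $\iota$ by a positive constant), so any $c>0$ is reached. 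On the boundary of ${\mathcal D}$, Proposition~\ref{p5.1}(i) gives $0$, and so does the formula when a variable with $t_j>0$ vanishes. If some $t_j=0$ is paired with $x_j=0$, we instead use the interpretation of Remark~\ref{r5.4}, choosing the convention that matches continuity, i.e.\ the $0^0=1$ reading where needed. This bookkeeping is routine.

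\emph{Necessity.} Let $g\tilde{\,}=(g_{\mathcal A})\tilde{\,}$ for a unitarily invariant ${\mathcal A}$, with $g$ of the product form \eqref{f5.12}. If $g\equiv0$ on ${\mathcal D}_+$, continuity gives $g\equiv0$, so $c=0$. Otherwise $g$ restricted to ${\mathcal D}_+$ satisfies the hypotheses of Lemma~\ref{l5.5}: it is non-zero, non-negative, positively homogeneous, Schur concave and monotone in the sense of \eqref{f5.6}. The one hypothesis not immediate from the list above is measurability of the factors $g_j$. From the product structure one can write $g_j(x)$ as a ratio $g(x,\ldots)/g(\ldots)$ with the other coordinates fixed, which is continuous because $g$ is. This requires a point of ${\mathcal D}_+$ where $g\neq0$. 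Homogeneity plus the product form forces $g$ to be nonzero on all of ${\mathcal D}_+$ once it is nonzero somewhere, via the multiplicative argument at the start of the proof of Lemma~\ref{l5.5}. Lemma~\ref{l5.5} then yields \eqref{f5.13} on ${\mathcal D}_+$, and continuity of $g_{\mathcal A}$ (Proposition~\ref{p3.1}) extends it to ${\mathcal D}$.

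The main obstacle is exactly this passage from "arbitrary complex-valued $g_j$" to the hypotheses of Lemma~\ref{l5.5}. This includes the possibility that some factor vanishes at isolated points, making $g$ zero on part of ${\mathcal D}_+$ but not all of it. I would handle it first by showing that the zero set of $g$ on ${\mathcal D}_+$ is either empty or everything. Suppose $g(x)=0$ and $y\in{\mathcal D}_+$. By monotonicity \eqref{f5.6} applied to a small multiple $ay\le x$ coordinatewise, $g(ay)=0$, and homogeneity then gives $g(y)=0$. With that dichotomy in hand, the remaining steps are direct citations of Lemma~\ref{l5.5}, Proposition~\ref{p5.1} and Proposition~\ref{p3.1}.
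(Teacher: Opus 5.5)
Your proposal is correct and follows the paper's own route. Sufficiency is read off from Proposition~\ref{p5.1}. Necessity uses the fact that continuity of $g$ forces continuity, hence measurability, of the factors $g_j$, after which Lemma~\ref{l5.5} applies. The additional points you spell out are exactly the details the paper leaves implicit, and you handle them correctly: the case $c=0$, the rescaling via Lemma~\ref{l3.9}(iii), zero exponents via Remark~\ref{r5.4}, the nonvanishing dichotomy for $g$ on ${\mathcal D}_+$, hypothesis \eqref{f5.6} from Loewner monotonicity of $\iota$, and the extension to the boundary of ${\mathcal D}$ via Proposition~\ref{p3.1}.
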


\begin{proof}
The 'if-part' follows from Proposition \ref{p5.1}. Conversely, let $g$ have the form \eqref{f5.12} and let $g\tilde{\,} \in \tilde{{\mathcal G}}$. Then the function $g$, and hence, all the functions $g_j$ are continuous. Lemma \ref{l5.5} implies that $g$ has the form \eqref{f5.13}. 
\end{proof}   

Similarly to Remark \ref{r5.4}, the assertion of Proposition \ref{p4.2} can be extended to negative exponents in two ways.

\begin{pro}   \label{p5.7}
    Let $p \in (0,+\infty)$ and $(t_1,\ldots,t_n)^\top \in \mathcal D$, $t_1 > 0$.
    \begin{itemize}
    \item[(i)]  For any $\Delta \in {\mathcal M}^\geq$ one has  $m(\Delta ; \sum_{j=1}^n t_j (\sigma_j^{\dagger})^p = 1 ) = t_1^{2/p} \lambda_n$. 
    \item[(ii)] Let $r=\max \{ j : t_j >0 \}$ and ${\mathcal A} = \{ A \in {\mathcal M} : {\rm rk}(A) \geq r \,\, {\rm and} \,\,\sum_{j=1}^r t_j \sigma_j^{-p} = 1 \}$. If $\Delta \in {\mathcal M}^\geq$ is invertible then 
    \begin{equation} \label{f5.14}
         m(\Delta ; {\mathcal A}) = \left(\sum_{j=1}^r \left(\lambda_{n-j+1} t_j^{2/p} \right)^{p / (p+2)} \right)^{(p+2)/p}   \, .
    \end{equation}
    If $\Delta \in {\mathcal M}^\geq$ is not invertible then $\iota(\Delta ; {\mathcal A})=0$ and the infimum zero is attained if and only if ${\rm rk}(\Delta) \leq n-r$.  
    \end{itemize}   
\end{pro}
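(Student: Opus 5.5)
For both parts the plan is to reduce to a scalar optimization over singular values via Corollary \ref{c3.5}. Both constraint sets are unitarily invariant: for $U,V\in{\mathcal U}$ one has $(VAU)^\dagger=U^*A^\dagger V^*$, and ranks and singular values are preserved. Hence $\iota(\Delta;\cdot)=\inf\sum_{j=1}^n\lambda_{n-j+1}\sigma_j^2$ over admissible singular value vectors. Since $t_1>0$, the zero matrix is never admissible.

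For (i), let $k={\rm rk}(A)$. The nonzero singular values of $A^\dagger$ are $\sigma_k^{-1}\ge\ldots\ge\sigma_1^{-1}$, so $\sigma_1(A^\dagger)=\sigma_k^{-1}$.
\begin{itemize}
\item \emph{Lower bound.} The constraint gives $t_1\sigma_k^{-p}\le 1$, i.e.\ $\sigma_k\ge t_1^{1/p}$. Therefore $\sum_j\lambda_{n-j+1}\sigma_j^2\ge\lambda_n\sigma_1^2\ge\lambda_n\sigma_k^2\ge t_1^{2/p}\lambda_n$.
\item \emph{Attainment.} Take the rank-one matrix $A=t_1^{1/p}E\,U^*$ with $U\in{\mathcal U}_\Delta$. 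Then $A$ is admissible and ${\rm tr}(A\Delta A^*)=t_1^{2/p}\lambda_n$.
\end{itemize}
This argument works for every $\Delta\in{\mathcal M}^\geq$, singular or not. (I am reading the formula in (i) as having the exponent sign that makes this attainment computation consistent.)

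For (ii) with $\Delta$ invertible, we may take $\sigma_{r+1}=\ldots=\sigma_n=0$. This is allowed because the rank condition only requires $\sigma_1,\ldots,\sigma_r>0$, and it can only decrease the target. It remains to minimize $\sum_{j=1}^r b_jx_j$ with $b_j=\lambda_{n-j+1}$ and $x_j=\sigma_j^2$, subject to $\sum_{j=1}^r t_jx_j^{-q}=1$ where $q=p/2$. This is the analogue of Lemma \ref{l5.2} and of the H\"older step in Proposition \ref{p4.2}(ii). Apply H\"older with exponents $(q+1)/q$ and $q+1$ to the terms $(b_jx_j)^{q/(q+1)}(t_jx_j^{-q})^{1/(q+1)}=b_j^{q/(q+1)}t_j^{1/(q+1)}$. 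This yields
\[
\sum_{j=1}^r b_jx_j\ \ge\ \Bigl(\sum_{j=1}^r (\lambda_{n-j+1}t_j^{2/p})^{p/(p+2)}\Bigr)^{(p+2)/p}.
\]
Equality holds exactly when $x_j$ is proportional to $(t_j/\lambda_{n-j+1})^{1/(q+1)}$. These equality values are decreasing in $j$, because $t_j$ decreases and $\lambda_{n-j+1}$ increases. So they are legitimate ordered singular values, and the bound \eqref{f5.14} is attained, e.g.\ by $V\,{\rm diag}(\sigma_1,\ldots,\sigma_r,0,\ldots,0)\,U^*$ with $U\in{\mathcal U}_\Delta$.

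For singular $\Delta$ I would use Lemma \ref{l2.3}: the value zero is attained iff some admissible $A$ has ${\mathcal R}(\Delta)\subseteq{\mathcal N}(A)$. Such an $A$ has ${\rm rk}(A)\le n-{\rm rk}(\Delta)$, and since ${\rm rk}(A)\ge r$ this is possible iff ${\rm rk}(\Delta)\le n-r$. Conversely, when ${\rm rk}(\Delta)\le n-r$, choose $r$ orthonormal vectors in ${\mathcal N}(\Delta)$ and build $A$ from them with any admissible $\sigma_1,\ldots,\sigma_r$.

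The main obstacle is the claim $\iota(\Delta;{\mathcal A})=0$ for every non-invertible $\Delta$. The constraint forces $\sigma_j\ge t_j^{1/p}$ for $j\le r$, so these singular values cannot be made small. The natural route is the continuity of $\iota(\cdot;{\mathcal A})$ from Proposition \ref{p3.1}, letting $\lambda_n\to 0$ in \eqref{f5.14}. However, that limit only kills the first summand, suggesting the infimum equals the formula with the vanishing eigenvalues inserted rather than zero. I would test this on $n=2$, $r=2$ first. If the test confirms that the infimum is not always zero, I would reformulate this part of the statement: equality to zero for all singular $\Delta$ when $r=1$, and the corrected limiting value in general.
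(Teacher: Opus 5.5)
\paragraph{Parts (i) and (ii) for invertible $\Delta$.} Your argument here is correct. For (i) you follow essentially the paper's route: a lower bound $\mathrm{tr}(A\Delta A^*)\ge\lambda_n\sigma_1^2$ from the constraint, plus the rank-one matrix $t_1^{1/p}EU^*$. No reinterpretation of the formula is needed, since $t_1^{2/p}\lambda_n$ is exactly what that matrix gives. The paper's proof writes $\sigma_j(A)^{(\dagger)}$, the generalized inverse of the scalar $\sigma_j(A)$. Under that reading the constraint gives $\sigma_1\ge t_1^{1/p}$ directly, and your bound goes through unchanged.

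For the invertible case of (ii), the paper applies Lagrange multipliers to the reduced problem and finds a unique stationary point, tacitly taking it to be the minimizer. Your H\"older argument with exponents $(q+1)/q$ and $q+1$ gives the global lower bound and the equality case in one step, with no existence or boundary discussion. It is the same device the paper uses in Proposition~\ref{p4.2}(ii). It also holds verbatim when some $b_j=\lambda_{n-j+1}$ vanish, which is what settles the singular case.

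\paragraph{The singular case: the claim is false.} Your doubt is justified, and you can drop the hedge. The claim $\iota(\Delta;\mathcal A)=0$ for every non-invertible $\Delta$ fails whenever $r\ge 2$ and $n-r<\mathrm{rk}(\Delta)<n$. Take $n=r=2$ and $\Delta=\mathrm{diag}(1,0)$. For $A\in\mathcal A$ one has $t_2\sigma_2^{-p}=1-t_1\sigma_1^{-p}<1$, so by Theorem~\ref{t3.2}
\[
\mathrm{tr}(A\Delta A^*)\ \ge\ \lambda_2\sigma_1^2+\lambda_1\sigma_2^2\ =\ \sigma_2^2\ >\ t_2^{2/p}.
\]
Letting $\sigma_1\to\infty$ shows $\iota(\Delta;\mathcal A)=t_2^{2/p}>0$, and this value is not attained.

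In general, every $A\in\mathcal A$ has $\sigma_r(A)\ge t_r^{1/p}$. So $A$ is bounded below by $t_r^{1/p}$ on an $r$-dimensional subspace. That subspace meets $\mathcal R(\Delta)$ nontrivially when $\mathrm{rk}(\Delta)>n-r$, so no sequence with $A_kP\to0$ exists, and Lemma~\ref{l2.3} gives $\iota(\Delta;\mathcal A)>0$.

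This is exactly where the paper's proof breaks. It simply asserts that a sequence as in Lemma~\ref{l2.3} exists, mirroring Proposition~\ref{p5.1}(i). There only a product of singular values is constrained, so $\sigma_2,\ldots,\sigma_n$ can be sent to $0$. Here the constraint bounds $\sigma_1,\ldots,\sigma_r$ from below. The printed claim also contradicts the continuity of $\iota(\cdot;\mathcal A)$ from Proposition~\ref{p3.1}, as you suspected.

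\paragraph{Corrected statement.} The right version is the one your continuity argument predicts. Your H\"older lower bound holds for all $\Delta$. It is sharp, either by Proposition~\ref{p3.1} or by sending $\sigma_j\to\infty$ for the indices $j\le n-\mathrm{rk}(\Delta)$ where $\lambda_{n-j+1}=0$; these come first, so the ordering is preserved. Hence the right-hand side of \eqref{f5.14} equals $\iota(\Delta;\mathcal A)$ for every $\Delta\in\mathcal M^\ge$.
\begin{itemize}
\item It vanishes iff $\lambda_{n-r+1}=0$, i.e.\ iff $\mathrm{rk}(\Delta)\le n-r$, and then it is attained by your construction.
\item For $n-r<\mathrm{rk}(\Delta)<n$ it is positive and not attained. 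H\"older equality would force $b_jx_j=c\,t_jx_j^{-q}$ for all $j\le r$, which is impossible since $b_1=0<b_r$.
\item For $r=1$ this reduces to the statement as printed.
\end{itemize}
Your characterization of when the value zero is attained is correct as it stands.
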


\begin{proof}
(i) Since $(\sum_{j=1}^n t_j (\sigma_j(A)^{(\dagger)})^p)^{-1/p} \leq t_1^{-1/p} \| A \|_2$ and ${\rm tr}(A \Delta A^*)=t_1^{2/p} \lambda_n$ if $A={\rm diag}(t_1^{1/p},0,\ldots,0) \, U^*$. $U \in {\mathcal U}_\Delta$, the result follows from Corollary \ref{c3.12}.

(ii) Let $\Delta \in {\mathcal I}$. Since there are no conditions on $\sigma_{r+1},\ldots,\sigma_n$ we set $\sigma_{r+1}=0$. With $q=p/2$ we apply Corollary \ref{c3.5}. We have to minimize the function $f$ defined by
     \begin{equation} \label{f5.15}
           f(x_1,\ldots,x_r) = \sum_{j=1}^r \lambda_{n-j+1} x_j \,\, , \,\, {\rm while} \,\, x_1 \geq \ldots \geq x_r > 0 \,\, {\rm and} \,\, \sum_{j=1}^r t_j x_j^{-q}=1 \, .
     \end{equation}
To compute the stationary points of the function $F(x_1,\ldots,x_r,\lambda) = f(x_1,\ldots,x_r) - \lambda \cdot (\sum_{j=1}^r t_j x_j^{-q} - 1 )$ we solve the system $\frac{{\partial} F}{{\partial} x_k} (x_1,\ldots,x_r) = \lambda_{n-k+1} + \lambda q t_kx_k^{-q-1} =0$. We get the solution $x_k = (\lambda_n t_k \lambda_{n-k+1}^{-1}t_1^{-1})^{1/(q+1)}x_1$ for $k \in \{ 1,\ldots,r\}$. Taking into account \eqref{f5.15} we get the equality $x_1^{-q} \sum_{k=1}^r t_k(\lambda_{n-k+1}t_1 \lambda_n^{-1}t_k)^{q/(q+1)} = 1$, leading to the coordinates $x_j^{(0)} = \left( \sum_{k=1}^r (\lambda_{n-k+1} t_k^{1/q} )^{q / (q+1)} \right)^{1/q} (t_j\lambda_{n-j+1}^{-1})^{1/(q+1)}$, $j \in \{ 1,\ldots,r \}$, of a unique stationary point and a corresponding value of $m(\Delta ; {\mathcal A})$ as in \eqref{f5.14} replacing $q$ by $p/2$. 

Suppose $\Delta \not\in {\mathcal I}$. Then there exists a sequence $(A_k)$ satisfying the conditions of Lemma \ref{l2.3}, which implies $\iota(\Delta ; {\mathcal A})=0$. There exists a matrix $A \in {\mathcal A}$ such that ${\mathcal R}(\Delta) \subseteq  {\mathcal N}(A)$ if and only if the dimension of ${\mathcal N}(A)$ is not smaller than $r$, or, what is the same, ${\rm rk}(\Delta) \leq n-r$.
\end{proof}

For $k \in \{ 1,\ldots,n \}$ and $r \in \{ 1,\ldots,k \}$ let $E_{r,k}$ be the $r$th elementary symmetric harmonic polynomial of $k$ variables, i.e. $E_{r,k}(x_1,\ldots,x_k) = \sum_{j \in J} x_{j_1} \ldots x_{j_r}$ with $(x_1,\ldots,x_k)^\top \in [0,+\infty)^k$ and $J$ the (index) set of all strictly increasing $r$-tuples $(j_1,\ldots,j_r)$ selected from $\{ 1,\ldots,k \}$. Since $m(\Delta ; E_{1,n}(\sigma_1^2,\ldots,\sigma_n^2)=1)=\lambda_n$ by Proposition \ref{p4.2}(i) and since $m(\Delta ; E_{n,n}(\sigma_1^2,\ldots,\sigma_n^2)=1) = n \cdot {\rm det}(\Delta^{1/n})$ by Proposition \ref{p5.1}(ii) one could try to find a formula for $m(\Delta ; E_{r,n}(\sigma_1^2,\ldots,\sigma_n^2)=1)$, $r \in \{ 2,\ldots,n-1\}$. We do not know a general solution. However, Proposition \ref{p5.9} below gives an answer in case $r=2$. Again, we start with an auxiliary extremal problem. Let $(a_k)_{k \in \mathbb N}$ be an increasing sequence of positive real numbers. For $n \geq 2$ let $f_n(x_1,\ldots,x_n)=\sum_{j=1}^n a_jx_j$ with $(x_1,\ldots,x_n)^\top \in [0,+\infty)^n$. Consider the following extremal problem ${\mathcal E}_n$:
\begin{equation} \label{f5.16}
     {\rm Find} \,\, \mu_n = \min f_n(x_1,\ldots,x_n) \,\, {\rm under} \, {\rm the} \, {\rm condition} \,\, E_{2,n}(x_1,\ldots,x_n)=1 \, .
\end{equation}     
Since the problem ${\mathcal E}_{n-1}$ can be considered as the problem ${\mathcal E}_n$ restricted to the part $\{ (x_1,\ldots,x_{n-1},0)^\top : x_1,\ldots,x_{n-1} \in [0,+\infty)^{n-1} \}$ of its boundary we get $\mu_n \leq \mu_{n-1}$ for any $n$. Moreover, some computations show that ${\mathcal E}_2$ has exactly one stationary point $(x_1^{(2)},x_2^{(2)})$ satisfying $\mu_2 = f_2(x_1^{(2)},x_2^{(2)}) = 2 \sqrt{a_1a_2}$. 

\begin{lem} \label{l5.8}
   Let $n \geq 3$. The problem ${\mathcal E}_n$ has at most one stationary point $(x_1^{(n)},\ldots,x_n^{(n)} )$. The stationary point exists if and only if 
   \begin{equation}   \label{f5.17}
       \min \left\{ \sum_{j=1}^n a_j - (n-1) a_n , \left(\sum_{j=1}^n a_j \right)^2  - (n-1)\sum_{j=1}^n a_j^2 \right\} > 0 \, .
   \end{equation}
In this case
   \begin{equation}   \label{f5.18}
        \mu_n= f(x_1^{(n)},\ldots,x_n^{(n)}) = \sqrt{\frac{2}{n-1}} \left( \left(\sum_{j=1}^n a_j \right)^2 - (n-1) \sum_{j=1}^n a_j^2 \right)^{1/2} \, .
   \end{equation}     
\end{lem}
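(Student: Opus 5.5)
We look for interior stationary points of $\mathcal E_n$ with the Lagrange multiplier method, in the same way as in Lemma \ref{l5.2}. Put $s=\sum_{j=1}^n x_j$. Then $\partial E_{2,n}/\partial x_k = s - x_k$, so the stationarity system reads
\[
a_k = \lambda\,(s - x_k), \qquad k\in\{1,\ldots,n\},
\]
with $E_{2,n}(x_1,\ldots,x_n)=1$ and all $x_k>0$.

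\emph{Step 1: solve the linear system.} The system gives $x_k = s - a_k/\lambda$. Summing over $k$ yields $s = ns - A_1/\lambda$, where $A_1=\sum_j a_j$. Hence $s = A_1/((n-1)\lambda)$, and
\[
x_k = \frac{A_1-(n-1)a_k}{(n-1)\lambda}.
\]
So every stationary point is determined by $\lambda$ alone, and the direction of $(x_1,\ldots,x_n)$ is fixed. Since $(a_k)$ is increasing, all $x_k$ are positive if and only if $\lambda>0$ and $A_1-(n-1)a_n>0$. This is the first condition in \eqref{f5.17}. (If $\lambda<0$, all $x_k$ with $a_k$ small would be negative.)

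\emph{Step 2: the constraint fixes $\lambda$.} Write $A_2=\sum_j a_j^2$ and use $2E_{2,n}=s^2-\sum_k x_k^2$. A routine computation gives
\[
\sum_k \bigl(A_1-(n-1)a_k\bigr)^2 = (n-2)A_1^2+(n-1)^2A_2 .
\]
Therefore
\[
2(n-1)^2\lambda^2 E_{2,n} = n^2A_1^2-(n-2)A_1^2-(n-1)^2A_2 = (n-1)\bigl(2A_1^2-(n-1)A_2\bigr)\cdot\text{const}.
\]
I expect the constant to come out as $2\lambda^2 = \bigl(A_1^2-(n-1)A_2\bigr)\cdot\frac{2}{n-1}$ after simplification; this is to be checked. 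Setting $E_{2,n}=1$ then requires $A_1^2-(n-1)A_2>0$, which is the second condition in \eqref{f5.17}. When this holds, there is exactly one positive $\lambda$, hence at most one stationary point. Conversely, if \eqref{f5.17} holds, this construction produces the stationary point.

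\emph{Step 3: the value.} By Euler's identity, $\sum a_kx_k = \lambda\sum_k x_k(s-x_k) = 2\lambda E_{2,n} = 2\lambda$. Substituting $\lambda$ from Step 2 gives \eqref{f5.18}.

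The main obstacle is keeping the algebra of Step 2 straight, so that the constraint reduces exactly to $\lambda^2=\frac{1}{2(n-1)}\bigl(A_1^2-(n-1)A_2\bigr)$. One must also check that no spurious branch survives: the case $\lambda\le 0$ has to be excluded, and so do stationary points where some $x_k=0$. The latter are treated as boundary points, i.e.\ as problems $\mathcal E_{n-1}$, and are not interior stationary points.
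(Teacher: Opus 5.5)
There is a genuine gap: the proposal computes the critical value but never shows that this value is the minimum $\mu_n$.

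Your Lagrange computation follows the paper's. Step 1 is exactly \eqref{f5.19}, and evaluating $f_n=\lambda\sum_k x_k(s-x_k)=2\lambda E_{2,n}=2\lambda$ via Euler's identity is a neat way to get the value. But the lemma asserts $\mu_n=f(x^{(n)})$, i.e.\ that the stationary value is the \emph{minimum} of $\mathcal E_n$. A Lagrange critical point need not be a minimizer. The feasible set of $\mathcal E_n$ also contains boundary points with some $x_k=0$, where the problem becomes an $(n-1)$-variable problem whose minimum might a priori be smaller than your critical value. Your closing remark (boundary points are not counted as stationary points) only settles how many stationary points there are, not this comparison of values.

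The paper devotes the second half of its proof to exactly this comparison:
\begin{itemize}
\item The minimum exists, since $f_n\to+\infty$ on the closed constraint set.
\item Since $(a_k)$ is increasing, the boundary minimum is attained on the face $x_n=0$, so it equals $\mu_{n-1}$.
\item By induction on $n$, if $\mathcal E_n$ has a stationary point then so does $\mathcal E_{n-1}$, with $f_n(x^{(n)})\le f_{n-1}(x^{(n-1)})$.
\end{itemize}
The inductive step rests on \eqref{f5.21} and on
\[
A_1^2-(n-1)A_2\le\frac{n-1}{n-2}\Bigl(B_1^2-(n-2)B_2\Bigr),\qquad B_1=\sum_{j=1}^{n-1}a_j,\quad B_2=\sum_{j=1}^{n-1}a_j^2,
\]
which is equivalent to $\bigl(B_1-(n-2)a_n\bigr)^2\ge0$. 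The base case is $(a_1+a_2+a_3)^2-2(a_1^2+a_2^2+a_3^2)=4a_1a_2-(a_1+a_2-a_3)^2\le\mu_2^2=4a_1a_2$.

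A different way to close the gap is convexity. $E_{2,n}^{1/2}$ is concave on $[0,+\infty)^n$, so $\{x\ge0:E_{2,n}(x)\ge1\}$ is convex, and minimizing $f_n$ over it is the same as minimizing over $E_{2,n}=1$. At your point $\nabla f_n=2\lambda\nabla E_{2,n}^{1/2}$ with $\lambda>0$, so concavity gives $f_n(y)-f_n(x)\ge2\lambda\bigl(E_{2,n}(y)^{1/2}-1\bigr)\ge0$ for every feasible $y$. One of these two arguments has to be added.

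Separately, Step 2 as written is wrong, although its target is right.
\begin{itemize}
\item Since $s=A_1/((n-1)\lambda)$, one has $(n-1)^2\lambda^2s^2=A_1^2$, not $n^2A_1^2$.
\item $\sum_k\bigl(A_1-(n-1)a_k\bigr)^2=(n-1)^2A_2-(n-2)A_1^2$, with a minus sign in front of $(n-2)A_1^2$.
\end{itemize}
Hence $2(n-1)^2\lambda^2E_{2,n}=(n-1)\bigl(A_1^2-(n-1)A_2\bigr)$. Then $E_{2,n}=1$ gives $\lambda^2=\frac{1}{2(n-1)}\bigl(A_1^2-(n-1)A_2\bigr)$, which together with $f_n=2\lambda$ yields \eqref{f5.18}. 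Your intermediate guess $2\lambda^2=\frac{2}{n-1}\bigl(A_1^2-(n-1)A_2\bigr)$ would give the wrong constant. Finally, excluding $\lambda\le0$ is simpler than you state: $a_k=\lambda(s-x_k)$ with $a_k>0$ and $s-x_k>0$ forces $\lambda>0$.
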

   
\begin{proof}
Let $F(x_1,\ldots,x_n,\lambda) = f(x_1,\ldots,x_n) - \lambda (E_{2,n}(x_1,\ldots,x_n)-1)$, add the $n$ equations $\frac{{\partial} F}{{\partial} x_k} (x_1,\ldots,x_n,\lambda) = a_k-\lambda (\sum_{j=1}^n x_j - x_k)=0$, $k \in \{ 1,\ldots,n \}$, and subtract from this sum the $k$-th equality multiplied by $(n-1)$. One obtains $\sum_{j=1}^n a_j - (n-1)a_k = (n-1) \lambda x_k$ what transforms to
\begin{equation}   \label{f5.19}
    x_k = ((n-1)\lambda)^{-1} \left(\sum_{j=1}^n a_j - (n-1)a_k \right) \, .
\end{equation}
Since $x_n$ should be positive, necessarily $\sum_{j=1}^n a_j - (n-1)a_n \not= 0$. Moreover, if this expression would be negative then necessarily $\lambda < 0$, leading to an contradiction $x_1 <0$ since  $\sum_{j=1}^n a_j - (n-1)a_n > 0$. Consequently, the condition 
$\sum_{j=1}^n a_j - (n-1)a_n  > 0$ is necessary for the existence of a stationary point.  Using the equality $E_{2,n} (x_1,\ldots,x_n) = \frac{1}{2} \, \left((\sum_{j=1}^n x_j)^2 - (n-1) \sum_{j=1}^n x_j^2 \right)$ one easily obtains
\begin{equation}   \label{f5.20_1}
    \lambda^2 =(2 (n-1)^2)^{-1} \left((\sum_{j=1}^n a_j)^2 - \sum_{j=1}^n a_j^2 \right) \, ,
\end{equation}
by \eqref{f5.16} and \eqref{f5.19}, what implies inequality $\left((\sum_{j=1}^n a_j)^2 - (n-1) \sum_{j=1}^n a_j^2\right) > 0$ to be also necessary for the existence of a stationary point.

Conversely, if \eqref{f5.17} is satisfied, the computations above, particularly \eqref{f5.20_1} and \eqref{f5.19}, reveal the existence of a unique stationary point $(x_1^{(n)},\ldots,x_n^{(n)})^\top$. One can compute $f_n(x_1^{(n)},\ldots,x_n^{(n)}) = \sqrt{2/(n-1)} \left((\sum_{j=1}^n a_j)^2 - (n-1) \sum_{j=1}^n a_j^2 \right)^{1/2}$. We have only to demonstrate finally that this term is the minimum $\mu_n$ of the extremal problem ${\mathcal E}_n$. Since, obviously, the minimum of ${\mathcal E}_n$ at the boundary $\{ (x_1,\ldots,x_n)^\top : x_j=0 \,\, {\rm for} \, {\rm some} \,\, j \in \{1,\ldots,n)\}\}$ equals the minimum at the set $\{ (x_1,\ldots,x_{n-1},0): (x_1,\ldots,x_{n-1})^\top \in [0,+\infty)^{n-1} \}$ it suffices to show that for all $n \geq 3$ the following fact is true: If ${\mathcal E}_n$ has a stationary point $(x_1^{(n)}, \ldots , x_n^{(n)} )$ then ${\mathcal E}_{n-1}$ has a stationary point $(x_1^{(n-1)}, \ldots , x_{n-1}^{(n-1)} )$ as well, and 
\begin{equation}  \label{f5.20_2}
    f_n(x_1^{(n)},\ldots,x_n^{(n)}) \leq f_{n-1}(x_1^{(n-1)},\ldots,x_{n-1}^{(n-1)}) \, .
\end{equation}    
We proceed by induction on $n$: If ${\mathcal E}_3$ has a stationary point $(x_1^{(3)},x_2^{(3)},x_3^{(3)} \}$ then 
\begin{eqnarray*}
          f_3(x_1^{(3)},x_2^{(3)},x_3^{(3)}) & = & ((a_1+a_2+a_3)^2 -2(a_1^2+a_2^2+a_3^2))^{1/2} \\
                                                               & = &  (4a_1a_2 - (a_1+a_2-a_3)^2)^{1/2} \\
                                                            & \leq & 2 \sqrt{a_1a_2} = f_2(x_1^{(2)},x_2^{(2)}) \, .
\end{eqnarray*}
For arbitrary $n$, the inequality $a_{n-1} \leq a_n$ yields
\begin{equation} \label{f5.21}
      \sum_{j=1}^n a_j - (n-1) a_n \leq \sum_{j=1}^{n-1} a_j - (n-2) a_{n-1} \, .
\end{equation}
Moreover, the valid inequality $0 \leq (\sum_{j=1}^{n-1} a_j - (n-2) a_n)^2$ can be transformed into the equivalent inequality
\begin{equation}    \label{f5.22}
      ( \sum_{j=1}^n a_j )^2 - (n-1) \sum_{j=1}^n a_j^2 \leq \frac{n-1}{n-2} \left(  (\sum_{j=1}^{n-1} a_j)^2 - (n-2) \sum_{j=1}^{n-1} a_j^2\right) \, .
\end{equation}
Therefore, by \eqref{f5.18}, \eqref{f5.21} and \eqref{f5.22} the existence of a stationary point of ${\mathcal E}_n$ is sufficient for the existence of a stationary point of ${\mathcal E}_{n-1}$. Finally, by \eqref{f5.19} and \eqref{f5.22} the inequality \eqref{f5.20_2} follows. 
\end{proof}   

For $n \geq 2$ set ${\mathcal A} = \{ A \in {\mathcal M}: E_{2,n}(\sigma_1^2, \ldots , \sigma_n^2)=1 \}$.                                    

\begin{pro} \label{p5.9}
(i) Let $\Delta \in {\mathcal M}^\geq$ be invertible. If 
\[
\min \left\{ \sum_{j=1}^n \lambda_{n-j+1} - (n-1)\lambda_1, \left( \sum_{j=1}^n \lambda_{n-j+1}\right)^2 - (n-1) \sum_{j=1}^n \lambda_{n-j+1}^2 \right\} >0\, ,
\]
 then we can calculate 
 \[
 m(\Delta ; {\mathcal A}) = \sqrt{\frac{2}{n-1}}  \left( \left( \sum_{j=1}^n \lambda_{n-j+1}\right)^2 - (n-1) \sum_{j=1}^n \lambda_{n-j+1}^2 \right)^{1/2} \, .
 \]
 If for $k \in \{ 3,\ldots,n \}$, the inequalities
   \begin{eqnarray}   \label{f5.23}
         & & \\
        \min \left\{ \sum_{j=1}^k \lambda_{n-j+1} - (k-1) \lambda_{n-k+1} , \left(\sum_{j=1}^k \lambda_{n-j+1}\right)^2 - (k-1) \sum_{j=1}^k \lambda_{n-j+1}^2 \right\} \leq 0 \, ,& &  \nonumber \\
        \min \left\{ \sum_{j=1}^{k-1} \lambda_{n-j+1} - (k-2) \lambda_{n-k+2} , \left(\sum_{j=1}^{k-1} \lambda_{n-j+1}\right)^2 - (k-2) \sum_{j=1}^{k-1} \lambda_{n-j+1}^2 \right\} > 0 &  & \nonumber
   \end{eqnarray}
   are satisfied, then $m(\Delta ; {\mathcal A}) = \sqrt{\frac{2}{k-2}}  \left( ( \sum_{j=1}^{k-1} \lambda_{n-j+1})^2 - (k-2) \sum_{j=1}^{k-1} \lambda_{n-j+1}^2 \right)^{1/2}$.
   
(ii) If $\Delta$ is not invertible then $\iota(\Delta ; {\mathcal A}) = 0$. The infimum zero is attained if and only if ${\rm rk}(\Delta) \leq n-2$. 
\end{pro}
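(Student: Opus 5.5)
Since $\mathcal A$ is defined through the singular values alone, it is unitarily invariant. By Corollary \ref{c3.5} we have, for every $\Delta$,
\[
\iota(\Delta;{\mathcal A})=\inf\Bigl\{\sum_{j=1}^n \lambda_{n-j+1}\,x_j : x_1\geq\ldots\geq x_n\geq 0,\ E_{2,n}(x_1,\ldots,x_n)=1\Bigr\},
\]
where $x_j=\sigma_j^2$. Put $a_j=\lambda_{n-j+1}$, so that $(a_j)$ is increasing and positive for invertible $\Delta$. Part (i) then reduces to problem ${\mathcal E}_n$ of \eqref{f5.16}, with the additional ordering constraint $x_1\geq\ldots\geq x_n$. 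I would first show that this ordering constraint is harmless. By the rearrangement inequality, for increasing $a_j$ the minimum of $\sum a_jx_j$ over permutations of a fixed vector $x$ is attained at the decreasing arrangement. Since $E_{2,n}$ is symmetric, the minimum of ${\mathcal E}_n$ without ordering equals the minimum with ordering. Alternatively, \eqref{f5.19} shows directly that the stationary point satisfies $x_1\geq\ldots\geq x_n$.

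For the first formula of (i), the hypothesis is exactly \eqref{f5.17} with $a_j=\lambda_{n-j+1}$, noting $a_n=\lambda_1$. Lemma \ref{l5.8} then gives the existence of the stationary point and the value \eqref{f5.18}, which is the claimed expression. Lemma \ref{l5.8} was written to identify this stationary value as the true minimum $\mu_n$; I would reuse its induction argument, which compares with boundary minima where some $x_j=0$.

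For the second formula, the main obstacle is the boundary analysis. Suppose \eqref{f5.23} holds for some $k$. Then ${\mathcal E}_k$ has no stationary point, so its minimum lies on the face $x_k=0$. By the monotonicity $\mu_k\leq\mu_{k-1}$ (and the reverse inequality just obtained), $\mu_k=\mu_{k-1}$. Meanwhile ${\mathcal E}_{k-1}$ has a stationary point, whose value is $\mu_{k-1}$ by Lemma \ref{l5.8}. To pass from ${\mathcal E}_n$ down to ${\mathcal E}_k$, I would use the contrapositive of the inductive step in Lemma \ref{l5.8}, namely \eqref{f5.21} and \eqref{f5.22}. Failure of \eqref{f5.17} at level $k$ implies failure at every level $j\geq k$, so none of ${\mathcal E}_n,\ldots,{\mathcal E}_k$ has an interior stationary point. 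Each minimum is therefore pushed to the face $x_j=0$, and by the ordering and rearrangement step this is the last coordinate. This yields $\mu_n=\mu_{n-1}=\ldots=\mu_{k-1}$. I would need to check carefully that the minimum exists, which holds by coercivity once at least two coordinates are active, and that a minimizer on a face of a lower problem really has its zero coordinate last. Finally, $\sigma_j=\sqrt{x_j}$ realizes the minimum via Theorem \ref{t3.2}, so this is a genuine minimum $m$.

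For (ii), I would follow Proposition \ref{p5.7}(ii). Take $U\in{\mathcal U}_\Delta$, so that the first column of $U$ lies in ${\mathcal N}(\Delta)$. Set $A_k=\mathrm{diag}(b_k,c_k,0,\ldots,0)U^*$ with $b_k^2c_k^2=1$ and $c_k\to0$. Then $A_k\in{\mathcal A}$ and $A_kP\to0$, so Lemma \ref{l2.3} gives $\iota=0$. By Lemma \ref{l2.3}, the infimum is attained iff some $A\in{\mathcal A}$ has ${\mathcal R}(\Delta)\subseteq{\mathcal N}(A)$. Now $E_{2,n}(\sigma^2)=1$ forces ${\rm rk}(A)\geq2$, and any rank $\geq2$ is permitted. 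Hence attainment occurs iff $n-{\rm rk}(\Delta)\geq 2$, that is, ${\rm rk}(\Delta)\leq n-2$.
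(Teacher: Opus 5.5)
Your proposal is correct and follows essentially the same route as the paper: reduce via Corollary \ref{c3.5} to problem ${\mathcal E}_n$ with $a_j=\lambda_{n-j+1}$, use Lemma \ref{l5.8} when an interior stationary point exists and otherwise descend to the face $x_n=0$, and settle (ii) with Lemma \ref{l2.3} together with ${\rm rk}(A)\geq 2$. You also make explicit two steps the paper leaves implicit: the rearrangement argument for the ordering, and the contrapositive of \eqref{f5.21}--\eqref{f5.22}, which shows that levels $k,\ldots,n$ all fail. The only small correction is that for $k=3$ you should cite the remark on ${\mathcal E}_2$ preceding Lemma \ref{l5.8}, since the lemma itself assumes $n\geq 3$.
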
   

\begin{proof}
(i) Let $\Delta \in \mathcal I$. By Lemma \ref{l5.8}, if the extremal problem ${\mathcal E}_n$ does not have a stationary point then its minimum is attained at the set $\{ ( x_1,\ldots,x_{n-1},0)^\top : (x_1,\ldots,x_{n-1})^\top \in [0,+\infty)^{n-1} \}$, what leads to the extremal problem ${\mathcal E}_{n-1}$. If the extremal problem ${\mathcal E}_n$ has a stationary point $(x_1^{(n)} ,\ldots , x_n^{(n)})^\top$ then the coordinates of it satisfy the chain of inequalities $x_1^{(n)} \geq \ldots \geq x_n^{(n)} \geq 0$ by \eqref{f5.19}. So, assertion (i) follows by an application of Lemma \ref{l5.8}. 

(ii) Let $\Delta \not\in \mathcal I$. There exists a sequence $(A_k)$ satisfying the conditions of Lemma \ref{l2.3}. Since ${\rm rk}(A) \geq 2$ for $A \in \mathcal A$, (ii) follows by an application of the second assertion of Lemma \ref{l2.3}.
\end{proof}

We enrich this collection of examples giving another application of Proposition \ref{p4.2}. Set $f_{r,k}(A) = E_{r,k} (a_{11},\ldots,a_{kk})$ and $\phi_{r,k}(A) = \max \{ f_{r,k} (U^*AU) : U  \in {\mathcal U} \}$ for ${\mathcal A} \in {\mathcal M}^\geq$. 

\begin{pro} \label{p5.10}
For $k \in \{ 1,\ldots,n \}$, $r \in \{ 1,\ldots,k \}$ and $\Delta \in {\mathcal M}^\geq$ invertible, we have 
\begin{equation*}
    m( \Delta ; \{ A \in {\mathcal M}^\geq : f_{r,k} (A)=1 \} ) = \frac{1}{k^2} \left( \begin{array}{c} k \\  r \end{array} \right)^{2/r} \cdot \left(\sum_{j=1}^k \lambda_{n-j+1}^{-1} \right)^{-1} \, .
\end{equation*}
\end{pro}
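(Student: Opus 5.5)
We would prove the formula by squeezing it between a lower bound, obtained by reducing the symmetric constraint to a linear one, and an upper bound from an explicit positive semidefinite minimizer taken from Proposition \ref{p4.2}(ii). Write $A\in{\mathcal M}^\geq$, $d=(a_{11},\ldots,a_{kk})^\top\in[0,+\infty)^k$ and $s=\sum_{j=1}^k a_{jj}$.

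\emph{Step 1 (from $E_{r,k}$ to a linear constraint).} Maclaurin's inequality gives
\[
E_{r,k}(d)\le \binom{k}{r}\Bigl(\frac{s}{k}\Bigr)^{r},
\]
with equality if and only if $a_{11}=\ldots=a_{kk}$. Hence $f_{r,k}(A)=1$ forces $s\ge c_{r,k}:=k\binom{k}{r}^{-1/r}$. Since ${\rm tr}(A\Delta A^*)$ is homogeneous of degree $2$ in $A$, the substitution $A=sB$ turns the problem into minimizing ${\rm tr}(B\Delta B^*)$ over $B\ge 0$ with $\sum_{j\le k}b_{jj}=1$, multiplied by $s^2$, and $s^2\ge c_{r,k}^2$. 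Because $\Delta$ is invertible, that minimum is positive. The minimum is therefore reached at the smallest admissible $s$, namely $s=c_{r,k}$ with all $a_{jj}$ equal.

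\emph{Step 2 (the linear problem).} For $B\ge0$, the sum $\sum_{j\le k}b_{jj}$ is at most the Ky Fan $k$-norm $\|B\|_{(k)}=\phi_{{\mathbf t},1}(B)$ with ${\mathbf t}=(1,\ldots,1,0,\ldots,0)$. By Lemma \ref{l3.9}(ii) this gives the lower bound
\[
\iota\Bigl(\Delta;\sum_{j\le k}b_{jj}=1\Bigr)\ \ge\ m(\Delta;{\mathcal S}_{{\mathbf t},1})=\Bigl(\sum_{j=1}^k\lambda_{n-j+1}^{-1}\Bigr)^{-1},
\]
where the equality is Proposition \ref{p4.2}(ii) with $p=1$. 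For the reverse inequality, take the minimizer \eqref{f4.2} with $V=U\in{\mathcal U}_\Delta$. It equals $U\,{\rm diag}(\lambda_n^{-1},\ldots,\lambda_{n-k+1}^{-1},0,\ldots,0)\,U^*$ up to a positive scalar, so it is positive semidefinite, and its trace attains the Ky Fan value.

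\emph{Step 3 (upper bound and constant).} Steps 1 and 2 only fit together if this minimizer also has equal first $k$ diagonal entries and trace concentrated on those coordinates. Since the constraint $f_{r,k}$ is not unitarily invariant, we must check that the minimizer can be rotated within the eigenspaces (Lemma \ref{l2.2}(vi), Theorem \ref{t3.2}) to satisfy both conditions simultaneously. I would try first to conjugate by a unitary acting on the span of $e_1,\ldots,e_k$, for instance a Fourier matrix, which equalizes the diagonal of any matrix supported there. Then $a_{11}=\ldots=a_{kk}$ holds, so Maclaurin's inequality is an equality and $f_{r,k}(A)=1$ after scaling. The attained value is the Ky Fan minimum times the square of the normalizing scalar from Step 1. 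Finally, compare this scalar carefully with the factor $k^{-2}\binom{k}{r}^{2/r}$ in the statement. The sanity cases $r=1$, $k=n$ (the hyperplane result of Example \ref{e2.8}) and $r=k=n=1$ both reproduce the claimed value.

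I expect Step 3 to be the main obstacle. The difficulty is reconciling the non-unitarily-invariant diagonal constraint with the eigenvalue-based answer, that is, showing that the optimal $A$ can be placed in the coordinates $e_1,\ldots,e_k$ with equal diagonal. In the same step we must track the direction of the exponent of $\binom{k}{r}$ against the normalization $c_{r,k}$. A test with $r=k=n=2$ and $\Delta=I$ should decide whether the constant as printed or its reciprocal is the correct one; it yields $\binom{k}{r}^{-2/r}k^2$ rather than the printed factor $k^{-2}\binom{k}{r}^{2/r}$. So the statement may contain a misprint in that factor, while the remaining structure of the formula is as claimed.
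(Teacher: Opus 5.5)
Your Steps 1 and 2 are sound as a relaxation. Maclaurin's inequality, $\sum_{j\le k}b_{jj}\leq\|B\|_{(k)}$ and Proposition \ref{p4.2}(ii) with $p=1$ do give ${\rm tr}(A\Delta A^*)\geq k^2\binom{k}{r}^{-2/r}\bigl(\sum_{j=1}^k\lambda_{n-j+1}^{-1}\bigr)^{-1}$ on $\{A\in{\mathcal M}^\geq : f_{r,k}(A)=1\}$. However, the substitution $A=sB$ discards the constraint on the individual diagonal entries of $B$, so it does not show that the minimum sits at $s=c_{r,k}$ with equal diagonal.

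The gap is Step 3, and it cannot be closed. Equality throughout your chain forces $B=A/s$ to be a minimizer in Proposition \ref{p4.2}(ii). Being positive semidefinite, it is $U\,{\rm diag}(\ldots)\,U^*$ with $U\in{\mathcal U}_\Delta$, so ${\mathcal R}(B)$ is spanned by eigenvectors of $\Delta$ for $\lambda_n,\ldots,\lambda_{n-k+1}$. At the same time, $\sum_{j\le k}b_{jj}={\rm tr}(B)$ forces ${\mathcal R}(B)\subseteq{\rm l.h.}\{e_1,\ldots,e_k\}$. For generic $\Delta$ these two requirements are incompatible.

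Your rotation does not repair this. $\Delta$ is fixed, so conjugating $A$ by a Fourier matrix changes ${\rm tr}(A\Delta A^*)$ unless that unitary commutes with $\Delta$. Lemma \ref{l2.2}(vi) only permits left multiplications $A\mapsto WA$, which destroy positivity and the diagonal. The freedom in Theorem \ref{t3.2} stays inside the eigenspaces of $\Delta$. In fact, with $f_{r,k}$ read literally the formula is false. Take $n=2$, $k=r=1$ and $\Delta={\rm diag}(2,1)$, so the constraint is $a_{11}=1$. Then ${\rm tr}(A\Delta A^*)=2\|Ae_1\|^2+\|Ae_2\|^2\geq 2$, with equality at $A={\rm diag}(1,0)$, whereas the right-hand side equals $\lambda_2=1$.

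The missing idea is that the paper's argument is really about the unitary-similarity-invariant function $\phi_{r,k}(A)=\max\{f_{r,k}(U^*AU):U\in{\mathcal U}\}$ defined just before the statement. Its proof invokes the Marcus--McGregor identity $\binom{k}{r}\bigl(\frac1k\sum_{j=1}^k\lambda_j(A)\bigr)^r$, which for $A\in{\mathcal M}^\geq$ is the value of $\phi_{r,k}(A)$, not of $f_{r,k}(A)$. Your Maclaurin/Ky Fan estimate and your Fourier rotation are exactly the two halves of that identity. (The rotation must be applied after diagonalizing: a DFT conjugation equalizes the diagonal of a diagonal matrix, not of an arbitrary one.) But the rotation is legitimate inside the maximum defining $\phi_{r,k}$, not on the optimization variable against a fixed $\Delta$.

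With $\phi_{r,k}$ the constraint set is exactly the Ky Fan sphere $\{A\in{\mathcal M}^\geq : \|A\|_{(k)}=k\binom{k}{r}^{-1/r}\}$. Your Step 2 observation that the minimizer \eqref{f4.2} with $V=U$ is positive semidefinite (this is \eqref{f5.24}), Proposition \ref{p4.2}(ii) with $p=1$, and scaling then finish the proof, as in the paper.

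Your diagnosis of the constant is correct. Scaling by $c=k\binom{k}{r}^{-1/r}$ multiplies the minimum by $c^2=k^2\binom{k}{r}^{-2/r}$, while the paper's scaling step uses $c^{-2}$. Your test $r=k=n=2$, $\Delta=I$ gives true value $2$ against printed value $1/8$, which confirms that the printed factor is inverted.
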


\begin{proof}
The description of the minimum set ${\mathcal S}_{{\mathbf t},p}(\Delta )$ in Proposition \ref{p4.2}(ii) gives the equality 
\begin{equation} \label{f5.24}
       m(\Delta ; {\mathcal S}_{{\mathbf t},p} ) = m(\Delta ; \{ A \in {\mathcal M}^\geq : \phi_{{\mathbf t},p}(A) = 1 \}) \, .
\end{equation}
Since  $ f_{r,k}(A) = \left( \begin{array}{c} k \\ r \end{array} \right) \left( \frac{1}{k} \, \sum_{j=1}^k \lambda_j(A) \right)^r$, cf.~\cite[Cor.~after Thm.~2]{MM}, one has 
\begin{eqnarray*}
\begin{split}
& m(\Delta ; \{  A \in  {\mathcal M}^\geq : f_{r,k}(A)=1 \}) = \\
   & \quad \quad =  m \left(\Delta ; \left\{ A \in {\mathcal M}^\geq : \sum_{j=1}^k \lambda_j(A) = k \left( \begin{array}{c} k \\ r \end{array} \right)^\frac{-1}{r} \right\} \right) \\
   & \quad \quad =  \frac{1}{k^2} \left( \begin{array}{c} k \\ r \end{array} \right)^{2/r}  \cdot m\left(\Delta ; \left\{ A \in {\mathcal M}^\geq : \sum_{j=1}^k \lambda_j(A)=1 \right\}\right) \, . 
\end{split}   
\end{eqnarray*}
Now, apply \eqref{f5.24} and Proposition \ref{p4.2}(ii). 
\end{proof}

It is well-known that extremal representations of matrix-valued functions are a rich source of inequalities. We want to sketch ways to obtain inequalities from the results above illustrating them by examples. 

A first set of inequalities can be immediately derived from concavity and superadditivity of the function $\iota( \cdot : {\mathcal A})$. For example, selecting ${\mathbf t}=(1,\ldots,1)^\top$ and $p=1$ in Proposition \ref{p4.2}(ii) one obtains
\begin{eqnarray} \label{f5.25}
& & \\
       \frac{E_{n,n}(\lambda_1(\Delta_1+\Delta_2),\ldots,\lambda_n(\Delta_1+\Delta_2))}{E_{n-1,n}(\lambda_1(\Delta_1+\Delta_2),\ldots,\lambda_n(\Delta_1+\Delta_2))} & \geq &
       \frac{E_{n,n}(\lambda_1(\Delta_1),\ldots,\lambda_n(\Delta_1))}{E_{n-1,n}(\lambda_1(\Delta_1),\ldots,\lambda_n(\Delta_1))} \nonumber \\
       & & +
       \frac{E_{n,n}(\lambda_1(\Delta_2),\ldots,\lambda_n(\Delta_2))}{E_{n-1,n}(\lambda_1(\Delta_2),\ldots,\lambda_n(\Delta_2))} \nonumber
\end{eqnarray}  
for $\Delta_1,\Delta_2 \in {\mathcal M}^\geq \cap \mathcal I$. In particular, if $\Delta_1={\rm diag}(x_1,\ldots,x_n)$, $\Delta_2 = {\rm diag}(y_1,\ldots,y_n)$ we obtain superadditivity of harmonic mean or the equivalent inequality
\begin{equation} \label{f5.26}
       \frac{E_{n,n}(x_1+y_1,\ldots,x_n+y_n)}{E_{n-1,n}(x_1+y_1,\ldots,x_n+y_n)} \geq 
        \frac{E_{n,n}(x_1,\ldots,x_n)}{E_{n-1,n}(x_1,\ldots,x_n)} +
         \frac{E_{n,n}(y_1,\ldots,y_n)}{E_{n-1,n}(y_1,\ldots,y_n)}
\end{equation}
for $(x_1,\ldots,x_n)^\top, (y_1,\ldots,y_n)^\top  \in (0,+\infty)^n$

Conversely, if the inequality \eqref{f5.26} is true the function $\frac{E_{n,n}}{E_{n-1,n}}$ is concave, and hence, Schur concave on $(0,+\infty)^n$, cf.~\cite[Prop.~3.C.2]{MOA}. Since the vector $(\lambda_1(\Delta_1+\Delta_2),\ldots,\lambda_n(\Delta_1+\Delta_2))^\top$ is majorized by the vector $(\lambda_1(\Delta_1)+\lambda_1(\Delta_2),\ldots,\lambda_n(\Delta_1)+\lambda_n(\Delta_2))^\top$, \eqref{f5.26} implies \eqref{f5.25}. Marcus and Lopez proved by induction on $r$ in \cite[Thm.~1]{ML}  that \eqref{f5.26} is still true if $\frac{E_n,n}{E_{n-1,n}}$ is replaced by $\frac{E_r,n}{E_{r-1,n}}$ for any $r \in \{1,\ldots,n\}$.  To obtain such a result applying the method which was described above, we would have to give a respective extremal representation of any of the functions $\frac{E_r,n}{E_{r-1,n}}$, $r \in \{ 1,\ldots,n-1 \}$. We do not know such a representation. Might be, multilinear algebra cannot be applied here, compare with remarks at the end of the present section.  

By Proposition \ref{p5.1} and Lemma \ref{l2.1} we get a generalization of Minkowski's determinantal inequality. For ${\mathbf t}=(t_1,\ldots,t_n)^\top \in(0,+\infty)^n$ define a function $\psi_{{\mathbf t}}$ by $\psi_{{\mathbf t}}(\Delta)=\prod_{j=1}^n \lambda_{n-j+1}^{t_j}$ for $\Delta \in {\mathcal M}^\geq$, together with a function ${g}_{{\mathbf t}}$ by ${g}_{{\mathbf t}}(x_1,\ldots,x_n) = \prod_{j=1}^n x_j^{t_j}$ for $(x_1,\ldots,x_n)^\top \in {\mathcal D}$. Let $({g}_{{\mathbf t}})\tilde{\,\,}$ be the permutation invariant extension of ${g}_{{\mathbf t}}$ to $[0,+\infty)^n$. 

\begin{pro} \label{p5.11}
    Let ${\mathbf t} \in {\mathcal D}_+$. The functions $\psi_{{\mathbf t}}$ and $({g}_{{\mathbf t}})\tilde{\,\,}$ are concave if and only if $t = \sum_{j=1}^n t_j \leq 1$. They are superadditive if and only if $t \geq 1$. 
\end{pro}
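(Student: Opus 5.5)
The plan is to reduce everything to Proposition~\ref{p5.1} and Lemma~\ref{l2.1}. The key observation is that, for $\mathbf t\in\mathcal D_+$, Proposition~\ref{p5.1} expresses a fixed positive power of $\psi_{\mathbf t}$ as the function $\iota(\,\cdot\,;\mathcal S_{\mathbf t})$. That function is known from Section~\ref{s2} to be non-negative, positively homogeneous, concave and superadditive on $\mathcal M^\geq$.

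\emph{Step 1 (identification).} Put $c_{\mathbf t}=t\prod_{j=1}^n t_j^{-t_j/t}>0$. For invertible $\Delta$, formula \eqref{f5.1} reads
\[
m(\Delta;\mathcal S_{\mathbf t})=c_{\mathbf t}\,\psi_{\mathbf t}(\Delta)^{1/t}.
\]
For non-invertible $\Delta$ we have $\lambda_n=0$. Hence $\psi_{\mathbf t}(\Delta)=0$ because $t_1>0$, and $\iota(\Delta;\mathcal S_{\mathbf t})=0$ by Proposition~\ref{p5.1}(i). So on all of $\mathcal M^\geq$ we get $\psi_{\mathbf t}=h^{t}$ with $h:=c_{\mathbf t}^{-1}\iota(\,\cdot\,;\mathcal S_{\mathbf t})$. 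The function $h$ is non-negative, positively homogeneous, not identically zero ($h(I)>0$), concave and superadditive. For $({g}_{\mathbf t})\tilde{\,\,}$ we use that it is the restriction of $\psi_{\mathbf t}$ to the non-negative diagonal matrices, $({g}_{\mathbf t})\tilde{\,\,}(x)=\psi_{\mathbf t}({\rm diag}(x))$, in the same way that $(g_{\mathcal A})\tilde{\,}$ was identified with the restriction of $\iota(\,\cdot\,;\mathcal A)$ in Section~\ref{s3}. Consequently $({g}_{\mathbf t})\tilde{\,\,}=\tilde h^{\,t}$, where $\tilde h(x)=h({\rm diag}(x))$ is again non-negative, positively homogeneous, nonzero, concave and superadditive on the cone $[0,+\infty)^n$.

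\emph{Step 2 (sufficiency).} Apply Lemma~\ref{l2.1}(ii) with $p=t$ to $h$ and to $\tilde h$. If $t\le 1$, then $h^t$ and $\tilde h^{\,t}$ are concave. If $t\ge1$, they are superadditive. Lemma~\ref{l2.1} is formulated for functions on $\mathcal M$, but its proof uses only positive homogeneity on a convex cone. I would check explicitly that it transfers verbatim to the cones $\mathcal M^\geq$ and $[0,+\infty)^n$.

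\emph{Step 3 (necessity).} Apply Lemma~\ref{l2.1}(i) to $\tilde h$. If $t>1$, then $\tilde h^{\,t}=({g}_{\mathbf t})\tilde{\,\,}$ is not concave. If $t<1$, it is not superadditive. Since $({g}_{\mathbf t})\tilde{\,\,}$ is the restriction of $\psi_{\mathbf t}$ to a convex subcone, any failure of concavity or superadditivity for it is also a failure for $\psi_{\mathbf t}$. This gives both ``only if'' directions simultaneously for the two functions.

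\emph{Main obstacle.} I expect the main difficulty to be the bookkeeping in Step~1. One point is the ordering convention: the weight $t_j$ must be attached to the $j$-th smallest eigenvalue, matching \eqref{f5.1}, so that the restriction of $\psi_{\mathbf t}$ to diagonal matrices really is the permutation invariant extension used in the statement. The other point is verifying that Lemma~\ref{l2.1}(i) applies on a cone rather than on $\mathcal M$. If the latter needs an argument, I would do it directly on a ray pair. The plan would be to take $x$ and $y=2x$ with $\tilde h(x)>0$, giving $\tilde h^{\,p}(x+y)=3^p\tilde h^{\,p}(x)$ versus $(1+2^p)\tilde h^{\,p}(x)$, which settles superadditivity for $p<1$. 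Concavity for $p>1$ is handled by the midpoint of $0$ and $x$, or of $x$ and $2x$, using strict convexity of $s\mapsto s^p$.
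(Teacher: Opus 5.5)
Your argument is correct and follows the paper's route: the paper only points to Proposition~\ref{p5.1} and Lemma~\ref{l2.1}, and your identity $\iota(\,\cdot\,;{\mathcal S}_{\mathbf t})=c_{\mathbf t}\,\psi_{\mathbf t}^{1/t}$ on ${\mathcal M}^\geq$, followed by Lemma~\ref{l2.1}(ii) for sufficiency and Lemma~\ref{l2.1}(i) for necessity, is exactly how that pointer unpacks. The ordering point you flagged is a real defect in the statement, not just bookkeeping: taking $g_{\mathbf t}(x)=\prod_j x_j^{t_j}$ on ${\mathcal D}$ literally gives $\psi_{\mathbf t}({\rm diag}(x))=\prod_j x_{n-j+1}^{t_j}\neq g_{\mathbf t}(x)$ in general, and for $n=2$, $t_1>t_2$, $t=1$ the literal $(g_{\mathbf t})\tilde{\,\,}$ equals $(1+\varepsilon)^{t_1}(1-\varepsilon)^{t_2}>1$ at $(1\pm\varepsilon,1\mp\varepsilon)$ for small $\varepsilon>0$ while it equals $1$ at $(1,1)$ and $2$ at $(2,2)$, so it is neither concave nor superadditive; hence your Step~1 identification, like the statement itself, only holds with $g_{\mathbf t}(x)=\prod_j x_{n-j+1}^{t_j}$, which is the reading consistent with Proposition~\ref{p5.6} and with the remark following the statement.
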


If ${\mathbf t} \in (0,+\infty)^n \setminus {\mathcal D}_+$ the function $({g}_{{\mathbf t}})\tilde{\,\,}$ is known to be not concave, cf.~\cite[3.A.2.b]{MOA}. So it is not Schur concave. We do not know a description of the set of all ${\mathbf t} \in (0,+\infty)^n$ such that functions $\psi_{{\mathbf t}}$ or $({g}_{{\mathbf t}})\tilde{\,\,}$, respectively, are superadditive. In case $n=2$ easy calculations show that these functions can be superadditive for some ${\mathbf t} \not\in {\mathcal D}_+$. 

Let $\phi \in {\mathcal F}$. The definition of $\iota( \cdot ; {\mathcal S}_\phi)$ implies that ${\rm tr}(A \Delta A^*) \geq \iota(\Delta ; {\mathcal A}) = \iota(\Delta ; {\mathcal A})(\phi(A))^2$ for any $A \in {\mathcal S}_\phi$. Since $(\phi(aA))^2 = a^2 (\phi(A))^2$ for $a \in [0,+\infty)$, $A \in {\mathcal M}$, the inequality ${\rm tr}(A \Delta A^*) \geq  \iota(\Delta ; {\mathcal A})(\phi(A))^2$ is true for any $A \in \mathcal M$ and any $\Delta$. Consequently, a lower bound for ${\rm tr}(A \Delta A^*)$ is found for any given $\Delta$, any $A \in \mathcal M$. In particular, if $\Delta = I$ then a lower bound for the Schatten $2$-norm of $A$ can be derived. Another method is based on Lemma \ref{l3.9}(ii). For example, if ${\mathbf t}= (1,\ldots,1)^\top$ and $q= p / (p-2)$, Proposition \ref{p4.2} implies that for given $(x_1,\ldots,x_n)^\top \in (0,+\infty)^n$, the monotonicity of the function $p \in (0,2) \to (\sum_{j=1}^n x_j^p)^{1/p}$ implies the monotonicity of the function $q \in (-\infty,0) \to (\sum_{j=1}^n x_j^q)^{1/q}$. 

Assertions concerning superadditivity of certain functions can also be derived by methods of multilinear algebra. Let $P$ denote the orthogonal projection of the tensor product ${\mathbb C}^n \otimes {\mathbb C}^n$ onto the antisymmetric tensor product ${\mathbb C}^n \wedge {\mathbb C}^n$. For linear operators $A,B \in {\mathbb C}^n$ define $A \wedge B = P (A \otimes B) P$. Recall that ${\mathbb C}^n \wedge {\mathbb C}^n$ is an invariant subspace of $A \wedge A$ and that the eigenvalues of the restriction of $A \otimes A$ to ${\mathbb C}^n \wedge {\mathbb C}^n$ are exactly the numbers $\{ \lambda_i (A)\lambda_k (A) : j,k \in\{ 1,\ldots,n \} \, , \,\, j < k \}$. Since $\Delta_1 \otimes \Delta_2$ is non-negative Hermitian for $\Delta_1,\Delta_2 \in {\mathcal M}^\geq$, the operator $\Delta_1 \wedge \Delta_2$ is non-negative Hermitian as well. 

Thus, ${\rm tr}(\tilde{A} ((\Delta_1+\Delta_2) \wedge (\Delta_1+\Delta_2)) \tilde{A}^* ) \geq {\rm tr}(\tilde{A} (\Delta_1 \wedge \Delta_1) \tilde{A}^*) + {\rm tr}(\tilde{A} (\Delta_2 \wedge \Delta_2) \tilde{A}^*)$ for any $n^2 \times n^2$ matrix $\tilde{A}$ implying the superadditivity of the function $\Delta \in {\mathcal M}^\geq \to \inf \{ {\rm tr}(\tilde{A} (\Delta \wedge \Delta) \tilde{A}^*): \tilde{A} \in {\mathcal A} \}$ for arbitrary subsets $\tilde{{\mathcal A}}$ of the set of $n^2 \times n^2$ matrices. In particular, for the set $\tilde{{\mathcal A}}$ of all $n^2 \times n^2$ matrices satisfying the equality ${\rm tr}(P (A \otimes A) P) = {\rm tr}(A \wedge A)=1$, the function $\frac{E_{n,n}}{E_{n-2,n}}$ is superadditive by Proposition \ref{p2.5}(i) or by Proposition \ref{p4.2}(ii). This result is weaker than the superadditivity of the function  $\Big( \frac{E_{n,n}}{E_{n-2,n}} \Big)^{1/2}$ established by Bullen and Marcus in \cite{BM}.  Our method can be extended to the $k$-th antisymmetric tensor power or even to general symmetric classes, cf.~\cite[Sections 2.3, 2.4]{M}. The consequence of this observation has still to be explored.

\section{Prediction of multivariate stationary sequences}
\label{s6}       

Let $K$ be a complex Hilbert space with inner product $(\cdot,\cdot)$ and corresponding norm $\| \cdot \|$. For a matrix $A=(a_{ik}) \in \mathcal M$ and vectors $X=(x_1,\ldots,x_n)^\top$, $Y=(y_1,\ldots,y_n)$ in $K^n$ define the multiplication by $AX=(\sum_{j=1}^n a_{1j}x_j,\ldots, \sum_{j=1}^n a_{nj} x_j)^\top$ and the Grammian $\mathcal M$-valued inner product $G$ by $G(X,Y)=((x_j,y_k))_{j,k \in \{ 1,\ldots,n \}}$ so that $K^n$ has the structure of a left Hilbert $\mathcal M$-module. Recall that
\begin{equation} \label{f6.1}
    G(AX,BY) = A G(X,Y) B^* \,\, {\rm for} \,\,A,B \in {\mathcal M}, \, X,Y \in K^n  \, .
\end{equation}    
Set $(X,Y)_n = {\rm tr}(G(X,Y)) = \sum_{j=1}^n (x_j,y_j)$ and denote by $\| \cdot \|_n$ the norm defined by the scalar inner product $(\cdot,\cdot)_n$. The notions of a subspace or a submodule of $K$ and $K^n$, respectively, mean always norm-closedness of the selected linear subspaces. For simplicity $\|x-y\|^2$ is called the distance of $x,y \in K$, although the notion `squared distance' would be correct. Similarly, $\| X-Y \|_n^2$ is called the distance of $X,Y \in K^n$. 

Let ${\mathbf X} = (X(s)=(x_1(s),\ldots,x_n(s))^\top )_{s \in {\mathbb Z}}$ be an $n$-variate stationary sequence with values in $K^n$. The subspace $H={\rm c.l.h.}\{ x_j(s) : j=1,\ldots,n , \, s \in {\mathbb Z} \}$ is called the time domain of $\mathbf X$. The submodule $H^n$ is the closed $\mathcal M$-linear hull of all $X(s)$ with $s \in \mathbb Z$, and it is also called the time domain of $\mathbf X$. The probabilistic interpretation of these notions can be seen if $K$ is assumed to be the Hilbert space of square-integrable random variables  with expectation $0$ over a certain probability space. 

For a non-empty subset $\mathcal O$ of ${\mathbb Z} \setminus \{ 0 \}$ let $H({\mathcal O})= {\rm c.l.h.} \{ x_j(s) : j= 1,\ldots,n, \, s \in {\mathcal O} \}$ be the observation domain  based on observations at all points of the observation set $\mathcal O$. Let $P({\mathcal O})=P$ be the orthogonal projection of $H$ onto $H({\mathcal O})$, and let ${\mathbf P}({\mathcal O})={\mathbf P}$ be its $n$-fold inflation, ${\mathbf P}((x_1,\ldots,x_n)^\top) = (P(x_1),\ldots,P(x_n))^\top \in H^n$ for $(x_1, \ldots , x_n)^\top \in H^n$. Recall, that $\mathbf P$ is the $\mathcal M$-linear Grammian orthogonal projection in $H^n$ onto $H({\mathcal O})^n$, i.e. $G(X-{\mathbf P}(X),Y)=0$ for any $X \in H^n$, $Y \in H({\mathcal O})^n$. Therefore,
\begin{equation} \label{f6.2}
    G(X-{\mathbf P}(X),X-{\mathbf P}(X) ) = \min \{ G(X-Y,X-Y) : Y \in H({\mathcal O})^n \}
\end{equation}
with respect to Loewner's semiordering, what implies 
\[
    \| X-{\mathbf P}(X) \|_n^2 = \sum_{j=1}^n \| x_j-P(x_j) \|^2 = \min \left\{ \sum_{j=1}^n \| x_j-y_j \|^2 ; y \in H({\mathcal O}), j = 1,\ldots,n \right\}
\]
for any $X=(x_1,\ldots,x_n)^\top \in H^n$. 

A main goal of multivariate prediction is to determine the prediction error matrix
\begin{equation}  \label{f6.3}
    \Delta({\mathcal O}) = \Delta = G(X(0)-{\mathbf P}(X(0)), X(0)-{\mathbf P}(X(0))) \, .
\end{equation}    
From \eqref{f6.3} we can find that the distance
\begin{equation} \label{f6.4}
    d_j({\mathcal O}) = d_j = \| x_j(0)-P(x_j(0)) \|^2
\end{equation}
of the element $x_j(0)$ to the observation domain is the $j$-th element of the principal diagonal of $\Delta$. 

In practice one would sometimes be interested to predict a certain functional of $X(0)$ instead of $X(0)$ itself, and to compute the corresponding prediction error, cf.~\cite{MMS,V}. The simplest problem of this type is the prediction of a linear combination of the elements of $X(0)$ or, slightly more general, an $\mathcal M$-linear combination of these elements. From \eqref{f6.1}, \eqref{f6.2}, \eqref{f6.3} and the $\mathcal M$-linearity of $\mathbf P$ follows that 
\begin{equation}   \label{f6.5}
\begin{split}
    A \Delta A^*   & =  G(AX(0)-{\mathbf P}(A X(0)), AX(0)-{\mathbf P}(A X(0)) ) =\\
   & \quad \quad \quad =  \min \{ G(A X(0)-Y,A X(0)-Y) : Y \in H({\mathcal O})^n \}\\
\end{split}
\end{equation}
for $A \in \mathcal M$ with respect to Loewner's semiordering. However, recall that for arbitrary subsets ${\mathcal A} \subseteq \mathcal M$ an infimum of the set $\{ A \Delta A^* : A \in {\mathcal A} \}$ might not exist with respect to Loewner's semiordering. Therefore, motivated by the paper \cite{HL} we formulate the following prediction problem of Helson-Lowdenslager type:

\begin{problemHL}
{\rm
For a non-empty subset ${\mathcal A} \in {\mathcal M}$ and a non-empty subset ${\mathcal O} \subseteq {\mathbb Z} \setminus \{ 0 \}$, compute the prediction error
\[
    d({\mathcal O} ; {\mathcal A}) = d = \inf \{ {\rm tr} (G(AX(0)-Y,AX(0)-Y)) : Y \in H({\mathcal O})^n, A \in {\mathcal A} \} \, .
\]
If the infimum is a minimum then describe the set of all $A \in {\mathcal A}$ where the minimum is attained.
}
\end{problemHL}

Since $d({\mathcal O} ; {\mathcal A}) = \inf \{ {\rm tr}(A \Delta({\mathcal O}) A^*): A \in {\mathcal A} \} = \iota ( \Delta({\mathcal O}) ; {\mathcal A})$ by \eqref{f6.5}, one is lead to apply the results of the preceding sections to the problem (HL) and to give them a prediction theoretic meaning. In the sequel we discuss mainly consequences that are of particular interest in multivariate prediction theory. A convenient way is to partition a matrix $A$ into its row vectors
\[
   A = \left( \begin{array}{c} {\mathbf a}^{(1)} \\ \vdots \\ {\mathbf a}^{(n)} \end{array} \right) \in {\mathcal M} \, .
\]
Then ${\rm tr}(A \Delta A^*) = \sum_{j=1}^n \| {\mathbf a}^{(j)} X(0) - P({\mathbf a}^{(j)} X(0) )\|^2$ is the sum of distances of linear combinations of ${\mathbf a}^{(j)}X(0)$, $j\in \{ 0,\ldots,n \}$, to the observation domain, and $d$ is the minimum of these sums if $A$ runs through the set $\mathcal A$. A similar interpretation was given by Helson and Lowdenslager \cite[Thm.~11]{HL}. A more concrete result follows if we apply Proposition \ref{p2.4} with $p=2$.

\begin{pro}   \label{p6.1}
   Let $k \in \{ 1,\ldots, n \}$. The minimal sum of distances of the linear combinations of elements of  $\{ {\mathbf a}_j^\top X(0) : j \in \{ 1,\ldots,k \} \}$, where $\{ {\mathbf a}_1 ,\ldots, {\mathbf a}_k \}$ runs through the set of $k$-tuples of vectors of ${\mathbb C}^n$ with $\sum_{j=1}^k \| {\mathbf a}_j \|^2 =1$, is equal to the smallest eigenvalue of the prediction error matrix $\Delta$.
\end{pro}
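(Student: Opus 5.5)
The plan is to recast the statement as an instance of $\iota(\Delta;\mathcal A)$ for a suitable set $\mathcal A$, and then apply Proposition \ref{p2.4} (equivalently Corollary \ref{c3.12} with $\phi=\|\cdot\|_2$). Given a $k$-tuple $\mathbf a_1,\ldots,\mathbf a_k\in\mathbb C^n$ with $\sum_{j=1}^k\|\mathbf a_j\|^2=1$, form the matrix $A\in\mathcal M$ whose first $k$ rows are $\mathbf a_1^\top,\ldots,\mathbf a_k^\top$ and whose remaining $n-k$ rows are zero. By the row decomposition stated just before the proposition and by \eqref{f6.5},
\[
{\rm tr}(A\Delta A^*)=\sum_{j=1}^k \|\mathbf a_j^\top X(0)-P(\mathbf a_j^\top X(0))\|^2 ,
\]
so the quantity to be minimized is exactly ${\rm tr}(A\Delta A^*)$. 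Moreover, $\|A\|_2^2=\sum_j\|\mathbf a_j\|^2=1$. Hence the minimal sum of distances equals $\iota(\Delta;\mathcal A_k)$, where $\mathcal A_k=\{A\in\mathcal M:\ \|A\|_2=1,\ \text{rows } k+1,\ldots,n \text{ vanish}\}$.

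For the lower bound, every $A\in\mathcal A_k$ satisfies $\|A\|_2=1\ge 1$, so \eqref{f2.6} (Proposition \ref{p2.4}) gives ${\rm tr}(A\Delta A^*)\ge\lambda_n\|A\|_2^2=\lambda_n$. For attainment, take $U\in\mathcal U_\Delta$; its first column $\mathbf u$ is a unit eigenvector for $\lambda_n$ by \eqref{f2.3}. Set $A=EU^*$ with $E={\rm diag}(1,0,\ldots,0)$, that is, $\mathbf a_1=\overline{\mathbf u}$ and $\mathbf a_j=0$ for $j\ge2$. This $A$ lies in $\mathcal A_k$ for every $k\ge1$, and ${\rm tr}(EU^*\Delta UE)=\lambda_n$, exactly as in the proof of Corollary \ref{c3.12}. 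Thus the infimum is a minimum and equals $\lambda_n$, independently of $k$.

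The only delicate step is the bookkeeping in the identification. One must check the convention ($\mathbf a_j^\top X(0)$ versus row vectors) so that the quadratic form is $\mathbf a^\top\Delta\overline{\mathbf a}$ and not its transpose. This requires no further argument, because conjugating the coefficient vectors preserves the constraint. A second point is whether $\Delta$ may be singular. If so, $\lambda_n=0$ and the same vector from $\mathcal N(\Delta)$ gives the value $0$, consistent with Lemma \ref{l2.3}. So no invertibility hypothesis is needed.
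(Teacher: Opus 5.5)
Your proof is correct and is essentially the paper's argument. The paper derives Proposition~\ref{p6.1} directly from the Frobenius-norm case (Proposition~\ref{p2.4}, equivalently Corollary~\ref{c3.12} with $\phi=\|\cdot\|_2$), identifying the sum of distances with ${\rm tr}(A\Delta A^*)$ for the matrix $A$ with rows $\mathbf a_j^\top$ and $\|A\|_2=1$; your observation that the minimizer $EU^*$, $U\in\mathcal U_\Delta$, has only one nonzero row, and so is admissible for every $k$, is exactly what makes the restriction to $k$ rows harmless (cf.\ Lemma~\ref{l2.2}(iii)).
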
   

Setting $k=1$ in the preceding proposition we get that the distance of the set $\{ {\mathbf a}^\top X(0) : \| {\mathbf a}\|_2=1 \}$ to the observation domain equals to $\lambda_n$. Since all vectors of the canonical orthonormal basis have Euclidean length $1$ we have $\lambda_n \leq \min \{ d_j : j \in \{ 1,\ldots,n \} \}$ what leads to the well-known fact that the smallest element on the principal diagonal of a non-negative Hermitian matrix is not smaller than the minimal eigenvalue of that matrix. It would be interesting to determine the distance of the set $\{ {\mathbf a}^\top X(0) : {\mathbf a}  \in {\mathcal A}_1 \}$ for other subsets ${\mathcal A}_1 \subseteq \mathcal M$. For instance, we do not know a solution if ${\mathcal A}_1$ is the set of all vectors whose $l_p$-norm equals to $1$ for $p \not= 2$. If in Proposition \ref{p6.1} the condition $\sum_{j=1}^k \| {\mathbf a} \|^2=1$ is replaced by the condition $\sum_{j=1}^k \| {\mathbf a}_j \|^2=k$ the corresponding distance $d$ equals to $k \lambda_n$. It is instructive to compare this fact with Ky Fan's result, which claims that the minimal sum of the distances of $\{ {\mathbf a}_j^\top X(0) : j \in \{ 1,\ldots,k \} \}$ equals to $\sum_{j=1}^k \lambda_{n-j+1}$ if the set $\{ {\mathbf a}_1,\ldots,{\mathbf a}_k \}$ covers all orthonormal systems of $k$ vectors, cf.~\cite[Thm.~20.A.2]{MOA}, \cite{Fan}. 

To apply Proposition \ref{p2.5} and Example \ref{e2.8} note, that in case ${\mathcal A}_1$ is a subset of ${\mathbb C}^n$ and ${\mathcal A}_2$ is a subset of the set of complex-valued $(n-1) \times n$ matrices such that the zero matrix belongs to ${\mathcal A}_2$, then for ${\mathcal A} = \left\{ \left( \begin{array}{c} {\mathbf a}^\top \\ A_2 \end{array} \right) : {\mathbf a} \in {\mathcal A}_1, {\mathcal A}_2 \in {\mathcal A}_2 \right\}$ the prediction error $d({\mathcal O} ; {\mathcal A})$ coincides with the distance of the set $\{ {\mathbf a}^\top X(0) : {\mathbf a} \in {\mathcal A}_1 \}$ to $H({\mathcal O})$. In particular, if ${\mathcal A}_1$ is the singleton $\{ e_1 \}$ then $d=d_1$. Furthermore, if $r \in \{ 1,\ldots,n \}$ and ${\mathcal A}_1$ is the set of all vectors ${\mathbf a} = (a_1 ,\ldots, a_n ) \in {\mathbb C}^n$ with
\begin{equation}   \label{f6.6}
     \sum_{j=1}^r a_j = 1
\end{equation}
then $d$ can be considered as the distance of the set of all linear combinations $\sum_{j=1}^r a_j x_j(0)$ with coefficients satisfying \eqref{f6.6} to the set $H({\mathcal O}) + {\rm l.h.} \{ x_j(0) : j \in \{ r+1,\ldots,n \}\}$. The case $r=1$ is of particular interest. Remembering Rozanov's minimality definition \cite{MMS} (cf.~\cite{KMae} for an overview on minimality notions of multivariate stationary sequences), we call the distances
\begin{equation}   \label{f6.7}
    \tilde{d}_k({\mathcal O}) = \tilde{d}_k = \inf \{ \| x_k(0)-y \|^2 : y \in H({\mathcal O}) + {\rm l.h.} \{ x_j(0) : j \not=k \}\} \, , \,\, k \in \{ 1,\ldots,n\}
\end{equation}
the Rozanov distances. Using $\tilde{d}_k$ we can give a practicable invertibility criterion for the prediction error matrix. 

\begin{pro}   \label{p6.2}
   For any non-empty subset $\mathcal O$ of ${\mathbb Z} \setminus \{ 0 \}$ the following assertions are equivalent:
   \begin{itemize}
      \item[(i)] The prediction error matrix \eqref{f6.3} is invertible.
      \item[(ii)] The elements of $X(0)$ are linearly independent over $H({\mathcal O})$. 
      \item[(iii)] All Rozanov distances \eqref{f6.7} are positive.
   \end{itemize}
\end{pro}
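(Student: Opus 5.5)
The plan is to prove the cycle (i)$\Leftrightarrow$(ii) and (i)$\Leftrightarrow$(iii), working throughout with the Grammian description \eqref{f6.3} of $\Delta=\Delta({\mathcal O})$. The basic tool is identity \eqref{f6.5} applied to a single row. For ${\mathbf a}\in{\mathbb C}^n$ take $A$ with first row ${\mathbf a}^\top$ and all other rows zero. Then \eqref{f6.5} gives
\[
{\mathbf a}^\top\Delta\overline{{\mathbf a}}=\|{\mathbf a}^\top X(0)-P({\mathbf a}^\top X(0))\|^2 ,
\]
which is the distance of ${\mathbf a}^\top X(0)$ to $H({\mathcal O})$.

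For (i)$\Leftrightarrow$(ii), note that $\Delta$ is non-negative Hermitian. It is therefore singular if and only if some ${\mathbf a}\neq 0$ satisfies ${\mathbf a}^\top\Delta\overline{{\mathbf a}}=0$. By the displayed identity this happens if and only if $\sum_j a_jx_j(0)\in H({\mathcal O})$ for some nontrivial coefficient vector. That is exactly the failure of linear independence of $x_1(0),\ldots,x_n(0)$ over (modulo) $H({\mathcal O})$. This step is essentially definitional once (ii) is read as independence in the quotient $H/H({\mathcal O})$.

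For (i)$\Leftrightarrow$(iii), I will use the hyperplane results of Section~2. Fix $k$ and let $X=E_{kk}$, the matrix unit. Then ${\mathcal A}_X=\{A:a_{kk}=1\}$. As in the discussion before \eqref{f6.7}, where the remaining rows may be chosen to be zero, one gets $\tilde d_k=\iota(\Delta;\{a_{kk}=1\})$, and by Proposition~\ref{p2.5} this infimum is a minimum. Permuting coordinates so that $k$ comes first, Proposition~\ref{p2.5} and Corollary~\ref{c2.7}(i) give the following. If $\Delta$ is invertible, then $\mathcal N(\Delta)=\{0\}\subseteq\mathcal N(X)$, so $\tilde d_k=((\Delta^{-1})_{kk})^{-1}>0$ for every $k$. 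Conversely, suppose all $\tilde d_k>0$. By Proposition~\ref{p2.5}(ii) this means $\mathcal N(\Delta)\subseteq\mathcal N(E_{kk})=\{x:x_k=0\}$ for every $k$. Intersecting over $k$ forces $\mathcal N(\Delta)=\{0\}$, so $\Delta$ is invertible.

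The main obstacle is justifying $\tilde d_k=m(\Delta;\{a_{kk}=1\})$ cleanly. Concretely, one must show that the infimum in \eqref{f6.7}, taken over $y\in H({\mathcal O})+{\rm l.h.}\{x_j(0):j\ne k\}$, equals the minimum over ${\mathbf a}$ with $a_k=1$ of the distance of ${\mathbf a}^\top X(0)$ to $H({\mathcal O})$. This works because $H({\mathcal O})$ is closed and adding a finite-dimensional span keeps the sum closed. One then minimizes first over the $H({\mathcal O})$-component, which by \eqref{f6.2} yields ${\mathbf a}^\top\Delta\overline{{\mathbf a}}$, and afterwards over the coefficients $a_j$, $j\neq k$. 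Finally, ${\rm tr}(A\Delta A^*)$ for $A\in{\mathcal A}_X$ is at least the contribution of its $k$-th row, and it equals that contribution when the other rows vanish.
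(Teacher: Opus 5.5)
Your proposal is correct. Your (i)$\Leftrightarrow$(ii) step is the paper's argument in quadratic-form language. The paper says the Gram matrix $\Delta$ is singular iff the residuals $x_j(0)-P(x_j(0))$ are linearly dependent, and $\sum_j a_j(x_j(0)-P(x_j(0)))=0$ iff $\sum_j a_jx_j(0)\in H(\mathcal O)$; this is exactly your criterion ${\mathbf a}^\top\Delta\overline{{\mathbf a}}=0$.

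For the Rozanov distances you take a genuinely different route. The paper proves (ii)$\Leftrightarrow$(iii) directly from \eqref{f6.7}: $\tilde d_k=0$ iff $x_k(0)=\sum_{j\ne k}a_jx_j(0)+y$ with $y\in H(\mathcal O)$, i.e.\ iff there is a dependence relation over $H(\mathcal O)$ with nonzero $k$-th coefficient. This is elementary, but the passage ``distance zero $\Rightarrow$ exact representation'' tacitly uses that $H(\mathcal O)+{\rm l.h.}\{x_j(0):j\ne k\}$ is closed.

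You instead prove (i)$\Leftrightarrow$(iii) by identifying $\tilde d_k$ with the hyperplane minimum $m(\Delta;\{a_{kk}=1\})$ and invoking Proposition~\ref{p2.5}. The identity $\tilde d_k=\inf\{{\mathbf a}^\top\Delta\overline{{\mathbf a}}:a_k=1\}$ needs only that $H(\mathcal O)$ is closed, so that the projection exists. Your remark about closedness of the sum is superfluous, since a distance to a set equals the distance to its closure. Attainment then comes for free from the finite-dimensional quadratic problem, so your route avoids the closedness issue altogether.

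Your route also yields more than the equivalence: $\tilde d_k=((\Delta^\dagger)_{kk})^{-1}$ if $e_k\in\mathcal R(\Delta)$ and $\tilde d_k=0$ otherwise. That is precisely Corollary~\ref{c6.4} and the formula leading to \eqref{f6.8}, which the paper only obtains later via Proposition~\ref{p6.3}. The price is reliance on the Section~2 machinery for what the paper treats as a one-line definitional equivalence.
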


\begin{proof}
(i) $\leftrightarrow$ (ii): Recall that the Grammian matrix $\Delta({\mathcal O})$ is invertible if and only if the elements $\{ x_j(0)-{P}(x_j(0)) : j \in \{ 1,\ldots,n \} \}$ are linearly independent. Additionally, we know that $\sum_{j=1}^n a_j (x_j(0) - {P}(x_j(0))) = 0$ if and only if $\sum_{j=1}^n a_j x_j(0) \in H({\mathcal O})$.

(ii) $\leftrightarrow$ (iii): For any $k \in \{ 1,\ldots,n \}$ the equality $\tilde{d}_k({\mathcal O})=0$ is equivalent to the existence of a representation  $X_k(0) = \sum_{j \not=k} a_jx_j(0) +y$, or equivalently, $x_k(0)-\sum_{j \not=k} a_jx_j(0) = y$ for some coefficients $a_j \in {\mathbb C}$, $y \in H({\mathcal O})$. 
\end{proof}          

Now, we can give Examples {\ref{e2.8}(i),(ii) the prediction theoretical form. Let $r \in \{ 1,\ldots,n \}$ and recall that $\delta_{jk}^{(\dagger)}$ denotes the $(j,k)$-th element of $\Delta^\dagger$. 

\begin{pro} \label{p6.3}
\begin{itemize}
    \item[(i)]   If $e_j \in {\mathcal R}(\Delta)$ for $j \in \{ 1,\ldots,r \}$ then the minimum $\mu_r$ of the set  $\{ \sum_{j=1}^r | a_j|^2 \tilde{d}_j : \sum_{j=1}^r a_j=1, a_j \in {\mathbb C} \}$ is equal to $({\rm tr}((\Delta / \Delta_{22})^{-1}))^{-1}$, where $\Delta_{22}$ denotes the right lower $(n-r) \times (n-r)$ corner of $\Delta$. The minimum $\mu_r$ is attained if and only if $a_j=(\sum_{j=1}^r \delta_{jj}^{(\dagger)} )^{-1}\delta_{jj}^{(\dagger)}$. If $e_j \not\in {\mathcal R}(\Delta)$ for some $j \in \{ 1,\ldots,r \}$ then $\mu_r=0$. 
    \item[(ii)]    If $\sum_{j=1}^r e_j \in {\mathcal R}(\Delta)$ then the distance of the set $\{ \sum_{j=1}^r a_j x_j(0) : \sum_{j=1}^r a_j = 1, a_j \in {\mathbb C} \}$ to the set $H({\mathcal O}) + {\rm l.h.} \{ x_j(0) : j \in \{ (r+1),\ldots,n \} \}$ is equal to $(\sum_{j,k=1}^r \delta_{jk}^{(\dagger)})^{-1}$. If $a_m = (\sum_{j,k =1}^r \delta_{jk}^{(\dagger)} )^{-1} \sum_{k=1}^r \delta_{mr}^{(\dagger)}$, $m \in \{ 1,\ldots,r \}$, then the value is attained, and vice versa. If $\sum_{j=1}^r e_j \not\in {\mathcal R}(\Delta)$ this distance equals zero. 
\end{itemize}    
\end{pro}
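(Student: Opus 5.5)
The plan is to reduce both parts to Problem (HL) with the hyperplanes of Example \ref{e2.8}. For a single row vector $\mathbf a$, the quadratic form $\mathbf a^{\top}\Delta\overline{\mathbf a}$ equals $\|\mathbf a^\top X(0)-P(\mathbf a^\top X(0))\|^2$ by \eqref{f6.5}. Because of this, minimizing over the coefficients $a_j$ with $j$ in an index set $J$ left free yields the squared distance of the remaining combination to $H(\mathcal O)+{\rm l.h.}\{x_j(0):j\in J\}$. I then evaluate the resulting minima by Proposition \ref{p2.5}.

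First I would establish a formula for the Rozanov distances. If $e_k\in\mathcal R(\Delta)$, then $\tilde d_k=(\delta^{(\dagger)}_{kk})^{-1}$; if $e_k\notin\mathcal R(\Delta)$, then $\tilde d_k=0$. To see this, take $X=e_ke_1^\top$ after a permutation, or equivalently apply Proposition \ref{p2.5} with the matrix $X$ whose only nonzero entry is a $1$ in position $(k,k)$. Restricting to matrices whose only nonzero row is row $k$ does not change the minimum, because the other rows only add non-negative terms and are unconstrained. In that row the constraint is $a_{kk}=1$ and the other entries are free, so $m(\Delta;\mathcal A_X)=\tilde d_k$ by the remark above. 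Proposition \ref{p2.5} gives $m(\Delta;\mathcal A_X)=({\rm tr}(X\Delta^\dagger X^*))^{-1}=(\delta^{(\dagger)}_{kk})^{-1}$ when $\mathcal N(\Delta)\subseteq\mathcal N(X)$, i.e.\ when $e_k\in\mathcal R(\Delta)$, and $0$ otherwise.

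For part (i), if some $e_j\notin\mathcal R(\Delta)$ with $j\le r$, then $\tilde d_j=0$, and choosing $a_j=1$ with all other $a_i=0$ gives $\mu_r=0$. Otherwise every $\tilde d_j=1/\delta^{(\dagger)}_{jj}$, and I minimize $\sum_{j\le r}|a_j|^2/\delta^{(\dagger)}_{jj}$ subject to $\sum_{j\le r} a_j=1$. By Cauchy--Schwarz, $1=|\sum_j a_j|^2\le(\sum_j |a_j|^2/\delta^{(\dagger)}_{jj})(\sum_j\delta^{(\dagger)}_{jj})$. So the minimum is $(\sum_{j\le r}\delta^{(\dagger)}_{jj})^{-1}$, with equality exactly when $a_j=\delta^{(\dagger)}_{jj}/\sum_k\delta^{(\dagger)}_{kk}$. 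It remains to identify $\sum_{j\le r}\delta^{(\dagger)}_{jj}={\rm tr}((\Delta^\dagger)_{11})={\rm tr}((\Delta/\Delta_{22})^{-1})$, using $(\Delta^\dagger)_{11}=(\Delta/\Delta_{22})^{-1}$ from the proof of Corollary \ref{c2.7}(ii); this agrees with Example \ref{e2.8}(i).

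For part (ii), take $X$ with first row $(s^{(r)})^\top$ and all other rows zero, as in Example \ref{e2.8}(ii). The constraint ${\rm tr}(AX^*)=1$ reads $\sum_{k\le r}a_{1k}=1$, and all remaining entries are free. Rows $2,\dots,n$ only add non-negative terms, so they are taken to be zero. In row $1$, minimizing over $a_{1k}$ for $k>r$ turns $m(\Delta;\mathcal A_X)$ into exactly the distance in question. Proposition \ref{p2.5}(i) then gives the value $({\rm tr}(X\Delta^\dagger X^*))^{-1}=(\sum_{j,k\le r}\delta^{(\dagger)}_{jk})^{-1}$ when $\mathcal N(\Delta)\subseteq\mathcal N(X)$, i.e.\ $s^{(r)}\in\mathcal R(\Delta)$, and $0$ otherwise by part (ii) of that proposition. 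The minimizers satisfy $AP=c\,X\Delta^\dagger$ with $c=({\rm tr}(X\Delta^\dagger X^*))^{-1}$. Reading off the first $r$ entries of the first row gives $a_m=c\sum_{k\le r}\delta^{(\dagger)}_{km}$, which coincides with the stated $c\sum_{k\le r}\delta^{(\dagger)}_{mk}$ only after using the Hermitian symmetry of $\Delta^\dagger$ and taking the appropriate conjugate. I expect this to be the main obstacle. One must check that fixing $AP$ determines these coefficients, namely that $P$ acts trivially on the span of $e_1,\dots,e_r$ directions that matter, which follows from $s^{(r)}\in\mathcal R(\Delta)$. One must also reconcile the index and conjugation conventions, $\delta^{(\dagger)}_{mk}$ versus the printed $\delta^{(\dagger)}_{mr}$, with the identification of row vectors $\mathbf a^\top$ against $\overline{\mathbf a}$ in ${\rm tr}(A\Delta A^*)$.
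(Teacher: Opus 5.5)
Your reduction is the one the paper intends. Proposition \ref{p6.3} is given there only as the prediction-theoretic reading of Example \ref{e2.8}(i),(ii), that is, Proposition \ref{p2.5} combined with the interpretation of ${\rm tr}(A\Delta A^*)$ as a sum of row-wise squared distances. In (i) you take a slightly different route. You apply Proposition \ref{p2.5} to each rank-one $X$ with a single $1$ at $(k,k)$ to get $\tilde d_k=(\delta^{(\dagger)}_{kk})^{-1}$, and then use Cauchy--Schwarz, instead of the single hyperplane $\sum_{k\le r}a_{kk}=1$. This is correct, since each $\delta^{(\dagger)}_{jj}>0$ because $e_j\in{\mathcal R}(\Delta^\dagger)$. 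It also gives uniqueness of the minimizer and yields Corollary \ref{c6.5} directly. Part (i) is fine, and so are the value and the degenerate case in (ii).

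The gap is the last step of (ii). Your claim that $s^{(r)}\in{\mathcal R}(\Delta)$ makes $P$ act trivially on $e_1,\dots,e_r$ is false; it only gives $Ps^{(r)}=s^{(r)}$.

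Set $\mathbf b_0=c\,(s^{(r)})^\top\Delta^\dagger$. Then $\mathbf b_0P=\mathbf b_0$, since $\Delta^\dagger P=\Delta^\dagger$, and any $\mathbf b$ with $\mathbf bP=\mathbf b_0$ satisfies $\mathbf b s^{(r)}=\mathbf bPs^{(r)}=1$ automatically. So the minimizing first rows are exactly $\mathbf b_0+\mathbf v$ with $\mathbf v^*\in{\mathcal N}(\Delta)$. Consequently the first $r$ coefficients are determined if and only if $e_1,\dots,e_r\in{\mathcal R}(\Delta)$.

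Under the stated hypothesis alone, the ``vice versa'' is actually false. Take $n=r=2$, $x_1(s)=x_2(s)$ a scalar white noise, and ${\mathcal O}=-{\mathbb N}$. Then $\Delta$ is the $2\times 2$ matrix with all entries $1$, $s^{(2)}\in{\mathcal R}(\Delta)$, and the value is $1$. Every pair with $a_1+a_2=1$ gives the same element $x_1(0)$ and attains this value, whereas the formula singles out $a_1=a_2=1/2$.

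So no argument can close this step, and the paper gives no separate justification for that clause. What you can prove is the following:
\begin{itemize}
\item The ``if'' direction: the first $r$ entries of $\mathbf b_0$ attain the value, with its remaining entries used as coefficients of $x_{r+1}(0),\dots,x_n(0)$.
\item The ``only if'' direction, under the extra assumption $e_1,\dots,e_r\in{\mathcal R}(\Delta)$, e.g.\ $\Delta\in{\mathcal I}$.
\end{itemize}

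On conjugation: with the convention \eqref{f6.1}, your formula $a_m=c\sum_{k\le r}\delta^{(\dagger)}_{km}$ is the correct one. It is the complex conjugate of $c\sum_{k\le r}\delta^{(\dagger)}_{mk}$, and Hermitian symmetry does not make these equal. So read the printed $\delta^{(\dagger)}_{mr}$ as $\delta^{(\dagger)}_{km}$ rather than trying to reconcile the two.
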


Because $\mu_r=0$ if $\tilde{d}_j=0$ for some $j \in \{ 1,\ldots,n \}$, we can derive a generalization of Proposition \ref{p6.2} from Proposition \ref{p6.3}(i): 

\begin{cor}   \label{c6.4}
  For all $r \in \{ 1,\ldots,n \}$, the Rozanov distances $\tilde{d}_j$ are strictly positive for $j \in \{ 1,\ldots,r \}$ if and only if $e_j \in {\mathcal R}(\Delta)$ for $j \in \{ 1,\ldots,r \}$.
\end{cor}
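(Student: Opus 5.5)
The plan is to reduce Corollary \ref{c6.4} to a direct Gram-matrix argument about the first $r$ residuals, and then read off the "if" direction from Proposition \ref{p6.3}(i). Set $y_j = x_j(0)-P(x_j(0))$. Then $\Delta = G(Y,Y)$ with $Y=(y_1,\ldots,y_n)^\top$, so ${\mathcal R}(\Delta)$ can be described by linear (in)dependence among the $y_j$. This is the same mechanism used in the proof of Proposition \ref{p6.2}.

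\emph{First step.} Show that $\tilde d_k=0$ if and only if $e_k\notin{\mathcal R}(\Delta)$.

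For the Rozanov distance, $\tilde d_k=0$ holds exactly when $x_k(0)\in H({\mathcal O})+{\rm l.h.}\{x_j(0):j\neq k\}$. Applying $I-P$ turns this into $y_k\in{\rm l.h.}\{y_j:j\ne k\}$. Indeed, $H({\mathcal O})+{\rm l.h.}$ of finitely many vectors is closed, so the infimum is attained, and membership survives the projection in both directions. In turn, $y_k$ lies in the span of the other residuals exactly when there is $\mathbf a\in{\mathbb C}^n$ with $a_k=1$ and $\sum_j a_jy_j=0$. The latter condition is equivalent to $\mathbf a^\top\Delta\bar{\mathbf a}=0$, i.e.\ $\bar{\mathbf a}\in{\mathcal N}(\Delta)$ (up to the conjugation convention).

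Finally, the existence of a null vector of $\Delta$ with nonzero $k$-th coordinate means ${\mathcal N}(\Delta)\not\subseteq e_k^\perp$. Since ${\mathcal R}(\Delta)={\mathcal N}(\Delta)^\perp$ for Hermitian $\Delta$, this is equivalent to $e_k\notin{\mathcal R}(\Delta)$. Hence $\tilde d_k>0\iff e_k\in{\mathcal R}(\Delta)$ for each single $k$, and taking $k=1,\ldots,r$ gives the corollary.

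\emph{Alternative route.} To follow the paper's indicated path instead, I would argue as follows. If all $e_j\in{\mathcal R}(\Delta)$ for $j\le r$, then Proposition \ref{p6.3}(i) gives $\mu_r=({\rm tr}((\Delta/\Delta_{22})^{-1}))^{-1}>0$. Because $\mu_r=0$ whenever some $\tilde d_j=0$, all $\tilde d_j$ for $j\le r$ are then positive. Conversely, if some $e_j\notin{\mathcal R}(\Delta)$, I would use the single-index equivalence from the first step to get $\tilde d_j=0$. Proposition \ref{p6.3}(i) alone gives only $\mu_r=0$ in that case, which does not single out a vanishing $\tilde d_j$, so the first step is needed anyway.

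\emph{Main obstacle.} The only delicate point is the identification of $\tilde d_k=0$ with membership in the sum space. This needs closedness of $H({\mathcal O})+{\rm l.h.}\{x_j(0)\}$, which holds since a closed subspace plus a finite-dimensional space is closed. The conjugation bookkeeping between $\mathbf a$ and ${\mathcal N}(\Delta)$ is routine.
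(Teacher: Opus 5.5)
Your main argument is correct, and it takes a genuinely different route from the paper.

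\textbf{The paper's route.} The paper gets the corollary as a by-product of the extremal formula in Proposition~\ref{p6.3}(i), which is the prediction-theoretic reading of Example~\ref{e2.8}(i) and hence of the hyperplane result Proposition~\ref{p2.5}. That formula shows that $\mu_r>0$ exactly when $e_1,\ldots,e_r\in{\mathcal R}(\Delta)$. Combined with the observation that $\mu_r=0$ exactly when some $\tilde d_j$ with $j\le r$ vanishes, this gives the equivalence.

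\textbf{Your route.} You work directly with the residuals $y_j=x_j(0)-P(x_j(0))$ and their Gram matrix $\Delta$, as in the proof of Proposition~\ref{p6.2}. You show that $\tilde d_k=0$ iff some $v\in{\mathcal N}(\Delta)$ has $v_k\neq 0$, iff $e_k\notin{\mathcal R}(\Delta)$. The closedness of $H({\mathcal O})+{\rm l.h.}\{x_j(0):j\neq k\}$ and the positivity step $v^*\Delta v=0\Rightarrow\Delta v=0$ are handled correctly.

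\textbf{What each buys.} Your argument is elementary and self-contained. It needs no Moore--Penrose inverse or Schur complement, and it does not rely on Proposition~\ref{p6.3}; the paper leaves implicit the identification $\mu_r=m(\Delta;\sum_{k\le r}a_{kk}=1)$ on which that proposition rests. It also proves the sharper index-by-index statement $\tilde d_k>0\iff e_k\in{\mathcal R}(\Delta)$. From this the corollary follows for every $r$, and Proposition~\ref{p6.2}(i)$\Leftrightarrow$(iii) is the case $r=n$. The paper's route, in return, comes with the quantitative value of $\mu_r$. It also illustrates the paper's theme that extremal representations yield such qualitative criteria.

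\textbf{A slip in your ``alternative route''.} Your claim that the first step is needed anyway is mistaken. The corollary only asks whether all of $\tilde d_1,\ldots,\tilde d_r$ are positive. If they are, Cauchy--Schwarz gives $\sum_{j\le r}|a_j|^2\tilde d_j\ge\bigl(\sum_{j\le r}\tilde d_j^{-1}\bigr)^{-1}>0$ whenever $\sum_{j\le r}a_j=1$. So $\mu_r=0$ does force some $\tilde d_j=0$ with $j\le r$, and you never need to identify which one. The paper's route is therefore complete on its own. This does not affect your main proof.
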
   
   
If $\tilde{d}_j > 0$ for any $j \in \{ 1,\ldots,r \}$ then we can show that $\mu_r=(\sum_{j=1}^r \tilde{d}_j^{-1})^{-1}$ leading to another corollary of Proposition \ref{p6.2}(i):

\begin{cor} \label{c6.5}
    If $e_j \in {\mathcal R}(\Delta)$ for any $j \in \{ 1,\ldots,r \}$ then $\sum_{j=1}^r \tilde{d}_j^{-1} = {\rm tr}((\Delta / \Delta_{22})^{-1})$. 
\end{cor}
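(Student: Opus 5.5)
The plan is to compute each Rozanov distance $\tilde d_k$ separately as a hyperplane minimum in the sense of Proposition \ref{p2.5}, and then sum the results. The sum of the diagonal entries $\delta_{jj}^{(\dagger)}$ of $\Delta^\dagger$ will turn out to be the trace of a corner block of $\Delta^\dagger$, and this block is identified through the Schur complement exactly as in the proof of Corollary \ref{c2.7}(ii). This avoids going through the minimum $\mu_r$ of Proposition \ref{p6.3}(i), although it is consistent with it.

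\emph{Step 1: each Rozanov distance as a hyperplane minimum.} Fix $k\in\{1,\ldots,r\}$. Every element of $H({\mathcal O})+{\rm l.h.}\{x_j(0):j\not=k\}$ has the form $y-\sum_{j\not=k}a_jx_j(0)$ with $y\in H({\mathcal O})$. Hence $\tilde d_k$ is the infimum of $\|\mathbf a^\top X(0)-y\|^2$ over $y\in H({\mathcal O})$ and $\mathbf a\in{\mathbb C}^n$ with $a_k=1$. Minimizing first over $y$ and using \eqref{f6.5} for the matrix with first row $\mathbf a^\top$ and all other rows zero, this becomes
\[
\tilde d_k=\inf\{\mathbf a^\top\Delta\bar{\mathbf a}: a_k=1\}=m(\Delta;{\mathcal A}_X),
\]
where $X$ is the matrix unit $e_1e_k^\top$, so that ${\rm tr}(AX^*)=a_{1k}$. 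The rows of $A$ other than the first can be taken to be zero at the minimum, as in the discussion following Proposition \ref{p6.1}.

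\emph{Step 2: evaluate each $\tilde d_k$.} We have ${\mathcal N}(X)=e_k^\perp$, so the hypothesis $e_k\in{\mathcal R}(\Delta)={\mathcal N}(\Delta)^\perp$ is exactly the condition ${\mathcal N}(\Delta)\subseteq{\mathcal N}(X)$. Proposition \ref{p2.5}(i) therefore gives
\[
\tilde d_k=\bigl({\rm tr}(X\Delta^\dagger X^*)\bigr)^{-1}=\bigl(\delta_{kk}^{(\dagger)}\bigr)^{-1}.
\]
This value is positive and finite, since $e_k\in{\mathcal R}(\Delta)={\mathcal R}(\Delta^\dagger)$ gives $\delta^{(\dagger)}_{kk}>0$.

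\emph{Step 3: sum over $k$ and identify the block.} Summing over $k\le r$ gives
\[
\sum_{j=1}^r\tilde d_j^{-1}=\sum_{j=1}^r\delta_{jj}^{(\dagger)}={\rm tr}\bigl((\Delta^\dagger)_{11}\bigr),
\]
where $(\Delta^\dagger)_{11}$ denotes the upper left $r\times r$ block of $\Delta^\dagger$. Now take $X={\rm diag}(1,\ldots,1,0,\ldots,0)$ with $r$ ones, as in Example \ref{e2.8}(i). The hypothesis that $e_1,\ldots,e_r\in{\mathcal R}(\Delta)$ means ${\mathcal N}(\Delta)\subseteq{\mathcal N}(X)$. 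By the proof of Corollary \ref{c2.7}(i), this is equivalent to invertibility of $\Delta/\Delta_{22}$, and then $(\Delta^\dagger)_{11}=(\Delta/\Delta_{22})^{-1}$ as used in the proof of Corollary \ref{c2.7}(ii). Taking traces finishes the argument.

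\emph{Main obstacle.} The delicate point is this block identity when $\Delta$ is singular. It involves the Moore--Penrose inverse and the generalized Schur complement with $\Delta_{22}^\dagger$, and the paper only asserts it inside the proof of Corollary \ref{c2.7}. If a justification is needed, I would first try to write $\Delta=\Delta P$ with $P$ the orthoprojection onto ${\mathcal R}(\Delta)$. I would then check directly that $Z:=(I_r\ 0)\,\Delta^\dagger$ satisfies $Z\Delta=(I_r\ 0)$, which uses $(I_r\ 0)P=(I_r\ 0)$, a consequence of the hypothesis. Finally I would read off from the first block column of this identity that $(\Delta^\dagger)_{11}$ inverts $\Delta/\Delta_{22}$.
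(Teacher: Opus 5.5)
Your proof is correct, but it is organized differently from the paper's. The paper evaluates the single extremal quantity $\mu_r=\min\{\sum_{j=1}^r|a_j|^2\tilde d_j:\sum_{j=1}^r a_j=1\}$ in two ways. Proposition \ref{p6.3}(i), i.e.\ Example \ref{e2.8}(i) for the hyperplane $\sum_{k=1}^r a_{kk}=1$, gives $\mu_r=\bigl({\rm tr}((\Delta/\Delta_{22})^{-1})\bigr)^{-1}$. Once Corollary \ref{c6.4} guarantees $\tilde d_j>0$, the weighted Cauchy--Schwarz inequality gives $\mu_r=\bigl(\sum_{j=1}^r\tilde d_j^{-1}\bigr)^{-1}$.

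You instead apply Proposition \ref{p2.5}(i) to each rank-one functional $X=e_1e_k^\top$ separately, get $\tilde d_k=(\delta^{(\dagger)}_{kk})^{-1}$, and sum by linearity of the trace. Your route is more direct. It proves the finer entrywise identity for every $k\le r$, which the paper only mentions afterwards (for $k=1$, and via \eqref{f6.8} for invertible $\Delta$). It also gives positivity of $\tilde d_k$ immediately and needs neither Corollary \ref{c6.4} nor the Cauchy--Schwarz step. The paper's route fits its theme of reading identities off two evaluations of one extremum, and it comes with the prediction-theoretic interpretation of the optimal weights $a_j\propto\tilde d_j^{-1}$. Both routes ultimately rest on the same block identity $(\Delta^\dagger)_{11}=(\Delta/\Delta_{22})^{-1}$, which is asserted in the proof of Corollary \ref{c2.7}(ii).

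One correction to your sketched justification of that identity. Write $Z=(Z_1\ Z_2)$. The first block column of $Z\Delta=(I_r\ 0)$, namely $Z_1\Delta_{11}+Z_2\Delta_{12}^*=I_r$, is not enough by itself to eliminate $Z_2$. You also need the second block column $Z_1\Delta_{12}+Z_2\Delta_{22}=0$, multiplied on the right by $\Delta_{22}^\dagger\Delta_{12}^*$. Combine it with $\Delta_{22}\Delta_{22}^\dagger\Delta_{12}^*=\Delta_{12}^*$, which holds because $\Delta\geq 0$ forces ${\mathcal R}(\Delta_{12}^*)\subseteq{\mathcal R}(\Delta_{22})$. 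This eliminates $Z_2$ and yields $Z_1(\Delta/\Delta_{22})=I_r$, which also shows directly that $\Delta/\Delta_{22}$ is invertible.
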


If $r=1$ one has $\tilde{d}_1=(\delta_{11}^{(\dagger)})^{-1}$ if $e_1 \in {\mathcal R}(\Delta)$, and $\tilde{d}_1=0$ otherwise. Thus, if $\Delta \in {\mathcal I}$ we can say that 
\begin{equation} \label{f6.8}
     \tilde{d}_j = (\delta_{jj}^{(\dagger)})^{-1} = \frac{{\rm det}(\Delta)}{{\rm det}(\Delta_j)} \, , 
\end{equation}
where $\Delta_j$ is obtained from $\Delta$ deleting the $j$-th row and column from the matrix representation of $\Delta$, and $j$ takes any value of $\{ 1,\ldots,n \}$. The interlacing theorem for eigenvalues of Hermitian matrices yields
\[
    \lambda_n \leq \tilde{d}_j \leq \lambda_1 \, .
\]
Moreover, since $\tilde{d}_j$ equals the $j$-th element of the principal diagonal of $\Delta^{-1}$ we get the equality $\sum_{j=1}^n \tilde{d}_j^{-1} = {\rm tr}(\Delta^{-1}) = \sum_{j=1}^n \lambda_j^{-1}$. So, there exist a certain index number $k$ such that $\tilde{d}_k$ is not larger than the harmonic mean of the eigenvalues of $\Delta$. 

The next result describes conditions under which $\tilde{d}_j$ is equal to one of the extremal values $\lambda_1$ or $\lambda_n$:

\begin{pro}   \label{p6.6}
   One of the equalities $\tilde{d}_1=\lambda_1$ or $\tilde{d}_1=\lambda_n$ is satisfied if and only if
   \begin{equation} \label{f6.9}
        (x_1(0)-{P}(x_1(0))) \,\, {\it is} \, {\it orthogonal} \, {\it to} \,\, (x_j(0)-{P}(x_j(0))) \,\, {\it for} \, {\it any} \,\, j \in \{ 2,\ldots,n \}    \,. 
   \end{equation}
   In this case $d_1=\tilde{d}_1$, i.e.~the orthogonal projection of $x_1(0)$ onto $H({\mathcal O})+ {\rm l.h.} \{x_j(0) : j \in \{ 2,\ldots,n \}\}$ coincides with ${P}(x_1(0))$. 
\end{pro}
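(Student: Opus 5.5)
The plan is to translate everything into statements about the prediction error matrix $\Delta=\Delta({\mathcal O})$ and its first standard basis vector $e_1$. By \eqref{f6.3}, the $(1,j)$-entry of $\Delta$ is the inner product $(x_1(0)-P(x_1(0)),\,x_j(0)-P(x_j(0)))$. Hence condition \eqref{f6.9} is equivalent to $\Delta e_1=d_1e_1$, i.e.\ $\Delta={\rm diag}(d_1,\Delta_1)$, where $\Delta_1$ is the matrix obtained by deleting the first row and column as in \eqref{f6.8}. Equivalently, \eqref{f6.9} says that $e_1$ is an eigenvector of $\Delta$.

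For the direction from extremality to \eqref{f6.9}, I would first treat invertible $\Delta$ and use \eqref{f6.8}: $\tilde d_1=(\delta^{(\dagger)}_{11})^{-1}=(e_1^*\Delta^{-1}e_1)^{-1}$. By the Rayleigh quotient for $\Delta^{-1}$,
\[
\lambda_1^{-1}\le e_1^*\Delta^{-1}e_1\le\lambda_n^{-1}.
\]
Equality on the left holds iff $e_1$ lies in the eigenspace of $\Delta^{-1}$ for $\lambda_1^{-1}$, and equality on the right iff it lies in the eigenspace for $\lambda_n^{-1}$. Since $\Delta$ and $\Delta^{-1}$ share eigenvectors, $\tilde d_1=\lambda_1$ or $\tilde d_1=\lambda_n$ forces $\Delta e_1=\lambda e_1$ for $\lambda=\lambda_1$ or $\lambda=\lambda_n$. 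This is exactly \eqref{f6.9}, and it also gives $d_1=\lambda=\tilde d_1$.

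For the direction from \eqref{f6.9}, the block form ${\rm diag}(d_1,\Delta_1)$ gives $\det\Delta=d_1\det\Delta_1$, so $\tilde d_1=d_1$ by \eqref{f6.8}, and $d_1$ is an eigenvalue of $\Delta$. Alternatively, in the Hilbert-space picture, $x_1(0)-P(x_1(0))$ is orthogonal to $H({\mathcal O})+{\rm l.h.}\{x_j(0):j\ge2\}$, so the two projections of $x_1(0)$ coincide. Either route gives the final claim $d_1=\tilde d_1$ together with the stated coincidence of projections.

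The main obstacle is upgrading ``$d_1$ is an eigenvalue'' to ``$d_1\in\{\lambda_1,\lambda_n\}$''. This step does not follow from the block structure alone, since $\Delta_1$ can have eigenvalues on both sides of $d_1$. For instance, $\Delta={\rm diag}(2,3,1)$ satisfies \eqref{f6.9} with $\tilde d_1=2$, which equals neither $\lambda_1=3$ nor $\lambda_3=1$. What I would try first is to read off which extremal value occurs by comparing $d_1$ with the spectrum of $\Delta_1$. That comparison succeeds exactly when $d_1\ge\lambda_1(\Delta_1)$ or $d_1\le\lambda_{n-1}(\Delta_1)$, and in particular always when $n=2$. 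So the ``if'' part can be established only under this additional condition, and the proof should record that restriction explicitly. The other genuinely delicate point is the case of non-invertible $\Delta$. There I would use Corollary~\ref{c6.4}: if $\tilde d_1=0=\lambda_n$, then $e_1\notin{\mathcal R}(\Delta)$. Deducing \eqref{f6.9} then requires checking separately whether $e_1\in{\mathcal N}(\Delta)$, and I would handle that case on its own.
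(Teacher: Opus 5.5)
Your argument for the direction ``$\tilde d_1\in\{\lambda_1,\lambda_n\}\Rightarrow$ \eqref{f6.9}'' with $\Delta\in{\mathcal I}$ is correct, and it takes a different route from the paper.

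\textbf{Comparison with the paper.} The paper writes $\tilde d_1=\det(\Delta)/\det(\Delta_1)$ and uses Cauchy interlacing to force the eigenvalues of $\Delta_1$ to be $\lambda_2,\ldots,\lambda_n$ when $\tilde d_1=\lambda_1$. It then compares $\det(zI-\Delta)$ with the Schur-complement factorization $\det(zI_{n-1}-\Delta_1)\,(z-\delta_{11}-\delta^*(zI_{n-1}-\Delta_1)^{-1}\delta)$ to get $\delta=0$. The case $\lambda_n$ is only said to be ``similar''. Your Rayleigh-quotient bound $\lambda_1^{-1}\le e_1^*\Delta^{-1}e_1\le\lambda_n^{-1}$, with its equality cases, treats both extremes at once and needs neither interlacing nor the resolvent identity. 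It also gives the sharp statement: for invertible $\Delta$, $\tilde d_1\in\{\lambda_1,\lambda_n\}$ iff $e_1$ is an eigenvector of $\Delta$ for $\lambda_1$ or $\lambda_n$, i.e.\ iff \eqref{f6.9} holds and $d_1\in\{\lambda_1,\lambda_n\}$.

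\textbf{The ``if'' direction.} Your objection is correct. Condition \eqref{f6.9} only gives $\Delta={\rm diag}(d_1,\Delta_1)$, hence $\tilde d_1=d_1$ with $d_1$ some eigenvalue of $\Delta$. Your example ${\rm diag}(2,3,1)$ is a genuine counterexample for $n\ge3$. It is realized by a white-noise sequence with this covariance and ${\mathcal O}=-{\mathbb N}$, for which $\Delta({\mathcal O})$ equals the covariance. So the paper's claim that this direction is ``obvious'' is an error; only the final sentence, $d_1=\tilde d_1$, survives in general. Your corrected sufficient condition ($d_1\ge\lambda_1(\Delta_1)$ or $d_1\le\lambda_{n-1}(\Delta_1)$) is exactly right.

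\textbf{The gap: singular $\Delta$.} This case cannot be closed the way you intend, because the implication is false there. Take $n=2$ and $\Delta=\left(\begin{array}{cc}1&1\\1&1\end{array}\right)$, realized e.g.\ by $x_1(s)=x_2(s)$ a normalized white noise with ${\mathcal O}=-{\mathbb N}$. Then $\tilde d_1=0=\lambda_2$, but $\delta_{12}=1$, so \eqref{f6.9} fails.

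More generally, suppose $\Delta$ is singular and $\tilde d_1=0$. Condition \eqref{f6.9} would give $\Delta e_1=d_1e_1=\tilde d_1e_1=0$. So \eqref{f6.9} holds exactly when $e_1\in{\mathcal N}(\Delta)$, i.e.\ when $x_1(0)\in H({\mathcal O})$, and nothing forces this. Knowing $e_1\notin{\mathcal R}(\Delta)$ from Corollary~\ref{c6.4} does not help.

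Instead of promising to handle this case separately, state the hypothesis $\Delta\in{\mathcal I}$ for the $\lambda_n$-branch. The paper's proof uses it tacitly through \eqref{f6.8}. The $\lambda_1$-branch needs no invertibility: $\tilde d_1\le d_1=e_1^*\Delta e_1\le\lambda_1$, so $\tilde d_1=\lambda_1$ forces $e_1^*\Delta e_1=\lambda_1$, hence $\Delta e_1=\lambda_1e_1$, which is \eqref{f6.9}.
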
   
        
\begin{proof}
If \eqref{f6.9} is satisfied all assertions are obvious. We prove the converse conclusion. Suppose, $\tilde{d}_1=\lambda_1$ or $\tilde{d}_1=\lambda_n$. Set $\Delta = (\delta_{jk})$, $\delta = (\delta_{21},\ldots,\delta_{n1})^\top$ and let $I_{n-1}$ be the $(n-1) \times (n-1)$ identity matrix. If $\tilde{d}_1=\lambda_1$, \eqref{f6.8} yields $\lambda_j(\Delta)=\lambda_{j+1}(\Delta_1)$ for any $j \in \{ 1,\ldots,n-1 \}$. Thus, if the complex number $z$ is in the resolvent set of $\Delta$ we get the equality
\begin{eqnarray*}
    \prod_{j=1}^n (z-\lambda_j) & = & {\rm det}(zI-\Delta) = {\rm det}((zI_{n-1}-\Delta_1) (z-\delta_{11}- \delta^* (zI_{n-1}-\Delta_1)^{-1} \delta) \\
    & = & \prod_{j=2}^n (z-\lambda_j) (z-\delta_{11}- \delta^* (zI_{n-1}-\Delta_1)^{-1} \delta)
\end{eqnarray*}
or equivalently, $z-\lambda_1=z-\delta_{11}-\delta^*(zI_{n-1}-\Delta_1)^{-1} \delta$, resulting in 
\begin{eqnarray*}
\delta & = & (\left( (x_1(0)-{P}(x_1(0)),  (x_2(0)-{P}(x_2(0))\right), \ldots  \\
& & \quad \quad \ldots , \left((x_1(0)-{P}(x_1(0)),  (x_n(0)-{P}(x_n(0))\right) )^\top \\
& = & 0 \, . 
\end{eqnarray*}
If $\tilde{d}_1=\lambda_n$ the proof is quite similar. 
\end{proof}        

The preceding proposition reveals that $\tilde{d}_1$ is extremal if and only if knowledge of $x_2(0),\ldots,x_n(0)$ does not improve the prediction. Let us mention, however, that the condition \eqref{f6.9} does not mean that the sequence $(x_1(s))_{s \in {\mathbb Z}}$ is orthogonal to any of the sequences $(x_j(s))_{s \in {\mathbb Z}}$, $j\in \{ 2,\ldots,n \}$, in general. 

We finish the present section with another interpretation of $d({\mathcal O};{\mathcal A})$. Let $M$ be the (non-stochastic) spectral measure of the sequence $\mathbf X$. Let ${\mathcal T}({\mathcal O})$ be the set of all $\mathcal M$-valued trigonometric  polynomials of the form $\sum_k A_k e^{{\mathbf i} s_k ( \cdot )}$ for $A_k \in \mathcal M$, $s_k \in \mathcal O$. According to the multivariate version of Kolmogorov's isomorphism theorem we get $d({\mathcal O},{\mathcal A}) = \inf \left\{ {\rm tr} \left( \int_0^{2 \pi} (A-F(\theta)) \, {\mathrm d}(M(\theta))(A-F(\theta))^* \right) : F \in {\mathcal T}({\mathcal O}), A \in {\mathcal A} \right\}$. If $A$ is invertible then $\inf \left\{ {\rm tr} \left( \int_0^{2 \pi} (A-F(\theta)) \, {\mathrm d}(M(\theta))(A-F(\theta))^* \right) : F \in {\mathcal T}({\mathcal O}) \right\} = \inf \left\{ {\rm tr} \left( \int_0^{2 \pi} (I-F(\theta)) \, A\, {\mathrm d}(M(\theta)) \, A^* (I-F(\theta))^* \right) : F \in {\mathcal T}({\mathcal O}), A \in {\mathcal A} \right\}$, and hence, $d({\mathcal O} ; {\mathcal A}) = \inf \{ {\rm tr}(\Delta_A({\mathcal O})) : A \in ({\mathcal A} \cap {\mathcal I} ) \}$ in case $ ({\mathcal A} \cap {\mathcal I})$ is dense in $\mathcal A$ and $\Delta_A({\mathcal O})$ denotes the prediction error matrix of the sequence $(A X(s))_{s \in {\mathbb Z}}$, according to Lemma \ref{l2.2}(ii). Since in applications the modelling stationary sequence is estimated from observations (cf.~\cite[Chapter 16]{Br}) there is some freedom of choice. Therefore, using the preceding results  one could try to choose from a set $\{ A X(s) \}_{s \in {\mathbb Z}}$, where $A$ exhaustes a certain subset $\mathcal A$ of $\mathcal M$ in such a way that $d({\mathcal O},{\mathcal A})$ is as small as possible.


\begin{thebibliography}{00}


\bibitem{A}
 Ando, T.:
 Schur complements and matrix inequalities: operator-theoretic approach.
 in: Fuzhen Zhang (ed.), The Schur Complement and Its Applications, Springer, New York, 2005/2010, 137--162.

\bibitem{Bh}
 Bhatia, R.:
 {\it Matrix Analysis}.
 Graduate Texts in Mathematics v.~169, Springer, New York, 1997.
 
\bibitem{Br}
 Brockwell, P.~J., Davis, R.~A.: 
 {\it Time Series: Theory and Methods}.
 2nd edition, Springer, New York, 2006.
 
\bibitem{BM}
 Bullen, P., Marcus, M.: 
 Symmetric means and matrix inequalities.
 Proc. Amer. Math. Soc. 12 (1961), 285--290. 
 
 \bibitem{Cheng}
 Cheng, R.:
 Some Banach spaces of vector-valued functions and an extremal problem.
 {\it Anal. Math.} {\bf 23} (1997), 205--222.
 
\bibitem{Ding}
 Ding, Chao:
 Variational analysis of the Ky Fan $k$-norm.
 {\it Set-Valued and Variational Analysis} {\bf 25} (2017), 265--296.
 
\bibitem{Fan}
 Fan, Ky:
 On a theorem of Weyl concerning the eigenvalues of linear transformations.
 {\it Proc. Nat. Acad. SCI. USA} {\bf 35} (1949), 652--655.

\bibitem{HL}
 Helson, H.; Lowdenslager, D.:
 Prediction theory and Fourier series in several variables.
 Acta Math. 99 (1958), 165--202.

\bibitem{HJ_alt}
 Horn, R.~A.; Johnson, C.~R.:
 {\it Matrix Analysis}.
 1st edition, Cambridge University Press, Cambridge, 1985.
 
\bibitem{HJ}
 Horn, R.~A.; Johnson, C.~R.:
 {\it Matrix Analysis}.
 2nd edition, Cambridge University Press, Cambridge, 2012.
 
\bibitem{HJ2}
 Horn, R.~A.; Johnson, C.~R.:
 {\it Topics in Matrix Analysis}.
 2nd edition, Cambridge University Press, Cambridge, 2011.
 
\bibitem{K}
 Klotz, L.:
 A matrix generalization of a theorem of Szeg{\H{o}}.
 Analysis Math. 18 (1992), 63--72.

\bibitem{KL}
 Klotz, L.; Lasarow, A.:
 Extremal problems for matrix-valued polynomials on the unit circle and applications
 to multivariate stationary sequences.
 J. Approx. Theory 125 (2003), 42--62.
 
\bibitem{KMae}
 Klotz, L., M\"adler, C.: 
 On the notion of minimality of a $q$-variate stationary sequence.
 Complex Anal. Oper. Theory 12 (2018), no. 1, 1--15. 

\bibitem{KM}
 Klotz, L.; Medina, J.~M.:
 ${\mathcal I}_H$-singularity and ${\mathcal I}_H$-regularity of multivariate stationary processes over LCA groups.
 Probab. and Math. Statistics 41, 1 (2021), 173--192.
 
\bibitem{Liu}
 Liu, Jianzhou:
 Eigenvalue and singular value inequalities of Schur complements.
 in: Fuzhen Zhang (ed.), The Schur Complement and Its Applications, Springer, New York, 2005/2010, 47--82.

\bibitem{MW}
 Makagon, A; Weron, A.:
 $q$-variate minimal stationary processes.
 Studia Math. 59, (1976), 41--52.
 
\bibitem{M}
 Marcus, M.: 
 {\it Finite Dimensional Multilinear Algebra, Part I}. 
 Marcel Dekker Inc., New York, 1973.
 
\bibitem{ML}
 Marcus, M., Lopes, L.: 
 Inequalities for symmetric functions and Hermitian matrices.
 Canad. J. Math. 9 (1957), 305--312. 
 
\bibitem{MM}
 Marcus, M., McGregor, J.~L.:
 Extremal properties of Hermitian matrices.
 Canad. J. Math. 8 (1956), 524--531.

\bibitem{MOA}
 Marshall, A.~W.; Olkin, I.; Arnold, B.~C.:
 {\it Matrix Inequalities: Theory of Majorization and Its Applications}.
 Springer Series in Statistics, New York, 2011 (First edition: Academic Press, New York, 1979).
 
\bibitem{MMS}
 Masyatka, O.; Moklyachuk, M.; Sidej, M.:
 Filtering of multidimensional stationary processes with missing observations.
 Universal J. Math. Appl. {2} (2019), no.~1, 24--32.

\bibitem{R}
 Rozanov, Yu.~A.:
 {\it Stationary Random Processes}.
 Holden-Day, San Francisco, 1967.
 
\bibitem{V}
 Vaninski, K.~L.:
 A method of functional estimation for stationary processes.
 J. Multivariate Anal. {46} (1993), no.~1, 97--111.
 
\bibitem{Watson}
 Watson, G.~A.: 
 On matrix approximation problems with Ky Fan $k$-norms
 {\it Numerical Algorithms} {\bf 5} (1993), 263--272.

\bibitem{WM}
 Wiener, N.; Masani, P.:
 The prediction theory of multivariate stochastic processes, Part I - The regularity condition.
 Acta Math. 98 (1957), 111--150.

\bibitem{Z}
 Zasukhin, V.~N.:
 On the theory of multidimensional stationary random processes (Russian).
 Dokl. Akad. Nauk Acad. SSSR (N.S.) 33 (1941), 435--437.


\end{thebibliography}


\end{document}